\def\ben#1{\begin{equation}#1\end{equation}}
\def\al#1{\begin{align*}#1\end{align*}}
\def\aln#1{\begin{align}#1\end{align}}
\newsavebox{\toy}
\savebox{\toy}{\framebox[0.65em]{\rule{0cm}{1ex}}}
\newcommand{\QED}{\usebox{\toy}\end{demo}}
\numberwithin{equation}{section}
\newtheorem{theorem}{Theorem}[section]
\newtheorem{lemma}[theorem]{Lemma}
\newtheorem{proposition}[theorem]{Proposition}
\newtheorem{cor}[theorem]{Corollary}
\newtheorem{rem}[theorem]{Remark}
\newcommand{\bd}{\begin{displaymath}}
\newcommand{\ed}{\end{displaymath}}
\newcommand{\N}{{\mathbb{N}}}
\newcommand{\R}{{\mathbb{R}}}
\newcommand{{\rd}}{\R^d}
\newcommand{\IP}{{\mathbb P}}
\newcommand{\IE}{\mathbb E}
\newcommand{\DP}{{\mathrm P}}
\newcommand{\DE}{{\mathrm E}}
\newcommand{\lan}{\langle}
\newcommand{\ran}{\rangle}
\renewcommand{\b}{\beta}
\newcommand{\rmd}{\mathrm{d}}
\newcommand{\D}{\Delta}
\newcommand{\e}{\varepsilon}
\newcommand{\ve}{\epsilon}
\newcommand{\z}{\zeta}
\newcommand{\kU}{\mathscr{U}}
\newcommand{\kW}{\mathcal{W}}
\newcommand{\dd}{\,\text{\rm d}}             % a straight d for differentials
\newcommand{\dB}{\xi}
\newcommand{\cF }{{\cal F}}
\newcommand{\cG }{{\cal G}}
\newcommand{\vphi}{f}
\newcommand{\sZ}{{\bf{\mathcal{Z}}}}
\newcommand{\ssup}[1] {{\scriptscriptstyle{({#1}})}}
\newcommand{\cvlaw}{\stackrel{(d)}{\longrightarrow}}
\newcommand{\cvIP}{\stackrel{\IP}{\longrightarrow}}
\newcommand{\cvLone}{\stackrel{L^1}{\longrightarrow}}
\newcommand{\cvLtwo}{\stackrel{L^2}{\longrightarrow}}
\newcommand{\eqlaw}{\stackrel{(d)}{=}}
\newcommand{\MN}{\color{magenta}}
   \def\MR#1{}  }
\author{Cl\'ement Cosco\footnote{ Weizmann Institute of Science.
\texttt{clement.cosco@gmail.com}}
\and
Shuta Nakajima\footnote{Department of Mathematics and Computer Science of the University of Basel. \texttt{shuta.nakajima@unibas.ch}}
   \and
   Makoto Nakashima\footnote{Graduate School of Mathematics, Nagoya University. \texttt{nakamako@math.nagoya-u.ac.jp}}
}
\begin{document}

%%%\pagestyle{myheadings}

%
%\pagestyle{myheadings}
%\markboth{FC-CC-CM}{EW limits}
%
%
%\title{A martingale approach to SHE and KPZ: the law of large numbers and fluctuations in the optimal weak disorder regime for $d\geq 3$}
\title{Law of large numbers and fluctuations in the sub-critical and $L^2$ regions for SHE and KPZ equation in dimension $d\geq 3$}

%\title{Law of large number and fluctuations in the weak disorder and full $L^2$-region for SHE and KPZ equation in dimention $d\geq 3$}

  %Law of large number, Edwards-Wilkinson and Gaussian Free Field fluctuations for the SHE and KPZ equations and directed polymers in $d\geq 3$ for optimal temperature regions}
%
%\author{Francis Comets$^{1,}$\footnote{Corresponding author}, Cl\'ement Cosco$^{1}$, Chiranjib Mukherjee$^{2}$}

\date{}
\maketitle
\begin{center}
\date{\today}
\end{center}
\begin{abstract}
There have been recently several works studying the regularized stochastic heat equation (SHE) and Kardar-Parisi-Zhang (KPZ) equation in dimension $d\geq 3$ as the smoothing parameter is switched off, but most of the results did not hold in the full temperature regions where they should. Inspired by martingale techniques coming from the directed polymers literature, we first extend the law of large numbers for SHE obtained in \cite{MSZ16} to the full weak disorder region of the associated polymer model and to more general initial conditions. We further extend the Edwards-Wilkinson regime of the SHE and KPZ equation studied in \cite{GRZ18,MU17,DGRZ20} to the full $L^2$-region, along with multidimensional convergence and general initial conditions for the KPZ equation (and SHE), which were not proven before. To do so, we %follow the strategy of \cite{CN19} and 
rely on a martingale CLT combined with a refinement of the local limit theorem for polymers.
\end{abstract}

\noindent \textbf{Keywords}: KPZ, SHE, Edwards-Wilkinson equation, directed polymers, random environment, weak disorder, $L^2$-region, martingales, local limit theorem for polymers, stochastic calculus.
\\

\noindent \textbf{AMS 2010 subject classifications}: Primary 60K37. Secondary 60F05, 60G42, 82D60.

{\footnotesize 
% \hspace{-10cm}

%
%\noindent$^{~1}$Universit\'e de Paris,
%Laboratoire de Probabilit\'es, Statistique et Mod\'elisation,\\
% LPSM (UMR 8001 CNRS, SU, UP)
%B\^atiment Sophie Germain, 8 place Aur\'elie Nemours, 75013 Paris\\
%\noindent {\tt comets@lpsm.paris,  ccosco@lpsm.paris}
%
%
%\noindent$^{~2}$Universit\"at M\"unster,
%Fachbereich Mathematik und Informatik,
%Einsteinstra\ss e 62, M\"unster, D-48149\\
%\noindent{\tt chiranjib.mukherjee@uni-muenster.de}

\section{Introduction}
 We refer the reader to Section \ref{sec:statResults} for precise statements of the theorems that we show.
\subsection{Regularization scheme of the KPZ equation and SHE}
The stochastic heat equation (with multiplicative white noise) is formally given by the SPDE
\begin{equation} \label{eq:formalSHE}
\frac{\partial}{\partial t} u(t,x) = \frac12 \D u(t,x) +
 \b u(t,x) \dB(t,x),
\end{equation}
for $t\geq 0$ and $x\in\mathbb R^d$ and
where $\xi$ is a space-time white noise on $\mathbb R_+ \times \mathbb R ^d$. Let us assume that $\xi$ is defined on a probability space $(\Omega,\mathcal F,\IP)$ and let $\IE$ denote the expectation with respect to $\IP$.

The equation is closely related to the KPZ equation which is formally described by
\begin{equation} \label{eq:formalKPZ}
\frac{\partial}{\partial t} h = \frac12 \D h +  \frac12   |\nabla h |^2+
 \b  \dB,
\end{equation}
and the two equations are related through the so-called Hopf-Cole transform 
\begin{equation}
h=\log u.
\end{equation}

In dimension $d=1$, much work has been concentrated in the past years in order to make sense of equation \eqref{eq:formalKPZ}, with the notable contributions \cite{BG97,H13}. One of the difficulties coming when trying to do so is that the derivative $\partial_x h$ fails to be a function and so one has to make sense of the square of a distribution. When $d\geq 3$, the solution $u$ should also fails to be a function, hence giving sense to the product $u\xi$ is also an issue. In this case, both SPDEs fall into the so-called \emph{super-critical} dimensions classification and do not enter in the framework of the recent theories \cite{H14,GIP15,KM17,GP18}. 

The general approach to studying equations as \eqref{eq:formalSHE} and \eqref{eq:formalKPZ} is to begin with mollifying the noise, and then look at the behaviour of the regularized solutions when the mollification is removed.
Supposing from now on that $d\geq 3$, we consider the regularized SHE:
\begin{equation}\label{eq:SHEintro}
 \frac{\partial}{\partial t} u_{\e} = \frac12 \D u_{\e} +
 \b \e^{\frac{d-2}2} u_{\e} \, \dB_{\e}, \quad u_\e(0,\cdot)\equiv u_0, 
\end{equation}
 where $\xi_\e$ is a mollification in space of $\xi$ such that $\xi_\e \Rightarrow \xi$ as $\e\to 0$, i.e.\
\[
\xi_{\e}(t,x) = (\xi(t,\cdot)\star \phi_\e)(x)=  \int \phi_\e(x - y) \xi(t,y) \dd y,
\]
with $\phi_\e = \e^{-d} \phi(\e^{-1} x)$ and $\phi$ being a smooth, compactly supported, symmetric function on $\mathbb R ^d$, so that $\phi_\e$ converges in distribution to the Dirac mass $\delta_0$. 

The authors in \cite{MSZ16} have shown that if one rescales the noise in \eqref{eq:SHEintro} by a $\e^{\frac{d-2}2}$ factor, the solution $u_\e$ of \eqref{eq:SHEintro} can be expressed in terms of a partition function of a directed polymer in white noise environment, at inverse temperature $\b$ (see also \cite{C17} for a general introduction to the directed polymer model). Let us briefly explain how this connection goes.  

Let $\sZ_T(x)$ be the \emph{normalized partition function} of the associated polymer, i.e.
\[\sZ_T(x)=\sZ_{T}(x,\xi) = \DE_x\left[\exp{\left(\beta\int_0^T \int_{{\mathbb R}^d} \phi(y-B_s) \ \xi({\rm d} s,{\rm d} y)-\frac{\b^2 T V(0)}{2}\right)} \right],
\]
where $(B_s)_{s\geq 0}$ is a Brownian motion path and $\DE_x$ the expectation associated to BM started at $x\in\mathbb R^d$,  $\b\geq 0$ and $V(x)=\int \phi(x-y)\phi(y)\dd y$.\footnote{When it is clear from the context, we sometimes use notations $\sZ_T=\sZ_T(\xi)=\sZ_T(0,\xi)$.} Further define:
\begin{equation}\label{eq:dBetx}
\xi^{\ssup{\e,t}}(s,y)= \e^{\frac{d+2}2}\dB\Big( t-\e^2 s,\e y  \Big)\quad \text{and} \quad \xi^{\ssup{\e,t,x}}(s,y)= \e^{\frac{d+2}2}\dB\Big( t-\e^2 s,\e y -x  \Big),
\end{equation}
respectively as a diffusively rescaled, time-reversed (resp.\ space-shifted) white noise (note that by scaling properties of the white noise, both fields %$\dB^{\ssup{\e,t}}$ and $\dB^{\ssup{\e,t,x}}$
have the same law as $\xi$). Then, the Feynmann-Kac formula ensures that  for the \emph{flat initial condition} $u_{\e}(0,\cdot) \equiv 1$,
\begin{equation}\label{eq:uZ}
u_\e(t,x)= \sZ_{\frac t{\e^2}} \left( \frac x \e, \xi^{\ssup{\e,t}} \right) = \sZ_{\frac t{\e^2}} \left(\xi^{\ssup{\e,t,x}} \right),
\end{equation}
see \cite{MSZ16} for more details. Note that similiar relations hold for more general initial conditions $u_0$ (cf.\ \eqref{eq:FKgeneralIC} or \eqref{eq:diracIC}). 

A regularized solution of the KPZ equation \eqref{eq:formalKPZ} can then be obtained via the Hopf-Cole transformation by letting
\begin{equation} \label{eq:hopf-coleIntro}
h_\e(t,x) = \log u_\e(t,x), \quad h_0 = \log u_0,
\end{equation}
which by It\^o calculus solves the regularized KPZ equation
\begin{equation}\label{eq:KPZe}
 \frac{\partial}{\partial t} h_{\e} = \frac12 \D h_{\e} +  \bigg[\frac12   |\nabla h_\e |^2  - C_\e\bigg]+
 \b \e^{\frac{d-2}2}   \dB_{\e} \;,\quad\,\,   h_{\e}(0,x) = h_0(x),
\end{equation}
with $C_\e$ that is the diverging parameter
\begin{equation}\label{C-eps}
C_\e= \b^2 V(0) \e^{-2}/2.
\end{equation}

Recently, there have been a few papers studying the asymptotic behavior $u_\e$ and $h_\e$ as the mollification parameter is switched off ($\e\to 0$) \cite{MSZ16,GRZ18,MU17,DGRZ20,DGRZ18b,CCM20,CCM19b}, but most of the results that have been obtained are not optimal in the sense that they were not proved to hold in the optimal $\b$-region where they should hold. In this paper, we bring new tools inspired from the polymer literature in order to extend some of the results to their optimal $\b$-regions. Let us first introduce these $\b$-regions of interest.

\subsection{The weak disorder and the $L^2$-region}
It is a standard result of the polymer litterature that the following $0$-$1$ law holds:
\begin{equation}
\text{either:}\quad (i)\ \sZ_T \to  0\quad \IP\text{-a.s,} \quad \text{or:} \quad (ii) \,\sZ_T \to  \sZ_\infty \text{ where } \sZ_\infty> 0\quad  \IP\text{-a.s,}
\end{equation}
and that there is a critical parameter $\b_c\geq 0$ such that
 \begin{equation}
\sZ_\infty = 0 \text{ a.s. } \text{ if }\b>\b_c \quad \text{and} \quad
\sZ_\infty > 0 \text{ a.s. } \text{ if } \b<\b_c,
\end{equation} 
and $\b_c>0$ when $d\geq 3$. Moreover, (ii) holds if and only if $(\sZ_T)_{T\geq 0}$ is uniformly integrable, see\ \cite{MSZ16,CY06}.

When situation (i) holds, we say that the system is in the \emph{strong disorder} regime, while if (ii) holds, we say that it is in the \emph{weak disorder} regime.  Localization properties for the polymer model and the solution $u_\e$ of SHE have been shown to hold inside the strong disorder region, see \cite{BC20,C17,BM19}. On the other hand, the polymer is diffusive in the full weak disorder region \cite{CY06} (see Appendix \ref{app:LLNDirac} in the continuous case).

Another region of interest is the $L^2$-region $[0,\b_{L^2})$, where
\begin{equation} \label{eq:defBeta2}
\b_{L^2} = \sup\{\b\geq 0,\, \sup\nolimits_t\IE\sZ_t^2 = \IE[\sZ_\infty^2] < \infty\},
\end{equation} 
satisfies $\b_{L^2}\leq \b_c$. In fact, it is widely believed that $\b_{L^2} < \b_c$, this strict inequality being proved for the discrete polymer when $d\geq 3$ \cite{BGH11,BT10,BS10,BS11} (see in particular \cite[Section 1.4]{BS10} for $d=3,4$). The $L^2$-region has been first introduced in \cite{B89,IS88} to show that diffusivity could happen for the polymer for small $\b$. It has the particular interest of allowing second moment computations that are not accessible in the weak disorder region part where $\b \geq \b_{L^2}$. The region has also been recently involved in the study of the speed of convergence and fluctuations of the polymer partition function \cite{CL17,CN19}, as well as for asymptotic properties of $u_\e$ and $h_\e$ that we describe now.

\subsection{Presentation of the results}

It was first observed in \cite{MSZ16} that when $\b<\b_{L^2}$, the following law of large numbers holds if $u_0= u_\e(0,\cdot)$ is regular enough:
\begin{equation} \label{eq:LLN}
\int_{\mathbb R ^d} u_\e(t,x) f(x) \dd x \cvIP \int_{\mathbb R ^d} \bar{u}(t,x) f(x) \dd x,
\end{equation}
 as $\e\to 0$  for any test function $f\in \mathcal{C}_c^\infty$, where
\begin{equation} \label{eq:Def_ubar}
\partial_t \bar{u}(t,x) = \frac{1}{2} \Delta \bar{u}(t,x),\qquad \bar{u}(0,\cdot) = u_0.
\end{equation}
The reason behind \eqref{eq:LLN} is that $u_\e(t,x)$ and $u_\e(t,y)$ become asymptotically independent for $x\neq y$ when $\e\to 0$, which is well captured by a covariance computation when $\b<\b_{L^2}$.

Our first main result (cf. Section \ref{sec:LLNstatements}) is to extend \eqref{eq:LLN} to the full \emph{weak disorder} region.
 Extending \eqref{eq:LLN} to the full weak disorder region requires more sophisticated arguments than a covariance computation, in particular because second moments blow up when $\b\geq \b_{L^2}$. To get around this issue, we rely on both the central limit theorem for polymer paths \cite{CY06} (which was shown in the full weak disorder region) and a homogenization argument as in \cite{CN19} that builds on the uniform integrability of $(\sZ_T)_{T\geq 0}$. In addition, we obtain a similar convergence to \eqref{eq:LLN} in the case of Dirac initial condition $u_0=\delta_{x_0}$ (although the limit will be purely random), see Theorem \ref{LNDirac}.
 
Fluctuations in the law of large numbers \eqref{eq:LLN} have been studied in \cite{GRZ18} in the case where the noise can also be colored in time. The authors showed that for $\b$ sufficiently small (namely for $\b<\b_0$ with $\b_0<\b_{L^2}$) and for all $u_0\in\mathcal C_b(\mathbb R^d)$, 
\begin{equation} \label{eq:CV_toEW_intro}
\e^{1-\frac d 2}\int_{\rd} f(x) \left(u_\e(t,x) - \bar{u}(t,x)\right)  \dd x \cvlaw \int_{\rd} f(x)\, \kU_1(t,x) \dd x\;,
\end{equation}
with $\mathscr U_1$ solving
the  stochastic heat equation with additive noise (also called the Edwards-Wilkinson (EW) equation):
\begin{equation}\label{eq:EW_GRZ}
\partial_t \kU_1(t,x)= \frac 12 \Delta \kU_1(t,x)+ \bar{u}(t,x) {\gamma}(\beta)  \xi(t,x),\qquad \kU_1(0,x)=0,
\end{equation}
where ${\gamma}(\beta)$ is defined in \eqref{def-sigma-beta}. To prove this convergence, the authors relied on Malliavin calculus techniques.

Our second main result (c.f.\ Section \ref{sec:ResultsSHE}) is a proof that the Edwards-Wilkinson regime described in \eqref{eq:CV_toEW_intro} can be extended to the region $\b\in (0,\b_{L^2})$ with joint in time convergence. Note that the parameter ${\gamma}(\beta)$ is finite in $(0,\b_{L^2})$ and blows up precisely at $\b_{L^2}$, indicating that \eqref{eq:CV_toEW_intro} should indeed hold up to that point (and not above), but the proof in \cite{GRZ18} does not carry up until there. In order to reach $\b_{L^2}$, we follow here a distinct strategy of proof that builds on stochastic calculus and the martingale central limit theorem, in the same spirit as in \cite{CN19,CCM19b,CL17,CN95}. Like in \cite{CN19}, the additional ingredients that we use to control the  quadratic variations and the cross-brackets of the martingales are the local limit theorem for polymers (Theorem \ref{th:errorTermLLT} in the appendix, see also \cite{CCM20,V06,Si95}) and a homogenization argument, c.f.\ Section \ref{sec:heuristicsEW} for a quick description of the proof. We believe that in addition to leading us to $\b_{L^2}$, this approach to the problem has the advantage of being quite straightforward and simple and that it should be applicable in different contexts (for example discrete structures). Furthermore, the method gives directly multi-dimensional in time convergence in \eqref{eq:CV_toEW_intro}, which has not been considered in \cite{GRZ18}. 

Let us turn to results on the KPZ equations. In \cite{DGRZ20}, the authors have considered the \emph{flat initial condition} $h_0\equiv 0$ and proved that there is some $\b_0<\b_{L^2}$ such that for all $\b<\b_0$,
\begin{equation} \label{eq:EWKPZintro}
\e^{1-\frac d 2}\int_{\rd} f(x) \left(h_\e(t,x)-\IE[h_\e(t,x)]\right)  \dd x \cvlaw \int_{\rd} f(x)\, \kU_{1}(t,x) \dd x,
\end{equation}
with $\bar u(t,x) = 1$ in the definition of $\kU_{1}(t,x)$. See also \cite{MU17} for another approach to the problem. Both of these proofs are restricted to small $\b$ and some additional arguments have to be brought to go up to $\b_{L^2}$. In particular, higher moments for the partition function (which do not hold when approaching $\b_{L^2}$) are technically required in the proof of \cite{DGRZ20}, while the renormalization approach in \cite{MU17} is restricted by essence to a small $\b$ region.

We prove in this paper (c.f.\ Section \ref{sec:resultsOnKPZ}) that \eqref{eq:EWKPZintro} extends to $(0,\b_{L^2})$ with joint in time convergence and general continuous initial conditons. Note that in \cite{MU17}, the authors consider  initial conditions at a macroscopic level (i.e.\ $u_\e(0,x) = u_0(\e^{-1}x)$), while ours are at a microscopic level (i.e.\ $u_\e(0,x) = u_0(x)$) and contrary to our result (see Theorem \ref{th:EWlimitKPZ}), the dependence in the initial condition disappears in the limit in their case. Our strategy of proof is based on the same martingale method as for SHE and relies on It\^o's formula. The proof requires significant additional work compared to the SHE case. One of the difficulties to overcome is to deal with ratios of partition functions and we do so by relying on a concentration inequality for $\sZ_T$ proven in \cite{CCM20} (see also \cite{CH02}) and precise estimates in the local limit theorem for polymers that we obtain in Theorem \ref{th:errorTermLLT}. Another issue to tackle is to control the It\^o correction and show that it satisfies some homogenization property by studying some 4-th moments.

As we just mentioned, we needed some precise estimates (in particular a speed  of convergence) on the local limit theorem for polymers which were not accessible via the pre-existing statements \cite{CCM20,V06,Si95}. Our Theorem \ref{th:errorTermLLT} improves the previous ones on the following points: (i) it is not restricted to the diffusive scale between the starting and ending points, in particular it shows that the error still vanishes up to a linear scale; (ii) it gives a speed of convergence with exponents that do not depend on $\b$. We think that these improvements are interesting on their own and may be useful for other purposes.

\subsection{Results on convergence to stationary Edwards-Wikinson and GFF}
As additional results, we show (stationary) Edwards-Wilkinson asymptotics for the \emph{stationary solutions} of the regularized SHE and KPZ (cf.\ Theorem \ref{th:CV_stationarySHE} and Theorem \ref{th:CV_stationaryKPZ}). These considerations have a counterpart in the polymer framework, namely that the diffusively rescaled and normalized (infinite-horizon) partition function $\sZ_\infty(x)$ and log-partition function $\log \sZ_\infty(x)$ converge to a Gaussian Free Field (cf.\ Corollary \ref{th:PolymerCV} and Corollary \ref{th:LogPolymerCV}), which we believe are an interesting results on their own.

 The notion of stationary solutions of the regularized equations can be defined as follows. If $\xi\to \mathcal V(\xi)$ denotes a measurable function such that $\mathcal V(\xi) = \sZ_\infty(0,\xi)$, then (similarly to \eqref{eq:uZ})
\begin{equation} \label{eq:defustat}
\quad u^{stat}_\e(t,x) = \mathcal V (\xi^{\ssup{\e,t,x}}),
\end{equation}
defines a stationary solution of the SPDE in \eqref{eq:SHEintro}, where by stationary we mean that the distribution of $u^{stat}_\e(t,x)$ is invariant by time-space translation (see \cite{CCM20,CCM19b,DGRZ18b}). 

The function $u^{stat}_\e$ plays a central role in the \emph{pointwise} asymptotic approximation of $u_{\e}$ (in contrast to \eqref{eq:LLN} which is averaged), at first order, for diffent types of initial conditions $u_0$ (continous and bounded, Dirac). For example, for any continous and bounded $u_0$, it holds that for all $\b<\b_{L^2}$, as $\e \to 0$,
\[\forall t>0,\,x\in\mathbb R^d, \quad u_\e(t,x) - \bar{u}(t,x)u^{stat}_\e(t,x) \cvLtwo 0,\]
see \cite{DGRZ18b,CCM20} for further discussion on this matter. 

In the KPZ case, a stationary solution to the SPDE in \eqref{eq:KPZe} is given by the Hopf-Cole transform
\begin{equation} \label{eq:defhstat}
 h^{stat}_\e = \log u^{stat}_\e,
\end{equation}
and as for SHE, $h^{stat}_\e$ is a central object in the \emph{pointwise} approximation (of first order) of $h_\e(t,x)$ as $\e\to 0$ for a variety of initial conditions, c.f.\ \cite{CCM20,DGRZ18b}. In particular, when $\b<\b_c$ and $h_0=0$,
\begin{equation} \label{eq:pointwiseApproxKPZ}
\forall t>0,\, x\in\mathbb R^d, \quad h_\e(t,x) - h^{stat}_\e(t,x) \cvIP 0.
\end{equation}
The (pointwise) fluctuations in the convergence \eqref{eq:pointwiseApproxKPZ} have been studied in \cite{CCM19b} deep in the $L^2$-region, and the limit of the correctly rescaled fluctuations involves a convolution of the heat kernel with a Gaussian free field. By a heuristic combination of this result and of \eqref{eq:EWKPZintro}, it was conjectured therein that convergence \eqref{eq:statKPZresult} below should hold in the $L^2$-region. Our method enables us to prove this property (Theorem \ref{th:CV_stationaryKPZ}).

\subsection{Additonal comments}
Similarly to the discussion in \cite{CN19}, it is not believed that \eqref{eq:CV_toEW_intro} nor \eqref{eq:EWKPZintro} extend to the region $[\b_{L^2},\b_c)$ since $\gamma(\b)$ blows up at $\b_{L^2}$. In particular, different scalings and limits for the fluctuations are expected above $\b_{L^2}$.

Let us end this introduction by mentioning some related works. Analogous Edwards-Wilkinson type of fluctuations for the partition functions of the discrete polymer (or equivalently discrete SHE and KPZ equation) when $d\geq 3$ have been obtained, in the full $L^2$-region, by the authors of \cite{LZ20} simultaneously and independently to our work. Their approach relies on chaos expansion and the 4-th moment theorem as used in \cite{CSZ17b} and is quite different from ours. This powerful technique allows them in particular to consider moments slightly above $2$ for the partition function. They moreover introduce a thoughtful martingale argument in order to overcome the technical difficulties that arise when dealing with the log-partition function (i.e.\ KPZ) in the full $L^2$-region.

In dimension $d=2$, similar questions on the regularized SHE and KPZ equation have been studied in a series of papers \cite{CSZ17b,CD18,G18,CSZ18a,CSZ18b}. Edwards-Wilkinson type of results for the KPZ equation have been proved to hold in a restricted part of the $L^2$-region in \cite{CD18,G18}, and extended to the full $L^2$-region in \cite{CSZ18b}. Interesting phenomena appear at the corresponding critical point $\b_{L^2}$ (which equals $\b_c$ in this case) \cite{GQT19,CSZ18a,CSZ19}. As discussed above, the situation is quite different in dimension $d=1$ and the corresponding $u_\e$ and $h_\e$ actually converge to random continuous processes that coincide with the notion of solutions of SHE and KPZ. Nevertheless, the link with directed polymers still holds and there is moreover a notion of polymer on white noise (not mollified) environment, see \cite{AKQ14,AKQ14b,CSZ17a} for more details.

\section{Statement of the results}\label{sec:statResults}
\subsection{On the law of large number}
\label{sec:LLNstatements}
\begin{theorem}[Case of regular initial condition] \label{th:LLNreg} Suppose that $u_0$ is continuous and bounded. Then, if weak disorder holds, we have that for any test function $f\in \mathcal{C}_c^\infty$,
\begin{equation} \label{eq:LLNreg}
\int_{\mathbb R ^d} u_\e(t,x) f(x) \dd x \cvIP \int_{\mathbb R ^d} \bar{u}(t,x) f(x) \dd x,
\end{equation}
as $\e\to 0$, where $\bar u$ is as in \eqref{eq:Def_ubar}.
\end{theorem}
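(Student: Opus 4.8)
The plan is to establish the theorem first for the flat initial condition $u_0\equiv1$, relying only on the uniform integrability of the martingale $(\sZ_T)_{T\ge0}$ (which holds precisely in the weak disorder region), and then to deduce the general case via the central limit theorem for the polymer endpoint. Fix $t>0$ (the case $t=0$ is trivial) and set $v_\e(x):=\sZ_{t/\e^2}(\xi^{\ssup{\e,t,x}})$, which by \eqref{eq:uZ} equals $u_\e(t,x)$ when $u_0\equiv1$; then $\IE[v_\e(x)]=1$, so $\IE\int f\,v_\e=\int f=\int f\,\bar u(t,\cdot)$, and it suffices for the flat case to show that $\int f\,v_\e$ converges to its mean in probability. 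The first step is to replace $\sZ_{t/\e^2}$ by the finite-horizon partition function $\sZ_N$ for large fixed $N$: since weak disorder gives $\sZ_T\to\sZ_\infty$ in $L^1$, one has $\IE|\sZ_{t/\e^2}-\sZ_N|\le\IE|\sZ_{t/\e^2}-\sZ_\infty|+\IE|\sZ_\infty-\sZ_N|$, the first term tending to $0$ as $\e\to0$ and the second as $N\to\infty$. As $v_\e(x)-\sZ_N(\xi^{\ssup{\e,t,x}})$ has the law of $\sZ_{t/\e^2}-\sZ_N$ for every $x$ (because $\xi^{\ssup{\e,t,x}}\eqlaw\xi$), this yields $\IE\bigl|\int f(x)\,(v_\e(x)-\sZ_N(\xi^{\ssup{\e,t,x}}))\,\dd x\bigr|\le\|f\|_{L^1}\,\IE|\sZ_{t/\e^2}-\sZ_N|$, which is negligible in the iterated limit $\e\to0$, $N\to\infty$. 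It then remains to show, for each fixed $N$, that $\int f(x)\,\sZ_N(\xi^{\ssup{\e,t,x}})\,\dd x\to\int f$ in $\IP$ as $\e\to0$; since the mean is already $\int f$, only the variance must vanish.

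The decisive observation is that, although $\sZ_\infty\notin L^2$ when $\b\ge\b_{L^2}$, the finite-horizon $\sZ_N$ lies in $L^2$ for \emph{every} $\b$, with $\IE[\sZ_N^2]=\DE_{0,0}\bigl[\exp(\b^2\int_0^N V(B^1_s-B^2_s)\,\dd s)\bigr]\le e^{\b^2N\|V\|_\8}$, where $\DE_{0,0}$ denotes expectation over two independent Brownian motions $B^1,B^2$ from the origin; hence the second-moment computation below is legitimate throughout the weak disorder region. By \eqref{eq:dBetx}, for fixed $t$ the field $\xi^{\ssup{\e,t,y}}$ is the spatial translate of $\xi^{\ssup{\e,t,x}}$ by $(y-x)/\e$, both having the law of $\xi$, so expanding the square gives
\[
\IE\bigl[\sZ_N(\xi^{\ssup{\e,t,x}})\,\sZ_N(\xi^{\ssup{\e,t,y}})\bigr]=\DE_{0,0}\Bigl[\exp\Bigl(\b^2\!\int_0^N V\bigl(B^1_s-B^2_s+\tfrac{y-x}{\e}\bigr)\dd s\Bigr)\Bigr].
\]
For fixed $x\ne y$ the shift $|y-x|/\e\to\8$ eventually pushes the argument off the compact support of $V$, so by dominated convergence (the integrand being bounded by $e^{\b^2N\|V\|_\8}$) this expectation tends to $1$; thus $\mathrm{Cov}(\sZ_N(\xi^{\ssup{\e,t,x}}),\sZ_N(\xi^{\ssup{\e,t,y}}))\to0$ for every $x\ne y$, while staying bounded by $e^{\b^2N\|V\|_\8}$ uniformly in $\e,x,y$. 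As $f$ is bounded with compact support, dominated convergence applied to $\mathrm{Var}\bigl(\int f\,\sZ_N(\xi^{\ssup{\e,t,\cdot}})\bigr)=\iint f(x)f(y)\,\mathrm{Cov}(\cdots)\,\dd x\,\dd y$ shows the variance vanishes, completing the flat case (which recovers and extends \cite{MSZ16}).

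For general bounded continuous $u_0$, the Feynman--Kac formula (the analogue of \eqref{eq:uZ} for non-flat data) writes $u_\e(t,x)=v_\e(x)\cdot\langle u_0(\mathrm{endpoint})\rangle^{\ssup{\e,t,x}}$, where $\langle\,\cdot\,\rangle^{\ssup{\e,t,x}}$ is the normalized polymer (Gibbs) measure over time $t/\e^2$ and the endpoint is rescaled so as to live on the macroscopic scale. By the central limit theorem for the polymer path in the full weak disorder region (\cite{CY06}; see also Appendix \ref{app:LLNDirac}), this rescaled endpoint converges in law, in $\IP$-probability, to $\mathcal N(x,t\,\mathrm{Id})$; since $u_0$ is bounded and continuous, this gives $\langle u_0(\mathrm{endpoint})\rangle^{\ssup{\e,t,x}}\to(\rho_t\star u_0)(x)=\bar u(t,x)$ in $\IP$-probability, with $\bar u$ as in \eqref{eq:Def_ubar}. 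Then $v_\e(x)\,\bigl|\langle u_0(\mathrm{endpoint})\rangle^{\ssup{\e,t,x}}-\bar u(t,x)\bigr|\le2\|u_0\|_\8\,v_\e(x)$ is a uniformly integrable family (as $v_\e(x)\eqlaw\sZ_{t/\e^2}$) tending to $0$ in probability, hence in $L^1$; integrating against $|f|$ and using Fubini and dominated convergence over the compact support of $f$ gives $\int f\,u_\e(t,\cdot)-\int f(x)\,\bar u(t,x)\,v_\e(x)\,\dd x\to0$ in $L^1$. Applying the flat case to the test function $g:=f\,\bar u(t,\cdot)\in\mathcal C_c(\rd)$ (note $\bar u(t,\cdot)=\rho_t\star u_0$ is smooth and bounded) yields $\int g\,v_\e\to\int g=\int f\,\bar u(t,\cdot)$ in $\IP$, and combining the two limits concludes.

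The only real obstacle I foresee is the passage beyond the $L^2$-threshold $\b_{L^2}$, where $\IE[u_\e(t,x)^2]$ diverges and the direct covariance estimate of \cite{MSZ16} breaks down. The remedy, carried out in the first two paragraphs, is to substitute the infinite-horizon partition function by the finite-horizon $\sZ_N$ --- always square integrable and, after diffusive rescaling, essentially localized in a space-time region that shrinks to a point --- and to absorb the substitution error entirely into the $L^1$ convergence $\sZ_T\to\sZ_\infty$, i.e.\ into uniform integrability; this is the ``homogenization'' input in the spirit of \cite{CN19}. The other nontrivial ingredient, needed only to identify the limiting profile $\bar u$ for non-flat data, is the quenched polymer CLT of \cite{CY06}, which I would treat as a black box.
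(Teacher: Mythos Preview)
Your proof is correct and follows the same two-pillar strategy as the paper: (i) use the polymer endpoint CLT of \cite{CY06} to reduce the general $u_0$ to the flat case, and (ii) handle the flat case by replacing the diverging partition function $\sZ_{t/\e^2}$ with a short-horizon surrogate that is square integrable for all $\b$, then exploit asymptotic decorrelation under the diffusive rescaling.

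The only substantive difference lies in the choice of surrogate in step~(ii). The paper substitutes $\sZ_{Tt}(\sqrt T x)$ by $\widetilde\sZ_T(\sqrt T x)=\overline\sZ_{T^{1/4}}(\sqrt T x)\wedge T^{d/4}$, i.e.\ a partition function with horizon growing like $T^{1/4}$, Brownian path constrained to a box, and then truncated; this produces exact independence of $\widetilde\sZ_T(\sqrt T x)$ and $\widetilde\sZ_T(\sqrt T y)$ for $|x-y|\gtrsim T^{-1/4}$, and the second-moment control comes from Lemma~\ref{CN Lemma 3.3} (\cite[Lemma 3.3]{CN19}) applied to the uniformly integrable family $(\sZ_s)_s$. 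You instead freeze the horizon at a fixed $N$, use the crude bound $\IE[\sZ_N^2]\le e^{\b^2 N\|V\|_\infty}$ valid for every $\b$, and compensate by taking an iterated limit $\e\to0$ then $N\to\infty$, with the substitution error absorbed entirely into the $L^1$ convergence $\sZ_T\to\sZ_\infty$. Your version is more elementary---no box constraint, no truncation, no auxiliary lemma---at the price of an iterated rather than single limit; the paper's scheme, on the other hand, is the one that generalizes to the quantitative estimates needed later in Section~\ref{sec:EWproof}.
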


\begin{cor}[The stationary case]\label{th:LLNstat} Recall $u^{stat}_\e$ from \eqref{eq:defustat}. If weak disorder holds, then for $f\in \mathcal{C}_c^\infty$, as $\e\to 0$,
\begin{equation} \label{eq:LLNstat}
\int_{\mathbb R ^d} u^{stat}_\e(t,x) f(x) \dd x \cvIP \int_{\mathbb R ^d} f(x) \dd x.
\end{equation}
\end{cor}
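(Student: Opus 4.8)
The plan is to deduce this from Theorem \ref{th:LLNreg} applied with flat initial data, by showing that replacing the stationary solution $u^{stat}_\e$ by the flat-initial-condition solution $u_\e$ only produces an error that vanishes in $L^1(\IP)$. Concretely, I would take $u_0\equiv 1$ in Theorem \ref{th:LLNreg}, so that $\bar u\equiv 1$ and $\int_{\rd}u_\e(t,x)f(x)\dd x\cvIP\int_{\rd}f(x)\dd x$; it then suffices to prove that
\[
\int_{\rd}\big(u_\e(t,x)-u^{stat}_\e(t,x)\big)f(x)\dd x\cvIP 0,\qquad \e\to 0.
\]

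The key point is the representation of both objects in terms of the same rescaled noise: by \eqref{eq:uZ} and \eqref{eq:defustat}, for the flat initial condition $u_\e(t,x)=\sZ_{t/\e^2}(\xi^{\ssup{\e,t,x}})$ while $u^{stat}_\e(t,x)=\sZ_\infty(\xi^{\ssup{\e,t,x}})$, so that $u_\e(t,x)-u^{stat}_\e(t,x)=\sZ_{t/\e^2}(\xi^{\ssup{\e,t,x}})-\sZ_\infty(\xi^{\ssup{\e,t,x}})$. Since $\xi^{\ssup{\e,t,x}}$ has the same law as $\xi$ as a process, this difference has the same distribution as $\sZ_{t/\e^2}-\sZ_\infty$, uniformly in $x$. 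Hence, by Jensen's inequality and Fubini,
\[
\IE\Big|\int_{\rd}\big(u_\e(t,x)-u^{stat}_\e(t,x)\big)f(x)\dd x\Big|\le \|f\|_{L^1(\rd)}\,\IE\big|\sZ_{t/\e^2}-\sZ_\infty\big|.
\]
In the weak disorder regime $(\sZ_T)_{T\ge 0}$ is uniformly integrable, so $\sZ_T\to\sZ_\infty$ in $L^1(\IP)$; as $t/\e^2\to\infty$ when $\e\to0$, the right-hand side tends to $0$. Thus the displayed difference goes to $0$ in $L^1(\IP)$, a fortiori in probability, and combined with Theorem \ref{th:LLNreg} this gives \eqref{eq:LLNstat}.

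There is no serious obstacle here; the only point requiring a little care is that the comparison must be done for a fixed realization of the rescaled noise $\xi^{\ssup{\e,t,x}}$ (not by comparing $u_\e$ and $u^{stat}_\e$ through their marginal laws separately), so that the error random variable is genuinely distributed as $\sZ_{t/\e^2}-\sZ_\infty$ and the $L^1$-convergence of the partition function can be invoked. As an alternative self-contained route that bypasses Theorem \ref{th:LLNreg} entirely, one can observe from \eqref{eq:dBetx}–\eqref{eq:defustat} that the field $x\mapsto u^{stat}_\e(t,x)$ has the same law as $x\mapsto\sZ_\infty(x/\e)$, a spatial rescaling of the stationary field $\sZ_\infty(\cdot)$, which is ergodic under $\R^d$-translations (mixing of the white noise) with $\IE[\sZ_\infty]=1$ (again by uniform integrability); then $\int_{\rd}\sZ_\infty(x/\e)f(x)\dd x\to\int_{\rd}f(x)\dd x$ in $L^1(\IP)$ follows from the multiparameter (Wiener) ergodic theorem after reducing $f$ to indicators of cubes and using the $\e$-uniform bound $\IE|\int_{\rd}\sZ_\infty(x/\e)g(x)\dd x|\le\IE[\sZ_\infty]\,\|g\|_{L^1(\rd)}$. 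Note that this averaging is carried out in $L^1$ rather than $L^2$, which is what allows the statement to hold throughout the full weak disorder region even where $\IE[\sZ_\infty^2]=\infty$.
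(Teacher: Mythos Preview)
Your proof is correct and follows essentially the same approach as the paper: both reduce to Theorem~\ref{th:LLNreg} with $u_0\equiv 1$ and control the difference $u^{stat}_\e-u_\e$ via the common representation through the shifted noise $\xi^{\ssup{\e,t,x}}$, together with the $L^1$-convergence $\sZ_T\to\sZ_\infty$ guaranteed by uniform integrability. The paper phrases the comparison as a joint-in-$x$ identity in law for the whole integral, whereas you bound the $L^1$-norm pointwise in $x$ and integrate; these are equivalent here.
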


\begin{theorem}[Case of Dirac initial condition] \label{LNDirac}\label{th:LLNDirac} Suppose that $u_0(x) = \delta_{x_0}$ and that weak disorder holds. Then, for all $f\in \mathcal{C}_c^\infty$,
\begin{equation} \label{eq:LLNdirac}
\int_{\mathbb R ^d} u_\e(t,x) f(x) \dd x \cvlaw \sZ_\infty \int_{\mathbb R ^d} \bar{u}(t,x) f(x) \dd x,
\end{equation}
as $\e\to 0$, where $\bar u$ is as in \eqref{eq:Def_ubar}, i.e.\ $\bar{u}(t,x) = \rho_t(x-x_0)$ with $\rho_t(x)$ is the standard heat kernel.
\end{theorem}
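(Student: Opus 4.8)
The plan is to reduce the statement to a convergence in law that decouples into a deterministic heat-kernel factor times the single random variable $\sZ_\infty$. The starting point is the Feynman--Kac representation for the Dirac initial condition: writing $u_0 = \delta_{x_0}$, one has (cf.\ the cited formula \eqref{eq:diracIC}) that $u_\e(t,x)$ equals $\rho_t(x-x_0)$ times a point-to-point normalized partition function of the rescaled polymer, where the Brownian bridge from $x_0/\e$ to $x/\e$ over time $t/\e^2$ replaces the free Brownian motion. Concretely, $\int_{\rd} u_\e(t,x) f(x)\,\dd x$ can be rewritten, after the change of variables and using the reversed/shifted white noise $\xi^{\ssup{\e,t}}$, as an average over the endpoint $x$ of a bridge partition function weighted by $\rho_t(x-x_0) f(x)$. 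So the quantity of interest is essentially $\DE_{x_0/\e}\!\big[\,g(\e B_{t/\e^2})\, e^{\text{(polymer weight up to }t/\e^2)}\big]$ for a suitable bounded continuous $g$ built from $f$ and the heat kernel, and one wants to show this converges in law to $\big(\int \bar u(t,x) f(x)\,\dd x\big)\,\sZ_\infty$.

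First I would establish the decoupling heuristically: on the one hand the polymer endpoint $\e B_{t/\e^2}$ is diffusive and converges to a Gaussian, decoupling from the environment seen near the starting point; on the other hand the partition function itself converges, by uniform integrability in the weak disorder regime, to $\sZ_\infty$. To make this rigorous I would use the central limit theorem for polymer paths of \cite{CY06} (valid in the full weak disorder region) together with the martingale/homogenization argument already invoked for Theorem \ref{th:LLNreg}: fix a large truncation time $S$, write $\sZ_{t/\e^2}$ as the product of the environment up to time $S$ (a fixed random variable $\sZ_S$ built from $\xi^{\ssup{\e,t}}$ restricted to $[0,S]$) and a conditional partition function from time $S$ onward; the latter, under the polymer measure, sees a nearly-decoupled environment and by the CLT the endpoint distribution is asymptotically the one driven by $\bar u$, so that the time-$[S,t/\e^2]$ contribution averages out to $\DE^{\text{polymer}}[g(\e B_{t/\e^2})] \to \int \bar u(t,x) f(x)\,\dd x$ (a deterministic constant) while the prefactor $\sZ_S$ converges a.s.\ to $\sZ_\infty$ as $S\to\infty$. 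A standard diagonal/$\e\to 0$ then $S\to\infty$ argument, controlling the error by uniform integrability (so that $L^1$ differences are uniformly small), yields the claim. Note the limit genuinely depends on the environment through $\sZ_\infty$ — this is why the convergence is only in law, not in probability, in contrast with the flat case.

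The main obstacle I anticipate is the interchange of limits: one must show that the ``time-$[S,t/\e^2]$'' conditional average of $g(\e B_{t/\e^2})$ concentrates around its $\bar u$-value uniformly enough in $\e$, and that the remainder — coming from the dependence between the early environment $\sZ_S$ and the late path — is negligible. This is exactly where the weak-disorder uniform integrability of $(\sZ_T)_{T\ge0}$ and the polymer CLT must be combined carefully; the cleanest route is probably to prove convergence of the joint law of $(\sZ_S^{\ssup\e}, \text{endpoint})$ and then pass $S\to\infty$. A secondary technical point is justifying the Feynman--Kac/bridge representation and the change of variables for the Dirac datum, which should follow from the flat-case identity \eqref{eq:uZ} by a standard approximation of $\delta_{x_0}$ by mollifiers, or be quoted directly from \cite{MSZ16}. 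I would also record that $\bar u(t,x)=\rho_t(x-x_0)$ so that the deterministic factor is literally $\int \rho_t(x-x_0) f(x)\,\dd x$, matching the statement.
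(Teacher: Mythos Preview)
Your approach is correct in spirit and uses the same ingredients as the paper, but the paper's route is considerably more direct. After the Feynman--Kac representation and the change of variable $X=\sqrt T(x-x_0)$ (with $T=\e^{-2}$), the paper observes in one line that
\[
\int_{\rd} u_\e(t,x) f(x)\,\dd x \;\eqlaw\; \sZ_{Tt}\,\mathtt E_0^{\b,Tt}\!\left[f\!\left(T^{-1/2}B_{Tt}+x_0\right)\right],
\]
i.e.\ the spatial integral is exactly the partition function times the polymer expectation of a bounded continuous function of the rescaled endpoint. Then the continuous analogue of the Comets--Yoshida CLT (proved in the appendix) gives $L^1$-convergence of this quantity to $\sZ_\infty\,\DE_{x_0}[f(B_t)]$, which is the claim.

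Your time-$S$ truncation and diagonal $\e\to0$, $S\to\infty$ scheme is not wrong, but it amounts to reproving the polymer CLT rather than invoking it: the decomposition into ``early environment $\sZ_S$'' times a conditional partition function, and the control of the remainder via uniform integrability, is precisely what goes on inside the proof of that CLT (and is what the paper carries out in its appendix following \cite{CY06}). So the cleaner version of your plan is: make the one-line identification above, then quote the CLT as a black box. A minor point: the convergence of the right-hand side is actually in $L^1$, not merely in law; the ``only in law'' aspect of the theorem comes from the initial $\eqlaw$ step where the $\e$-dependent noise $\xi^{\ssup{\e,t,x_0}}$ is replaced by a fixed $\xi$.
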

\begin{rem} In the Dirac case, $u_\e(t,x)$ can be represented by a product of $\rho_t(x)$ and a polymer partition function that starts at $(0,\e^{-1}x_0)$ and ends at $(\e^{-2}t,\e^{-1}x)$, see  \eqref{eq:diracIC}. When $\b<\b_{L^2}$, the local limit theorem for polymers (Theorem \ref{th:errorTermLLT})  states that this partition function factorizes in the product $\sZ_\ell(\e^{-1} x_0) \overset{\leftarrow}{\sZ}_{\e^{-2}t,\ell}(\e^{-1} x)$ with $\ell=o(\e^{-2})$ and $\ell\to\infty$ and where $ \overset{\leftarrow}{\sZ}_{T,\ell}(x)$ the time reversed partition function defined in \eqref{eq:timeRevPartitionf}. %Integrating against $f$ in \eqref{eq:LLNdirac} averages the dependence around $\e^{-1} x$, while the $\sZ_\infty$ appearing in the limit of \eqref{eq:LLNdirac} keeps track of the first dependence, namely
When $\e$ is small, $\overset{\leftarrow}{\sZ}_{\e^{-2}t,\ell}(\e^{-1} x)$ and $\overset{\leftarrow}{\sZ}_{\e^{-2}t,\ell}(\e^{-1} x')$ are morally independent for $x\neq x'$, hence one expects that
\begin{align*}
\int_{\mathbb R ^d} u_\e(t,x) f(x) \dd x &\approx \int_{\mathbb R ^d} \rho_t(x) \sZ_\ell(\e^{-1} x_0) \overset{\leftarrow}{\sZ}_{\e^{-2}t,\ell}(\e^{-1} x) f(x) \dd x\\
& \approx \sZ_\infty(\e^{-1} x_0)  \int_{\mathbb R ^d} \rho_t(x) \IE \overset{\leftarrow}{\sZ}_{\e^{-2}t,\ell}(\e^{-1} x) f(x) \dd x\\
& \eqlaw \sZ_\infty \int_{\mathbb R ^d} \rho_t(x)  f(x) \dd x.
\end{align*}
Unfortunately, there is no proof of a local limit theorem for $\b\geq \b_{L^2}$ and different arguments have to be brought in the higher $\b$-part of the weak disorder region.
\end{rem}

\subsection{On the fluctuations for SHE} \label{sec:ResultsSHE}
In the following, $\mathcal C^\infty_c$ denotes the set of infinitely differentiable, compactly supported functions on $\mathbb R^d$.
\begin{theorem}[Case of regular initial condition] \label{th:EWlimit} Suppose $u_\e(0,\cdot)= u_0$ where $u_0$ is a continuous and bounded function. For all $\b<\b_{L^2}$, $t_1,\dots,t_n\geq 0$ and $f_1,\dots,f_m\in\mathcal C^\infty_c$, the following convergence holds jointly for $i\leq n$ and $j\leq m$ as $\e\to 0$,
\begin{equation} \label{eq:EWGICSHEResult}
\e^{1-\frac d 2}\int_{\rd} f_j(x) \left(u_\e(t_i,x) - \bar{u}(t_i,x)\right)  \dd x \cvlaw \int_{\rd} f_j(x)\, \kU_1(t_i,x) \dd x\;,
\end{equation}
with $\kU_1$ defined in \eqref{eq:EW_GRZ} and $\bar u$ in \eqref{eq:Def_ubar}.
\end{theorem}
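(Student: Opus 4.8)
The plan is to reduce the convergence to a statement about a well-chosen family of continuous-time martingales and then apply a martingale central limit theorem, following the philosophy of \cite{CN19,CL17}. Fix $f\in\mathcal C^\infty_c$ and $t>0$. Using the Feynman–Kac representation \eqref{eq:uZ} (and its analogue for general $u_0$, cf.\ \eqref{eq:FKgeneralIC}), we can write $u_\e(t,x)$ in terms of the normalized partition function $\sZ_{t/\e^2}$ in the rescaled environment $\xi^{\ssup{\e,t,x}}$. The key observation is that $s\mapsto \sZ_s(x,\xi)$ is itself a martingale in $s$ with respect to the filtration generated by the white noise, with an explicit stochastic differential $\rmd\sZ_s(x) = \beta \sZ_s(x)\,(\text{something})\,\xi(\rmd s,\rmd y)$; more precisely the martingale bracket involves the polymer endpoint density. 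After diffusive rescaling, the quantity
\[
M^\e_s := \e^{1-\frac d2}\int_{\rd} f(x)\bigl(u^{(s)}_\e(t,x)-\bar u(t,x)\bigr)\dd x,
\]
where $u^{(s)}_\e$ is the solution run up to the intermediate (rescaled) time $s\in[0,t/\e^2]$ suitably interpolated, becomes a martingale in $s$ vanishing at $s=0$; its terminal value at $s=t/\e^2$ is the left-hand side of \eqref{eq:EWGICSHEResult}. I would set this up carefully so that It\^o's formula produces $M^\e$ as a stochastic integral against $\xi$.

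The core of the proof is then to compute the predictable quadratic variation $\langle M^\e\rangle_{t/\e^2}$ and show it converges in probability to the (deterministic) variance of $\int f(x)\,\kU_1(t,x)\dd x$, which from \eqref{eq:EW_GRZ} equals $\gamma(\beta)^2\int_0^t\!\!\int_{\rd}\bigl(\int \rho_{t-s}(x-z)f(x)\bar u(s,z)\dd x\bigr)^2\dd z\,\rmd s$. The bracket, after rescaling, is an integral over the rescaled time-space of a product of two partition functions started from nearby points times heat kernels; one must show that (a) the diagonal contribution, where the two polymer endpoints are at distance $o(\e^{-1})$, is the only surviving term, and (b) on that diagonal the product of the two partition functions can be replaced by $\sZ_\infty^{(1)}\cdot$(constant) via a homogenization/ergodic argument, exactly as in \cite{CN19}. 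This is where the local limit theorem for polymers (Theorem \ref{th:errorTermLLT}) enters: it lets one factorize $\sZ$ into an "early" piece (which, being far in the past, is close to $\sZ_\infty$ and concentrates) and a "late" piece whose expectation gives the heat kernel asymptotics — and crucially the improved version here holds up to $\beta<\beta_{L^2}$ with a quantitative rate, so one stays inside the $L^2$-region throughout and the second moments needed to control error terms remain finite. For the joint-in-time and joint-in-$f$ statement, one runs the same computation for cross-brackets $\langle M^{\e,i,j},M^{\e,i',j'}\rangle$ and identifies the limiting covariance with that of the Gaussian vector $\bigl(\int f_j\,\kU_1(t_i,\cdot)\bigr)_{i,j}$; the martingale CLT (in the vector form, e.g.\ via the Dambis–Dubins–Schwarz / Rebolledo criterion) then yields the joint convergence once one also checks a conditional Lindeberg/jump condition — here automatic since $M^\e$ is continuous — and that $\sup_\e\IE[(M^\e_{t/\e^2})^2]<\infty$, which is again the $L^2$-bound \eqref{eq:defBeta2}.

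I expect the main obstacle to be step (b), the homogenization of the quadratic variation: one needs to show that along the diagonal the local environment seen by the two coupled polymers decorrelates from the bulk martingale and averages out to its expectation, uniformly as $\e\to0$ and without access to moments of order $>2$ (since $\beta$ may be arbitrarily close to $\beta_{L^2}$). The strategy is to split time as $[0,t/\e^2]=[0,\ell]\cup[\ell,t/\e^2]$ with $1\ll\ell\ll\e^{-2}$, use uniform integrability of $(\sZ_T)$ (weak disorder) to replace the initial segment by $\sZ_\infty$ up to an $L^1$-small error, and use the local limit theorem with its $\beta$-uniform rate to handle the endpoint segment; controlling the coupling of the two polymers requires a careful second-moment estimate on the difference of the two partition functions, which is exactly where the refined Theorem \ref{th:errorTermLLT} (valid beyond the diffusive scale, with $\beta$-independent exponents) is needed. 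A secondary technical point is the passage from the discrete-in-time heuristics to the genuinely continuous white-noise setting, where the stochastic-calculus bookkeeping (It\^o corrections, the precise form of $\rmd\sZ_s$) must be done with care; this is routine but lengthy.
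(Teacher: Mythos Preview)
Your proposal is correct and follows essentially the same route as the paper: reduce to a continuous martingale via the Feynman--Kac representation, use the local limit theorem (Theorem \ref{th:errorTermLLT}) to factorize the point-to-point partition function appearing in the quadratic variation, apply a homogenization/truncation argument to show the bracket converges in $L^1$ to the deterministic variance, and conclude by the functional martingale CLT of \cite{JS87} (with cross-brackets for the joint statement). The paper's implementation differs only in cosmetic details, such as an explicit small-time cutoff $\tau_0>0$ to avoid the heat-kernel singularity near $\sigma=0$ and the use of uniform integrability of the \emph{product} $\sZ_t(x)\sZ_t(y)$ (Lemma \ref{lem:UIZZtxy}) rather than of $\sZ_T$ alone in the homogenization step.
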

 
\begin{theorem}[The stationary case] \label{th:CV_stationarySHE}
For all $\b<\b_{L^2}$, $t_1,\dots,t_n\geq 0$ and $f_1,\dots,f_m\in\mathcal C^\infty_c$, the following convergence holds jointly for $i\leq n$ and $j\leq m$ as $\e\to 0$,
\begin{equation}
\e^{1-\frac d 2}\int_{\rd} f_j(x) (u^{stat}_\e(t_i,x)-1) \, \dd x \cvlaw \int_{\rd} f_j(x)\, \kU_2(t_i,x) \dd x\;,
\end{equation}
with $\kU_2$ being the \textbf{stationary} solution of the EW equation:
\begin{equation}\label{EW2}
\partial_t \kU_2(t,x)= \frac 12 \Delta \kU_2(t,x)+ {\gamma}(\beta) \xi(t,x),\qquad \kU(0,x)\eqlaw \mathscr H(x),
\end{equation}
where $\gamma(\b)$ is given in \eqref{def-sigma-beta} and $\mathscr H$ is the Gaussian free field on $\mathbb{R}^d$ satisfying:
\begin{equation}\label{cov:H:GFF}
\mathrm{Cov}\big(\mathscr H(x),\mathscr H(y)\big)=\gamma^2(\b)\,\int_0^\infty \rho_{2\sigma}(x-y) \dd \sigma=  \, \frac{\gamma^2(\b) \,\Gamma(\frac d2-1)}{\pi^{\frac{d}{2}}} \frac 1 {|x-y|^{d-2}}.
\end{equation}
\end{theorem}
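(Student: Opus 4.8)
\textbf{Proof proposal for Theorem \ref{th:CV_stationarySHE}.}

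The plan is to run the same martingale CLT machinery that proves Theorem \ref{th:EWlimit}, but applied to the stationary object $u^{stat}_\e(t,x) = \mathcal V(\xi^{\ssup{\e,t,x}})$ instead of $u_\e(t,x)$. Writing $X_\e^j(t) = \e^{1-d/2}\int_{\rd} f_j(x)(u^{stat}_\e(t,x)-1)\,\dd x$, I would first express, via the time-reversal/rescaling identity behind \eqref{eq:defustat} and the Feynman–Kac representation, each $u^{stat}_\e(t,x)$ as (the $\e\to0$, $T=\e^{-2}t\to\infty$ limit of) a normalized polymer partition function, so that $X_\e^j(t)$ becomes a stochastic integral against white noise. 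Concretely one uses $\sZ_\infty = 1 + \int_0^\infty \int \beta\, \sZ_s\, \phi(y-B_s)\,\dB(\dd s,\dd y)$-type martingale representations: $u^{stat}_\e(t,x)-1$ is a martingale in the backward time variable, and summing against $\e^{1-d/2}f_j(x)\dd x$ yields a family of $L^2$ martingales $M_\e^j(\cdot)$ (indexed by a running time parameter) whose terminal values are the $X_\e^j(t_i)$. The convergence \eqref{eq:EWGICSHEResult} with $\bar u\equiv 1$ replaced by the stationary field is then obtained by verifying the martingale CLT hypotheses: (a) the predictable quadratic variations $\langle M_\e^j\rangle$ converge in probability to the deterministic limit dictated by the covariance structure of $\kU_2$, and (b) the cross-brackets $\langle M_\e^j, M_\e^k\rangle$ and the joint-in-time brackets converge to the corresponding covariances, together with a negligibility (Lindeberg/jump) condition, which is immediate here since everything is driven by continuous Gaussian noise.

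The key computational input is the identification of the limiting bracket. The quadratic variation of the martingale attached to $u^{stat}_\e$ involves, schematically, $\beta^2 \e^{2-d}\int_0^{T} \dd s \int f_j(x) f_k(x') \big(\text{two-point function of }\sZ\text{-type objects}\big)\,\dd x\,\dd x'$, and the two-point function, after using the local limit theorem for polymers (Theorem \ref{th:errorTermLLT}) and a homogenization argument in the spirit of \cite{CN19}, factorizes and averages to produce the constant $\gamma^2(\beta)$ (this is exactly where $\b<\b_{L^2}$ is used, since $\gamma(\beta)$ — defined in \eqref{def-sigma-beta} — is finite only there, and the relevant second moments are uniformly controlled). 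The spatial/temporal structure that survives is $\gamma^2(\beta)\int_0^{\min(t_i,t_k)} \int \big(\int f_j(x)\rho_{\sigma}(x-z)\dd x\big)\big(\int f_k(x')\rho_{\sigma}(x'-z)\dd x'\big)\dd z\,\dd\sigma$, which is precisely the covariance of $\int f_j\,\kU_2(t_i,\cdot)$ and $\int f_k\,\kU_2(t_k,\cdot)$ once one accounts for the stationary (non-zero) initial data $\kU_2(0,\cdot)\eqlaw \mathscr H$: the contribution of $\mathscr H$ is captured by the "infinite past" part of the partition function, i.e.\ by integrating the same heat-kernel bilinear form over $\sigma\in(0,\infty)$, which reproduces \eqref{cov:H:GFF}. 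In other words, the stationary martingale carries an extra block of quadratic variation coming from $s$ near $0$ in the $T=\e^{-2}t$ scale (equivalently, from the infinite-horizon tail of $\sZ_\infty$), and this block is exactly what upgrades the Dirichlet-type EW field $\kU_1$ to the stationary EW field $\kU_2$; identifying it as the Riesz-kernel covariance of $\mathscr H$ via $\int_0^\infty \rho_{2\sigma}(x-y)\dd\sigma = \tfrac{\Gamma(d/2-1)}{\pi^{d/2}}|x-y|^{2-d}$ is a direct Gamma-integral computation.

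I expect the main obstacle to be (b)-type control at small times, i.e.\ handling the "infinite-horizon tail" uniformly: one must show that the part of the quadratic variation coming from the far past of $\sZ_\infty$ (the piece responsible for $\mathscr H$) converges and does not produce spurious fluctuations, which requires good $L^2$ (and slightly better) estimates on $\sZ_\infty - \sZ_L$ and a quantitative local limit theorem valid up to linear scales — precisely the improvements recorded in Theorem \ref{th:errorTermLLT}. A secondary technical point is to make the martingale decomposition clean despite the fact that $u^{stat}_\e(t,x)$ is defined through an abstract measurable functional $\mathcal V$ rather than a finite-horizon Feynman–Kac integral; one circumvents this by approximating $\mathcal V(\xi^{\ssup{\e,t,x}})$ by finite-horizon partition functions $\sZ_L(\xi^{\ssup{\e,t,x}})$ with $L=L(\e)\to\infty$ slowly, controlling the error in $L^2$ uniformly in $\e$ by \eqref{eq:defBeta2} and weak-disorder integrability, and then passing to the limit. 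Once the brackets are computed, the conclusion follows from the multidimensional martingale CLT exactly as in Theorem \ref{th:EWlimit}, so the stationary case is genuinely a corollary of the same method modulo this tail analysis.
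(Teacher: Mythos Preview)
Your proposal is correct in spirit and would succeed, but it takes a longer route than the paper does and overstates the tools needed.

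The paper does \emph{not} rerun the martingale CLT on the stationary object. Instead, it observes that for any fixed $t$, $u^{stat}_\e(t,\cdot)\eqlaw \sZ_\infty(\sqrt T\,\cdot)$ with $T=\e^{-2}$, so the problem reduces to showing $Z_\infty^{\ssup T}(f):=T^{\frac{d-2}{4}}\int f(x)(\sZ_\infty(\sqrt T x)-1)\,\dd x\cvlaw \mathscr H(f)$. Since Theorem \ref{th:EWlimit} with $u_0\equiv 1$ already gives $Z_\tau^{\ssup T}(f)\cvlaw Z_\tau(f)$ for each finite $\tau$ (with $Z_\tau(f)$ centered Gaussian of variance $\gamma^2(\b)\int_0^\tau\iint f(x)f(y)\rho_{2\sigma}(x-y)\,\dd\sigma\,\dd x\,\dd y$), the paper simply commutes the limits $T\to\infty$ and $\tau\to\infty$ via Billingsley's criterion. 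The only new ingredient is the uniform $L^2$ estimate
\[
\sup_{T\geq 1}\big\Vert Z_\tau^{\ssup T}(f)-Z_\infty^{\ssup T}(f)\big\Vert_{L^2}\leq \Vert f\Vert_\infty\, T^{\frac{d-2}{4}}\Vert \sZ_{\tau T}-\sZ_\infty\Vert_{L^2}\to 0\quad(\tau\to\infty),
\]
which follows from $\Vert\sZ_\tau-\sZ_\infty\Vert_{L^2}=O(\tau^{-\frac{d-2}{4}})$, a direct bracket computation requiring no local limit theorem and no homogenization. The multi-time case is handled the same way.

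Your direct approach---forming the infinite-horizon martingale and computing its bracket---would reproduce exactly this, but with the bracket analysis of Section \ref{sec:EWproof} repeated and an additional limiting argument to justify passing to $\tau=\infty$ inside the CLT. You correctly identify the tail as the crux and propose approximating $\sZ_\infty$ by $\sZ_L$; that is the right idea, but note that the control you need there is just the elementary $L^2$ rate above, not the refined local limit theorem (Theorem \ref{th:errorTermLLT}). The paper's route buys economy: all the hard work (LLT, homogenization, bracket convergence) is already done in Theorem \ref{th:EWlimit}, and the stationary theorem becomes a short corollary via interchange of limits.
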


The particular case when $t=0$ in Theorem \ref{th:CV_stationarySHE} gives that $\e^{1-\frac{d}{2}}(u^{stat}_\e(0,\cdot)-1)\cvlaw \mathscr H$ in distribution, which entails the following result since $u^{stat}_\e(0,\cdot) \eqlaw \sZ_\infty(\cdot)$.
\begin{cor}[Gaussian free field limit of the polymer partition function] \label{th:PolymerCV}
For all $\b<\b_{L^2}$, the following convergence holds jointly for finitely many $f\in\mathcal C^\infty_c$ as $T\to\infty$, 
\begin{equation}
\int_{\mathbb{R}^d} f(x)\, T^{\frac{ (d-2)}{ 4}}\left(\mathcal{Z}_{\infty}(\sqrt T x)-1\right) \dd x \cvlaw \int_{\mathbb{R}^d} f(x)\, \mathscr H(x) \dd x,
\end{equation}
i.e.\  the spatially rescaled and centered normalized partition function $T^{\frac{ (d-2)}{ 4}}(\mathcal{Z}_{\infty}(\sqrt T\, \cdot)-1)$ converges in distribution towards the Gaussian free field $\mathscr H$.
\end{cor}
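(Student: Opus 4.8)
\textbf{Proof proposal for Corollary \ref{th:PolymerCV}.}

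The plan is to deduce the corollary directly from the $t=0$ case of Theorem \ref{th:CV_stationarySHE} by unwinding the scaling relation between $u^{stat}_\e$ and $\sZ_\infty$. Recall from \eqref{eq:defustat} that $u^{stat}_\e(0,x) = \mathcal V(\xi^{\ssup{\e,0,x}})$, and since $\xi^{\ssup{\e,0,x}}$ has the same law as $\xi$ as a field (by the scaling invariance of white noise noted after \eqref{eq:dBetx}), one has the distributional identity $u^{stat}_\e(0,\cdot) \eqlaw \sZ_\infty(\cdot)$ as random fields; more precisely, the scaling $\xi^{\ssup{\e,0,x}}(s,y) = \e^{\frac{d+2}{2}}\xi(-\e^2 s, \e y - x)$ together with the definition of $\sZ_T$ shows that $\mathcal V(\xi^{\ssup{\e,0,x}})$ corresponds to the infinite-horizon partition function of the polymer started at $\e^{-1}x$ in the original environment, i.e.\ $u^{stat}_\e(0,x) \eqlaw \sZ_\infty(\e^{-1}x)$ jointly in $x$. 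Setting $T = \e^{-2}$, so that $\e^{1-\frac d2} = T^{\frac{d-2}{4}}$ and $\e^{-1}x = \sqrt T x$, the $t=0$ specialization of Theorem \ref{th:CV_stationarySHE},
\be{
\e^{1-\frac d2}\int_{\rd} f(x)\,(u^{stat}_\e(0,x)-1)\dd x \cvlaw \int_{\rd} f(x)\,\mathscr H(x)\dd x,
}
translates verbatim into
\be{
T^{\frac{d-2}{4}}\int_{\rd} f(x)\,\big(\sZ_\infty(\sqrt T x)-1\big)\dd x \cvlaw \int_{\rd} f(x)\,\mathscr H(x)\dd x,
}
which is the claimed statement. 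The joint convergence over finitely many $f\in\mathcal C^\infty_c$ is inherited directly from the joint statement in Theorem \ref{th:CV_stationarySHE}.

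The only genuinely substantive point to verify carefully is the distributional identity $u^{stat}_\e(0,\cdot)\eqlaw \sZ_\infty(\e^{-1}\cdot)$ \emph{as processes in the spatial variable}, not merely marginally at each fixed $x$; this is what allows the convergence of the smeared field $\int f(x)(u^{stat}_\e(0,x)-1)\dd x$ to be identified with that of $\int f(x)(\sZ_\infty(\sqrt Tx)-1)\dd x$. This follows because the map $x\mapsto \xi^{\ssup{\e,0,x}}$ is obtained from a single white noise $\xi$ by the deterministic rescaling $(s,y)\mapsto(-\e^2 s,\e y)$ composed with the spatial shift $y\mapsto y-x$, so $\{u^{stat}_\e(0,x)\}_{x\in\rd}$ and $\{\sZ_\infty(\e^{-1}x)\}_{x\in\rd}$ are both measurable functionals of the \emph{same} rescaled white noise field and agree as random functions; equivalently one checks that all finite-dimensional distributions match, using that $\mathcal V(\xi) = \sZ_\infty(0,\xi)$ and that spatial translation of the environment shifts the starting point of the polymer. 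Once this is in hand, no further work is needed — the limiting Gaussian free field $\mathscr H$ and its covariance \eqref{cov:H:GFF} are exactly as produced by Theorem \ref{th:CV_stationarySHE}. I do not anticipate any real obstacle here beyond bookkeeping of the scaling exponents; the heavy lifting all resides in Theorem \ref{th:CV_stationarySHE} itself.
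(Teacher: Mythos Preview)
Your proposal is correct and matches the paper's own argument essentially line for line: the paper deduces the corollary by specializing Theorem \ref{th:CV_stationarySHE} to $t=0$ and invoking the distributional identity $u^{stat}_\e(t,\cdot)\eqlaw \sZ_\infty(\e^{-1}\cdot)$ (stated explicitly in Section \ref{subsec:conclusionGFF}), together with the substitution $T=\e^{-2}$. Your additional care in justifying that the identity holds jointly in $x$ is appropriate and is exactly the point the paper leaves implicit.
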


\subsection{On the fluctuations for the KPZ equation} \label{sec:resultsOnKPZ}
Let $h_\e$ be a solution of the regularized KPZ equation satisfying \eqref{eq:hopf-coleIntro}-\eqref{eq:KPZe}. 
\begin{theorem}[Case of regular initial conditions] \label{th:EWlimitKPZ} Suppose that
  $0<\inf_{x\in\R}u_0(x)\leq \sup_{x\in\R}u_0(x)<\infty$ and $\b<\b_{L^2}$. For all $\b<\b_{L^2}$, $t_1,\dots,t_n\geq 0$ and $f_1,\dots,f_m\in\mathcal C^\infty_c$, the following convergence holds jointly for $i\leq n$ and $j\leq m$ as $\e\to 0$,
\[
\e^{1-\frac d 2}\int_{\rd} f_j(x) \left(h_\e(t_i,x)-\IE[h_\e(t_i,x)]\right)  \dd x \cvlaw \int_{\rd} f_j(x)\, \kU_{3}(t_i,x) \dd x,
\]
with $\mathscr U_3$ solving
the following Edwards-Wilkinson type of  stochastic heat equation with additive noise:
\begin{equation} \label{eq:defU3}
 \partial_t  \kU_3(t,x)=\frac{1}{2} \Delta \kU_3(t,x) + \nabla \log{ \bar{u}(x,\tau)} \cdot\nabla \kU_3(t,x) +\xi(x,\tau).
 \end{equation}
\end{theorem}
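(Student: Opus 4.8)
The plan is to adapt to $h_\e=\log u_\e$ the martingale strategy used for the SHE (Theorem~\ref{th:EWlimit}), via It\^o's formula. Fix $t$ and $f$, let $\cF_s=\sigma(\xi|_{[0,s]\times\rd})$ be the white noise filtration, and note that the positive martingale $M^u_s(x):=\IE[u_\e(t,x)\mid\cF_s]$ interpolates between $\bar u(t,x)$ and $u_\e(t,x)$ (by the Feynman--Kac representation behind \eqref{eq:uZ} and \eqref{eq:FKgeneralIC}). Applying It\^o's formula to $\log M^u_s(x)$ gives
\[
 h_\e(t,x)-\IE h_\e(t,x)= N_t(x)-\tfrac12\bigl(A_t(x)-\IE A_t(x)\bigr),
\]
where $N_s(x)=\int_0^s (M^u_r(x))^{-1}\dd M^u_r(x)$ is a martingale and $A_s(x)=\int_0^s (M^u_r(x))^{-2}\dd\langle M^u(x)\rangle_r$ the It\^o correction. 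Setting $\cM_\e(s)=\e^{1-d/2}\int f(x)N_s(x)\dd x$ (a martingale in $s\le t$) and $\cA_\e=\e^{1-d/2}\int f(x)(A_t(x)-\IE A_t(x))\dd x$, I must show (i) $\cA_\e\cvIP 0$, and (ii) $\cM_\e(t)$ converges, jointly over $(t_i,f_j)$, to the Gaussian field $\bigl(\int f_j(x)\kU_3(t_i,x)\dd x\bigr)$ via the martingale CLT; together these give the statement.

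For (ii) the key input is the Malliavin derivative $D_{s,z}u_\e(t,x)=\b\e^{(d-2)/2}\DE_x\!\big[u_0(B_t)\,\phi_\e(B_{t-s}-z)\,\cdots\big]$ (with the exponential weight of the Feynman--Kac formula, $B_{t-s}$ the time‑$(t-s)$ position of the path). Then $\theta_s(x,z):=\IE[D_{s,z}u_\e(t,x)\mid\cF_s]$ is the integrand of $M^u_s(x)$, and
\[
 \langle\cM_\e\rangle_t=\e^{2-d}\int_0^t\!\!\int_{\rd}\Bigl(\int f(x)\,\frac{\theta_s(x,z)}{M^u_s(x)}\,\dd x\Bigr)^2\dd s\,\dd z ,
\]
the $\e$-powers eventually cancelling. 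The local limit theorem for polymers (Theorem~\ref{th:errorTermLLT}) provides the factorisation $\DE_x[u_0(B_t)\phi_\e(B_{t-s}-z)\cdots]\approx\rho_{t-s}(x-z)\,\overset{\leftarrow}{\sZ}(x,z)\,\sZ^{\rightarrow}_s(z)$, where $\overset{\leftarrow}{\sZ}$ depends only on the noise over physical times $(s,t]$ (independent of $\cF_s$, mean $1$) and $\sZ^{\rightarrow}_s(z)$ is $\cF_s$-measurable with mean $\bar u(s,z)$ and law asymptotically that of $\bar u(s,z)\,u^{stat}_\e(s,z)$; conditioning on $\cF_s$ then averages out $\overset{\leftarrow}{\sZ}$, so $\theta_s(x,z)\approx\b\e^{(d-2)/2}\rho_{t-s}(x-z)\sZ^{\rightarrow}_s(z)$ and $M^u_s(x)\approx\int\rho_{t-s}(x-y)\sZ^{\rightarrow}_s(y)\dd y$. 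The delicate point — the "ratio of partition functions'' $\theta_s/M^u_s$, equivalently the polymer midpoint density $D_{s,z}u_\e(t,x)/(\b\e^{(d-2)/2}u_\e(t,x))$ — is handled by showing the denominator concentrates, $M^u_s(x)\approx\int\rho_{t-s}(x-y)\bar u(s,y)\dd y=\bar u(t,x)$, which follows from the law of large numbers (Theorem~\ref{th:LLNreg}, with a heat-kernel test function) together with the concentration inequality for $\sZ_T$ of \cite{CCM20}. Homogenising $z\mapsto(\sZ^{\rightarrow}_s(z))^2$ against the macroscopically varying weight then yields
\[
 \langle\cM_\e\rangle_t\;\cvIP\;\gamma(\b)^2\int_0^t\!\!\int_{\rd}\Bigl(\int \frac{f(x)}{\bar u(t,x)}\,\rho_{t-s}(x-z)\,\dd x\Bigr)^2\bar u(s,z)^2\,\dd s\,\dd z,
\]
the constant $\b^2\,\IE[\sZ_\8^2]$ being equal to $\gamma(\b)^2$ by \eqref{def-sigma-beta} and consistency with Theorem~\ref{th:EWlimit}. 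Since $\bar u\,\kU_3=\kU_1$ — indeed $\kU_3=\kU_1/\bar u$ solves \eqref{eq:defU3} because $\partial_t-\tfrac12\Delta-\nabla\log\bar u\cdot\nabla$ conjugates to $\partial_t-\tfrac12\Delta$ under multiplication by $\bar u$ — the right-hand side is exactly $\mathrm{Var}\bigl(\int f(x)\kU_3(t,x)\dd x\bigr)$, and the cross-brackets (obtained by stopping the martingales at $t_i\wedge t_{i'}$) reproduce the covariances of $\bigl(\int f_j\kU_3(t_i,\cdot)\bigr)$. As $\cM_\e$ is a continuous martingale with a uniformly controllable bracket, the conditional Lindeberg condition is immediate, and the multivariate martingale CLT gives (ii).

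For (i), write $A_t(x)=\int_0^t (M^u_s(x))^{-2}\int_{\rd}\theta_s(x,z)^2\dd z\,\dd s$. Its expectation diverges like $\e^{1-d/2}$ (from $s\uparrow t$, regularised at scale $\e^2$), but is deterministic to leading order; the content is that the fluctuations of $\cA_\e$ vanish. On the singular region $s\uparrow t$ one has $\theta_s(x,z)\approx\b\e^{(d-2)/2}\phi_\e(x-z)M^u_s(x)$, so the integrand reduces to $\b^2\e^{d-2}\phi_\e(x-z)^2$ with the ratio $(M^u_s(x))^{-2}(M^u_s(x))^2$ cancelling exactly up to an error governed by the concentration inequality: this part is essentially the deterministic renormalisation $C_\e$ with negligible fluctuations. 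On the complementary region $\rho_{t-s}$ varies macroscopically, and $\int\rho_{t-s}(x-z)^2(\sZ^{\rightarrow}_s(z))^2\dd z$ must be shown to homogenise; since the coincident fourth moment of $\sZ_\8$ may be infinite up to $\b_{L^2}$, I would instead use a fourth-moment bound for the finite-horizon $\sZ_T$ (with at most polynomial growth in $T$ in the $L^2$ region), integrated against the spread-out kernel so that the near-diagonal contribution of the fourth moment enters only over scales $\gg\e$, and combine it with the quantitative local limit theorem of Theorem~\ref{th:errorTermLLT}, whose exponents do not depend on $\b$.

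The main obstacle is precisely this double point: controlling the ratios of partition functions in the bracket (via the law of large numbers and the concentration inequality) and the fourth-moment homogenisation of the It\^o correction (via finite-horizon moments and the $\b$-independent local limit theorem), simultaneously and all the way up to $\b_{L^2}$ — the regime where second moments of the partition function are still finite but higher moments, and in particular $\IE[\sZ_\8^4]$, may blow up.
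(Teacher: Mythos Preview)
Your overall architecture --- It\^o's decomposition of $\log u_\e$, martingale CLT for the martingale part, and homogenisation for the It\^o correction --- is exactly the paper's. The substantive gap is in step~(i). You propose to control the fluctuations of $\cA_\e$ via a fourth-moment bound on the finite-horizon partition function ``with at most polynomial growth in $T$ in the $L^2$ region'', but no such bound is available: the $L^2$-region is defined only by $\sup_T\IE[\sZ_T^2]<\infty$, and for $\b$ close to $\b_{L^2}$ there is no reason $\IE[\sZ_T^4]$ should grow only polynomially (for the discrete model the analogous higher-moment region is strictly smaller, cf.\ the references around \cite{BGH11,BS10}). The paper bypasses moments above two entirely, via the elementary but decisive observation that the It\^o-correction integrand is \emph{almost surely bounded}: writing it as a ratio of two-replica polymer expectations with $V(B^{(1)}_s-B^{(2)}_s)$ in the numerator gives $\langle\overline H(x)\rangle'_s\le\|V\|_\infty$ pointwise (this is \eqref{uniform-estFLAT}). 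With that uniform bound in hand the paper splits the polymer-time integral into three regimes (Lemmas~\ref{lem:firstStep}, \ref{eq:lem:removingTheBegining}, \ref{lem:step3}) and shows the variance of the centered correction vanishes using only spatial decorrelation (after confining paths to a mesoscopic box, Lemma~\ref{lem:ChangeZtoWidetildeZ}) and temporal decorrelation obtained from the local limit theorem (Lemma~\ref{homgeniation in t}). Note that the same a.s.\ bound holds in your filtration too, for the same reason; you should use it rather than chase fourth moments.

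A secondary point on step~(ii): your route ``the denominator concentrates on the deterministic value $\bar u(t,x)$, by the LLN of Theorem~\ref{th:LLNreg}'' is not the paper's, and making that concentration quantitative enough to divide inside the stochastic integral would require an LLN with a rate (and an extension to heat-kernel test functions). The paper works instead in the polymer-time filtration and shows, via the local limit theorem, that the denominator factorises as $\kW_s\approx\sZ_{\ell(T)}(\sqrt Tx)\,\bar u(t,x)$ with a \emph{random} prefactor $\sZ_{\ell(T)}$; the same prefactor appears in the numerator $\dd\kW_s$, so it cancels in $\dd\kW_s/\kW_s$ (Lemmas~\ref{Diff W and L} and \ref{LAST LEMMA}). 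The remaining error is then handled by Burkholder--Davis--Gundy together with the negative moments of $\sZ_\infty$ from \cite{CCM20}, not by the averaged LLN.
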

\begin{rem}
The solution of the above equation satisfies $\kU_3(t,x)=\bar{u}(x,t)^{-1}\kU_1(t,x)$ in distribution.
\end{rem}

\begin{theorem}[The stationary case] \label{th:CV_stationaryKPZ}
Recall $h^{stat}_\e$ from \eqref{eq:defhstat}. For all $\b<\b_{L^2}$, $t_1,\dots,t_n\geq 0$ and $f_1,\dots,f_m\in\mathcal C^\infty_c$, the following convergence holds jointly for $i\leq n$ and $j\leq m$ as $\e\to 0$,
\begin{equation} \label{eq:statKPZresult}
\e^{1-\frac d 2}\int_{\rd} f_j(x) (h^{stat}_\e(t_i,x)-\IE[h^{stat}_\e(t_i,x)]) \, \dd x \cvlaw \int_{\rd} f_j(x)\, \kU_2(t_i,x) \dd x\;,
\end{equation}
with $\mathscr U_2$ from \eqref{EW2}.
\end{theorem}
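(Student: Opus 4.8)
The plan is to deduce Theorem~\ref{th:CV_stationaryKPZ} from Theorem~\ref{th:CV_stationarySHE} together with the pointwise control of the difference between $h^{stat}_\e = \log u^{stat}_\e$ and its linearization $u^{stat}_\e - 1$. Write $h^{stat}_\e(t,x) - \IE[h^{stat}_\e(t,x)] = \big(u^{stat}_\e(t,x) - 1\big) - \big(\IE[u^{stat}_\e(t,x)] - 1\big) + R_\e(t,x)$, where $R_\e(t,x) = \big(\log u^{stat}_\e(t,x) - (u^{stat}_\e(t,x)-1)\big) - \IE\big[\log u^{stat}_\e(t,x) - (u^{stat}_\e(t,x)-1)\big]$. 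Since $\IE[u^{stat}_\e(t,x)] = \IE[\sZ_\infty] = 1$ (the partition function is a normalized martingale limit in weak disorder, and $\b<\b_{L^2}\le\b_c$), the middle term vanishes. Thus, after multiplying by $\e^{1-d/2}$ and integrating against $f_j$, the statement reduces to: (a) the convergence of $\e^{1-d/2}\int f_j(x)(u^{stat}_\e(t_i,x)-1)\,\dd x$, which is exactly Theorem~\ref{th:CV_stationarySHE}, jointly in $i,j$; and (b) showing that $\e^{1-d/2}\int f_j(x) R_\e(t_i,x)\,\dd x \to 0$ in probability (hence in law the error is negligible and Slutsky applies jointly).

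For step (b) the key is that $R_\e$ is a quadratic-and-higher remainder in $u^{stat}_\e - 1$: by Taylor's formula with the inequality $|\log(1+u) - u| \le C u^2$ valid when $u^{stat}_\e$ is bounded away from $0$, one gets $|R_\e(t,x)| \lesssim (u^{stat}_\e(t,x)-1)^2 + \IE[(u^{stat}_\e(t,x)-1)^2]$ on the event that $\inf_{t,x} u^{stat}_\e > 0$; off that event one needs a separate (polynomially small in $\e$) bound using a concentration inequality for $\sZ_T$ of the type proven in \cite{CCM20} (also \cite{CH02}). On the good event, taking expectations, $\IE\big[\e^{1-d/2}\int |f_j(x)| |R_\e(t_i,x)|\,\dd x\big] \lesssim \e^{1-d/2}\int |f_j(x)|\,\IE[(u^{stat}_\e(t_i,x)-1)^2]\,\dd x$. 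Now $\IE[(u^{stat}_\e(t,x)-1)^2] = \IE[\sZ_\infty^2]-1 =: c_\b < \infty$ uniformly in $\e,t,x$ since $\b<\b_{L^2}$ (this is the whole point of working in the $L^2$-region, using \eqref{eq:defBeta2} and stationarity \eqref{eq:defustat}). Hence the right-hand side is $O(\e^{1-d/2})$, which tends to $0$ for $d\ge 3$. Together with the $O(\e^{\text{(polynomial)}})$ bad-event contribution, this gives $\e^{1-d/2}\int f_j\, R_\e(t_i,\cdot) \to 0$ in $L^1(\IP)$, hence in probability.

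The remaining care is in justifying that $\IE[u^{stat}_\e(t,x)]=1$ exactly (so that no deterministic drift survives at order $\e^{d/2-1}$); this follows because $u^{stat}_\e(t,x)\eqlaw \sZ_\infty$ and $\IE[\sZ_\infty]=1$ in weak disorder by uniform integrability of $(\sZ_T)_{T\ge0}$ \cite{MSZ16,CY06}. One should also double check that the centering used in Theorem~\ref{th:CV_stationarySHE} (subtracting $1$) and the centering in Theorem~\ref{th:CV_stationaryKPZ} (subtracting $\IE[h^{stat}_\e]$) are compatible after absorbing $R_\e$: since $\IE[h^{stat}_\e(t,x)] = \IE[u^{stat}_\e(t,x)-1] + \IE[R_\e\text{-type term}] = \IE[\log u^{stat}_\e(t,x)]$, the bookkeeping is exactly as written above with the recentering built into the definition of $R_\e$. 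Finally, joint convergence over $(t_i)_i$ and $(f_j)_j$ is inherited directly from the joint statement of Theorem~\ref{th:CV_stationarySHE} via the vector-valued Slutsky lemma, since the error vector $\big(\e^{1-d/2}\int f_j R_\e(t_i,\cdot)\big)_{i,j}$ converges to $0$ in probability coordinatewise.

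The main obstacle I expect is the bad-event analysis in step (b): controlling $R_\e$ where $u^{stat}_\e$ may be close to $0$ requires a quantitative lower-tail (concentration) estimate for $\sZ_\infty$ with enough decay to beat the $\e^{1-d/2}$ prefactor blowing up is not an issue (it vanishes), but one must ensure $\IP(\inf_{t,x\in K} u^{stat}_\e(t,x) < \delta)$ or the corresponding integrated moment bound decays polynomially in $\e$ fast enough; this is precisely where the concentration inequality from \cite{CCM20} and the precise local limit theorem estimates (Theorem~\ref{th:errorTermLLT}) do the work, analogously to the role they play in the proof of Theorem~\ref{th:EWlimitKPZ}. Everything else is a soft combination of the already-established SHE result and a second-moment Taylor estimate.
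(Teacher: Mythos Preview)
Your argument contains a fatal arithmetic error that breaks the entire strategy. You claim that the bound $O(\e^{1-d/2})$ ``tends to $0$ for $d\ge 3$,'' but in fact $1-d/2\le -1/2$ for $d\ge 3$, so $\e^{1-d/2}\to\infty$ as $\e\to 0$. Since $\IE[(u^{stat}_\e(t,x)-1)^2]=\IE[\sZ_\infty^2]-1$ is a fixed positive constant (not $o(1)$), your pointwise $L^1$-bound on the remainder gives $\IE\big|\e^{1-d/2}\int f_j R_\e(t_i,\cdot)\big|\le C\,\e^{1-d/2}$, which diverges. The linearization $\log u \approx u-1$ is simply not accurate enough at the scale $\e^{1-d/2}$; the quadratic correction $(u^{stat}_\e-1)^2$ is of order one pointwise and does not vanish under the blow-up prefactor without further cancellation.

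This is not a technicality: it is exactly why the KPZ case requires genuinely more work than the SHE case. The paper does not attempt a Taylor linearization. Instead, it proves Theorem~\ref{th:CV_stationaryKPZ} by first establishing the finite-horizon KPZ result (Theorem~\ref{th:EWlimitKPZ} with $h_0\equiv 0$) and then exchanging the limits $T\to\infty$ and $\tau\to\infty$ via a commutative diagram, showing $\sup_{T\ge 1}T^{(d-2)/4}\IE|\log\sZ_\infty-\log\sZ_{T\tau}|\to 0$ as $\tau\to\infty$ through It\^o's decomposition of $\log\sZ$ and negative-moment bounds on $\sZ_\infty$. The hard ingredient underlying Theorem~\ref{th:EWlimitKPZ} itself is precisely Proposition~\ref{prop:VanishBracket}, which shows that the It\^o correction $\langle\overline{H}\rangle_{Tt}$ (morally your $R_\e$) has vanishing \emph{fluctuations} after spatial averaging---a homogenization/decorrelation argument, not a pointwise bound. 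If you want to salvage a direct approach, you would need to control the variance of $\int f R_\e$ by exploiting spatial decorrelation of $(u^{stat}_\e-1)^2$, which is essentially the content of Section~5.2.2 and is far from a ``soft combination.''
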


Again, letting $t=0$ in the above theorem leads to the following:
\begin{cor}[Gaussian free field limit of the polymer log-partition function] \label{th:LogPolymerCV}
For all $\b<\b_{L^2}$, the following convergence holds jointly for finitely many $f\in\mathcal C^\infty_c$ as $T\to\infty$, 
\begin{equation}
\int_{\mathbb{R}^d} f(x)\, T^{\frac{ (d-2)}{ 4}}\left(\log \mathcal{Z}_{\infty}(\sqrt T x)-\IE\log \mathcal{Z}_{\infty}(\sqrt T x)\right) \dd x \cvlaw \int_{\mathbb{R}^d} f(x)\, \mathscr H(x) \dd x,
\end{equation}
i.e.\  the rescaled and centered log-partition function $T^{\frac{ (d-2)}{ 4}}(\log \mathcal{Z}_{\infty}(\sqrt T\, \cdot)-\IE\log \mathcal{Z}_{\infty}(\sqrt T\, \cdot))$ converges in distribution towards the Gaussian free field $\mathscr H$.
\end{cor}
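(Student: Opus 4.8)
The plan is to derive Corollary \ref{th:LogPolymerCV} as a direct specialization of Theorem \ref{th:CV_stationaryKPZ} at time $t=0$, exactly as the phrase ``letting $t=0$ in the above theorem'' suggests. First I would recall that by construction \eqref{eq:defhstat}-\eqref{eq:defustat}, one has $h^{stat}_\e(0,x) = \log u^{stat}_\e(0,x) = \log \mathcal V(\xi^{\ssup{\e,0,x}})$, and since $\mathcal V(\xi^{\ssup{\e,0,x}}) \eqlaw \sZ_\infty(\e^{-1}x)$ jointly in $x$ (the field $\xi^{\ssup{\e,0,x}}$ being a diffusively rescaled, space-shifted white noise with the same law as $\xi$, cf.\ \eqref{eq:dBetx}), the random field $x\mapsto h^{stat}_\e(0,x)$ has the same law as $x\mapsto \log\sZ_\infty(\e^{-1}x)$. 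Substituting $t_i=0$ (take $n=1$, $t_1=0$) in \eqref{eq:statKPZresult} therefore yields
\[
\e^{1-\frac d 2}\int_{\rd} f_j(x)\big(\log \sZ_\infty(\e^{-1}x) - \IE\log\sZ_\infty(\e^{-1}x)\big)\dd x \cvlaw \int_{\rd} f_j(x)\,\kU_2(0,x)\dd x,
\]
jointly over $j\le m$, where by \eqref{EW2} the limit field $\kU_2(0,\cdot)$ is distributed as the Gaussian free field $\mathscr H$ with covariance \eqref{cov:H:GFF}.

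Next I would convert the $\e\to 0$ statement into a $T\to\infty$ statement by the change of variables $\e = T^{-1/2}$, i.e.\ $T = \e^{-2}$. Writing $x = \sqrt T\, y$ we have $\e^{-1}x = y\cdot \sqrt T\cdot \e^{-1}\cdot(\text{correction})$—more carefully, set $x = \e z$ so that $\e^{-1}x = z$ and $\dd x = \e^d\dd z$; then the left side becomes $\e^{1-\frac d2}\cdot\e^d\int f_j(\e z)(\log\sZ_\infty(z) - \IE\log\sZ_\infty(z))\dd z = \e^{1+\frac d2}\int f_j(\e z)(\cdots)\dd z$. With $\e = T^{-1/2}$ this is $T^{-\frac12-\frac d4}\int f_j(T^{-1/2}z)(\cdots)\dd z$, and substituting back $z = \sqrt T x$ (so $\dd z = T^{d/2}\dd x$) gives $T^{-\frac12-\frac d4+\frac d2}\int f_j(x)(\log\sZ_\infty(\sqrt T x) - \IE\log\sZ_\infty(\sqrt T x))\dd x = T^{\frac{d-2}{4}}\int f_j(x)(\cdots)\dd x$, which is precisely the left-hand side of the corollary. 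Since the right-hand side $\int f_j(x)\,\kU_2(0,x)\dd x \eqlaw \int f_j(x)\,\mathscr H(x)\dd x$ is unchanged, the joint convergence in distribution over finitely many $f\in\mathcal C^\infty_c$ follows immediately.

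The only mild point requiring care—and the nearest thing to an obstacle, though it is routine—is making the scaling identity between the $\e$-indexed family $h^{stat}_\e(0,\cdot)$ and the $T$-indexed family $\log\sZ_\infty(\sqrt T\,\cdot)$ rigorous \emph{jointly} in the test functions, which amounts to checking that the equality in law holds at the level of the random field (not just marginally), and this is immediate from $\mathcal V(\xi^{\ssup{\e,0,\cdot}}) \eqlaw \sZ_\infty(\e^{-1}\cdot)$ as an identity of random functions together with the linearity of $f\mapsto\int f\,(\cdot)$. One should also note that the centering $\IE\log\sZ_\infty(\sqrt T x)$ is finite and independent of $x$ by stationarity, so the centered integrals are well-defined. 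No further estimates are needed: all the analytic work (the martingale CLT, the local limit theorem input, the homogenization of the It\^o correction) has already been carried out in the proof of Theorem \ref{th:CV_stationaryKPZ}, and the corollary is purely a reparametrization.
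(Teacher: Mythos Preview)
Your proposal is correct and follows the same route the paper indicates: take $t=0$ in Theorem \ref{th:CV_stationaryKPZ}, use the distributional identity $h^{stat}_\e(t,\cdot)\eqlaw \log\sZ_\infty(\e^{-1}\cdot)$ (which the paper states explicitly in Section \ref{subsec:conclusionGFFKPZ}), and reparametrize $\e=T^{-1/2}$. One cosmetic remark: your change of variables through $z=\e^{-1}x$ and back is unnecessary, since with $T=\e^{-2}$ one has directly $\e^{1-d/2}=T^{(d-2)/4}$ and $\e^{-1}x=\sqrt T\,x$, so the left-hand side of \eqref{eq:statKPZresult} at $t=0$ is literally the left-hand side of the corollary with no integral substitution needed.
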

%\begin{rem}
%The particular case when $t=0$ in Theorem \ref{th:CV_stationaryKPZ} gives $\e^{1-\frac{d}{2}}(h^{stat}_\e(0,\cdot)-\IE[h^{stat}_\e(0,\cdot)])\cvlaw \mathscr H$ in distribution, which is equivalent to Theorem \ref{th:LogPolymerCV}.
%\end{rem}

%\item different limits for different ICs (Rk: in \eqref{eq:EW_GRZ} is the dependence in the IC artificial?)
\paragraph{Structure of the article}
 Section \ref{sec:LLN} is dedicated to the proof of the laws of large numbers from Section \ref{sec:LLNstatements}.
Section \ref{sec:EWproof} concentrates on showing the Edwards-Wilkinson fluctations of Section \ref{sec:ResultsSHE} and the section starts with a concise summary of the proof. The Edwards-Wilkinson results for the KPZ equation (Section \ref{sec:resultsOnKPZ}) are presented in Section \ref{sec:proofEWKPZ}. The proof builds on the martingale strategy for the SHE so we recommend the interested reader to begin with Section \ref{sec:ResultsSHE}.

\paragraph{Some notations}
\begin{itemize}
\item $\IE$ and $\IP$ (resp.\ $\DE_x$ and $\DP_x$) denote the expectations and probability with respect to the white noise $\xi$ (resp.\ with respect to the Brownian motion $B$ started at $x$).
\item $\DP_{0,x}^{t,y}$ and $\DE_{0,x}^{t,y},$ be the probability measure and expectation of the Brownian bridge from $(0,x)$ to $(t,y)$.
\item $\rho_t(x)=(2\pi t)^{-\frac{d}{2}}e^{-|x|^2/(2t)}$ the heat kernel and $\rho_t(x,y)=\rho_t(x-y)$.
\item $(\mathcal F_t)_{t\geq 0}$ is the filtration associated to the white noise.

\item $\Phi_T(B,\xi) = \exp\left\{\beta \int_0^T \int_{{\mathbb R}^d} \phi(y-B_s) \ \xi(\dd s,\dd y)-\frac{\b^2 V(0)T}{2}\right\}$ which satisfies
\begin{equation}
\sZ_T(x,\xi) = \DE_x \left[\Phi_T(B,\xi)\right].\end{equation}
 Note that we will repeatedly use the fact that $\IE[\sZ_T(x,\xi)] = \IE[\Phi_T] =1$.
\item ${\gamma}^2(\beta)$ is given by:
\begin{equation}\label{def-sigma-beta}
 \begin{aligned}
\gamma^2(\beta) & = \b^2 \int_{\rd}  \,\, V(Z)\,\,\DE_Z\bigg[\mathrm e^{\beta^2\int_0^\infty V( B_{2s})\,\dd s}\bigg] \dd Z,\\
 & = \b^2 \int_{\mathbb R^d} V(Z) \IE[\sZ(u) \sZ(u+Z)]\dd Z, \quad \forall u\in\mathbb R^d.
\end{aligned}
\end{equation}
\item When switching from solutions $u_\e$ and $h_\e$ to their polymer representations, we use $T=T(\e)=\e^{-2}$ to denote the time horizon of the polymer.
%\item We define \CC{Is this needed here? Also I think the notation changed from $W$ to something else}
%  $$W_{\tau}(x)= W_{\tau,t,T}(x)=\DE_x\left[ \Phi_\tau(B) \, u_0\left(\frac{B_{t T}}{\sqrt{T}}\right)\right].$$
\item We denote by $\overset{\leftarrow}{\sZ}_{T,\ell}(z)$ the time-reversed partition function of time horizon $\ell$:
\begin{equation} \label{eq:timeRevPartitionf}
\overset{\leftarrow}{\sZ}_{T,\ell}(z) = \DE_{z} \left[\exp\left\{\int_{T-\ell}^T \int_{\mathbb R^d} \phi(B_{T-s}-y)  \xi(\dd s,\dd y) -\frac{\b^2  V(0) \ell}{2}\right\} \right].
\end{equation}
\item Given $z=(z_1,\cdots,z_d)\in\R^d$,  $\ell>0$, and $T$, we denote by $\mathtt{R}_{\ell,T}(z)$ the cuboid in $[0,\infty)\times \R^d$
  \begin{align}
{\mathtt{R}_{\ell,T}(z)=[0,T]\times \prod_{i=1}^d[z_i-\ell,z_i+\ell].} \label{Def:CUBE}
\end{align}
\item We define the constrained partition function as
  \begin{align}
\overline{\sZ}_{\ell}(\z)&=\DE_{\z}\left[\Phi_{\ell};\,B[0,\ell]\subset \mathtt{R}_{\ell,\ell}(\zeta) \right],\label{eq:barZ}
  \end{align}
  where the event $\{B[0,\ell]\subset \mathtt{R}_{\ell,\ell}(\zeta)\}$ means that the graph of $B$ in time $[0,\ell]$ lies in $\mathtt{R}_{\ell,\ell}(\zeta)$.
\end{itemize}

\textbf{Note:} throughout the paper and if clear from the context, the constant $C$ that appears in successive upper-bounds may take different values.

\section{Proofs of the law on large numbers results} \label{sec:LLN}

%{\MN 
Throughout this section, we set $T=\ve^{-2}$. 
%}

\subsection{Case of Dirac initial conditions $u_0 = \delta_{x_0}$} %In this particular case,
For Dirac initial condition $u_0=\delta_{x_0}$, the solution $u_\e$ of \eqref{eq:SHEintro} admits the representation (see equation (2.11) in \cite{CCM20}):
\begin{equation} \label{eq:diracIC}
u_\e(t,x) = \rho_t(x-x_0) \DE_{0,0}^{Tt,\sqrt{T}(x-x_0)}\left[\Phi_{T t}(B,\dB_{(\e,x_0)})\right],
\end{equation}
where $\dB_{(\e,x_0)}(s,y)= \e^{\frac{d+2}2}\dB(\e^2 s, \e y + x_0 )$ is a diffusively rescaled white noise (without time-reversal), which again \emph{has the same law as} $\xi$.
Hence, letting $\mathtt{E}_{x}^{\b,T}$ denote the expectation under the polymer measure
\[\dd \mathtt{P}_{x}^{\b,T} = \frac{\Phi_T}{\sZ_T(x)} \dd \DP_x,\]
 of time horizon $T$ with  inverse temperature $\b$ started at $B_0=x\in\mathbb R^d$, we have by the heat kernal scaling $\rho(T t,\sqrt{T}x) = T^{-\frac{d}{2}} \rho_t(x)$ and the change of variable $X=\sqrt{T}(x-x_0)$, 
\begin{align}
\int_{\mathbb R ^d} u_\e(t,x) f(x) \dd x & \eqlaw \int_{\mathbb R ^d} \rho_{Tt}(X) \DE_{0,0}^{Tt,X}\left[\Phi_{T t}(B,\xi)\right] f(T^{-\frac{1}{2}} X + x_0) \dd X \nonumber \\
& = \sZ_{Tt} \, \mathtt{E}_{0}^{\b,Tt}[f(T^{-\frac{1}{2}} B_{T t} + x_0)].\label{eq:IntexprToPolymerMeasure}
\end{align}

 Then, the continuous analogue of the CLT in \cite{CY06} implies that in the weak disorder,
\begin{equation}
\sZ_{Tt} \, \mathtt{E}_{0}^{\b,Tt}[f(T^{-\frac{1}{2}} B_{T t} + x_0)] {\cvLone} \sZ_\infty\,  \DE_{x_0}[f(B_t )],\label{eq:CYCLT}
\end{equation}
 as  $T\to\infty$. % and is the continuous analogue of the CLT in \cite{CY06}. 
  The proof is almost the same as \cite[Theorem 5.1]{CY06}, see Appendix \ref{app:LLNDirac}, and hence  Theorem \ref{th:LLNDirac} holds.

\subsection{Case of continuous and bounded initial condition} \label{subsec:LLNuzero}
%is continuous needed ?
 In this case, the Feynmann-Kac formula leads to the following representation similar to \eqref{eq:uZ} (see also (2.4) in \cite{CCM20}) with $\xi^{(\e,t)}\eqlaw \xi$,
\begin{align} 
u_\e(t,x) & = \DE_{\sqrt{T} x}  \left[  u_0(T^{-\frac{1}{2}} B_{Tt}) \, \Phi_{Tt}(B;\xi^{\ssup{\e,t}})\right]\label{eq:FKgeneralIC}\\
&{ \eqlaw  \DE_{\sqrt{T} x}  \left[  u_0(T^{-\frac{1}{2}} B_{Tt}) \, \Phi_{Tt}(B;\xi)\right] }\label{eq:FKgeneralIC2}\\
&={ \sZ_{Tt}(\sqrt T x) \mathtt{E}_{\sqrt{T}x}^{\b,Tt}\left[u_0(T^{-\frac{1}{2}}B_{Tt})\right]},\notag
%& = \int_{\mathbb R^d} u_0(T^{-\frac{1}{2}} z) \rho_{Tt}(z-\sqrt{T}x) \DE_{0,\sqrt{T} x}^{Tt,z}  \left[\Phi_{Tt}(\xi^{\ssup{\e,t}};B)\right] \dd z\notag\\
%& = { T^{\frac{d-1}{2}}}  \int_{\mathbb R^d} u_0(y) \rho_{Tt}(\sqrt{T}(y-x)) \DE_{0,\sqrt{T} x}^{Tt,\sqrt{T}  y}  \left[  \Phi_{Tt}(\xi{^{(\e,0)}};B)\right] \dd y,\notag
\end{align} 
where the equality in distribution holds jointly in $x\in\mathbb R^d$ for any fixed $t$.

Therefore, we know that \begin{align}
\int_{\R^d}u_\e(t,x)f(x)\dd x%&={\MN T^{\frac{d-1}{2}}}\int_{\R^d}\dd xf(x)  \int_{\mathbb R^d} u_0(y) \rho_{Tt}(\sqrt{T}(y-x)) \DE_{0,\sqrt{T}{x}}^{Tt,\sqrt{T} { y}}  \left[  \Phi_{Tt}(\xi{ ^{(\e,0)}};B)\right] \dd y\notag\\
& { \eqlaw \int_{\R^d} f(x)\sZ_{Tt}(\sqrt{T}x)\mathtt{E}_{\sqrt{T}x}^{\b,Tt}\left[u_0(T^{-\frac{1}{2}}B_{Tt})\right]\dd x.}\label{eq:eq:IntexprToPolymerMeasure2}
\end{align}

%\CC{isn't it $T^{\frac{d}{2}}$ in factor? Should'nt the $x$ and $y$ be inversed and $\Phi_{Tt}(\xi^{\ssup{\e,t}};B)$ replaced with $\Phi_{Tt}(\xi_{(\e,0)};B)$ on the last line?}
%where,  replacing $\Phi_{Tt}(\xi^{\ssup{\e,t}};B)$ with $\Phi_{Tt}(\xi_{(\e,0)};B)$, we have used the fact that if $B$ is a Brownian bridge from $(0,x)$ to $(t,y)$, $(B_{t-s})_{s\in [0,t]}$ has the law of a Brownian bridge from $(0,y)$ to $(t,x)$.

%Therefore,
%\begin{align}
%\int_{\mathbb R^d} u_\e(t,x) f(x) \dd x & =  {\MN T^{\frac{d-1}{2}}}  \int \dd y\, u_0(y) \int \rho_{Tt}(\sqrt{T}(y-x)) \DE_{0,\sqrt{T}y}^{Tt,\sqrt{T} x}  \left[\Phi_{Tt}(\xi_{(\e,0)};W)\right] f(x) \dd x\nonumber\\
%& \eqlaw \int u_0(y) \sZ_{Tt}(\sqrt{T} y) \mathtt{E}_{\sqrt{T} y}^{\b,Tt}[f(T^{-\frac{1}{2}} W_{T t})] \dd y.\label{eq:eq:IntexprToPolymerMeasure2}
%\end{align}
%where the second equality is justified by the change of variable $X=\sqrt{T}x$ as in \eqref{eq:IntexprToPolymerMeasure}.

Now, by shift-invariance in distribution of the white noise, and thus of the partition function and the polymer measure,
\begin{align*}
& \IE\left[\left| \sZ_{Tt}(\sqrt{T} { x}) \mathtt{E}_{\sqrt{T} { x}}^{\b,Tt}[{ u_0}(T^{-\frac{1}{2}} B_{T t})] - \sZ_{Tt}(\sqrt{T} { x}) \DE_{ x}[{ u_0}(B_t)] \right| \right]\\
& = \IE\left[\left| \sZ_{Tt}(0) \mathtt{E}_{0}^{\b,Tt}[{ u_0}(T^{-\frac{1}{2}} B_{T t} + { x})] - \sZ_{Tt}(0) \DE_{ x}[{ u_0}(B_t)] \right| \right]\to 0,
\end{align*}
where the convergence is as $\e\to 0$ for all fixed ${ x}\in\mathbb R^d$ {by \eqref{eq:CYCLT}.} %and should come from {\MN uniform integrability } (what is inside the expectation is bounded by $2 \Vert f \Vert_\infty \sZ_{Tt}(0)$ which is UI) and the CLT in probability from continous analogue results to what is proved in \cite{CY06}.

 %(a dominating term would be ${\MN x}\mapsto   \DE_{\MN x}[|{\MN u_0}(W_t)|]$, where $\DE_y[|f(W_t)|] = \int \rho_t(x-y) |f(x)| \dd x$ is integrable in $y$, this implies that
This implies with dominated convergence theorem that 
\[ \IE\left[ \left|\int f(x) \sZ_{Tt}(\sqrt{T} x) \mathtt{E}_{\sqrt{T} x}^{\b,Tt}[u_0(T^{-\frac{1}{2}} B_{T t})] \dd x - \int f(x) \sZ_{Tt}(\sqrt{T} x) \DE_x[u_0(B_t)] \dd x \right| \right] \to 0.
\]
Combined with \eqref{eq:eq:IntexprToPolymerMeasure2}, what remains to show in order to prove Theorem \ref{th:LLNreg} is that
\[ \int f(x) \sZ_{Tt}(\sqrt{T} x) \DE_x[u_0(B_t)] \dd x \rightarrow  \int f(x) \DE_x[u_0(B_t)] \dd x,\]
where the RHS equals $\int \bar{u}(t,x) f(x) \dd x$.
\begin{lemma}
For any $t> 0$,
$$\lim_{T\to \infty}\mathbb{E}\left[\left|\int u_0(y) \sZ_{Tt}(\sqrt{T} y) \DE_y[f(B_t)] \dd y - \int u_0(y)  \DE_y[f(B_t)] \dd y\right|\right]=0.$$
\end{lemma}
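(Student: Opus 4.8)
The statement to prove is an averaged (homogenization-type) law of large numbers: the spatial average of $u_0(y)\,\sZ_{Tt}(\sqrt T y)\,\DE_y[f(B_t)]$ converges in $L^1(\IP)$ to the deterministic average with $\sZ_{Tt}(\sqrt T y)$ replaced by its mean $1$. Since everything is bounded ($u_0$ and $f$ are bounded, $f$ compactly supported, so the $y$-integral is effectively over a compact set after absorbing $\DE_y[f(B_t)]$ which is a bounded smooth function of $y$ with Gaussian tails), the natural route is an $L^2(\IP)$ estimate on the centered quantity
\[
G_T := \int g(y)\,\big(\sZ_{Tt}(\sqrt T y)-1\big)\,\dd y, \qquad g(y) := u_0(y)\,\DE_y[f(B_t)],
\]
where $g$ is bounded and integrable. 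By Jensen it suffices to show $\IE[G_T^2]\to 0$. Expanding the square and using $\IE[\sZ_{Tt}(\sqrt T y)]=1$,
\[
\IE[G_T^2] = \int\!\!\int g(y)g(y')\,\mathrm{Cov}\big(\sZ_{Tt}(\sqrt T y),\sZ_{Tt}(\sqrt T y')\big)\,\dd y\,\dd y'.
\]

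The key input is the decorrelation of partition functions at macroscopically separated points. First I would record the shift-invariance so that $\mathrm{Cov}(\sZ_{Tt}(\sqrt T y),\sZ_{Tt}(\sqrt T y'))$ depends only on $\sqrt T(y-y')$, and is uniformly bounded by $\IE[\sZ_\infty^2]-1$ in the $L^2$-region $\beta<\beta_{L^2}$ — but here we are in the full weak disorder region, so second moments need not be finite, and this is exactly the obstacle. The clean way around it, consistent with the tools the paper advertises (uniform integrability of $(\sZ_T)_{T\ge 0}$ plus a homogenization argument as in \cite{CN19}), is a truncation: write $\sZ_{Tt}(\sqrt T y) = \sZ_{Tt}(\sqrt T y)\wedge M + (\sZ_{Tt}(\sqrt T y)-M)_+$. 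The truncated field $\sZ^M_{Tt}(y):=\sZ_{Tt}(\sqrt T y)\wedge M$ is bounded (hence in $L^2$), shift-invariant, and — this is the point that needs the polymer CLT of \cite{CY06} and the convergence $\sZ_{Tt}(0)\to\sZ_\infty$ — for fixed $y\ne y'$ the pair $(\sZ_{Tt}(\sqrt T y),\sZ_{Tt}(\sqrt T y'))$ becomes asymptotically independent as $T\to\infty$, because the two polymers started at macroscopically distant points $\sqrt T y,\sqrt T y'$ feel essentially disjoint portions of the environment up to time $o(Tt)$ and then each converges to its own $\sZ_\infty$. Concretely one factorizes each $\sZ_{Tt}(\sqrt T y)$ at an intermediate scale $\ell$ with $1\ll\ell\ll Tt$ into a piece depending on the environment near $\sqrt T y$ on $[0,\ell]$ (which converges to $\sZ_\infty$ shifted) times an asymptotically negligible correction, the two base pieces being independent once $\sqrt T|y-y'|$ exceeds the (diffusive, hence $o(\sqrt T)$) range of dependence. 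Thus $\mathrm{Cov}(\sZ^M_{Tt}(y),\sZ^M_{Tt}(y'))\to 0$ pointwise for $y\ne y'$, and it is dominated by the constant $M^2$; the diagonal $\{y=y'\}$ is Lebesgue-null, so by dominated convergence the double integral of the covariance of the truncated fields tends to $0$. For the remainder, $\IE[G_T^2]\le 2\,\IE[(G^M_T)^2] + 2\,\IE\big[\big(\int g(y)(\sZ_{Tt}(\sqrt T y)-M)_+\dd y + \int g(y)(\IE[\sZ_{Tt}\wedge M]-1)\dd y\big)^2\big]$, and the second term is bounded, using $\|g\|_1<\infty$ and shift-invariance, by $C\,\IE[(\sZ_{Tt}-M)_+] + C\,|\IE[\sZ_{Tt}\wedge M]-1|$, both of which are controlled uniformly in $T$ by the uniform integrability of $(\sZ_T)$ and can be made $\le\delta$ by choosing $M=M(\delta)$ large; note $\IE[(\sZ_\infty-M)_+]=1-\IE[\sZ_\infty\wedge M]\to 0$, and uniform integrability gives the same bound uniformly in the horizon.

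So the plan is: (1) reduce to $\IE[G_T^2]\to 0$ by Jensen; (2) fix $\delta>0$ and pick $M$ large so that the truncation error in $L^2(\IP)$ is $\le\delta$ uniformly in $T$, using uniform integrability of the normalized partition function (the hypothesis "weak disorder holds"); (3) for the truncated, bounded field, expand the second moment and invoke asymptotic independence of $\sZ_{Tt}(\sqrt Ty)$ and $\sZ_{Tt}(\sqrt Ty')$ at macroscopic separation — itself a consequence of the $L^1$-CLT \eqref{eq:CYCLT} from \cite{CY06} and the factorization at an intermediate scale — together with dominated convergence to conclude $\IE[(G^M_T)^2]\to 0$; (4) combine and let $\delta\downarrow 0$. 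The main obstacle is step (3): making the "asymptotic independence at macroscopic separation" rigorous without second moments, i.e.\ quantifying that the early-time environments seen by two far-apart polymers are independent and that the late-time evolution homogenizes each marginal to $\sZ_\infty$. I expect this to be handled by the same intermediate-scale factorization / homogenization device used for the Dirac case in the Remark following Theorem \ref{th:LLNDirac} and in \cite{CN19}, now applied to bounded functionals of the partition function so that all moment computations are legitimate. The rest (dominated convergence, the elementary bound $\|g\|_1\le\|u_0\|_\infty\int|\DE_y[f(B_t)]|\dd y<\infty$, and the uniform-integrability estimates) is routine.
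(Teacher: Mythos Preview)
Your overall strategy (truncate, exploit decorrelation at macroscopic separation, control the truncation error by uniform integrability) is the right one and is essentially the paper's. But your step~(3) has a genuine gap, and you correctly flag it yourself: you never make the asymptotic independence rigorous. Two concrete issues. First, your claimed ``factorization'' $\sZ_{Tt}(\sqrt T y)\approx \sZ_\ell(\sqrt T y)\times(\text{negligible})$ is not a factorization at all; what is true is the additive $L^1$-approximation $\IE|\sZ_{Tt}-\sZ_\ell|\to 0$ (as $\ell\to\infty$, uniformly in $T$) coming from uniform integrability of the martingale. Second, and more seriously, even after replacing $\sZ_{Tt}$ by $\sZ_\ell$, the variables $\sZ_\ell(\sqrt T y)$ and $\sZ_\ell(\sqrt T y')$ are \emph{not} independent: $\sZ_\ell(x)$ is a functional of the entire noise on $[0,\ell]\times\mathbb R^d$, since the Brownian path is unconstrained. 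Your appeal to a ``diffusive range of dependence'' is a heuristic, not a proof.

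The paper closes this gap by spatially localizing: it replaces $\sZ_\ell(\zeta)$ by $\overline{\sZ}_\ell(\zeta)=\DE_\zeta[\Phi_\ell;\,B[0,\ell]\subset \mathtt{R}_{\ell,\ell}(\zeta)]$, the partition function with the path confined to a cube of side $2\ell$ around the starting point. Now $\overline{\sZ}_\ell(\sqrt T y)$ and $\overline{\sZ}_\ell(\sqrt T y')$ are \emph{exactly} independent once $|y-y'|>T^{-1/2}\cdot C_\phi\,\ell$, and the $L^1$ cost of the confinement is controlled by a H\"older bound against the Brownian exit probability. With this in hand the paper does not need your pointwise-covariance-plus-dominated-convergence argument; instead it takes $\ell=T^{1/4}$ and a \emph{growing} truncation level $M=T^{d/4}$, so that the covariance integral reduces to a near-diagonal strip of volume $O(T^{-d/4})$ times $\IE[\widetilde\sZ_T(0)^2]$, and then invokes the elementary Lemma~\ref{CN Lemma 3.3} (for a UI family $X_t$ and $a(t)\to\infty$, $a(t)^{-1}\IE[(X_t\wedge a(t))^2]\to 0$) to kill this product. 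This single-pass ``localize, truncate at a growing level, use Lemma~\ref{CN Lemma 3.3}'' is tighter than your two-parameter $M(\delta)$ scheme, though once you add the spatial localization your fixed-$M$ route would also go through. Finally, a minor point: your displayed inequality $\IE[G_T^2]\le 2\IE[(G_T^M)^2]+2\IE[(\cdots)^2]$ is not the right organization outside the $L^2$-region (the left side may be infinite); you should instead bound $\IE|G_T|\le (\IE[(G_T^M)^2])^{1/2}+\IE|R_T^M|$ and control $\IE|R_T^M|$ in $L^1$, which is what your subsequent estimates actually do.
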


\begin{proof}
%Let $t>0$. Since $f\in C_c^{\infty}$, there exists $L_{f}>0$ such that
%$${\rm supp}(f)\subset [-L_{f},L_{f}]^d.$$
%Thus, for $|y|>2L_{f}$
%\begin{align*}
%\textcolor{blue}{|}\DE_y[f(W_t)]\textcolor{blue}{|}&\leq |f|_{\infty}\DP_y(W_t\in{\rm supp}(f))\\
%&\leq |f|_{\infty}\DP_0(|W_t|>|y|/2)\leq C e^{-|y|^2/4t},
%\end{align*}
%with some constant $C=C(t,d,f)>0$. Therefore, for any $\delta>0$, there exists $L>2L_{f}$ such that 
%$$\int_{|y|>L}|u_0(y)|\textcolor{blue}{|}\DE_y[f(W_t)]\textcolor{blue}{|}\dd y <\delta.$$

 Let us define for $\zeta\in \mathbb{R}^d$, $\ell>0$, and $M>0$			%	\textcolor{blue}{I think a notation like $(\forall s\leq \ell_1, W_s \in \z+[-\ell_1,\ell_1])$ would be more appropriate} {\SN Sure, I corrected}
\begin{align*}
\overline{\sZ}_{\ell}(\z)&=\DE_{\z}\left[\Phi_{\ell};\,B[0,\ell]\subset \mathtt{R}_{\ell,\ell}(\zeta) \right]\\
\widetilde{\sZ}_{\ell,M}(\z)&=\overline{\sZ}_{\ell}(\z) \land M,\notag
\end{align*}
where the event $\{B[0,\ell]\subset \mathtt{R}_{\ell,\ell}(\zeta)\}$ means that the graph of Brownian motion $B$ in time $[0,\ell]$ lies in $\mathtt{R}_{\ell,\ell}(\zeta)$. In this section, we simply write 
$$\widetilde{\sZ}_T(\z)=\widetilde{\sZ}_{T^{\frac{1}{4}},T^{\frac{d}{4}}}(\zeta).$$
Since $\overline{\sZ}_{T^{\frac{1}{4}}}(\sqrt{T}x)\leq \sZ_{T^{\frac{1}{4}}}(\sqrt{T}x)$ for $x\in\R^d$, we have   
\al{
&\quad \mathbb{E}\left|\widetilde{\sZ}_T(\sqrt{T}x)-\sZ_{Tt}(\sqrt{T}x)\right|\\
&\leq \mathbb{E}\left|\widetilde{\sZ}_{T^{\frac{1}{4}},T^{\frac{d}{4}}}(\sqrt{T}x)-\overline{\sZ}_{T^{\frac{1}{4}}}(\sqrt{T}x)\right|+\mathbb{E}\left|\overline{\sZ}_{T^{\frac{1}{4}}}(\sqrt{T}x)-\sZ_{T^{\frac{1}{4}}}(\sqrt{T}x)\right|\\
& \quad +
\mathbb{E}\left|\sZ_{T^{\frac{1}{4}}}(\sqrt{T}x)-\sZ_{Tt}(\sqrt{T}x)\right|\\
&\leq \mathbb{E}\left[\overline{\sZ}_{T^{\frac{1}{4}}};~\overline{\sZ}_{T^{\frac{1}{4}}}\geq T^{\frac{d}{4}}\right]+\mathbb{E}\DE_{0}\left[\Phi_{T^{\frac{1}{4}}};\,B[0,T^{\frac{1}{4}}]\not\subset \mathtt{R}_{T^{\frac{1}{4}},T^{\frac{1}{4}}}(0)\right]+\mathbb{E}\left|\sZ_{T^{\frac{1}{4}}}-\sZ_{Tt}\right|\\
&\leq \mathbb{E}\left[\sZ_{T^{\frac{1}{4}}};~\sZ_{T^{\frac{1}{4}}}\geq T^{\frac{d}{4}}\right]+\DP_{0}\left(B[0,T^{\frac{1}{4}}]\not\subset \mathtt{R}_{T^{\frac{1}{4}},T^{\frac{1}{4}}}(0)\right)+\mathbb{E}\left|\sZ_{T^{\frac{1}{4}}}-\sZ_{Tt}\right|,
}
all of which converge to $0$ uniformly in $y$ by uniform integrability of $\sZ_T$.

%  \textcolor{blue}{(note: we should metion somewhere that $\sZ$ is UI for $\b<\b_c$)}.
 Next we will prove
\ben{\label{homogen-general}
\lim_{\e\to 0}\mathbb{E}\left(\int_{\R^d} \left(f(x) \widetilde{\sZ}_T(\sqrt{T} x) \DE_x[u_0(B_t)]  -  f(x) \mathbb{E}[\widetilde{\sZ}_T(\sqrt{T} x)] \DE_x[u_0(B_t)] \right)\dd x\right)^2= 0.
}
Indeed, since if $|x-y|>T^{-\frac{1}{4}}(1{ +2\delta_\phi})$ {where $\delta_\phi>0$ is a constant such that $\textrm{Supp}(\phi)\subset \{x: |x|<\delta_\phi\}$}, then $\widetilde{\sZ}_T(\sqrt{T} x)$ and $\widetilde{\sZ}_T(\sqrt{T} y)$ are independent, 
\al{
&\quad \mathbb{E}\left(\int_{\R^d} \left(f(x) \widetilde{\sZ}_T(\sqrt{T} x) \DE_x[u_0(B_t)]  -  f(x) \mathbb{E}[\widetilde{\sZ}_T(\sqrt{T} x)] \DE_x[u_0(B_t)] \right)\dd x\right)^2\\
&=\iint_{\R^d\times \R^d} \dd y \dd z f(x)f(y)\DE_x[u_0(B_t)]   \DE_y[u_0(B_t)]\\
& \qquad \qquad \qquad \qquad \times  \mathbb{E}\left[\left(\widetilde{\sZ}_T(\sqrt{T} x) - \mathbb{E}[\widetilde{\sZ}_T(\sqrt{T} x)]   \right)\left(\widetilde{\sZ}_T(\sqrt{T} y) - \mathbb{E}[\widetilde{\sZ}_T(\sqrt{T}y)]\right)\right]\\
&\leq \iint_{|x-y|\leq T^{-\frac{1}{4}}(1+2\delta_\phi)}\dd y \dd z \left|f(x)f(y)\DE_x[u_0(B_t)]   \DE_y[u_0(B_t)]	\right|			\\
& \qquad \qquad \qquad \qquad \times  \left|	\mathbb{E}\left[\left(\widetilde{\sZ}_T(\sqrt{T} x) - \mathbb{E}[\widetilde{\sZ}_T(\sqrt{T} x)]   \right)\left(\widetilde{\sZ}_T(\sqrt{T} y) - \mathbb{E}[\widetilde{\sZ}_T(\sqrt{T}y)]\right)\right]\right|		\\
&\leq CT^{-\frac{d}{4}}\mathbb{E}\left[\widetilde{\sZ}_T(0)^2\right],
}
%\CC{(I guess there should not be $y$ on the last line)} 
which converges to $0$ as $\e\to 0$ by the lemma below and uniform integrability of $(\sZ_T)_{T\geq 0}$.
\begin{lemma}{ \cite[Lemma 3.3]{CN19}}\label{CN Lemma 3.3}
Let $(X_{t})_{t\geq 0}$ be a non-negative, uniformly integrable family of random variables. Then, for any $a(t)\to\infty$ as $t\to\infty$, $$a(t)^{-1}\IE\left[\left(X_{t} \wedge a(t)\right)^2\right] \to 0.$$
\end{lemma}
Putting things together, it follows that 
\al{
&\quad \mathbb{E}\left[\left|\int f(x) \sZ_{Tt}(\sqrt{T} x) \DE_x[u_0(B_t)] \dd x - \int f(x)  \DE_y[u_0(B_t)] \dd x\right|\right]\\
%&\leq \mathbb{E}\left[\left|\int_{\R^d} f(x) \sZ_{Tt}(\sqrt{T} x) \DE_x[u_0(W_t)] \dd x - \int f(x)  \DE_x[u_0(W_t)] \dd x\right|\right]\\
&\leq \mathbb{E}\left[\int_{\R^d} |f(x)| \left|\sZ_{Tt}(\sqrt{T} x)-\widetilde{\sZ}_T(\sqrt{T} x) \right| \DE_x[|u_0(B_t)|] \dd x  \right]\\
&\hspace{8mm}+\mathbb{E}\left|\int_{\R^d} \left(f(x) \widetilde{\sZ}_T(\sqrt{T} x) \DE_x[u_0(B_t)]  -  f(x) \mathbb{E}[\widetilde{\sZ}_T(\sqrt{T} x)] \DE_x[u_0(B_t)] \right)\dd x\right|\\
&\hspace{16mm}+ \int_{\R^d} |f(x)| \left|\mathbb{E}\sZ_{Tt}(\sqrt{T} x)-\mathbb{E}\widetilde{\sZ}_T(\sqrt{T} x) \right| \DE_x[u_0(B_t)] \dd x ,
}
where the last three terms converge to $0$. %Since $\delta>0$ is arbitrary, the proof completed.
\end{proof}

\subsection{Case of stationary initial condition}\label{subsec:LLNstat}
\begin{proof}
We recall that for almost sure realization $\xi$ \begin{align*}
\sZ_\infty (\xi)=\lim_{T_0\to \infty}\sZ_{T_0}(\xi)=\lim_{T_0\to \infty}\DE_0\left[\exp\left(\beta\int_0^{T_0} \phi(y-B_s)\xi(\dd s,\dd y)-\frac{\beta^2V(0)T_0}{2}\right)\right].
\end{align*}

Then,  that for each $\ve>0$ and $x\in \R^d$\begin{align*}
u^{stat}_\ve(t,x)=\lim_{T_0\to \infty}\DE_0\left[\exp\left(\beta\int_0^{T_0} \phi(y-B_s)\xi^{(\e,t,x)}(\dd s,\dd y)-\frac{\beta^2V(0)T_0}{2}\right)\right].
\end{align*}
In particular,  the same argument as in \eqref{eq:eq:IntexprToPolymerMeasure2} {with $u_0\equiv 1$}  yields that 
 \begin{align*}
\int_{\R^d}(u^{stat}_\ve(t,x)-u_\ve(t,x))f(x)\dd x\eqlaw \int_{\R^d}f(x)(\sZ_\infty(\sqrt{T}x)-\sZ_{Tt}(\sqrt{T}x))\dd x
\end{align*}
and hence this converges to $0$ for any $f\in C_c(\R^d)$ by the shift-invariance of the white noise and the uniform integrability of $\{\sZ_T(0):T\geq 0\}$. Thus, Theorem \ref{th:LLNstat} follows from Theorem \ref{th:LLNreg}.
\end{proof}

\section{Proofs of the Edwards-Wilkinson limits for SHE} \label{sec:EWproof} 
% and GFF limit for the partition function
\subsection{Heuristics} \label{sec:heuristicsEW}
We explain here shortly the strategy of proof of Theorems \ref{th:EWlimit}-\ref{th:PolymerCV}. For simplicity, we restrict ourselves to the case $u_0\equiv 1$ and consider only $1$-dimensional convergence in time. For multi-dimensional convergence, we refer the reader to Section \ref{sec:multi-dim}.

Equation \eqref{eq:uZ} tells us that for fixed $t$, $u_\e(t,\cdot) \eqlaw \sZ_{T t}(\sqrt{T} \cdot)$ (we let $T=\e^{-2}$), so in order to prove convergence in \eqref{eq:EWGICSHEResult}, it is enough to show that the martingale 
\begin{equation} \label{eq:definitionNTftauIntro}
(Z^{\ssup{T}}(f)):\tau\in[0,\infty)
\to 
T^{\frac{d-2}{4}}\int_{\mathbb{R}^d}f(x) \left(\sZ_{{T\tau}}(\sqrt{T} x)-1\right)\rmd x. 
\end{equation}
converges to an adequate Gaussian process\footnote{This gives joint convergence for finitely many test functions $f$ thanks to the Cram\'er-Wold theorem}.
To this end, we rely on the functional central limit theorem for martingales \cite[Theorem 3.11 in Chap.~8]{JS87}, which states that if the  quadratic vaariation $\langle Z^{\ssup{T}}(f) \rangle_\tau$ converges towards a deterministic function as $T\to\infty$, then the continous process $Z^{\ssup{T}}(f)$ converges in distribution towards a Gaussian process whose covariance structure is given by the limiting quadratic variation. 

We have,
\begin{equation} \label{eq:bracketNheurist}
\left\langle Z^{\ssup{T}}(f)\right \rangle_\tau = T^{\frac{d-2}{2}} \iint f(x) f(y) \langle \sZ(\sqrt T x), \sZ(\sqrt T y) \rangle_{Tt},
\end{equation}
and since
\[\dd \sZ_s(x)=\beta\,\DE_x \left[\int_{\mathbb R^d}\phi(B_s-b) \Phi_s \xi(\dd s,\dd b)\right] = \beta\int_{(\mathbb R^{d})^2} \phi(z-b) \DE_{0,x}^{s,z}[\Phi_s]  \xi(\dd s,\dd b)\rho_s(z)\dd z,\]
 we find, using that $V=\phi \star \phi$,
\begin{align}
& \langle \sZ(\sqrt T x), \sZ(\sqrt T y) \rangle_{T\tau}\\
 & = \b^2 \int_0^{T \tau} \iint V(z_1-z_2) \rho_s(z_1-\sqrt T x) \rho_s(z_2-\sqrt T y)\times  \nonumber  \DE_{0,\sqrt T x}^{s,z_1} \left[\Phi_s\right] \DE_{0,\sqrt T y}^{s,z_2}\left[\Phi_s \right] \dd s \dd z_1 \dd z_2\nonumber\\
& = \b^2 T^{-\frac{d-2}{2}} \int_0^{\tau} \int_{(\mathbb{R}^d)^2} V(Z) \rho_\sigma(z -x) \rho_\sigma(z + T^{-\frac{1}{2}} Z - y)\times  \nonumber \\
& \hspace{5cm} \times \DE_{0,\sqrt T x}^{T\sigma,\sqrt T z} \left[\Phi_{T\sigma}\right] \DE_{0,\sqrt T y}^{T\sigma,\sqrt T z + Z}\left[\Phi_{T\sigma} \right] \dd \sigma \dd z \dd Z,
 \label{eq:bracketExpansion_intro}
\end{align}
where we have set $s=T\sigma$, $z_1 = \sqrt T z$ and $z_2 = z_1 + Z$ while using the scaling property $\rho_{T\sigma}(\sqrt T z) = T^{-\frac{d}{2}} \rho_\sigma(z)$ and symmetry of $V$. (Note that we see here the correct scaling appearing).

Then, 
% to prove convergence of $\langle N^{\ssup{T}}(f) \rangle_\tau$
we appeal to the local limit theorem for polymers, which tells us that if $\b<\b_{L^2}$ and if $\overset{\leftarrow}{\sZ}_{T\sigma,\ell}(z)$ denotes a time-reversed polymer partition function starting at $(T\sigma,z)$ of time horizon $\ell=\ell(T)=o(T)$, then
 \[ \DE_{0,\sqrt T x}^{T\sigma,\sqrt T z} \left[\Phi_s\right] \approx_{L^2}  \sZ_\ell(\sqrt T x) \overset{\leftarrow}{\sZ}_{T\sigma,\ell}(\sqrt T z),
 \]
where by $\approx_{L^2}$ we mean that the difference between the RHS and LHS goes to $0$ in $L^2$-norm as $T\to\infty$ (see Theorem \ref{th:errorTermLLT} or \cite{V06,Si95}).
Then, coming back to \eqref{eq:bracketNheurist}, we can argue that
\begin{equation*} \label{eq:mainLLTapprox_intro}
 \langle Z^{\ssup T}(f)\rangle_\tau \approx_{L^1} \iint_{\mathbb{R}^d\times \mathbb{R}^d}f(x) f(y) \sZ_\ell(\sqrt T x) \sZ_\ell(\sqrt T y) \Psi_T(x,y) \dd x \dd y,
\end{equation*}
with
\begin{align*}
& \Psi_T(x,y) \\
&=   \b^2
\int_{0}^\tau \int V(Z) \rho_\sigma(z-x) \rho_\sigma( z+T^{-\frac{1}{2}}Z-y) \overset{\leftarrow}{\sZ}_{T\sigma,\ell}(\sqrt T z) \overset{\leftarrow}{\sZ}_{T\sigma,\ell}(\sqrt T z + Z) \dd  z \dd Z \dd \sigma.
\end{align*}

Since $\sZ(z)$ and $\sZ(z')$ become asymptotically inpdependent for $|z-z'|\to\infty$, we can justify via a homogenization argument that 
\begin{align}
\Psi_T(x,y) \approx_{L^1} \IE[\Psi_T(x,x)] \to \Psi(x,y)& := \gamma^2(\b) \int_{0}^{\tau}\rho_{2\sigma}(x-y),
\end{align}
as $T\to\infty$, where $\gamma^2(\b)$ is given in \eqref{def-sigma-beta}.
By an analogous homogenization argument, we further find that
\begin{equation*} \label{eq:CvBracketIntro}
\langle Z^{\ssup T}(f)\rangle_\tau \to \gamma^2(\b)\int_{0}^\tau \iint f(x) f(y) \rho_{2\sigma}(x-y)\dd \sigma \dd x \dd y,
\end{equation*}
in $L^1$, where the RHS is precisely equal to the variance of $\int f(x) \kU_1(t,x) \dd x$ when $\tau=t$ and $u_0\equiv 1$.

\subsection{Proof of Theorem \ref{th:EWlimit} (1-dimensional in time)} \label{ref:proofOfEWOnetimeGeneralIC}
For some fixed $t$, let us define the martingale
\begin{equation}
    \tau \to \kW_{\tau}(x)= \kW_{\tau,t,T}(x)=\DE_x\left[ \Phi_\tau(B) \, u_0\left(\frac{B_{t T}}{\sqrt{T}}\right)\right].
\end{equation}
   % {\SN Is $\Phi_{\tau}$ $\Phi_{\tau T}$?} {\MN This should be $\Phi_\tau$ since it is not a martingale w.r.t. the canonical filtration $ \mathcal{F}_t$ in the case $\Phi_{T\tau}$.  However, $N_\tau^{\ssup {T}}$ is not a martingale w.r.t. $\mathcal{F}_t$ but the time-rescaled filtration $\mathcal{F}_t^{\ssup{T}}=\mathcal{F}_{tT}$.} \CC{can we remove this remark?}
as well as the martingale
   \begin{equation} \label{eq:definitionNTftauIntroGeneral}
(W^{\ssup{T}}(f)):\tau\in[0,t] \to 
T^{\frac{d-2}{4}}\int f(x) \left(\kW_{T \tau }\left({\sqrt{T}x}\right)-\bar{u}\left(t,x\right)\right)dx, 
\end{equation}
with respect to the filtration $\{\mathcal{F}_s^{(T)}:=\mathcal{F}_{Ts}:0\leq s\leq t\}$ such that ${\kW}^{\ssup{T}}_0=0$. We also define: 
\begin{equation} \label{eq:defNtau}
{\mathscr U_1(t,f)}:= \int f(x) \kU_1({t},x) \dd x,
\end{equation}
with $\kU_1$  given in \eqref{eq:EW_GRZ}.
  Relying on \eqref{eq:FKgeneralIC2}, we see that Theorem \ref{th:EWlimit} (for one time $t$) is proven once we show that as $T\to\infty$,

\begin{equation} \label{eq:cvNtftoU1f}
{W}_{t}^{\ssup{T}}(f)\cvlaw \mathscr U_1(t,f).
\end{equation}
% \CC{\sout{In fact, what we will  prove is that }
%\begin{equation} \label{eq:MaingoalCVNtauf}
%(Z^{\ssup{T}}_\tau(f))_{\tau\leq t}\cvlaw (Z_\tau(f))_{\tau\leq t},
%\end{equation}
%\sout{in terms of continuous processes.}}

For $0<  \tau_0\leq \tau \leq t$, we start again by decomposing
\begin{equation} \label{eq:decomposemartingale}
W^{\ssup{T}}_\tau(f) = W^{\ssup{T}}_{\tau_0,\tau}(f) + W^{\ssup{T}}_{\tau_0}(f),
\end{equation}
where $W^{\ssup{T}}_{\tau_0,\tau}(f)$ is defined by the relation. We first show that $W^{\ssup{T}}_{\tau_0}(f)$ is going to $0$ as $\tau_0\to 0$ uniformly in $T$:
\begin{lemma} \label{lem:unifNeglTau0}  As $\tau_0\to0$,
\begin{equation} \label{eq:unifNeglTau0General}
\sup_{T>1}\IE\left[W^{\ssup{T}}_{\tau_0}(f)^2\right] \to 0.
\end{equation}
\end{lemma}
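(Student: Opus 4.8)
The plan is to compute the $L^2$ norm of $W^{\ssup T}_{\tau_0}(f)$ directly via the quadratic variation of the martingale $\tau\mapsto W^{\ssup T}_\tau(f)$, exactly as in the heuristics of Section \ref{sec:heuristicsEW}, but to stop at time $\tau_0$ rather than at $t$ and simply bound everything rather than take a limit. Concretely, since $W^{\ssup T}_0(f)=0$ we have $\IE[W^{\ssup T}_{\tau_0}(f)^2]=\IE\langle W^{\ssup T}(f)\rangle_{\tau_0}$, and the bracket is computed from $\dd \kW_\tau(x)$ the same way as $\dd\sZ_\tau(x)$ was in \eqref{eq:bracketExpansion_intro}: writing $\kW_\tau(x)=\DE_x[\Phi_\tau u_0(B_{tT}/\sqrt T)]$ as a stochastic integral against $\xi$, one gets
\begin{align*}
\langle \kW(\sqrt T x),\kW(\sqrt T y)\rangle_{T\tau_0}
&=\b^2\int_0^{T\tau_0}\!\!\iint V(z_1-z_2)\,\rho_s(z_1-\sqrt T x)\rho_s(z_2-\sqrt T y)\\
&\qquad\times \DE^{s,z_1}_{0,\sqrt T x}\!\big[\Phi_s\,u_0(B_{tT}/\sqrt T)\big]\,\DE^{s,z_2}_{0,\sqrt T y}\!\big[\Phi_s\,u_0(B_{tT}/\sqrt T)\big]\dd s\,\dd z_1\dd z_2.
\end{align*}
Since $u_0$ is bounded, $|u_0|\le\|u_0\|_\infty$, so after the change of variables $s=T\sigma$, $z_1=\sqrt T z$, $z_2=z_1+Z$ (producing the factor $T^{-\frac{d-2}{2}}$ that cancels the prefactor $T^{\frac{d-2}{2}}$ in \eqref{eq:bracketNheurist}), we obtain
\[
\IE\big[W^{\ssup T}_{\tau_0}(f)^2\big]\le \|u_0\|_\infty^2\,\b^2\!\int_0^{\tau_0}\!\!\iint |f(x)f(y)|\!\iint V(Z)\rho_\sigma(z-x)\rho_\sigma(z+T^{-1/2}Z-y)\,\IE\big[\sZ_{T\sigma}(\sqrt T z)\sZ_{T\sigma}(\sqrt T z+Z)\big]\dd\sigma\dd z\dd Z\dd x\dd y.
\]

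Next I would bound the second-moment factor uniformly in $T$: by Cauchy–Schwarz and shift invariance, $\IE[\sZ_{T\sigma}(\sqrt T z)\sZ_{T\sigma}(\sqrt T z+Z)]\le \IE[\sZ_{T\sigma}^2]\le\IE[\sZ_\infty^2]=:c_\b<\infty$, which is finite precisely because $\b<\b_{L^2}$. (If one wants the sharper $\IE[\sZ(\cdot)\sZ(\cdot+Z)]$ decay in $Z$ one can keep it, but it is not needed here.) Using $\int V(Z)\dd Z<\infty$, $\int\rho_\sigma(z-x)\dd z=1$, and boundedness/compact support of $f$, the $z,Z,x,y$ integrals are all finite and the whole expression is bounded by $C_{\b,f,u_0}\int_0^{\tau_0}\dd\sigma = C_{\b,f,u_0}\,\tau_0$, uniformly in $T>1$. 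Letting $\tau_0\to 0$ gives the claim. The one point requiring a little care is the interchange of the $\iint\dd x\dd y$ integration with the expectation and the bracket computation — this is justified by Fubini–Tonelli since everything in sight is nonnegative after taking absolute values, together with the stochastic-Fubini / $L^2$-isometry identity for the Walsh integral representation of $\kW_\tau$, which is standard (cf.\ the computation leading to \eqref{eq:bracketExpansion_intro}).

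The main (and really only) obstacle is making sure the uniform-in-$T$ second moment bound is invoked correctly: one must check that the time argument $T\sigma$ ranges over $[0,\,T\tau_0]$ and hence $\IE[\sZ_{T\sigma}^2]$ is controlled by $\sup_{s\ge 0}\IE[\sZ_s^2]=\IE[\sZ_\infty^2]$, which is where $\b<\b_{L^2}$ enters and is the reason the lemma (and the whole 1-dimensional-in-time argument) does not extend past $\b_{L^2}$. Everything else is the routine bookkeeping of rescaling heat kernels and using $\int V<\infty$, $f\in\mathcal C^\infty_c$, $u_0$ bounded.
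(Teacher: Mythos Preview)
Your approach is essentially the same as the paper's and leads to the same bound $C\tau_0$, but there is one slip worth flagging. After conditioning on $B_s=z_1$ (so that $\DE^{s,z_1}_{0,\sqrt Tx}[\Phi_s\,u_0(B_{tT}/\sqrt T)]=\DE^{s,z_1}_{0,\sqrt Tx}[\Phi_s]\cdot\bar u(t-s/T,z_1/\sqrt T)$ by the Markov property), the random factor that remains once you bound $u_0$ is the \emph{point-to-point} (Brownian bridge) partition function $\DE^{s,z_1}_{0,\sqrt Tx}[\Phi_s]$, not the point-to-line $\sZ_s$. Hence the expectation in your displayed bound should be
\[
\IE\big[\DE^{T\sigma,\sqrt Tz}_{0,\sqrt Tx}[\Phi_{T\sigma}]\,\DE^{T\sigma,\sqrt Tz+Z}_{0,\sqrt Ty}[\Phi_{T\sigma}]\big],
\]
and the uniform-in-$T$ bound you need is \eqref{eq:unifboundForP2P} (shear invariance together with $\sup_t\IE\,\DE^{t,0}_{0,0}[\Phi_t]^2<\infty$ for $\b<\b_{L^2}$), rather than $\sup_s\IE[\sZ_s^2]=\IE[\sZ_\infty^2]$. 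With that correction the rest of your argument --- Chapman--Kolmogorov or $\int\rho_\sigma=1$, $\|V\|_1<\infty$, $f\in\mathcal C^\infty_c$ --- goes through exactly as in the paper's proof and yields $\IE[W^{\ssup T}_{\tau_0}(f)^2]\le C\tau_0$ uniformly in $T>1$.
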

By It\^o calculus for the white noise we have $\dd \Phi_s(B) = \int_{\mathbb R^d} \xi(\dd s,\dd b) \phi(B_s-b)\Phi_s(B)$, therefore equation \eqref{eq:definitionNTftauIntroGeneral} writes
\begin{align*}
W^{\ssup{T}}_{\tau_0,\tau}(f)&  =  T^{\frac{d-2}{4}} \int_{T\tau_0}^{T \tau} \int_{\mathbb R} \xi(\dd s, \dd b) \int_{(\mathbb R^d)^2}  f(x) \phi(z-b) \rho_s(z-\sqrt Tx)\\
&\hspace{4cm}\times  \DE_{0,\sqrt T x}^{s,z}[ \Phi_s] \DE_z\left[u_0\left(\frac{B_{tT-s}}{\sqrt{T}}\right)\right]
%\bar u (t-T^{-1} s,T^{-\frac{1}{2}}z)
\dd z  \dd x.
\end{align*}
Having the local limit theorem in mind (see Theorem \ref{th:errorTermLLT} in the appendix), we define
\begin{equation} \label{eq:defMtau0tauf}
\begin{aligned}
M^{\ssup{T}}_{\tau_0,\tau}(f)  & := T^{\frac{d-2}{4}} \int_{T\tau_0}^{T \tau} \int_{\mathbb R} \xi(\dd s, \dd b) \int_{(\mathbb R^d)^2}  f(x) \phi(z-b) \rho_s(z-\sqrt Tx)  \\
&\hspace{3cm} \times  \sZ_{\ell(T)}(\sqrt T x) \overset{\leftarrow}{\sZ}_{s,\ell(T)}(z) \DE_z\left[u_0\left(\frac{B_{tT-s}}{\sqrt{T}}\right)\right]% \bar u (t-T^{-1} s,T^{-\frac{1}{2}}z)
 \dd z  \dd x,
\end{aligned}
\end{equation}
with $\ell=\ell(T)=T^a$ and $a<\frac{1}{2}$.
 Precisely, we will justify that:
 \begin{lemma} \label{lem:changeOfMartingaleSHE} For all test function $f$, for all $\tau\geq \tau_0$ and as $T\to\infty$,
\begin{equation} \label{eq:approxByMtau0General}
\IE\left[|W^{\ssup{T}}_{\tau_0,\tau}(f)-M^{\ssup{T}}_{\tau_0,\tau}(f)|^2\right]\to 0.
\end{equation}
\end{lemma}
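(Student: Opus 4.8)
The plan is to prove Lemma~\ref{lem:changeOfMartingaleSHE} by estimating the $L^2$-norm of the difference $W^{\ssup{T}}_{\tau_0,\tau}(f)-M^{\ssup{T}}_{\tau_0,\tau}(f)$ through its quadratic variation, and then controlling the latter using the local limit theorem for polymers (Theorem~\ref{th:errorTermLLT}). Since both $W^{\ssup{T}}_{\tau_0,\tau}(f)$ and $M^{\ssup{T}}_{\tau_0,\tau}(f)$ are stochastic integrals against the white noise $\xi$ over the time window $[T\tau_0,T\tau]$, their difference is also such an integral, with integrand
\[
\Delta^{\ssup{T}}_s(b) = T^{\frac{d-2}{4}} \int_{(\R^d)^2} f(x)\,\phi(z-b)\,\rho_s(z-\sqrt T x)\,\Big( \DE_{0,\sqrt T x}^{s,z}[\Phi_s] - \sZ_{\ell(T)}(\sqrt T x)\overset{\leftarrow}{\sZ}_{s,\ell(T)}(z)\Big)\DE_z\big[u_0(B_{tT-s}/\sqrt T)\big]\,\dd z\,\dd x.
\]
By the It\^o isometry, $\IE[|W^{\ssup{T}}_{\tau_0,\tau}(f)-M^{\ssup{T}}_{\tau_0,\tau}(f)|^2] = \IE\big[\int_{T\tau_0}^{T\tau}\int_{\R^d}\Delta^{\ssup{T}}_s(b)^2\,\dd b\,\dd s\big]$, so the task reduces to bounding this space-time integral and showing it vanishes as $T\to\infty$.

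The key steps, in order, are as follows. First, expand the square and use $\int \phi(z_1-b)\phi(z_2-b)\,\dd b = V(z_1-z_2)$ to obtain a double spatial integral in $x,x',z_1,z_2$ against $V(z_1-z_2)$ times heat kernels and two factors of the error $\DE_{0,\sqrt T x}^{s,z_i}[\Phi_s] - \sZ_{\ell}(\sqrt T x)\overset{\leftarrow}{\sZ}_{s,\ell}(z_i)$. Second, apply Cauchy--Schwarz in the probability $\IP$ to separate the two error factors, reducing to controlling $\IE\big[(\DE_{0,\sqrt T x}^{s,z}[\Phi_s] - \sZ_{\ell}(\sqrt T x)\overset{\leftarrow}{\sZ}_{s,\ell}(z))^2\big]$, which is exactly the $L^2$-error quantity estimated in Theorem~\ref{th:errorTermLLT}; here one uses the hypothesis $\b<\b_{L^2}$ and the choice $\ell=T^a$ with $a<1/2$ so that $\ell = o(s)$ uniformly for $s\in[T\tau_0,T\tau]$ (this is where $\tau_0>0$ is essential — it keeps $s$ bounded below by $T\tau_0$, so the ratio $\ell/s$ is small). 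Third, perform the change of variables $s=T\sigma$, $z_i=\sqrt T w_i$ already used in \eqref{eq:bracketExpansion_intro}, which produces a prefactor $T^{\frac{d-2}{2}}\cdot T^{-\frac{d-2}{2}} = O(1)$ from the combination of $T^{\frac{d-2}{4}}$ squared with the Jacobians and heat-kernel scalings, so that the whole expression is a bounded integral in $\sigma, w, Z, x, x'$ of $V(Z)$ times heat kernels times the (now small) error term; the $L^2$ smallness from Theorem~\ref{th:errorTermLLT}, together with the integrability of $V$ and of the heat kernels against the compactly supported $f$ and the bounded $u_0$, lets one conclude by dominated convergence that the integral tends to $0$.

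The main obstacle I expect is making the application of Theorem~\ref{th:errorTermLLT} uniform over the relevant range of parameters after rescaling: the local limit theorem gives an $L^2$-error that decays in the polymer time horizon with some rate, but one needs this decay to be integrable against the heat-kernel weights $\rho_\sigma(w-x)\rho_\sigma(w+T^{-1/2}Z - x')$ uniformly in $\sigma\in[\tau_0,\tau]$ and over the unbounded $z$ (equivalently $w$) variable, including the ``linear scale'' regime where $|z - \sqrt T x|$ is comparable to $s$ rather than $\sqrt s$. This is precisely the reason the authors emphasize in the introduction that their Theorem~\ref{th:errorTermLLT} improves on \cite{V06,Si95} by (i) not being restricted to the diffusive scale and (ii) giving a $\b$-independent speed of convergence — both features are needed here. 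A secondary technical point is handling the boundary layer $s$ near $T\tau_0$ where $\ell(T)$ is not yet negligible compared to $s$; this contributes a vanishing amount because the time window $[T\tau_0, T\tau_0 + \ell(T)]$ has length $o(T)$ relative to the total, and on it one can bound the bridge expectation and the partition functions crudely in $L^2$ using $\b<\b_{L^2}$.
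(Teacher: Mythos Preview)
Your approach is essentially the same as the paper's: expand the $L^2$-norm via It\^o isometry and $V=\phi\star\phi$, apply Cauchy--Schwarz in $\IP$ to the two $\Gamma$-factors, rescale $s=T\sigma$, $z_1=\sqrt T z$, $z_2=\sqrt T z+Z$, and conclude by dominated convergence using the local limit theorem for the pointwise vanishing.

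Two remarks where you over-anticipate difficulty. First, the quantitative and non-diffusive-scale improvements of Theorem~\ref{th:errorTermLLT} are \emph{not} needed here; the paper explicitly notes that the weaker statement of \cite{V06} already suffices. The domination is obtained not from a rate in the LLT but from the uniform $L^2$ bound on point-to-point partition functions, namely $\DE_{0,0}^{t,z}[\Phi_t]\eqlaw \DE_{0,0}^{t,0}[\Phi_t]$ by shear-invariance and $\sup_t \IE\,\DE_{0,0}^{t,0}[\Phi_t]^2<\infty$ for $\b<\b_{L^2}$; this makes $\IE\,\Gamma^2$ uniformly bounded and the dominating function is simply $|f(x)f(y)|V(Z)\rho_\sigma(z-x)\rho_\sigma(z-y+T^{-1/2}Z)\|u_0\|_\infty^2$ times a constant. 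Second, there is no boundary-layer issue near $s=T\tau_0$: since $\ell(T)=T^a$ with $a<\tfrac12$ and $s\geq T\tau_0$, one has $\ell(T)/s\leq T^{a-1}/\tau_0\to 0$ uniformly over the whole window, so the LLT applies throughout without any separate treatment.
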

Finally, we will show that:
\begin{proposition} \label{prop:CVbracketSHEalterGeneral} For any test function $f$, all $0< \tau_0\leq \tau \leq t$ and as $T\to\infty$,
\begin{equation} \label{eq:centralLimitThMGeneral}
\langle M^{\ssup{T}}_{\tau_0,\cdot}(f) \rangle_\tau \cvLone \gamma^2(\b) \int_{\tau_0}^\tau \int_{(\mathbb R^d)^3} f(x) f(y) \rho_\sigma(z-x) \rho_\sigma( z-y) \bar u (t- \sigma,z)^2 \dd \sigma \dd z \dd x \dd y.
\end{equation}
\end{proposition}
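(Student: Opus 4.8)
The plan is to compute the bracket $\langle M^{\ssup{T}}_{\tau_0,\cdot}(f)\rangle_\tau$ exactly and then to prove separately that its expectation converges to the right-hand side of \eqref{eq:centralLimitThMGeneral} and that its variance tends to $0$; since $\IE\big|\langle M\rangle_\tau-c\big|\le\mathrm{Var}(\langle M\rangle_\tau)^{1/2}+\big|\IE[\langle M\rangle_\tau]-c\big|$, this gives the claimed $L^1$-convergence (with $c$ the limit). For fixed $t$, $M^{\ssup{T}}_{\tau_0,\cdot}(f)$ is a white-noise stochastic integral with integrand $G^{\ssup{T}}_s(b)$ against $\xi(\dd s,\dd b)$ read off from \eqref{eq:defMtau0tauf}, so It\^o's isometry gives $\langle M^{\ssup{T}}_{\tau_0,\cdot}(f)\rangle_\tau = \b^2 T^{\frac{d-2}{2}}\int_{T\tau_0}^{T\tau}\int_{\R^d}G^{\ssup{T}}_s(b)^2\,\dd b\,\dd s$ (the $\b^2$ coming from $\dd\Phi_s=\b\,\Phi_s\int\phi(B_s-b)\,\xi(\dd s,\dd b)$). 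First I would expand the square, integrate out $b$ with $\int_{\R^d}\phi(z-b)\phi(z'-b)\,\dd b=V(z-z')$, and substitute $s=T\sigma$, $z=\sqrt T w$, $z'=\sqrt T w+Z$; the heat-kernel scaling $\rho_{T\sigma}(\sqrt T\,\cdot)=T^{-d/2}\rho_\sigma(\cdot)$ absorbs exactly the prefactor $T^{\frac{d-2}{2}}$, while $\DE_{\sqrt T w}[u_0(B_{T(t-\sigma)}/\sqrt T)]=\bar u(t-\sigma,w)$ by Brownian scaling and the Feynman–Kac formula for \eqref{eq:Def_ubar}. This produces the exact identity
\al{
\langle M^{\ssup{T}}_{\tau_0,\cdot}(f)\rangle_\tau
&= \b^2\int_{\tau_0}^\tau\int_{(\R^d)^4} f(x) f(y) V(Z)\,\rho_\sigma(w-x)\,\rho_\sigma(w-y+T^{-\frac12}Z)\,\bar u(t-\sigma,w)\,\bar u(t-\sigma,w+T^{-\frac12}Z)\\
&\quad\times\ \sZ_\ell(\sqrt T x)\,\sZ_\ell(\sqrt T y)\ \overset{\leftarrow}{\sZ}_{T\sigma,\ell}(\sqrt T w)\ \overset{\leftarrow}{\sZ}_{T\sigma,\ell}(\sqrt T w+Z)\,\dd x\,\dd y\,\dd w\,\dd Z\,\dd\sigma ,
}
with $\ell=\ell(T)=T^a$, $a<\frac12$.

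The key structural fact is a separation of time scales: since $\sigma\ge\tau_0>0$ and $\ell=o(T)$, for $T$ large the window $[0,\ell]$ carrying $\sZ_\ell(\sqrt T x),\sZ_\ell(\sqrt T y)$ is disjoint from the window $[T\sigma-\ell,T\sigma]$ carrying $\overset{\leftarrow}{\sZ}_{T\sigma,\ell}(\sqrt T w),\overset{\leftarrow}{\sZ}_{T\sigma,\ell}(\sqrt T w+Z)$, so those two groups of factors are independent; likewise $\overset{\leftarrow}{\sZ}_{T\sigma,\ell}$ and $\overset{\leftarrow}{\sZ}_{T\sigma',\ell}$ are independent once $|\sigma-\sigma'|\gg\ell/T$, while spatially the $\sZ_\ell$'s (resp.\ the $\overset{\leftarrow}{\sZ}$'s) decorrelate at distance $\gg\sqrt\ell/\sqrt T$ — all these scales vanishing as $T\to\infty$. (This is where the restriction $\tau_0>0$ is used.) This already handles the mean: by the window-independence, $\IE[\langle M^{\ssup{T}}_{\tau_0,\cdot}(f)\rangle_\tau]$ is the integral above with the random product replaced by $\IE[\sZ_\ell(\sqrt T x)\sZ_\ell(\sqrt T y)]\cdot\IE[\overset{\leftarrow}{\sZ}_{T\sigma,\ell}(\sqrt T w)\overset{\leftarrow}{\sZ}_{T\sigma,\ell}(\sqrt T w+Z)]$; the first factor tends to $1$ for Lebesgue-a.e.\ $(x,y)$ (spatial independence and $\IE\sZ_\ell=1$) and is bounded by $\sup_T\IE\sZ_T^2<\infty$, finite precisely because $\b<\b_{L^2}$, while by stationarity and time-reversal the second factor equals $\IE[\sZ_\ell(0)\sZ_\ell(Z)]$, which tends to $\IE[\sZ_\infty(0)\sZ_\infty(Z)]$ uniformly for $Z\in\textrm{Supp}(V)$ since $\sZ_\ell\to\sZ_\infty$ in $L^2$ (again $\b<\b_{L^2}$); finally $\rho_\sigma(w-y+T^{-\frac12}Z)\to\rho_\sigma(w-y)$ and $\bar u(t-\sigma,w+T^{-\frac12}Z)\to\bar u(t-\sigma,w)$. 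Dominated convergence — with domination from the bounds on $f$, $V$, $\bar u$, $\IE\sZ_\ell^2$, the integrability of $\iint\rho_\sigma(w-x)\rho_\sigma(w-y)\,\dd w\,\dd x$ uniformly for $\sigma\in[\tau_0,t]$, and the compact supports of $f$, $V$ — then yields, after performing the $Z$-integral and using $\b^2\int_{\R^d}V(Z)\IE[\sZ_\infty(0)\sZ_\infty(Z)]\,\dd Z=\gamma^2(\b)$ from \eqref{def-sigma-beta}, that $\IE[\langle M^{\ssup{T}}_{\tau_0,\cdot}(f)\rangle_\tau]$ converges to the right-hand side of \eqref{eq:centralLimitThMGeneral}.

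The main obstacle is to show $\mathrm{Var}(\langle M^{\ssup{T}}_{\tau_0,\cdot}(f)\rangle_\tau)\to0$, the difficulty being that only second moments of the partition functions are available for $\b<\b_{L^2}$. Writing $\langle M\rangle_\tau=\int g_T(\theta)\,\dd\theta$ with $\theta=(\sigma,x,y,w,Z)$, we have $\mathrm{Var}(\langle M\rangle_\tau)=\iint\mathrm{Cov}(g_T(\theta),g_T(\theta'))\,\dd\theta\,\dd\theta'$, and by the separation of scales $\mathrm{Cov}(g_T(\theta),g_T(\theta'))$ vanishes unless $(\theta,\theta')$ lies in a near-diagonal set of Lebesgue measure $O(T^{-\eta})$ for some $\eta>0$ (namely $\{x,y\}$ within $O(\sqrt\ell/\sqrt T)$ of $\{x',y'\}$, or $|\sigma-\sigma'|=O(\ell/T)$ together with $\{w,w+Z\}$ within $O(\sqrt\ell/\sqrt T)$ of $\{w',w'+Z'\}$). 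On the part of this set where, within each of the pairs $\{x,y\}$, $\{x',y'\}$, $\{w,w+Z\}$, $\{w',w'+Z'\}$, the two points are themselves separated, the window-independence factors the expectations into pairs, each bounded by $\sup_T\IE\sZ_T^2<\infty$, so $|\mathrm{Cov}|=O(1)$ there and the contribution is $O(T^{-\eta})\to0$. The delicate region is the deep diagonal, where up to four partition functions cluster at mutually close points and second moments no longer suffice pointwise; there I would truncate the partition functions at a level $M(\ell)=\ell^{\delta}$ for $\delta>0$ small (replacing, in the spirit of the $\widetilde\sZ_T$ of Section \ref{subsec:LLNuzero} and using $\overline\sZ_\ell$ from \eqref{eq:barZ}, each factor by its path-constrained truncation), the induced error being negligible because $\sup_T\IE\sZ_T^2<\infty$ forces $\IE[\sZ_\ell;\sZ_\ell>M(\ell)]\to0$, while the clustered products of up to four truncated factors are then bounded by $M(\ell)^4=\ell^{4\delta}$, a quantity polynomial in $T$ that is beaten — once $\delta$ is small enough — by the faster polynomial decay of the measure of the deep-diagonal subset (Lemma \ref{CN Lemma 3.3} being available to optimise the truncation level if needed). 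This yields $\mathrm{Var}(\langle M^{\ssup{T}}_{\tau_0,\cdot}(f)\rangle_\tau)\to0$, and combined with the convergence of the mean, \eqref{eq:centralLimitThMGeneral} follows.
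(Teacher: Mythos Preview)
Your expectation computation is fine and mirrors what the paper needs. The variance argument, however, is not set up correctly, and the paper avoids this route altogether.

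First, your claim that ``$\mathrm{Cov}(g_T(\theta),g_T(\theta'))$ vanishes unless $(\theta,\theta')$ lies in a near-diagonal set'' is false as written: $\sZ_\ell(\sqrt T x)$ and $\sZ_\ell(\sqrt T x')$ use the whole white noise on $[0,\ell]\times\R^d$ and are never literally independent. Exact vanishing of the covariance only holds once you replace every factor by its path--constrained version $\overline\sZ_\ell$, so the truncation/constraint has to come first, not only on the ``deep diagonal''. Second, in your shallow near-diagonal case you require the pair $\{\sqrt T w,\sqrt T w+Z\}$ to be separated; since $Z\in\mathrm{supp}\,V$ is bounded while the decorrelation scale $\sqrt\ell\to\infty$, this pair is \emph{always} clustered, so your case is empty and the backward block of $g_T(\theta)$ is always a genuine product $\overset{\leftarrow}{\sZ}\cdot\overset{\leftarrow}{\sZ}$ with no further factorisation. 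Third, on the deep diagonal the product $g_T(\theta)g_T(\theta')$ involves eight truncated partition functions, not four, so your crude bound is $M(\ell)^8$; the balance $M(\ell)^8\times|\text{diagonal}|\to 0$ still works (take $M=\ell^\delta$ with $\delta$ small), but your sketch as stated does not establish it.

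The paper sidesteps the variance computation entirely by exploiting the product structure. One writes
\[
\langle M^{\ssup{T}}_{\tau_0,\cdot}(f)\rangle_\tau=\iint f(x)f(y)\,\sZ_\ell(\sqrt T x)\sZ_\ell(\sqrt T y)\,\Psi_T(x,y)\,\dd x\,\dd y,
\]
where $\Psi_T$ contains only the two backward factors and the $\sigma$-integral. Since the forward block $\sZ_\ell\sZ_\ell$ lives on $[0,\ell]$ and $\Psi_T$ on $[T\tau_0-\ell,T\tau]$, they are independent for $T$ large, so
\[
\IE\big|\langle M\rangle_\tau-\iint ff\,\Psi\big|\le\iint|ff|\,\IE[\sZ_\ell\sZ_\ell]\,\IE|\Psi_T-\Psi|+\Big|\iint ff\,\IE[\sZ_\ell\sZ_\ell-1]\,\Psi\Big|.
\]
This reduces everything to two lemmas, each involving only a \emph{pair} of partition functions: (i) $\IE|\Psi_T(x,y)-\Psi(x,y)|\to0$, proved by replacing $\overset{\leftarrow}{\sZ}\overset{\leftarrow}{\sZ}$ by $(\overline\sZ\,\overline\sZ)\wedge T^{\frac12-a}$ and running an $L^2$ homogenisation (the square now involves only four factors, and Lemma~\ref{CN Lemma 3.3} with the uniform integrability of $\sZ_\ell(0)\sZ_\ell(Z)$ closes it); and (ii) $\iint ff(\IE[\sZ_\ell\sZ_\ell]-1)\Psi\to0$, which is just the spatial decorrelation of $\IE[\sZ_\ell(\sqrt T x)\sZ_\ell(\sqrt T y)]$. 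In short, by splitting off the forward block first, the paper never has to look at more than a product of two partition functions at a time, whereas your variance route forces you to control eight-fold products and requires a more delicate (though ultimately feasible) truncation.
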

 Combining \eqref{eq:unifNeglTau0General}, \eqref{eq:approxByMtau0General} and \eqref{eq:centralLimitThMGeneral}, we obtain that for all $\tau\leq t$,
\begin{equation} \label{eq:CVbracketNtauf}
\langle {W}^{\ssup{T}}(f)\rangle_\tau \cvLone {\gamma^2(\b) \int_{0}^\tau \int_{(\mathbb R^d)^3} f(x) f(y) \rho_\sigma(z-x) \rho_\sigma( z-y) \bar u (t- \sigma,z)^2 \dd \sigma \dd z \dd x \dd y,}
\end{equation} 
which, by the functional central limit theorem for martingales \cite[Theorem 3.11 in Chap.~8]{JS87}, implies that $({W}^{\ssup{T}}_\tau(f))_{\tau\leq t}$ converges to the centered Gaussian process with variance given by the RHS of \eqref{eq:CVbracketNtauf}. Since $\mathscr U_1(t,f)$ is a centered Gaussian variable with the same variance as the RHS of \eqref{eq:CVbracketNtauf} when $\tau = t$, this in turn entails that \eqref{eq:cvNtftoU1f} holds.

We now turn to the proofs of the above-mentioned results and we begin with Lemma \ref{lem:changeOfMartingaleSHE} and Lemma \ref{lem:unifNeglTau0}.
\begin{proof}[Proof of Lemma \ref{lem:changeOfMartingaleSHE}]
Let $\tau_0 > 0$. Setting,
\begin{equation} \label{eq:DefGamma}
\Gamma(s,z,x) = \DE_{0,\sqrt T x}^{s,z}[ \Phi_s] - \sZ_{\ell(s)}(\sqrt T x) \overset{\leftarrow}{\sZ}_{s,\ell(s)}(z),
\end{equation}
we obtain using that $\int \phi(z_1-b)\phi(z_2-b) \dd b = V(z_1-z_2)$:
\begin{align*}
&\IE\left[\left|W^{\ssup{T}}_{\tau_0,\tau}(f)-M^{\ssup{T}}_{\tau_0,\tau}(f)\right|^2\right]\\
& = T^{\frac{d-2}{2}} \int_{T\tau_0}^{T \tau} \int_{(\mathbb R^d)^4} f(x) f(y) V(z_1-z_2) \rho_s(z_1-\sqrt Tx)  \rho_s(z_2-\sqrt Ty) \DE_{z_1}\left[u_0\left(\frac{B_{tT-s}}{\sqrt{T}}\right)\right]%\bar u (t-T^{-1} s,T^{-\frac{1}{2}}z_1)
 \\
&  \hspace{3cm} \times \DE_{z_2}\left[u_0\left(\frac{B_{tT-s}}{\sqrt{T}}\right)\right]% \bar u (t-T^{-1} s,T^{-\frac{1}{2}}z_2) 
\IE[ \Gamma(s,z_1,\sqrt T x) \Gamma(s,z_2,\sqrt T y)] \dd s \dd z_1 \dd z_2 \dd x \dd y\\
& =  \int_{\tau_0}^{\tau}  \int_{(\mathbb R^d)^4} f(x) f(y) V(Z) \rho_\sigma(z-x)  \rho_\sigma(z-y+T^{-\frac{1}{2}}Z) \DE_{\sqrt{T}z}\left[u_0\left(\frac{B_{tT-\sigma T}}{\sqrt{T}}\right)\right]%\bar u (t- \sigma,z)
 \\
& \times \DE_{\sqrt{T}z+Z}\left[u_0\left(\frac{B_{tT-\sigma T}}{\sqrt{T}}\right)\right]%\bar u (t-\sigma,z+T^{-\frac{1}{2}} Z) 
 \IE\left[ \Gamma(T\sigma,\sqrt T z, \sqrt T x) \Gamma(T\sigma,\sqrt T z + Z,\sqrt T y)\right] \dd \sigma \dd z \dd Z \dd x \dd y\\
 & =  \int_{\tau_0}^{\tau}  \int_{(\mathbb R^d)^4} f(x) f(y) V(Z) \rho_\sigma(z-x)  \rho_\sigma(z-y+T^{-\frac{1}{2}}Z) \bar u (t- \sigma,z)
 \\
& \qquad  \times \bar u (t-\sigma,z+T^{-\frac{1}{2}} Z) 
 \IE\left[ \Gamma(T\sigma,\sqrt T z, \sqrt T x) \Gamma(T\sigma,\sqrt T z + Z,\sqrt T y)\right] \dd \sigma \dd z \dd Z \dd x \dd y,
\end{align*}
where we have set $s=T\sigma, z_1 = \sqrt T z, z_2 = \sqrt T z + Z$ and we have used the scaling property of Brownian motion and $\bar{u}(t,x)=\DE_x[u_0(B_t)]$.
For all fixed $z,\sigma,x$, the quantity $\IE\, \Gamma(T\sigma,\sqrt T z, \sqrt T x)^2$ vanishes as $T\to\infty$ by the local limit theorem for polymers (Theorem \ref{th:errorTermLLT}; see also the weaker statement as in \cite{V06} which suffices here), so we obtain by Cauchy-Schwarz inequality and dominated convergence that the last display goes to $0$. To obtain a domination, we use that $u_0$ (and hence $\bar u$) is bounded and we observe that  $\IE\,\Gamma(T\sigma,\sqrt T z, \sqrt T x)^2$ is uniformly bounded by a constant since for all $z$, 
\begin{equation} \label{eq:unifboundForP2P}
(i)\ \DE_{0,0}^{t,z}[\Phi_t] \eqlaw \DE_{0,0}^{t,0}[\Phi_t]; \text{ and } (ii) \text{ for } \b<\b_{L^2},\  \sup_t \IE \DE_{0,0}^{t,0}[\Phi_t]^2 <\infty,
\end{equation} 
where property (i) comes from shear-invariance property of white noise as in \cite[p.20]{CC18} and property (ii) follows from \cite[Corollary 3.8]{V06} (adapting slightly the proof to consider the simpler case of a single copy of the Brownian bridge, similarly to \cite[Proposition 3.2]{CN19} in the discrete case).
\end{proof}

\begin{proof}[Proof of Lemma \ref{lem:unifNeglTau0}]
By \eqref{eq:unifboundForP2P} and Chapman-Kolmogorov formula, we have
\begin{align}
\IE\left[W^{\ssup{T}}_{\tau_0}(f)^2\right] &= T^{\frac{d-2}{2}} \int_{0}^{T \tau_0}  \int_{(\mathbb R^d)^4} f(x) f(y) V(Z) \rho_s(z-\sqrt T x)  \rho_s(z+Z-\sqrt T y) \bar u (t- \sigma,z) \nonumber\\
& \qquad  \times \bar u (t-\sigma,z+T^{-\frac{1}{2}} Z)  \IE\left[ \DE_{0,\sqrt T x}^{s, z}[ \Phi_{\sigma}]\DE_{0,\sqrt T x}^{s, z+Z}[ \Phi_{\sigma}]\right] \dd s \dd z \dd Z \dd x \dd y\nonumber\\
& \leq C \Vert u_0 \Vert_\infty^2 \int_{0}^{\tau_0}  \int_{(\mathbb R^d)^3} |f(x) f(y)|  V(Z) \rho_\sigma(y-x + T^{-\frac{1}{2}}Z)  \dd \sigma \dd x \dd y \dd Z \nonumber\\
& \leq C \Vert f\Vert_\infty \Vert V\Vert_1 \Vert f\Vert_1 \tau_0, \label{eq:L2upperBoundZTf}
\end{align}
where in the {second} inequality we have made the same change of variables as above, while in the last inequality we have bounded $f(y)$ by its supremum and then integrated in $y$.
\end{proof}

\begin{proof}[Proof of Proposition \ref{prop:CVbracketSHEalterGeneral}]
Note that the second part of the proposition is a consequence of the first part thanks to the functional central limit for martingales (\cite[Theorem 3.11]{JS87}).
The proof of the first part of the proposition requires a few steps. By the same changes of variables to what was done in the proof of Lemma \ref{lem:changeOfMartingaleSHE}, we find that 
\begin{equation} \label{eq:bracketOfMtau0}
\langle M^{\ssup{T}}_{\tau_0,\cdot}(f) \rangle_\tau = \iint_{\mathbb{R}^d\times \mathbb{R}^d}f_1(x) f_2(y) \sZ_\ell(\sqrt T x) \sZ_\ell(\sqrt T y) \Psi_T(x,y) \dd x \dd y,
\end{equation}
where
\begin{align*}
\Psi_T(x,y) & = \b^2
\int_{\tau_0}^\tau \iint_{(\mathbb R^d)^2} V(Z) \rho_\sigma(z-x) \rho_\sigma( z+T^{-\frac{1}{2}}Z-y) \bar u (t- \sigma,z) \\
& \hspace{2cm} \bar u (t- \sigma,z+T^{-\frac{1}{2}} Z) \overset{\leftarrow}{\sZ}_{T\sigma,\ell}(\sqrt T z) \overset{\leftarrow}{\sZ}_{T\sigma,\ell}(\sqrt T z + Z)  \dd  z \dd Z \dd \sigma
\end{align*}

%{\MN (I have arranged the flow of proof. )}

Also, we set \begin{align*}
\Psi(x,y)=\gamma^2(\b) \int_{\tau_0}^\tau \int_{\mathbb R^d} \rho_\sigma(z-x) \rho_\sigma( z-y) \bar u (t- \sigma,z)^2 \dd \sigma \dd z.
\end{align*} 
Then, we have \begin{align}
&\IE\left[\left|\langle M^{\ssup{T}}_{\tau_0,\cdot}(f) \rangle_\tau-\int_{(\mathbb R^d)^2} f(x) f(y)\Psi(x,y) \dd \sigma \dd z \dd x \dd y\right|\right]\notag\\
&\leq  \iint_{\mathbb{R}^d\times \mathbb{R}^d}|f(x) f(y)|\IE\left[ \sZ_\ell(\sqrt T x) \sZ_\ell(\sqrt T y)\right] \IE\left[\left|\Psi_T(x,y)-\Psi(x,y)\right|\right] \dd x \dd y\label{term:Mdiff1}\\
&+\left|\iint_{\mathbb{R}^d\times \mathbb{R}^d}f(x) f(y)\IE \left[\sZ_\ell(\sqrt T x) \sZ_\ell(\sqrt T y) -1\right]\Psi(x,y)\dd x \dd y\right|.\label{term:Mdiff2}
\end{align}
Thus, it is enough to show that \eqref{term:Mdiff1} and \eqref{term:Mdiff2} 	converge to $0$ as $T\to \infty$. Since we know that $\Psi(x,y)$ is bounded and for any $x,y\in \R^d$, $\ell\geq 0$\begin{align}
\IE\left[ \sZ_\ell(x) \sZ_\ell(y)\right]=\IE\left[\overset{\leftarrow}{\sZ}_{T\sigma,\ell}(x) \overset{\leftarrow}{\sZ}_{T\sigma,\ell}(y)\right]\leq \IE\left[\sZ_\infty^2\right]<\infty,\label{eq:expZZ}
\end{align}
when $\b<\b_{L^2}$, \eqref{term:Mdiff1} follows when the following two lemmas are proved.
\end{proof}

We first deal with this last term:
\begin{lemma} \label{lem:asymptPsi}  For all $0<\tau_0\leq \tau$,  $x,y\in \R^d$, $u_0\in C_b(\R^d)$,
  \aln{\label{eq:asymptPsi}
\lim_{T\to\infty} \IE\left|\Psi_T(x,y) -  \Psi(x,y)\right|=0.
}
\end{lemma}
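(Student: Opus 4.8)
The plan is to show that $\Psi_T(x,y)$, which involves products of two time-reversed partition functions $\overset{\leftarrow}{\sZ}_{T\sigma,\ell}(\sqrt T z)$ and $\overset{\leftarrow}{\sZ}_{T\sigma,\ell}(\sqrt T z + Z)$ evaluated at points that are at (macroscopic-scale) distance $|Z|=O(1)$ apart, homogenizes: when $T\to\infty$ the microscopic separation $|Z|$ becomes negligible relative to the horizon $\ell=T^a$ of the reversed partition functions only in a weak, integrated sense, so we replace the product by its expectation and identify the limit via the definition \eqref{def-sigma-beta} of $\gamma^2(\b)$. Concretely, I would first split the difference $\Psi_T(x,y)-\Psi(x,y)$ into three pieces: (a) replace $\overset{\leftarrow}{\sZ}_{T\sigma,\ell}(\sqrt T z) \overset{\leftarrow}{\sZ}_{T\sigma,\ell}(\sqrt T z + Z)$ by its expectation $\IE\big[\overset{\leftarrow}{\sZ}_{T\sigma,\ell}(\sqrt T z) \overset{\leftarrow}{\sZ}_{T\sigma,\ell}(\sqrt T z + Z)\big]$; (b) replace the expected product, together with the $Z$-integration against $V(Z)$, by $\gamma^2(\b)$ using that $\int_{\rd} V(Z)\IE[\sZ(0)\sZ(Z)]\dd Z = \gamma^2(\b)$ by \eqref{def-sigma-beta} and the shift-invariance identity \eqref{eq:expZZ}; (c) replace $\rho_\sigma(z+T^{-1/2}Z-y)$ and $\bar u(t-\sigma,z+T^{-1/2}Z)$ by $\rho_\sigma(z-y)$ and $\bar u(t-\sigma,z)$, which is a routine continuity-plus-dominated-convergence step since $|T^{-1/2}Z|\to 0$ on the support of $V$.

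For step (a) — the genuine homogenization — I would take the $L^1(\IP)$ norm inside the $\sigma,z,Z$ integrals (legitimate on the bounded region $\sigma\in[\tau_0,\tau]$, $Z\in\mathrm{Supp}(V)$, with $z$-integrability provided by the heat kernel factor $\rho_\sigma(z-x)$ which is bounded below away from $0$ on the relevant compact set since $\sigma\geq\tau_0>0$) and bound
\[
\IE\Big|\overset{\leftarrow}{\sZ}_{T\sigma,\ell}(\sqrt T z) \overset{\leftarrow}{\sZ}_{T\sigma,\ell}(\sqrt T z + Z) - \IE\big[\overset{\leftarrow}{\sZ}_{T\sigma,\ell}(\sqrt T z) \overset{\leftarrow}{\sZ}_{T\sigma,\ell}(\sqrt T z + Z)\big]\Big|.
\]
Here one should \emph{not} expect this integrand to vanish pointwise; rather one exploits averaging in the $z$-variable: cover the (compact) $z$-domain by a grid of mesh $\delta$, note that $\overset{\leftarrow}{\sZ}_{T\sigma,\ell}(\sqrt T z)$ and $\overset{\leftarrow}{\sZ}_{T\sigma,\ell}(\sqrt T z')$ become independent once $|\sqrt T (z-z')| > 2(\ell + \delta_\phi)$, i.e.\ once $|z-z'| \gtrsim T^{a-1/2}\to 0$, so for fixed mesh $\delta$ the number of "independent blocks" is $\gtrsim \delta^{-d}$ and a second-moment (variance) estimate in the spirit of Lemma \ref{CN Lemma 3.3} / the computation in \eqref{homogen-general} shows the fluctuation of the $z$-average is $O(T^{-(1/2-a)d/2})\cdot(\text{second moment})$, which vanishes using $\sup_{T}\IE[\overset{\leftarrow}{\sZ}_{T\sigma,\ell}{}^2]\leq \IE[\sZ_\infty^2]<\infty$ for $\b<\b_{L^2}$ from \eqref{eq:expZZ}. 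The main technical care is to interchange this $z$-averaging with the smooth weights $\rho_\sigma(z-x)$, $\rho_\sigma(z-y)$, $\bar u(t-\sigma,z)^2$, which is handled by their uniform continuity on the compact domain at scale $\delta\downarrow 0$ taken after $T\to\infty$.

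I expect step (a), the homogenization estimate, to be the main obstacle: one must make precise the two-scale argument (microscopic decorrelation length $T^{a-1/2}$ versus the fixed integration scale), control the number of nearly-independent blocks, and use uniform $L^2$-boundedness of the (constrained or unconstrained) reversed partition functions to kill the resulting variance — exactly the type of argument carried out for \eqref{homogen-general} but now with an extra $Z$-shift and the $\sigma$-integration present. Steps (b) and (c) are comparatively routine: (b) is an application of \eqref{def-sigma-beta}, \eqref{eq:expZZ} and Fubini together with the fact that $\IE[\overset{\leftarrow}{\sZ}_{T\sigma,\ell}(0)\overset{\leftarrow}{\sZ}_{T\sigma,\ell}(Z)] \to \IE[\sZ_\infty(0)\sZ_\infty(Z)]$ as $\ell\to\infty$ by monotone/dominated convergence in the $L^2$-region, and (c) is continuity of $\rho_\sigma$ and $\bar u$ plus dominated convergence, the domination coming from $\|u_0\|_\infty$, $\|V\|_1$ and the lower bound $\sigma\geq\tau_0$ on the heat kernels. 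Assembling the three bounds via the triangle inequality and letting $T\to\infty$ (then $\delta\to0$) yields \eqref{eq:asymptPsi}.
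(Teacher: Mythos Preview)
Your three-step decomposition (a)--(c) matches the paper's argument, just reordered: the paper does your step (c) first, then inserts a truncation, then carries out your step (a). Steps (b) and (c) are indeed routine, as you say.

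There is, however, a genuine gap in your treatment of step (a). To run the block-independence variance bound on
\[
\int_{\R^d} g(z)\,\overset{\leftarrow}{\sZ}_{T\sigma,\ell}(\sqrt T z)\,\overset{\leftarrow}{\sZ}_{T\sigma,\ell}(\sqrt T z+Z)\,\dd z,
\]
you need to control $\IE\big[\big(\overset{\leftarrow}{\sZ}_{T\sigma,\ell}(\sqrt T z)\,\overset{\leftarrow}{\sZ}_{T\sigma,\ell}(\sqrt T z+Z)\big)^2\big]$, i.e.\ essentially a \emph{fourth} moment of $\sZ_\ell$. This is finite only for small $\b$ and blows up well before $\b_{L^2}$; the bound $\sup_T\IE[\overset{\leftarrow}{\sZ}_{T\sigma,\ell}{}^{2}]\leq\IE[\sZ_\infty^2]$ you invoke is simply the wrong moment. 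You cite Lemma~\ref{CN Lemma 3.3}, which is the right tool, but that lemma applies to \emph{truncated} second moments and requires only uniform integrability --- you never introduce a truncation.

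The paper's fix is exactly this missing step: before the variance computation, replace the product by the capped and spatially constrained version $\big\{\overline{\sZ}_\ell(\sqrt T z)\,\overline{\sZ}_\ell(\sqrt T z+Z)\big\}\wedge T^{\frac12-a}$. The constraint $\overline{\sZ}$ from \eqref{eq:barZ} also makes the independence for far-apart $z$'s genuine rather than approximate (a point you leave vague). The cost of passing to the truncated version is controlled by the uniform integrability of $\big(\sZ_\ell(0)\sZ_\ell(Z)\big)_{\ell\geq 0}$ (Lemma~\ref{lem:UIZZtxy}), and the variance of the truncated integrand is bounded by a factor $T^{a-\frac12}$ times $\IE\big[\big(\{\overline{\sZ}_\ell(0)\overline{\sZ}_\ell(Z)\}\wedge T^{\frac12-a}\big)^2\big]$, which Lemma~\ref{CN Lemma 3.3} kills using only uniform integrability --- no fourth moment needed. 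Without this truncation your argument does not reach the full $L^2$-region.
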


%{\MN 
\begin{lemma}\label{lem:asymptffZ} For all $f\in C_c^\infty(\R^d)$\begin{align*}
\lim_{T\to \infty}\left|\IE \left[\iint_{\mathbb{R}^d\times \mathbb{R}^d}f(x) f(y)\left(\sZ_\ell(\sqrt T x) \sZ_\ell(\sqrt T y) -1\right)\Psi(x,y)\dd x\dd y\right] \right|=0.
\end{align*}

\end{lemma}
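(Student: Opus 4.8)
The plan is to interchange expectation and integration — this is licit by Fubini, since $\Psi$ is bounded on $\R^d\times\R^d$ (using $\tau_0>0$: $\int_{\R^d}\rho_\sigma(z-x)\rho_\sigma(z-y)\bar u(t-\sigma,z)^2\,\dd z\le\|u_0\|_\infty^2\rho_{2\sigma}(x-y)\le\|u_0\|_\infty^2(4\pi\tau_0)^{-d/2}$ for $\sigma\ge\tau_0$, and $\gamma^2(\b)<\infty$), while $\IE[\sZ_\ell(\sqrt Tx)\sZ_\ell(\sqrt Ty)]\le\IE[\sZ_\infty^2]<\infty$ by \eqref{eq:expZZ} and $f$ has compact support. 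Since $\IE[\sZ_\ell(z)]=1$ for every $z$, the claim is equivalent to
\[
\iint_{\R^d\times\R^d} f(x)f(y)\,\Psi(x,y)\,\mathrm{Cov}\!\big(\sZ_\ell(\sqrt Tx),\sZ_\ell(\sqrt Ty)\big)\,\dd x\,\dd y\ \longrightarrow\ 0\qquad\text{as }T\to\infty .
\]
By \eqref{eq:expZZ} again, $|\mathrm{Cov}(\sZ_\ell(\sqrt Tx),\sZ_\ell(\sqrt Ty))|\le 1+\IE[\sZ_\infty^2]$ uniformly in $T,x,y$, so the integrand is dominated by the fixed $L^1(\dd x\,\dd y)$ function $(1+\IE[\sZ_\infty^2])\|\Psi\|_\infty|f(x)f(y)|$. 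By dominated convergence it then suffices to show that, for each fixed $x\neq y$, $\mathrm{Cov}(\sZ_\ell(\sqrt Tx),\sZ_\ell(\sqrt Ty))\to0$ as $T\to\infty$, where $\ell=\ell(T)=T^a$ with $a<\tfrac12$ (the case $\b=0$ being trivial since then $\sZ_\ell\equiv1$).

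For this I would use the second-moment identity, obtained by a Gaussian integration over $\xi$ (the same computation underlying the formula for $\gamma^2(\b)$): with $B$ a standard $d$-dimensional Brownian motion started at $0$,
\[
\IE[\sZ_\ell(u)\sZ_\ell(v)]=\DE_0\!\Big[\exp\Big(\b^2\!\int_0^\ell V(B_{2s}+u-v)\,\dd s\Big)\Big].
\]
Since $\mathrm{Supp}(V)\subset\overline{B(0,2\delta_\phi)}$, the exponent vanishes off the event $A:=\{\exists\,s\le\ell:\ B_{2s}+\sqrt T(x-y)\in\mathrm{Supp}(V)\}$, so (using $\IE[\sZ_\ell(z)]=1$) $\mathrm{Cov}(\sZ_\ell(\sqrt Tx),\sZ_\ell(\sqrt Ty))=\DE_0[(e^{U}-1)\mathbf 1_A]$ with $U:=\b^2\int_0^\ell V(B_{2s}+\sqrt T(x-y))\,\dd s$. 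On $A$ one has $\sup_{s\le2\ell}|B_s|\ge\sqrt T|x-y|-2\delta_\phi$, hence for $x\ne y$ fixed and $T$ large, a Gaussian tail bound gives $\DP_0(A)\le\DP_0\big(\sup_{s\le2\ell}|B_s|\ge\tfrac12\sqrt T|x-y|\big)\le C_d\,e^{-c\,|x-y|^2T^{1-a}}\to0$, using $a<1$. Then Hölder's inequality with an exponent $q>1$ and convexity of $t\mapsto|t|^q$ yield
\[
\big|\DE_0[(e^{U}-1)\mathbf 1_A]\big|\ \le\ \DE_0\big[|e^{U}-1|^q\big]^{1/q}\DP_0(A)^{1-1/q}\ \le\ \big(2^{q-1}(\DE_0[e^{qU}]+1)\big)^{1/q}\DP_0(A)^{1-1/q},
\]
so everything comes down to controlling $\DE_0[e^{qU}]$ uniformly in $\ell$ (and in the shift $\sqrt T(x-y)$) for some $q>1$.

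This uniform bound is the one genuinely delicate point: it is a moment of the two-point function strictly above $2$, which is \emph{not} available for every $\b<\b_{L^2}$ without exploiting the strict inequality. Since $0<\b<\b_{L^2}$, however, one may pick $q=q(\b)\in\big(1,(\b_{L^2}/\b)^2\big)$, so that $\tilde\b:=\sqrt q\,\b<\b_{L^2}$; then $qU=\tilde\b^2\int_0^\ell V(B_{2s}+\sqrt T(x-y))\,\dd s$, and applying the second-moment identity \emph{at inverse temperature $\tilde\b$}, followed by Cauchy–Schwarz and shift-invariance of the partition function,
\[
\DE_0[e^{qU}]=\IE\big[\sZ^{\ssup{\tilde\b}}_\ell(u)\,\sZ^{\ssup{\tilde\b}}_\ell(v)\big]\ \le\ \IE\big[\sZ^{\ssup{\tilde\b}}_\ell(0)^2\big]\ \le\ \sup_{\ell\ge0}\IE\big[\sZ^{\ssup{\tilde\b}}_\ell(0)^2\big]=\IE\big[(\sZ^{\ssup{\tilde\b}}_\infty)^2\big]<\infty,
\]
where $\sZ^{\ssup{\tilde\b}}$ is the partition function with $\b$ replaced by $\tilde\b$, the last finiteness holding precisely because $\tilde\b<\b_{L^2}$. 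Feeding this back into the Hölder estimate gives $\mathrm{Cov}(\sZ_\ell(\sqrt Tx),\sZ_\ell(\sqrt Ty))\to0$ for each fixed $x\ne y$, and the dominated convergence step above then completes the proof. I expect this uniform $(2+\varepsilon)$-moment estimate for the two-point function in the full $L^2$-region to be the only real obstacle; the remainder is decorrelation of distant partition functions together with a Gaussian tail bound.
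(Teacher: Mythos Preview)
Your proof is correct and is in fact more direct than the paper's for this particular lemma. Since the statement already carries the expectation $\IE$, after Fubini the task reduces to showing
\[
\iint f(x)f(y)\,\Psi(x,y)\,\mathrm{Cov}\big(\sZ_\ell(\sqrt Tx),\sZ_\ell(\sqrt Ty)\big)\,\dd x\,\dd y\ \to\ 0,
\]
and you handle this cleanly by dominated convergence together with the explicit two-point formula and a H\"older bound with exponent $q\in\big(1,(\b_{L^2}/\b)^2\big)$, so that $\sqrt q\,\b<\b_{L^2}$ and $\sup_\ell\DE_0[e^{qU}]\le\IE\big[(\sZ^{\ssup{\sqrt q\b}}_\infty)^2\big]<\infty$. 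Your hesitation about this step is unfounded: this is precisely the standard ``bump $\b$ strictly inside the open $L^2$-region'' device, and the paper itself uses the identical H\"older trick in \eqref{Difference of sZ and bar-sZ} (Step~2 of the proof of Lemma~\ref{lem:asymptPsi}).

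The paper takes a different route: it recycles the truncation/homogenization machinery from Lemma~\ref{lem:asymptPsi}, replacing $\sZ_\ell(\sqrt Tx)\sZ_\ell(\sqrt Ty)$ by the confined and truncated $\big(\overline{\sZ}_\ell(\sqrt Tx)\overline{\sZ}_\ell(\sqrt Ty)\big)\wedge T^{1/2-a}$, exploiting the \emph{exact} independence of the confined partition functions when $|x-y|>C_\phi T^{a-1/2}$, and only then taking expectation. This actually establishes the stronger $L^1$ convergence of the random integral (before expectation), which is more than the lemma requires; the payoff is that the argument can be written as a one-line reference to Lemma~\ref{lem:asymptPsi}. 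Your approach trades that extra strength for a short self-contained argument. Both routes ultimately rely on the openness of the $L^2$-region through a H\"older step.
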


%}

Before turning to the proof of this estimate, we need the following lemma:
\begin{lemma}\label{lem:UIZZtxy}
For each $x,y\in \R^d$, the family $(\sZ_t(x)\sZ_t(y))_{t\geq 0}$ is uniformly integrable. 
\end{lemma}

\iffalse{%{\MN Which one is more clear in two proofs below?}
{\MN \begin{proof}[Proof of Lemma \ref{lem:UIZZtxy}]
For each $x,y\in \R^d$, $\{\sZ_t(x)\sZ_t(y):t\geq 0\}$ is a submartingale with respect to $\mathcal{F}_t$. Indeed, we have that \begin{align*}
\left\langle \sZ(x),\sZ(y)\right\rangle_t=\int_0^t \beta^2  \DE_{x,y}^{\otimes 2}\left[\Phi_s(B^{\ssup{1}})\Phi_s(B^{\ssup{2}}) V(B_s^{\ssup{1}}-B_s^{\ssup{1}}) \right]\dd s\geq 0
\end{align*}
$\IP$-a.s. Also, it is easy to see that $\sZ_t(x)\sZ_t(y)\cvLone \sZ_\infty(x)\sZ_\infty(y)$ by $L^2$-boundedness of martingale $\left\{\sZ_t(x)\right\}$.
\end{proof}
}
}\fi

%\iffalse{
\begin{proof}[Proof of Lemma \ref{lem:UIZZtxy}]
%We first remark that for each $\beta<\beta_{L^2}$, there exists a $p>1$ such that $\IE\left[\sZ_\infty^{2p}(0)\right]<\infty$.
When we apply the Doob's inequality to an $L^2$-bounded martingale $\{\sZ_t(x):t\geq 0\}$ for $x\in\R^d$, we obtain that \begin{align}
\IE\left[\sZ_t(x)\sZ_t(y)\right]\leq \IE\left[\sup_{t\geq 0}\sZ_t(x)\sZ_t(y)\right]\leq \IE\left[\sup_{t\geq 0}\sZ_t^2\right]\leq C\IE\left[\sZ_\infty^{2}\right],\label{eq:UIZZin}
\end{align} 
where we have used the spatial shift invariance of $\sZ_t(x)$.
Thus, we obtain that for each $x,y\in \R^d$, $t\geq 0$, $M\geq 0$\begin{align}
\IE\left[\sZ_t(y)\sZ_t(y):\sZ_t(x)\sZ_t(y)\geq M\right]\leq \IE\left[\sup_{t\geq 0}\sZ_t(y)\sZ_t(y):\sup_{t\geq 0}\sZ_t(x)\sZ_t(y)\geq M\right],\label{eq:UIZZsup}
\end{align}
where the RHS converges to $0$ as $M\to\infty$ from \eqref{eq:UIZZin} . 
\end{proof}
%}\fi

%\begin{proof}
%By the shift-invariance property of white noise we have,
%\begin{align*}
%& \IE\left[\left|\sZ_t(Z) \sZ_t(0) - \sZ_\infty(Z) \sZ_\infty(0)\right| \right] \\
%& \leq \IE\left[\left|\sZ_t(Z)\right| \left| \sZ_t(0) -  \sZ_\infty(0)\right| \right] + \IE\left[\left|\sZ_\infty(0)\right| \left| \sZ_t(Z) - \sZ_\infty(Z) \right| \right]\\
%& \leq 2 \sup_{t\geq 0} \IE\left[\sZ_t(0)^2\right]^{\frac{1}{2}} \IE\left[\left| \sZ_t(0) -  \sZ_\infty(0)\right| \right]^{\frac{1}{2}},
%\end{align*}
%where the latter term goes to $0$ uniformly in $Z$, while $(\sZ_\infty(Z) \sZ_\infty(0))_{Z \in \mathrm{Supp} V}$ is uniformly integrable since $\mathrm{Supp}\, V$ is compact and $ \IE[|\sZ_\infty(Z)-\sZ_\infty(Z_0)|^2]$ can be made uniformly small for $|Z-Z_0|$ close to $0$. We conclude by observing that  for all $t_0\in\mathbb R_+$, $\sup_{t\leq t_0,Z\in \mathrm {Supp}\, V} \IE\left[(\sZ_t(Z)\sZ_t(0))^2\right]$ is finite thanks to Cauchy-Schwarz inequality.
%\end{proof}
\begin{proof}[Proof of Lemma \ref{lem:asymptPsi}]
The proof is divided into several steps of approximation. \\

\noindent(Step 1): 
\begin{align*}
&\Psi_T(x,y)\\
&\approx_{L^1} 	\b^2\int_{\tau_0}^\tau \iint V(Z) \rho_\sigma(z-x) \rho_\sigma( z-y)\bar u (t- \sigma,z)^2 \overset{\leftarrow}{\sZ}_{\ell}(\sqrt T z) \overset{\leftarrow}{\sZ}_{\ell}(\sqrt T z + Z) \dd \sigma \dd  z \dd Z,
\end{align*}
%  Let us first brief the outline of the proof. Since $\sZ_{\ell}(\sqrt T z)$ and $\sZ_{\ell}(\sqrt T z')$ are asymptotically independent for $z\neq z'$, the following mixing property should hold as $T\to\infty$:
%\begin{align*}
%\Psi_T(x,y) & \approx_{L^1} \b^2
%\int_{\tau_0}^\tau \iint V(Z) \rho_\sigma(z-x) \rho_\sigma( z+T^{-\frac{1}{2}}Z-y)\bar u (t- \sigma,z) \\
%& \hspace{2cm} \bar u (t- \sigma,z+T^{-\frac{1}{2}} Z) \IE\left[\overset{\leftarrow}{\sZ}_{T\sigma,\ell}(\sqrt T z) \overset{\leftarrow}{\sZ}_{T\sigma,\ell}(\sqrt T z + Z)\right] \dd \sigma \dd  z \dd Z\\
%&\approx_{L^1} \b^2
%\int_{\tau_0}^\tau \iint V(Z) \rho_\sigma(z-x) \rho_\sigma( z-y)\bar u (t- \sigma,z) \\
%& \hspace{2cm} \bar u (t- \sigma,z) \IE\left[\sZ_\infty(0) \sZ_\infty(Z)\right] \dd \sigma \dd  z \dd Z \\
%& = \gamma^2(\b) \int_{\tau_0}^\tau \int_{\mathbb R^d} \rho_\sigma(z-x) \rho_\sigma( z-y) \bar u (t- \sigma,z)^2 \dd \sigma \dd z,
%\end{align*}
where the $\approx_{L^1}$ sign means that the difference between the left and right hand sides goes to $0$ in $L^1$-norm.
% and where in the second line we have used 
%$\gamma^2(\b)=\int_{\mathbb{R}^d} V(Z) \,  \mathrm{Cov}\left(\sZ_\infty(0),\sZ_\infty(Z)\right) \dd Z$ {\SN Is it correct?} {\MN $ \gamma^2(\b)=\beta^2\int_{\mathbb{R}^d} V(Z)\IE\left[\sZ_\infty(0)\sZ_\infty(Z)\right]\dd Z$ }and spatial shift invariance of the white noise.

%We make the heuristic rigorous. 
 We remark that for any $\sigma\in [\tau_0, \tau]$, there exists a constant $C=C_{\tau_0}$ such that  \begin{align*}
\rho_{\sigma }(x)\leq C,\quad x\in \R^d.
\end{align*}
Therefore, the dominated convergence theorem with the boundedness of $\bar{u}$, and \eqref{eq:expZZ} implies that  \begin{align*}
&   
\b^2\int_{\tau_0}^\tau \iint V(Z) \rho_\sigma(z-x) \rho_\sigma( z+T^{-\frac{1}{2}}Z-y)\bar u (t- \sigma,z) \\
& \hspace{2cm} \bar u (t- \sigma,z+T^{-\frac{1}{2}} Z) \overset{\leftarrow}{\sZ}_{\ell}(\sqrt T z) \overset{\leftarrow}{\sZ}_{\ell}(\sqrt T z + Z) \dd \sigma \dd  z \dd Z\\
&\approx_{L^1}\b^2\int_{\tau_0}^\tau \iint V(Z) \rho_\sigma(z-x) \rho_\sigma( z-y)\bar u (t- \sigma,z)^2 \overset{\leftarrow}{\sZ}_{\ell}(\sqrt T z) \overset{\leftarrow}{\sZ}_{\ell}(\sqrt T z + Z) \dd \sigma \dd  z \dd Z.%\\
%& \hspace{2cm} \bar u (t- \sigma,z)
%& \int_{\tau_0}^\tau \gamma^2(\b) \int_{\mathbb R^d} \rho_\sigma(z-x) \rho_\sigma( z-y) \bar u (t- \sigma,z)^2 \dd \sigma \dd z.
\end{align*}

Since  for each $\tau_0\leq \sigma\leq \tau$
$$ \overset{\leftarrow}{\sZ}_{T\sigma,\ell}(\sqrt T z) \overset{\leftarrow}{\sZ}_{T\sigma,\ell}(\sqrt T z + Z)\overset{(d)}{=} \sZ_{\ell}(\sqrt T z) \sZ_{\ell}(\sqrt T z + Z),$$
by dominated convergence theorem it suffices to show that  for each $x,y\in \R^d$, and each $\tau_0\leq \sigma\leq \tau$,
\begin{align*}
&   \b^2\iint V(Z) \rho_\sigma(z-x) \rho_\sigma( z-y)\bar u (t- \sigma,z)^2 \\
& \hspace{2cm}{\sZ}_{\ell}(\sqrt T z) {\sZ}_{\ell}(\sqrt T z + Z) \dd  z \dd Z\\
& \cvLone \gamma^2(\b) \int_{\mathbb R^d} \rho_\sigma(z-x) \rho_\sigma( z-y) \bar u (t- \sigma,z)^2  \dd z.
\end{align*}
To this end, we use the truncation method as in Section \ref{subsec:LLNuzero} (recall that  the $\overline{\sZ}$ notation from \eqref{eq:barZ}). \\

\noindent(Step 2)
 \begin{align}
  &   \b^2\int \rho_\sigma(z-x) \rho_\sigma( z-y)\bar u (t- \sigma,z)^2 \mathcal{V}_T(z)\dd z\notag\\
&\approx_{L^1}  \b^2\int \rho_\sigma(z-x) \rho_\sigma( z-y)\bar u (t- \sigma,z)^2\widetilde{\mathcal{V}}_T(z)\dd z, \label{eq:VttilVt}
  \end{align}
where we denote by \begin{align*}
\mathcal{V}_T(z)&:=\int V(Z){\sZ}_{\ell}(\sqrt T z) {\sZ}_{\ell}(\sqrt T z + Z) \dd Z.\\
\widetilde{\mathcal{V}}_T(z)&:=\int V(Z)\left(\big\{\overline{\sZ}_{\ell}(\sqrt T z) \overline{\sZ}_{\ell}(\sqrt T z + Z)\big\}\land T^{\frac{1}{2}-a}\right)\dd Z.
\end{align*}

This immediately follows when we show  that for each $z\in \R$
\ben{\label{trunc2}
\IE\left[			\mathcal{V}_T(z)-\widetilde{\mathcal{V}}_T(z)	\right]\to 0,
%  \sup_{{\MN z,\sigma}} \IE\left[ \int_{{\rm Supp} V}{\MN V(Z)}\left(\sZ_{\ell}(\sqrt T z) \sZ_{\ell}(\sqrt T z + Z)-\left( \big\{\overline{\sZ}_{\ell}(\sqrt T z) \overline{\sZ}_{\ell}(\sqrt T z + Z)\big\}\land T^{\frac{1}{2}-a}\right)dZ\right)\right] \to 0.
  }
 which follows $\IE\left[\sZ_{\ell}(\sqrt T z) \sZ_{\ell}(\sqrt T z + Z)-\left( \big\{\overline{\sZ}_{\ell}(\sqrt T z) \overline{\sZ}_{\ell}(\sqrt T z + Z)\big\}\land T^{\frac{1}{2}-a}\right)\right]\to 0$ {uniformly in $Z\in \textrm{Supp} V$} for each $z\in \R^d$. 
  
Note that
\al{
  &\quad \sZ_{\ell}(\sqrt T z) \sZ_{\ell}(\sqrt T z + Z)-\left( \big\{\overline{\sZ}_{\ell}(\sqrt T z) \overline{\sZ}_{\ell}(\sqrt T z + Z)\big\}\land T^{\frac{1}{2}-a}\right)\\
  &= \sZ_{\ell}(\sqrt T z) \sZ_{\ell}(\sqrt T z + Z)-\overline{\sZ}_{\ell}(\sqrt T z) \overline{\sZ}_{\ell}(\sqrt T z + Z)\\
  &\hspace{12mm}+\left(\overline{\sZ}_{\ell}(\sqrt T z) \overline{\sZ}_{\ell}(\sqrt T z + Z)-\left( \big\{\overline{\sZ}_{\ell}(\sqrt T z) \overline{\sZ}_{\ell}(\sqrt T z + Z)\big\}\land T^{\frac{1}{2}-a}\right)\right)\\
  &= (\sZ_{\ell}(\sqrt T z) - \overline{\sZ}_{\ell}(\sqrt T z))(\sZ_{\ell}(\sqrt T z + Z)- \overline{\sZ}_{\ell}(\sqrt T z + Z))\\
  &\hspace{12mm}{ + (\sZ_{\ell}(\sqrt T z) - \overline{\sZ}_{\ell}(\sqrt T z))\sZ_{\ell}(\sqrt T z + Z)}\\
  &\hspace{12mm}{ +\sZ_{\ell}(\sqrt T z) (\sZ_{\ell}(\sqrt T z + Z)- \overline{\sZ}_{\ell}(\sqrt T z + Z))}\\
  &\hspace{12mm}+ \left(\overline{\sZ}_{\ell}(\sqrt T z) \overline{\sZ}_{\ell}(\sqrt T z + Z)-\left( \big\{\overline{\sZ}_{\ell}(\sqrt T z) \overline{\sZ}_{\ell}(\sqrt T z + Z)\big\}\land T^{\frac{1}{2}-a}\right)\right)
}

For the first three terms, taking $p,q>1$ such that $\gamma(\sqrt{p}\beta)<\infty$ and $p^{-1}+q^{-1}=1$, by Holder inequality,
\begin{equation}\label{Difference of sZ and bar-sZ}
  \begin{split}
  &\quad \IE\left[ \int_{{\rm Supp} V}\,V(Z)  (\sZ_{\ell}(\sqrt T z) - \overline{\sZ}_{\ell}(\sqrt T z))(\sZ_{\ell}(\sqrt T z + Z)- \overline{\sZ}_{\ell}(\sqrt T z + Z))\dd Z \right]\\
  & \leq \IE\left[ \int_{{\rm Supp} V}V(Z)  \sZ_{\ell}(\sqrt T z)(\sZ_{\ell}(\sqrt T z + Z)- \overline{\sZ}_{\ell}(\sqrt T z + Z))\dd Z \right]\\
  & =\IE\left[ \int_{{\rm Supp} V}\,V(Z)  (\sZ_{\ell}(\sqrt T z) - \overline{\sZ}_{\ell}(\sqrt T z))\sZ_{\ell}(\sqrt T z + Z)\dd Z \right]\\
  &\leq  \int_{{\rm Supp} V}  V(Z)\DE_{0,Z}\left[e^{\beta^2\int_{0}^\ell V(B^{(1)}_{s}-B^{(2)}_{s})ds} : B^{(1)}[0,\ell]\not \in \mathtt{R}_{\ell,\ell}\right] \dd Z\\
    &\leq\left( \int  V(Z)\DE_{0,Z}\left[e^{p\beta^2\int_{0}^\ell V(B^{(1)}_{s}-B^{(2)}_{s})ds}\right]^{\frac{1}{p}} \dd Z\right) \left( \int V(Z)\DP_{0}\left(B^{(1)}[0,\ell]\not \in \mathtt{R}_{\ell,\ell}\right)^{\frac{1}{q}}  \dd Z\right),
    \end{split}
\end{equation}
which converges to $0$ as $T\to\infty$ (Recall the notation \eqref{eq:barZ})

For the last term, by $\overline{\sZ}_{\ell}(z)\leq  \sZ_{\ell}(z)$, the uniform integrability of $\sZ_{\ell}(0) \sZ_{\ell}(Z)$ and the dominated convergence theorem yields that
\al{
  &\quad   \IE\left[\int_{{\rm Supp} V}V(Z) \left(\overline{\sZ}_{\ell}(\sqrt T z) \overline{\sZ}_{\ell}(\sqrt T z + Z)-\left( \big\{\overline{\sZ}_{\ell}(\sqrt T z) \overline{\sZ}_{\ell}(\sqrt T z + Z)\big\}\land T^{\frac{1}{2}-a}\right)\right)\dd Z\right]\\
    &\leq \int_{{\rm Supp} V} V(Z)\IE \left[\sZ_{\ell}(\sqrt T z) \sZ_{\ell}(\sqrt T z + Z); \sZ_{\ell}(\sqrt T z) \sZ_{\ell}(\sqrt T z + Z)> T^{\frac{1}{2}-a}\right]\dd Z\to 0.
    }
Thus we obtain \eqref{trunc2}. \\

\noindent(Step 3) \begin{align}
&\b^2\int \rho_\sigma(z-x) \rho_\sigma( z-y)\bar u (t- \sigma,z)^2\tilde{\mathcal {V}}_T(z)\dd z\nonumber\\
&\approx_{L^1}\b^2\int \rho_\sigma(z-x) \rho_\sigma( z-y)\bar u (t- \sigma,z)^2\IE\left[\tilde{\mathcal {V}}_T(z)\right]\dd z\label{eq:VtEVt}
\end{align}
which completes the proof of Lemma \ref{lem:asymptPsi} with \eqref{trunc2} and \begin{align*} \IE\left[\mathcal{V}_T(z)\right]\to \int \beta^2 V(Z) \IE\left[\sZ_\infty(0)\sZ_\infty(Z)\right]\dd Z=\gamma^2(\beta).
\end{align*}

%For ease of notation, we write
%\[X(T,z)=\left( \big\{\overline{\sZ}_{\ell}(\sqrt T z) \overline{\sZ}_{\ell}(\sqrt T z + Z)\big\}\land T^{\frac{1}{2}-a}\right)-\IE\left[\left( \big\{\overline{\sZ}_{\ell}(\sqrt T z) \overline{\sZ}_{\ell}(\sqrt T z + Z)\big\}\land T^{\frac{1}{2}-a}\right)\right].\]
Since $\textrm{Cov} \left(\widetilde{\mathcal{V}}_T(z_1),\widetilde{\mathcal{V}}_T(z_2)\right)=0$ unless  $|z_1-z_2|< C_{\phi}T^{a-\frac{1}{2}}$  (for some constant $C_\phi$ depending of the size of the support of $\phi$),  for each $\tau_0\leq \sigma\leq \tau$, and $x,y\in \R^d$%for $Z\in {\rm supp}(V)$, 
\begin{align}
 &\IE\left( \int \rho_\sigma(z-x)  \rho_\sigma( z-y)\left(\widetilde{\mathcal{V}}_T(z)-\IE\left[\widetilde{\mathcal{V}}_T(z)\right]\right)  \bar u (t- \sigma,z)^2 \dd  z \right)^2\nonumber\\
&= \iint \rho_\sigma (z_1-x)\rho_\sigma(z_1-y)\rho_\sigma (z_2-x)\rho_\sigma(z_2-y)\\
&\hspace{3cm} \bar u (t- \sigma,z_1)^2\bar u (t- \sigma,z_2)^2 \textrm{Cov} \left(\widetilde{\mathcal{V}}_T(z_1),\widetilde{\mathcal{V}}_T(z_2)\right)\dd z_1\dd z_2\notag\\
%&\hspace{3em}\times \IE\left[\left(\widetilde{\mathcal{V}}_T(z_1)-\IE\left[\widetilde{\mathcal{V}}_T(z_1)\right]\right) \left(\widetilde{\mathcal{V}}_T(z_2)-\IE\left[\widetilde{\mathcal{V}}_T(z_2)\right]\right) \right]\dd z_1\dd z_2\notag\\
&= \iint_{|z_1-z_2|\leq C_\phi T^{a-\frac{1}{2}}} \rho_\sigma (z_1-x)\rho_\sigma(z_1-y)\rho_\sigma (z_2-x)\rho_\sigma(z_2-y)\\
&\hspace{3cm} \bar u (t- \sigma,z_1)^2\bar u (t- \sigma,z_2)^2 \textrm{Cov} \left(\widetilde{\mathcal{V}}_T(z_1),\widetilde{\mathcal{V}}_T(z_2)\right)\dd z_1\dd z_2\notag\\   &\leq CT^{a-\frac{1}{2}}\rho_{2\sigma} (x-y)\IE \left[\widetilde{\mathcal{V}}_T(0)^2\right]
%   & \leq C T^{d\left(a-\frac{1}{2}\right)} \sup_{z\in\mathbb R ^d,Z\in \mathrm{Supp } V}\IE\left[X(T,z,Z)^2\right]\nonumber\\
 %  &  = C T^{d\left(a-\frac{1}{2}\right)} \sup_{Z\in \mathrm{Supp } V}\IE\left[X(T,0,Z)^2\right],\label{eq:upperBoundlemmaPsiT}
\end{align}
where in the last line we have used the spatial shift invariance of $\IE\left[\widetilde{\mathcal{V}}_T(z)^2\right]$ and the boundedness of $u(t-\sigma,x)$, $\rho_\sigma(x)$ for $\tau_0\leq \sigma\leq\tau$, $x\in\R^d$. Thus, it is enough to show that \begin{align*}
&T^{a-\frac{1}{2}}\IE \left[\widetilde{\mathcal{V}}_T(0)^2\right]\to 0.
\end{align*}
We have from Cauchy-Schwarz inequality that \begin{align*}
&T^{a-\frac{1}{2}}\IE \left[\widetilde{\mathcal{V}}_T(0)^2\right]\leq \int V(Z)dZ\int V(Z)T^{a-\frac{1}{2}}\IE\left[\left(\big\{\overline{\sZ}_{\ell}(0) \overline{\sZ}_{\ell}( Z)\big\}\land T^{\frac{1}{2}-a}\right)^2\right]dZ.
\end{align*}
Since  $T^{a-\frac{1}{2}}\IE\left[\left(\big\{\overline{\sZ}_{\ell}(0) \overline{\sZ}_{\ell}( Z)\big\}\land T^{\frac{1}{2}-a}\right)^2\right]\leq \IE \left[\sZ_\ell(0)\sZ_\ell(Z)\right]\leq \IE \left[\sZ_\infty(0)^2\right]$, it is enough to show that for each $Z\in \textrm{Supp} V$, \begin{align*}
T^{a-\frac{1}{2}}\IE\left[\left(\big\{\overline{\sZ}_{\ell}(0) \overline{\sZ}_{\ell}( Z)\big\}\land T^{\frac{1}{2}-a}\right)^2\right]\to 0\qquad\text{as $T\to \infty$. }
\end{align*} 
This follows from Lemma~\ref{CN Lemma 3.3}  and uniform integrability of $\{\sZ_{\ell_T}(0)\sZ_{\ell_T}(Z):T\geq 0\}$ .\end{proof}

We are now ready to complete the proof of  Proposition \ref{prop:CVbracketSHEalterGeneral} by proving Lemma \ref{lem:asymptffZ} but the proof  is the same as the proof of Lemma \ref{lem:asymptPsi}, so we will only show the strategy.
\begin{proof}[Proof of Lemma \ref{lem:asymptffZ}]
%\end{proof}

%\begin{proof}[Proof of Proposition \ref{prop:CVbracketSHEalterGeneral}]
%By equation \eqref{eq:bracketOfMtau0}, the statement of the proposition writes: for all $\tau_0>0$,
%\begin{align*}
%\lim_{T\to\infty} \IE\Bigg| &\iint_{\mathbb{R}^d\times \mathbb{R}^d}f_1(x) f_2(y) \sZ_\ell(\sqrt T x) \sZ_\ell(\sqrt T y) \Psi_T(x,y) \dd x \dd y \\
%& - \gamma^2(\b) \iint_{\mathbb{R}^d\times \mathbb{R}^d}f_1(x) f_2(y) \int_{\tau_0}^\tau \int_{\mathbb R^d} \rho_\sigma(z-x) \rho_\sigma( z-y) \bar u (t- \sigma,z)^2 \dd \sigma \dd z \Bigg| = 0.
%\end{align*}
%Let us use the short-cut notation $\Psi(x,y) := \gamma^2(\b)\int_{\tau_0}^\tau \int_{\mathbb R^d} \rho_\sigma(z-x) \rho_\sigma( z-y) \bar u (t- \sigma,z)^2 \dd \sigma \dd z$. Now observe two facts: (i) for $T$ large enough, $\sZ_\ell(\sqrt T x) \sZ_\ell(\sqrt T y)$ is independent of $\Psi_T(x,y)$, so in the first integral, we can replace $\Psi_T(x,y)$ by its limit $\Psi(x,y)$ thanks to \eqref{eq:asymptPsi}; \\ 
We have
\begin{align*}
& \iint_{\mathbb{R}^d\times \mathbb{R}^d}f(x) f(y) \sZ_\ell(\sqrt T x) \sZ_\ell(\sqrt T y) \Psi(x,y) \dd x \dd y\\
&\approx_{L^1}\iint_{\mathbb{R}^d\times \mathbb{R}^d}f(x) f(y) \left(\left\{\overline{\sZ}_\ell(\sqrt T x) \overline{\sZ}_\ell(\sqrt T y)\right\} \land T^{\frac{1}{2}-a}\right)\Psi(x,y) \dd x \dd y\\
&\approx_{L^1}\iint_{\mathbb{R}^d\times \mathbb{R}^d}f(x) f(y)\IE\left[  \left(\left\{\overline{\sZ}_\ell(\sqrt T x) \overline{\sZ}_\ell(\sqrt T y)\right\} \land T^{\frac{1}{2}-a}\right)\right] \Psi(x,y) \dd x \dd y\\
&\sim\iint_{\mathbb{R}^d\times \mathbb{R}^d}f(x) f(y)\IE\left[  \overline{\sZ}_\ell(\sqrt T x) \overline{\sZ}_\ell(\sqrt T y)\right] \Psi(x,y) \dd x \dd y\\
&= \iint_{|x-y|\leq  C_\phi T^{a-\frac{1}{2}}} f(x) f(y)\, \IE\left[  \overline{\sZ}_\ell(\sqrt T x) \overline{\sZ}_\ell(\sqrt T y)\right] \Psi(x,y) \dd x \dd y \nonumber\\
  &+ \iint_{{|x-y|>  C_\phi T^{a-\frac{1}{2}}}}f(x) f(y)\,\IE\left[ \overline{\sZ}_\ell(\sqrt T x)\right]\IE\left[ \overline{\sZ}_\ell(\sqrt T y)\right]  \Psi(x,y)\dd x \dd y\\
  &\to \iint f(x)f(y)\Psi(x,y)\dd x\dd y,
\end{align*}
where we denote by $a_n\sim b_n$ when $a_n-b_n\to 0$ as $n\to \infty$, and we have used the same argument in \eqref{trunc2} in the first approximation,  \eqref{eq:VtEVt} in the second approximation, and uniform integrability of $\{\overline{\sZ}_\ell(\sqrt T x) \overline{\sZ}_\ell(\sqrt T y)\}_{T\geq 0}$ follows from \eqref{eq:UIZZsup} in the third approximation, and the last convergence follows from  $\IE\left[ \overline{\sZ}_\ell(\sqrt T y)\right]\to \IE\left[\sZ_\infty\right]=1$ and $\IE\left[ \overline{\sZ}_\ell(\sqrt T x) \overline{\sZ}_\ell(\sqrt T y)\right] \leq \IE\left[\sZ_\infty^2\right]<\infty$. 
\end{proof}

\subsection{Proof of Theorem \ref{th:CV_stationarySHE} (1-dimensional in time)}  \label{subsec:conclusionGFF}
 When $u_0\equiv 1$ in \eqref{eq:definitionNTftauIntroGeneral}, we have for all $\tau\in [0,\infty)$:
\begin{equation} \label{eq:NtaufWhenu0isOne} 
W_\tau^{\ssup{T}}(f) = Z_\tau^{\ssup{T}}(f) = 
T^{\frac{d-2}{4}}\int f(x) \left(\sZ_{T \tau }\left({\sqrt{T}x}\right)-1\right)dx.
\end{equation}
We have seen in Section \ref{ref:proofOfEWOnetimeGeneralIC} (see also Section \ref{sec:heuristicsEW}) that for all $\tau\geq 0$, $Z_\tau^{\ssup{T}}(f)\cvlaw {Z_\tau(f)}$ where   the latter quantity is a centered Gaussian variable of variance 
\begin{equation} \label{eq:TheVarianceformula}
 \gamma^2(\b) \int_{0}^\tau \iint f(x)f(y) \rho(2\sigma,x-y) \dd \sigma \dd x \dd y.
\end{equation}
When $\tau = \infty$, this last quantity is also the variance of $\mathscr H(f) := \int f(x) \mathscr H(x) \dd x$, where $\mathscr H(x)$ is the Gaussian free field defined in \eqref{cov:H:GFF}. Note that it is equally the variance of $\mathscr U_2(t,f)=\int f(x) \mathscr U_2(t,x)\dd x$ from \eqref{EW2}.

Now, we define $Z_\infty^{(T)}(f)$ by formula \eqref{eq:NtaufWhenu0isOne} when $\tau =\infty$, and $Z_\infty(f)$ to be the centered Gaussian variable of variance given by $\eqref{eq:TheVarianceformula}$ when $\tau=\infty$.
Since $u^{stat}_\e(t,\cdot) \eqlaw \sZ_\infty(\e^{-1}\cdot)$ for all $t\geq 0$, we see that one-dimensional time convergences in Theorem \ref{th:CV_stationarySHE} is proven if we can justify that $Z_\infty^{\ssup{T}}(f) \cvlaw Z_\infty(f)$ as $T\to\infty$ (note that this also implies Corollary \ref{th:PolymerCV}).

To this end,  it is enough to show  that the following (commutative) diagram holds  \cite[Theorem 3.2]{B99}:
\[\begin{CD} 
Z_\tau^{\ssup{T}}(f) @>{(d)}>{T \to \infty}> Z_\tau(f)\\
@V{L^2, \text{ unif.\ in } T\geq 1}V{\tau\to \infty}V @V{(d)}V{\tau \to \infty}V\\
Z_\infty^{\ssup{T}}(f) @.  \mathscr H(f)
\end{CD}\]

Indeed, we already mentioned above that we know that the first line convergence holds. By convergence of the variance, the Gaussian variable $Z_\tau(f)$ converges as $\tau\to\infty$ to $\langle \mathscr H,f\rangle$.  So what is left to prove is the uniform convergence; we have
\begin{align*}
\left\Vert Z_\tau^{\ssup{T}}(f) - Z_\infty^{\ssup{T}}(f) \right\Vert_{L^2(\IP)} & \leq T^{\frac{d-2}{4}} \int \left\Vert \sZ_{\tau T}\big(\sqrt T x\big) - \sZ_{\infty}\big(\sqrt T x\big)  \right\Vert_{L^2(\IP)} |f(x)| \dd x\\
&\leq \Vert f \Vert_\infty T^{\frac{d-2}{4}} \Vert \sZ_{\tau T}(0) - \sZ_{\infty}(0)  \Vert_{L^2(\IP)},
\end{align*}
which vanishes uniformly in $T\geq 1$ as $\tau\to\infty$ since $\Vert \sZ_{\tau} - \sZ_{\infty}  \Vert_{L^2} = O(\tau^{-\frac{d-2}{4}})$, {which follows for example from \eqref{eq:bracketExpansion_Wbis} (recall that $\kW_\tau = \sZ_\tau$ when $u_0=0$), see also estimate (3.5) in \cite{CCM19b}.}

\subsection{Multidimensional convergence in the EW limits for SHE}\label{sec:multi-dim}
For ease of notation, we present the proof of multidimensional convergence in Theorem \ref{th:EWlimit} for the flat initial condition case ($u_0 \equiv 1$), although the case of general initial conditions is obtain by the same considerations. Multidimensional case of Theorems \ref{th:CV_stationaryKPZ} are obtained by commutating limits $T\to\infty$ and $\tau\to\infty$ limit as in Section \ref{subsec:conclusionGFF}.
\subsubsection{Preliminaries}
Let $t>0$ and set $u_0\equiv 1$. We directly see from \eqref{eq:uZ} that for all $0\leq s_1\leq \dots \leq s_n = t$, $x_1,\dots,x_n\in \mathbb{R}^d$,
\begin{align} 
& \big(u_\e(s_1,x_1),\dots, u_\e(s_n,x_n)\big)\nonumber\\
& = \Bigg( \sZ_{T(t-s_1), T t}\bigg(\sqrt{T}\, x_1,\xi^{\ssup{\e,t}}\bigg),\dots, \sZ_{T(t-s_n),T t}\bigg(\sqrt{T}  \, x_n, \xi^{\ssup{\e,t}}\bigg)\Bigg), \label{eq:couplingProporty}
\end{align}
where for any $0\leq U \leq T$ and $X\in \rd$, we set
\begin{align}
&\sZ_{U,T}(X;\xi)= 
E\big[\Phi_{U,T}(B) \big \vert {B_U=X}\big],\qquad\mbox{with}\nonumber\\
&\Phi_{U,T}(B)=\exp\bigg\{\beta\int_U^T\int_{\rd} \phi(B_s-y) \xi(s,y) \dd s \dd y- \frac{\beta^2}2 (T-U) V(0)\bigg\}.\label{PhiAB}
\end{align}
Therefore, since $\xi^{\ssup{\e,t}} \eqlaw \xi$, the proof reduces to showing that for all $t>0$, jointly for finitely many $u\in [0,t]$ and $f\in\mathcal C^\infty_c$, as $T\to\infty$, \footnote{{When dealing with both joint convergence in $t$ and $f$, we need to properly prove joint convergence for different $f$ since the Cramer-Wold device will not be enough in this case}}
\begin{equation}\label{eq:JointCVlogZ}
 T^{\frac{d-2}{4}}\int f(x) \left( \sZ_{T u,T t}(\sqrt T x)- 1\right)\dd x  \cvlaw  \int f(x) \mathscr U_1(t-u,x)\dd x,
\end{equation}

We further define, for $u\geq 0$ and $f\in\mathcal C^\infty_c$, the process
\begin{equation}\label{eq:rescaledMg}
(Z^{\ssup{T}}(u,f)):\tau\in[0,\infty)\to 
\begin{cases}
T^{\frac{d-2}{4}}\int_{\mathbb{R}^d}f(x) \left(\sZ_{Tu,{T\tau}}(\sqrt{T} x)-1\right)\rmd x & \text{if } \tau \geq u,\\
\qquad \qquad 0 & \text{if } 0\leq \tau < u.
\end{cases}
\end{equation}
Then, for all $u\geq 0$ and $f\in\mathcal C^\infty_c$, $\tau \to Z_\tau^{\ssup{T}}(u,f)$ is a continuous martingale and such that $Z_0^{\ssup{T}}(u,f) = 0$. In view of the desired convergence \eqref{eq:JointCVlogZ}, we have again in mind the functional CLT for martingales \cite[Theorem 3.11]{JS87}, so we are interested in the limit of the cross-bracket $\langle Z^{\ssup T}(u_1,f_{{ 1}}),Z^{\ssup T}(u_2,f_{{ 2}})\rangle_\tau$.
We have:
\begin{proposition}\label{lm:CVLone}
For all test functions $f_1$ and $f_2$ in $\mathcal{C}_c^\infty$ and $0\leq u_2\leq u_1\leq t$, for all $\tau \geq u_1$,
\begin{align} 
&\langle Z^{\ssup T}(u_1,f_1),Z^{\ssup T}(u_2,f_2)\rangle_\tau \nonumber\\
&\overset{L^1}{\longrightarrow} \int_{\mathbb{R}^d}\int_{\mathbb{R}^d} \left( \gamma^2(\b) \int_{0}^{\tau-u_1} \rho(2\sigma + (u_1-u_2),x-y) \dd \sigma \right) f_1(x)f_2(y)\dd x \dd y, \label{eq:LimcovStruc}
\end{align}
as $T\to\infty$, where
\[
\mathrm{Cov}\big(\mathscr U(t-u_1,x), \mathscr U(t-u_2,y)\big)= \gamma^2(\b)\int_0^{t-u_1} \rho(2\sigma +(u_1-u_2),x-y) \dd \sigma.
\]
\end{proposition}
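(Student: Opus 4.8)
The plan is to run, for the pair $(Z^{\ssup T}(u_1,f_1),Z^{\ssup T}(u_2,f_2))$, the martingale–bracket computation of Proposition \ref{prop:CVbracketSHEalterGeneral}, the only genuinely new feature being the two distinct starting times $u_2\le u_1$. Since $\tau\ge u_1$ and $\tau\mapsto Z^{\ssup T}(u_1,f_1)_\tau$ is constant on $[0,u_1]$, the cross-bracket accumulates only on $[u_1,\tau]$. First I would use the It\^o SDE $\dd_s\sZ_{Tu_i,s}(\sqrt Tx)=\beta\int\phi(z-b)\,\rho_{s-Tu_i}(z-\sqrt Tx)\,\DE_{Tu_i,\sqrt Tx}^{s,z}[\Phi]\,\xi(\dd s,\dd b)\,\dd z$ together with $\int\phi(z_1-b)\phi(z_2-b)\dd b=V(z_1-z_2)$, then perform the change of variables $s=T\sigma$, $z_1=\sqrt Tz$, $z_2=\sqrt Tz+Z$ and use $\rho_{Ta}(\sqrt Tw)=T^{-d/2}\rho_a(w)$. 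As in \eqref{eq:bracketExpansion_intro} the prefactor $T^{(d-2)/2}$ is exactly absorbed, leaving
\[
\langle Z^{\ssup T}(u_1,f_1),Z^{\ssup T}(u_2,f_2)\rangle_\tau=\beta^2\!\iint\! f_1(x)f_2(y)\!\int_{u_1}^\tau\!\!\iint\! V(Z)\,\rho_{\sigma-u_1}(z-x)\,\rho_{\sigma-u_2}(z+T^{-1/2}Z-y)\,\DE_{Tu_1,\sqrt Tx}^{T\sigma,\sqrt Tz}[\Phi]\,\DE_{Tu_2,\sqrt Ty}^{T\sigma,\sqrt Tz+Z}[\Phi]\,\dd\sigma\,\dd z\,\dd Z\,\dd x\,\dd y.
\]

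Next, as in Lemma \ref{lem:unifNeglTau0}, I would discard the range $\sigma\in[u_1,u_1+\tau_0]$: using \eqref{eq:unifboundForP2P} and Chapman--Kolmogorov its contribution is $O(\tau_0)$ uniformly in $T$, so it suffices to treat $\sigma\in[u_1+\tau_0,\tau]$ and let $\tau_0\downarrow0$ at the end. On that range both horizons $T(\sigma-u_i)\to\infty$, so Theorem \ref{th:errorTermLLT} applies and lets one replace each Brownian-bridge factor by $\sZ^{\ssup{u_i}}_{\ell}(\cdot)\,\overset{\leftarrow}{\sZ}_{T\sigma,\ell}(\cdot)$ with $\ell=T^a$, $a<\tfrac12$, where $\sZ^{\ssup{u_i}}_\ell$ is the forward partition function over the window $[Tu_i,Tu_i+\ell]$; that the resulting error in the bracket vanishes is the argument of Lemma \ref{lem:changeOfMartingaleSHE} (Cauchy--Schwarz plus dominated convergence, using \eqref{eq:unifboundForP2P} and boundedness of $f_1,f_2$).

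It then remains to homogenize $\iint f_1(x)f_2(y)\,\sZ^{\ssup{u_1}}_\ell(\sqrt Tx)\,\sZ^{\ssup{u_2}}_\ell(\sqrt Ty)\,\Psi_T(x,y)\,\dd x\,\dd y$, where $\Psi_T$ collects the $(z,Z,\sigma)$-integral involving $V$, the two heat kernels and $\overset{\leftarrow}{\sZ}_{T\sigma,\ell}(\sqrt Tz)\overset{\leftarrow}{\sZ}_{T\sigma,\ell}(\sqrt Tz+Z)$. Since $\ell=o(T)$, when $u_1>u_2$ the windows $[Tu_1,Tu_1+\ell]$ and $[Tu_2,Tu_2+\ell]$ are disjoint, hence $\sZ^{\ssup{u_1}}_\ell$ and $\sZ^{\ssup{u_2}}_\ell$ are genuinely independent, and for $\sigma\ge u_1+\tau_0$ both are also disjoint from $[T\sigma-\ell,T\sigma]$; the case $u_1=u_2$ reduces to Proposition \ref{prop:CVbracketSHEalterGeneral} with $u_0\equiv1$. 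Running the truncation scheme of Lemma \ref{lem:asymptPsi} (replacing $\overline{\sZ}_\ell\overline{\sZ}_\ell$ by $\overline{\sZ}_\ell\overline{\sZ}_\ell\wedge T^{\frac12-a}$, whose covariances have range $O(T^{a-1/2})$ and whose second moment at scale $T^{1/2-a}$ is negligible by Lemma \ref{CN Lemma 3.3} and uniform integrability of $(\sZ_\ell(0)\sZ_\ell(Z))_T$), and then of Lemma \ref{lem:asymptffZ} for the $\sZ^{\ssup{u_i}}_\ell$ factors, one replaces every partition function by its mean; using $\IE\sZ^{\ssup{u_i}}_\ell=1$, $\IE[\overset{\leftarrow}{\sZ}_{T\sigma,\ell}(\sqrt Tz)\overset{\leftarrow}{\sZ}_{T\sigma,\ell}(\sqrt Tz+Z)]\to\IE[\sZ_\infty(0)\sZ_\infty(Z)]$ and $T^{-1/2}Z\to0$, the bracket converges in $L^1$ to $\iint f_1(x)f_2(y)\int_{u_1}^\tau\big(\beta^2\!\int V(Z)\IE[\sZ_\infty(0)\sZ_\infty(Z)]\dd Z\big)\int_{\R^d}\rho_{\sigma-u_1}(z-x)\rho_{\sigma-u_2}(z-y)\,\dd z\,\dd\sigma\,\dd x\,\dd y$. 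Finally, by \eqref{def-sigma-beta} the $Z$-integral equals $\gamma^2(\beta)$, by the Gaussian semigroup property $\int_{\R^d}\rho_{\sigma-u_1}(z-x)\rho_{\sigma-u_2}(z-y)\dd z=\rho_{2\sigma-u_1-u_2}(x-y)$, and the substitution $\sigma\mapsto\sigma+u_1$ turns the time integral into $\int_0^{\tau-u_1}\rho_{2\sigma+(u_1-u_2)}(x-y)\,\dd\sigma$, which is exactly \eqref{eq:LimcovStruc}; letting $\tau_0\downarrow0$ closes the argument.

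I expect the homogenization step (the bookkeeping of the truncation and finite-range-dependence estimates of Lemmas \ref{lem:asymptPsi}--\ref{lem:asymptffZ} in a configuration with two ``forward'' partition functions living at two distinct time windows) to be the main burden, but it is work rather than a conceptual obstacle: the offset $u_1-u_2>0$ only helps the decoupling. As in the single-time proof, the one place that genuinely needs the strength of Theorem \ref{th:errorTermLLT} (over the older \cite{V06,Si95}) is that the local limit theorem error must vanish uniformly over the macroscopic spatial scales of $x,y,z$ over which one integrates.
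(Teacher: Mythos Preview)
Your proposal is correct and follows essentially the same approach as the paper: the paper's proof first reduces to $u_2=0$ by a time shift and then writes out the cross-bracket and invokes ``a repetition of the arguments that lead to \eqref{eq:CVbracketNtauf}'', which is precisely the LLT replacement plus homogenization you spell out. One small overstatement: you say the full strength of Theorem \ref{th:errorTermLLT} is genuinely needed here, but as noted in the proof of Lemma \ref{lem:changeOfMartingaleSHE}, the weaker \cite{V06} version already suffices for this step --- the refined error bounds only become essential in the KPZ section.
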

\begin{proof}
First observe that by a simple shift in time, we can again restrict ourselves to showing the lemma when $u_2 = 0$. Then, for all $\tau \geq u$,
\begin{align*}
&\langle Z^{\ssup T}(u,f_1),Z^{\ssup T}(0,f_2)\rangle_\tau  = \b^2 \int_u^\tau 
\int_{\mathbb{R}^d\times \mathbb R^d} f_1(x)f_2(y) \dd x \dd y\\
& \times \int_{\mathbb R^d} V(Z) \rho_{\sigma-u}(z-x) \rho_\sigma( z-y+T^{-\frac{1}{2}}Z) \DE_{Tu,\sqrt T x}^{T\sigma,\sqrt T z} \left[\Phi_{Tu,T\sigma}\right] \DE_{0,\sqrt T y}^{T\sigma,\sqrt T z+Z} \left[\Phi_{T\sigma}\right] \dd  z \dd Z.
\end{align*}
By a repetition of the arguments that lead to \eqref{eq:CVbracketNtauf} (namely \eqref{eq:decomposemartingale}-\eqref{eq:centralLimitThMGeneral}, with in particular Lemma \ref{lem:asymptPsi} and the proof of Proposition \ref{prop:CVbracketSHEalterGeneral}),
 we find that
\begin{equation*} 
\langle Z^{\ssup T}(u,f_1),Z^{\ssup T}(0,f_2)\rangle_\tau \approx_{L^1} \iint_{\mathbb{R}^d\times \mathbb{R}^d}f_1(x) f_2(y) \IE\left[\sZ_{Tu,Tu+\ell}(\sqrt T x)\right] \IE\left[ \sZ_\ell(\sqrt T y)\right] \Theta_T(x,y) \dd x \dd y,
\end{equation*}
where
\begin{align*}
\Theta_T(x,y) & = \b^2
\int_{u}^\tau \iint V(Z) \rho_{\sigma-u}(z-x) \rho_\sigma( z-y) \IE\left[\overset{\leftarrow}{\sZ}_{T\sigma,\ell}(\sqrt T z) \overset{\leftarrow}{\sZ}_{T\sigma,\ell}(\sqrt T z + Z)\right] \dd  z \dd Z \dd \sigma,\\
& = \gamma^2(\b) \int_{u}^\tau \rho_{2\sigma -u}(x-y) \dd \sigma,
\end{align*}
which gives \eqref{eq:LimcovStruc} when $u_2=0$.

\end{proof}

\subsubsection{Concluding lines for the multidimensional case} \label{sec:conclusionOfProofOfSHE}
We show here the desired convergence \eqref{eq:JointCVlogZ}, which also writes
\begin{equation}\label{eq:finalGoal}
 T\to\infty, \quad Z_t^{\ssup T}(u,f)\cvlaw\int_{\mathbb{R}^d} f(x)\mathscr U_1(t-u,x)\dd x \quad \text{jointly in } u\in[0,t], f\in \mathcal C^\infty_c,
\end{equation}
We first observe that \eqref{eq:LimcovStruc} holds for all $0\leq u_2\leq u_1\leq t$, while if $0\leq \tau <u_1$, the cross-bracket $\langle Z^{\ssup T}(u_1,f_1),Z^{\ssup T}(u_2,f_2)\rangle_\tau $ is null and converges trivially to zero. Then, we define the quantity $C_\tau(u_1,u_2,f_1,f_2)$ to be equal to the RHS of \eqref{eq:LimcovStruc} when $\tau \geq u_1$ and set it to $0$ when $\tau < u_1$. We also define $C_\tau(u_1,u_2,f_1,f_2)$ when $u_1>u_2$ to be $C_\tau(u_2,u_1,f_1,f_2)$. 

Therefore, by the multidimensional functional central limit for martingales (\cite[Theorem 3.11]{JS87}), we obtain that the family of continuous martingales $(Z^{\ssup T}(u,f))_{u}$ converges towards the family of Gaussian continuous martingales $(\tau\to(G_\tau(u,f))_{u}$, whose covariance structure is given by $C_\tau(u_1,u_2,f_1,f_2)$, where this convergence holds in terms of continuous processes in $\tau$ and jointly for finitely many $u\in[0,t]$, $f\in \mathcal C^\infty_c$.

Hence, \eqref{eq:finalGoal} follows from letting $\tau=t$ in this convergence and by observing that there is equality in law between the centered Gaussian families $G_t(u,f))_{u\in[0,t],f}$ and $(\int f(x)\mathscr U_1(t-u,x)\dd x)_{u\in[0,t],f}$  since they have the same covariance structure (recall that we have set $u_0(\cdot)\equiv 1$).

\section{Proofs of the Edwards-Wilkinson limits for KPZ}\label{sec:proofEWKPZ}
We use a constant $C>0$ depending on $d$ and $u$, which may change from line to line.
Recall that 
  \al{
    \kW_{\tau}(x)=\kW_{\tau,t, T}(x)=\DE_x\left[ \Phi_\tau \, u_0\left(\frac{B_{t T}}{\sqrt{T}}\right)\right],
  }
and note that for fixed $t$,
\begin{equation} \label{eq:hepsWtT}
h_{\e}(t,x)\eqlaw \log{\kW_{t T}(\sqrt{T}\,x)}.
\end{equation}
  Since $u_0$ is both bounded and bounded away from $0$, both $|u_0^{-1}|_{\infty}$ and $|u_0|_{\infty}$ are finite and 
  $$|u_0^{-1}|^{-1}_{\infty} \sZ_{\tau}(x)\leq \kW_{\tau}(x)\leq |u_0|_{\infty} \sZ_{\tau}(x).$$
  Hereafter, we use this without any comment.
  
  For fixed $t>0$, we define a martingale increment $\dd\overline{M}_\tau(x)$ {(we omit the superscript $T$ and $t$ to make notation simple)} as%  We define a martingale increment $\dd\overline{M}_\tau(x)$ as
\begin{align}
&\dd \overline{M}_\tau(x)\nonumber \\
&=\frac{\beta}{\bar{u}(t,T^{-\frac{1}{2}}\,x)} \DE_{x}\left[ \int_{\R^d} \phi(B_\tau-b) \xi(\dd \tau, \dd b)\,\overset{\leftarrow}{\sZ}_{t,\ell(T)}(B_\tau) \, u_0\left(\frac{B_{t T}}{\sqrt{T}}\right)\right]\nonumber\\
&= \frac{\beta}{\bar{u}(t,T^{-\frac{1}{2}}\,x)} \int_{\R^d} \xi(\dd \tau, \dd b) \int_{\R^d}\rho_\tau(z-x) \phi(z-b) \overset{\leftarrow}{\sZ}_{t,\ell(T)}(z) \, \DE_z\left[ u_0\left(\frac{B_{t T-\tau}}{\sqrt{T}}\right)\right]\dd z,\label{eq:defdMbartau}
\end{align}
for   $\tau\in [\ell(T),t T]$  with $\ell( T)=T^a$, $a\in (0,\frac{1}{4})$ for $T> 0$  large enough.

%Put together, these two propositions imply 1-dimensional in time convergence for the EW limits and GFF limits (theorems \CC{???}).  (or for the multi-time correlation as in Section \ref{sec:conclusionOfProofOfSHE}, \textcolor{blue}{think about joint in time}).
%and
%\begin{equation}
% T^{\frac{d-2}{2}} \int_{\mathbb{R}^d\times \mathbb{R}^d} f(x) f(y)\, \langle H (\sqrt T x), H (\sqrt T y) \rangle_{t T} \dd x \dd y \cvLone \langle N(f) \rangle_\tau.
%\end{equation}
\begin{comment}
To prove Proposition \ref{prop:mainpropKPZ}, we rely again on the CLT for martingales. This time, because of the presence of $\sZ_\tau$ in the denominator of \eqref{eq:diffOfH}, the bracket of $H$ is slightly more delicate to deal with compared to the previous sections and our strategy is to replace $H$ in \eqref{eq:CLTmartH} by another martingale with no denominator.
\end{comment}

\subsection{Structure of the proof for 1-dimensional time convergence in time}
\label{sec:explainKPZ}
It\^o's decomposition gives
\begin{equation}\label{eq:Itodecomposition}
\log \kW_\tau(x) = \overline{H}_\tau(x) - \frac{1}{2} \langle \overline{H}(x) \rangle_\tau
\end{equation}
with $t\to \overline{H}_\tau(x)$ being the martingale satisfying $\overline{H}_0(x) = 0$ and
\begin{equation} \label{eq:diffOfH}
\dd \overline{H}_\tau(x) = \frac{\dd \kW_\tau(x)}{\kW_\tau(x)}.\end{equation}
The main idea of our approach is to rely again on the central limit theorem for martingales and show that:
\begin{proposition} \label{prop:mainpropKPZ} For any test function $f$, as $T\to \infty$
%\begin{equation}
%T^{\frac{d-2}{2}} \int_{\mathbb{R}^d\times \mathbb{R}^d} f(x) f(y)\, \langle H (\sqrt T x), H (\sqrt T y) \rangle_{t T} \dd x \dd y \cvLone \langle N(f) \rangle_\tau.
%\end{equation}
%As a consequence,
\begin{equation} \label{eq:CLTmartH}
T^{\frac{d-2}{4}} \int_{\mathbb{R}^d} f(x) \overline{H}_{T t} (\sqrt T x) \dd x \cvlaw \mathscr U_3(t,f),
\end{equation}
where $\mathscr U_3(t,f) = \int f(x) \mathscr U_3(t,x) \dd x$ with $\mathscr U_3(t,x)$ from \eqref{eq:defU3}.
\end{proposition}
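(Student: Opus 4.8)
The plan is to reproduce, at the level of the martingale $\overline H$, the stochastic-calculus and martingale-CLT strategy used for the SHE in Section~\ref{ref:proofOfEWOnetimeGeneralIC}. Fix $t>0$ and, for $\tau\in[0,t]$, set
\[
H^{\ssup{T}}_\tau(f):=T^{\frac{d-2}{4}}\int_{\R^d} f(x)\,\overline H_{T\tau}(\sqrt T x)\,\dd x ,
\]
which, by It\^o's formula for the white noise applied to $\dd\overline H_s(x)=\dd\kW_s(x)/\kW_s(x)$, is a continuous martingale with $H^{\ssup{T}}_0(f)=0$ and an explicit stochastic-integral representation (hence an explicit bracket, obtained by expanding against $V=\phi\star\phi$). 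I would then invoke the functional CLT for martingales \cite[Theorem~3.11]{JS87}, so that the whole problem reduces to identifying $\lim_{T\to\infty}\langle H^{\ssup{T}}(f)\rangle_{Tt}$ in $L^1$. Exactly as for the SHE, I would proceed in three steps: (i) discard the part of the bracket accumulated on $\tau\le\tau_0$ (and on the initial window $[0,\ell(T)]$, $\ell(T)=T^a$ with $a<\tfrac14$), uniformly in $T$ as $\tau_0\to0$; (ii) replace $H^{\ssup{T}}$ by the surrogate martingale $M^{\ssup{T}}_{\tau_0,\tau}(f):=T^{\frac{d-2}{4}}\int_{\R^d}f(x)\int_{T\tau_0}^{T\tau}\dd\overline M_s(\sqrt T x)\,\dd x$ built from the increments \eqref{eq:defdMbartau}; (iii) compute $\lim_{T\to\infty}\langle M^{\ssup{T}}_{\tau_0,\cdot}(f)\rangle_\tau$ and then let $\tau_0\to0$.

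Step (i) is a second-moment bound in the spirit of Lemma~\ref{lem:unifNeglTau0}: expanding the bracket leaves an integral of $V$ against heat kernels carrying the weight $\IE\big[\DE^{s,z_1}_{0,\sqrt T x}[\Phi_s]\,\DE^{s,z_2}_{0,\sqrt T x}[\Phi_s]\,\kW_s(\sqrt T x)^{-2}\big]$, which is uniformly bounded because $\sZ_s(x)/\kW_s(x)\in[|u_0|_\infty^{-1},|u_0^{-1}|_\infty]$ and by the Brownian-bridge bound \eqref{eq:unifboundForP2P}, and the remaining heat-kernel integral is $O(\tau_0)$. Step (ii) is the heart of the proof and the main obstacle. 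In $\dd\overline M_\tau$ the Brownian-bridge weight appearing in $\dd\overline H_\tau$ has been factorised using the local limit theorem, the factor near time $0$ has cancelled against $\kW_\tau(\sqrt T x)^{-1}$, and the remainder of the denominator has been replaced by the \emph{deterministic} factor $\bar u(t,T^{-1/2}x)^{-1}$. Proving $\IE|H^{\ssup{T}}_{\tau_0,Tt}(f)-M^{\ssup{T}}_{\tau_0,Tt}(f)|^2\to0$ (the KPZ analogue of Lemma~\ref{lem:changeOfMartingaleSHE}) therefore needs two genuinely quantitative inputs, rather than the soft moment arguments that sufficed for $u_\e$: the sharp local limit theorem for polymers, Theorem~\ref{th:errorTermLLT}, to control the factorisation error of the bridge weight on scales beyond the diffusive one and with a rate that does not depend on $\b$; and the concentration inequality for $\sZ_T$ of \cite{CCM20} (see also \cite{CH02}), to control the ratio $\sZ_\tau(\sqrt T x)/\kW_\tau(\sqrt T x)$ --- the delicate point being precisely that $\kW_\tau(\sqrt T x)$ may be atypically small. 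As in Lemma~\ref{lem:changeOfMartingaleSHE}, one expands the bracket of the difference into a $V$-against-heat-kernel integral of the product of the two error terms, dominates the integrand uniformly using $\b<\b_{L^2}$ (cf.\ \eqref{eq:unifboundForP2P} and the boundedness of $u_0$), and concludes by dominated convergence.

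For step (iii), the usual changes of variables $s=T\sigma,\ z_1=\sqrt T z,\ z_2=\sqrt T z+Z$, the identity $\overset{\leftarrow}{\sZ}_{T\sigma,\ell}(\sqrt T z)\overset{\leftarrow}{\sZ}_{T\sigma,\ell}(\sqrt T z+Z)\eqlaw\sZ_\ell(\sqrt T z)\sZ_\ell(\sqrt T z+Z)$, and the scaling relation $\DE_z[u_0(B_{tT-\sigma T}/\sqrt T)]=\bar u(t-\sigma,z)$ put $\langle M^{\ssup{T}}_{\tau_0,\cdot}(f)\rangle_\tau$ into the form $\iint_{\R^d\times\R^d}\bar u(t,x)^{-1}\bar u(t,y)^{-1}f(x)f(y)\,\Psi_T(x,y)\,\dd x\,\dd y$, with $\Psi_T$ exactly the quantity of Lemma~\ref{lem:asymptPsi}; note that, unlike in the SHE case, there is no $\sZ_\ell(\sqrt T x)$ prefactor here (it cancelled in step (ii)), so Lemma~\ref{lem:asymptffZ} is not needed. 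The three-step homogenisation of Lemma~\ref{lem:asymptPsi} (truncation of $\overline{\sZ}_\ell$, replacement by its mean $\gamma^2(\b)$, and the $O(T^{a-1/2})$ decorrelation estimate via Lemma~\ref{CN Lemma 3.3}) then yields, after letting $\tau_0\to0$ and combining with (i)--(ii),
\[
\langle H^{\ssup{T}}(f)\rangle_{Tt} \cvLone \gamma^2(\b)\int_{0}^{t}\int_{(\R^d)^3}\frac{f(x)\,f(y)}{\bar u(t,x)\,\bar u(t,y)}\,\rho_\sigma(z-x)\,\rho_\sigma(z-y)\,\bar u(t-\sigma,z)^2\,\dd\sigma\,\dd z\,\dd x\,\dd y .
\]
By the remark following Theorem~\ref{th:EWlimitKPZ}, $\kU_3(t,x)\eqlaw\bar u(t,x)^{-1}\kU_1(t,x)$, so, comparing with \eqref{eq:CVbracketNtauf}, the right-hand side is exactly the variance of $\mathscr U_3(t,f)$; the functional martingale CLT \cite[Theorem~3.11]{JS87} then gives \eqref{eq:CLTmartH}.
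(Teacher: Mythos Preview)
Your outline matches the paper's high-level strategy (replace $\overline H$ by the surrogate $\overline M$, then apply the martingale CLT to $\overline M$; step (iii) is essentially Proposition~\ref{prop:MwidetildeIsGaussianFlat} and is fine), but step~(i) and step~(ii) as you describe them contain a real gap concerning the denominator $\kW_s^{-1}$.

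In step~(i) you claim that the weight $\IE\big[\DE^{s,z_1}_{0,\sqrt T x}[\Phi_s]\,\DE^{s,z_2}_{0,\sqrt T y}[\Phi_s]\,\kW_s(\sqrt T x)^{-1}\kW_s(\sqrt T y)^{-1}\big]$ is uniformly bounded ``because $\sZ_s/\kW_s\in[|u_0|_\infty^{-1},|u_0^{-1}|_\infty]$ and by \eqref{eq:unifboundForP2P}''. Neither ingredient gives this: the first only says $\kW_s\asymp\sZ_s$, the second bounds the \emph{numerator}, not the ratio. Controlling $\IE[\DE^{s,z}_{0,x}[\Phi_s]^2/\sZ_s(x)^{2}]$ uniformly would need fourth moments of the point-to-point partition function, which are not available in the full $L^2$-region. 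The paper avoids this by conditioning on the good event $A_s(x)=\{\sZ_s(\sqrt T x)^{-1}\le T^{\e/8}\}$: on $A_s$, $\kW_s^{-1}$ is deterministically $\le C T^{\e/8}$ and one is back to the SHE estimate \eqref{eq:bracketExpansion_W}; on $A_s^c$, the a.s.\ bound \eqref{uniform-estFLAT} combined with $\IP(A_s^c)\le T^{-2d}$ (negative moments of $\sZ_\infty$ from \cite{CCM20} and Doob) suffices. This is why the small-time cutoff is taken at $T^{1-\e}$ rather than at a fixed $T\tau_0$.

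Step~(ii) as an $L^2$ estimate runs into the same obstruction: expanding $\IE[(H^{\ssup T}_{\tau_0,Tt}(f)-M^{\ssup T}_{\tau_0,Tt}(f))^2]$ produces cross-brackets carrying $\kW_s(\sqrt T x)^{-1}\kW_s(\sqrt T y)^{-1}$, and there is no ``dominated convergence'' argument as in Lemma~\ref{lem:changeOfMartingaleSHE} because the integrand is not uniformly dominated. The paper instead works in $L^1$: pull $|f(x)|$ inside, apply B\"urkholder--Davis--Gundy to the one-dimensional martingale for fixed $x$, and then use Cauchy--Schwarz to separate $\sup_s \kW_s^{-1}$ (all moments by Doob and \cite{CCM20}) from the bracket of the difference. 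Moreover, the replacement $\dd\overline H_s\to\dd\overline M_s$ is done in \emph{two} stages via the intermediate increment $\dd L_s=\bar u(t,\cdot/\sqrt T)\,\sZ_{\ell(T)}\,\dd\overline M_s$ (see \eqref{eq:defdL}): first $\dd\kW_s\to\dd L_s$ by the quantitative LLT (Theorem~\ref{th:errorTermLLT}), keeping the random denominator $\kW_s$; then $\dd L_s/\kW_s\to\dd\overline M_s$ by showing $\sZ_{\ell(T)}\bar u/\kW_s\to 1$. Incidentally, the ratio $\sZ_\tau/\kW_\tau$ you single out as delicate is deterministically bounded; the concentration input from \cite{CCM20} is used to control $\kW_s^{-1}$ itself.
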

Then, we will prove that the It\^o correction can be neglected in the limit: 
\begin{proposition}\label{prop:VanishBracket}
 For all $t>0$, as $T\to\infty$,
\begin{equation}
T^{\frac{d-2}{4}} \int_{\mathbb{R}^d} f(x)\,\left( \big\langle \overline{H}(\sqrt T x) \big\rangle_{T t} - \IE \big\langle \overline{H}(\sqrt T x) \big\rangle_{Tt} \right)  \cvLone 0.
\end{equation}
\end{proposition}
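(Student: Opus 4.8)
The plan is to compute the quadratic variation $\langle \overline H(\sqrt T x)\rangle_{Tt}$ explicitly, recognize it as a spatial average of products of (forward and time-reversed) partition functions against heat kernels, and then show that the centered and rescaled version of this average converges to $0$ in $L^1$. The starting point is the It\^o representation $\dd \overline H_\tau(x) = \dd\kW_\tau(x)/\kW_\tau(x)$, which by the computation of $\dd \kW_\tau$ as in \eqref{eq:defdMbartau} gives
\be{
\big\langle \overline H(\sqrt T x)\big\rangle_{Tt} = \b^2 \int_0^{Tt}\!\!\int_{(\R^d)^2}\!\! \frac{V(z_1-z_2)\,\rho_s(z_1-\sqrt T x)\rho_s(z_2-\sqrt T x)}{\kW_s(\sqrt T x)^2}\,\DE^{s,z_1}_{0,\sqrt T x}[\Phi_s]\DE^{s,z_2}_{0,\sqrt T x}[\Phi_s]\,\DE_{z_1}\!\!\Big[u_0\Big(\tfrac{B_{tT-s}}{\sqrt T}\Big)\Big]\DE_{z_2}\!\!\Big[u_0\Big(\tfrac{B_{tT-s}}{\sqrt T}\Big)\Big]\,\dd s\,\dd z_1\,\dd z_2.
}
After the diffusive change of variables $s=T\sigma$, $z_1=\sqrt T z$, $z_2 = \sqrt T z + Z$ (as in the proof of Lemma \ref{lem:changeOfMartingaleSHE}), this becomes $T^{-\frac{d-2}{2}}$ times an integral over $\sigma\in[0,t]$, $z\in\R^d$, $Z\in\R^d$, so that $T^{\frac{d-2}{4}}$ times the centered bracket is $T^{-\frac{d-2}{4}}$ times an integral of a \emph{centered} field; this is exactly the scaling that a homogenization/independence argument can kill.

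The key steps, in order, are as follows. First, I would use the local limit theorem for polymers (Theorem \ref{th:errorTermLLT}), together with the factorization of the ratio $\DE^{T\sigma,\sqrt T z}_{0,\sqrt T x}[\Phi_{T\sigma}]/\kW_{T\sigma}(\sqrt T x) \approx \overset{\leftarrow}{\sZ}_{T\sigma,\ell}(\sqrt T z)/\bar u(t,T^{-1/2}\sqrt T x)$ up to an $L^2$ (or $L^1$) error, to replace $\langle \overline H(\sqrt T x)\rangle_{Tt}$ by a quantity of the same form as $\langle M^{\ssup T}_{\tau_0,\cdot}(f)\rangle$ from Proposition \ref{prop:CVbracketSHEalterGeneral}, i.e.\ involving only $\overset{\leftarrow}{\sZ}_{T\sigma,\ell}(\sqrt T z)\overset{\leftarrow}{\sZ}_{T\sigma,\ell}(\sqrt T z + Z)$ and no denominator; the handling of the denominator is where the concentration inequality for $\sZ_T$ from \cite{CCM20} enters, to control the event that $\kW_s(\sqrt T x)$ is atypically small, and where the small-$\tau_0$ cutoff (uniform in $T$, analogous to Lemma \ref{lem:unifNeglTau0}) is needed near $s=0$. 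Second, having reduced to the $\overset{\leftarrow}{\sZ}$-only expression, I would integrate against $f(x)$, apply the truncation from \eqref{eq:barZ}--$\widetilde{\mathcal V}_T$ exactly as in Steps 1--3 of the proof of Lemma \ref{lem:asymptPsi}, and use the finite-range-of-dependence of the truncated constrained partition functions (they decorrelate once $|z_1-z_2| > C_\phi T^{a-1/2}$) to obtain a variance bound of order $T^{a-\frac12}\IE[\widetilde{\mathcal V}_T(0)^2]\to 0$, invoking Lemma \ref{CN Lemma 3.3} and uniform integrability of $(\sZ_\ell(0)\sZ_\ell(Z))_{T\ge 0}$. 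Combining these, the $L^1$-norm of $T^{\frac{d-2}{4}}\int f(x)(\langle \overline H(\sqrt T x)\rangle_{Tt} - \IE\langle\overline H(\sqrt T x)\rangle_{Tt})\,\dd x$ is bounded by the sum of an LLT/denominator error (vanishing) and the square root of the homogenization variance (vanishing), giving the claim.

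The main obstacle I expect is the presence of the random denominator $\kW_s(\sqrt T x)^2$ inside the $s$-integral: unlike in the SHE case, one cannot simply bound it, and the factorization coming from the local limit theorem only controls it for $s$ of order $T$, not for small $s$ or $s$ close to $tT$. Dealing with this requires (a) splitting off an initial layer $s\le T\tau_0$ and showing its contribution is uniformly small as $\tau_0\to 0$ — which itself needs a moment bound on $\kW_s^{-1}$ on that layer, obtainable from the concentration inequality of \cite{CCM20}; and (b) on the bulk $T\tau_0 \le s \le Tt$, carefully propagating the $L^2$-error in the LLT approximation through the division, using H\"older's inequality with an exponent $p>2$ close enough to $2$ that $\gamma(\sqrt p\,\b)<\infty$ still holds (as in \eqref{Difference of sZ and bar-sZ}), which is exactly why the statement is confined to $\b<\b_{L^2}$. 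The rest — the homogenization estimate and the truncation bookkeeping — is essentially a repetition of the SHE arguments already carried out and should not pose new difficulties.
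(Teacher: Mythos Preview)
Your approach differs substantially from the paper's, and the main gap is in handling the initial time layer. You propose to cut off $s\le T\tau_0$ and argue, by analogy with Lemma~\ref{lem:unifNeglTau0}, that its contribution is ``uniformly small as $\tau_0\to 0$'' via moment bounds on $\kW_s^{-1}$. This analogy is misplaced: Lemma~\ref{lem:unifNeglTau0} bounds the $L^2$-norm of a \emph{martingale increment} $W^{\ssup T}_{\tau_0}(f)$, whose square equals the expected cross-bracket and is $O(\tau_0)$ after the $f\otimes f$ integration. Here you must control $T^{\frac{d-2}{4}}$ times the centered \emph{bracket} $\int_0^{T\tau_0}\langle\overline H(\sqrt T x)\rangle'_s\,\dd s$ itself, whose expectation is of order~$1$ (the integrand is bounded by $\|V\|_\infty$ near $s=0$ and decays like $s^{-d/2}$ thereafter, giving a convergent integral that does not vanish with~$\tau_0$). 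Your scaling observation that the change of variables produces a $T^{-\frac{d-2}{2}}$ factor is valid only on the bulk $\sigma\ge\tau_0$, precisely because the $\sigma^{-d/2}$ singularity of $\int\rho_\sigma^2$ at $\sigma=0$ undoes that gain. A moment bound on the denominator therefore cannot make the layer go to~$0$; you need a fluctuation estimate, which requires spatial decorrelation: one must first localize the partition functions to boxes of side $\sim\sqrt s\log T$ (the~$\widetilde\kW$ device of Lemma~\ref{lem:ChangeZtoWidetildeZ}) so that $\langle\overline H(\sqrt T x)\rangle'_s$ and $\langle\overline H(\sqrt T y)\rangle'_s$ become independent once $|x-y|>C T^{-1/2}\sqrt s\log T$, and then square and exploit the small support of the covariance.

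The paper avoids your global LLT replacement altogether and instead splits $[0,tT]$ into three regimes: for $s\in[T^{\frac12+\e},tT]$ the \emph{expectation} of the bracket increment is already $o(T^{-\frac{d-2}{4}})$, so no centering is needed (Lemma~\ref{lem:firstStep}); for $s\in[0,T^{\frac12-\e}]$ the spatial localization plus independence yields a variance bound (Lemma~\ref{eq:lem:removingTheBegining}); and only in the narrow window $s\in[T^{\frac12-\e},T^{\frac12+\e}]$ is the LLT invoked, and there solely to produce \emph{temporal} decorrelation between $\Delta_{s_1}$ and $\Delta_{s_2}$ when $|s_1-s_2|>T^{1/4}$ (Lemma~\ref{homgeniation in t}). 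A secondary issue with your plan is that replacing the ratio $\langle\kW\rangle'_s/\kW_s^2$ by $\langle\overline M\rangle'_s$ on the bulk requires controlling the \emph{difference of brackets} $\langle\overline H\rangle'_s-\langle\overline M\rangle'_s$, not the \emph{bracket of the difference} $\langle\overline H-\overline M\rangle'_s$; the BDG route used in Proposition~\ref{prop:replaceByM} does not transfer, and you would again need localization and negative-moment bounds to handle the random denominator, not just a H\"older step with $p$ close to~$2$.
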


Proposition \ref{prop:mainpropKPZ}  and Proposition \ref{prop:VanishBracket} combined {with equations \eqref{eq:hepsWtT} and  \eqref{eq:Itodecomposition}} imply Theorem~\ref{th:EWlimitKPZ}. The proofs of the results for the GFF limit and the EW limits are then concluded as in Section \ref{subsec:conclusionGFFKPZ} (or for the multi-time correlation see Section \ref{sec:KPZmultidim}).

The proof for Proposition \ref{prop:mainpropKPZ} follows the martingale strategy of Section \ref{sec:EWproof}. This time, because of the presence of $\kW_\tau$ in the denominator of \eqref{eq:diffOfH}, the  quadratic variation of $H$ is slightly more delicate to deal with compared to the previous sections and we wish to replace $H$ in \eqref{eq:CLTmartH} by another martingale with no denominator. To do so, by the local limit theorem and a homoginaization, one may expect
\al{
  \kW_\tau(x)&= \DE_x\left[ \Phi_\tau \, u_0\left(\frac{B_{t T}}{\sqrt{T}}\right)\right]\\
  &=\int_{\R^d}\rho_\tau(z-x) \DE_{0,x}^{\tau,z}[\Phi_\tau] \DE_z\left[ u_0\left(\frac{B_{t T-\tau}}{\sqrt{T}}\right)\right] \dd z\\  
  &\approx \int_{\R^d}\rho_\tau(z-x) \sZ_{\ell(t)}(x) \overset{\leftarrow}{\sZ}_{t,\ell(T)}(z) \DE_z\left[ u_0\left(\frac{B_{t T-\tau}}{\sqrt{T}}\right)\right] \dd z\\  
  &\approx  \sZ_{\ell(t)}(x)  \int_{\R^d}\rho_\tau(z-x)\IE\left[\overset{\leftarrow}{\sZ}_{t,\ell(T)}(z)\right] \DE_z\left[ u_0\left(\frac{B_{t T-\tau}}{\sqrt{T}}\right)\right] \dd z=  \sZ_{\ell(t)}(x) \bar{u}(t,T^{-\frac{1}{2}}\,x).
}
Therefore, one may observe that
\begin{align*}
\frac{\dd \kW_\tau(x)}{\kW_\tau(x)} & = \frac{\b}{\kW_\tau(x)}  \int_{\mathbb R^d} \xi(\dd \tau,\dd b) \DE_x\left[ \phi(B_\tau-b) \Phi_\tau   { \, u_0\left(\frac{B_{t T}}{\sqrt{T}}\right)} \right]\\
& = \frac{\b}{\kW_\tau(x)}  \int_{\mathbb R^d} \xi(\dd \tau,\dd b) \int_{\mathbb R^d} \rho_\tau(z-x) \phi(z-b) \DE_{0,x}^{ \tau,z}\left[\Phi_\tau\right] { \, \DE_z\left[ u_0\left(\frac{B_{t T-\tau}}{\sqrt{T}}\right)\right]} \dd z\\
&\approx \frac{\beta}{\bar{u}(t,T^{-\frac{1}{2}}\,x)} \int_{\R^d} \xi(\dd \tau, \dd b) \int_{\R^d}\rho_\tau(z-x) \phi(z-b) \overset{\leftarrow}{\sZ}_{\tau , \ell(T)}(z) \, \DE_z\left[ u_0\left(\frac{B_{t T-\tau}}{\sqrt{T}}\right)\right]\dd z\\
&= \dd \overline{M}_\tau(x),
\end{align*}
where we have used the local limit theorem in the third line. Hence $\dd \overline{H}_\tau(x)$ may be replaced by $\dd \overline{M}_\tau(x)$. In fact, we precisely show that
\begin{proposition} \label{prop:replaceByM}
{For $\e>0$ small enough, for all $t>0$ },  as $T\to \infty$, 
  \al{
    T^{\frac{d-2}{4}}\IE\left| \int_{\R^d} f(x) \left(\overline{H}_{T t}(\sqrt T x)-\overline{M}_{T t}(\sqrt{T}x)\right)\dd x \right| \to 0,
  }
   where
    \[
\overline{M}_{\tau}(x) := \int_{T^{1-\varepsilon}}^{\tau} \dd \overline{M}_s(x).
\]
\end{proposition}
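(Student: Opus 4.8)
The plan is to reduce to an $L^2$ bound and to split the difference at the intermediate time $T^{1-\e}$. By the Cauchy--Schwarz inequality in $\IP$ it suffices to prove that
\[
T^{\frac{d-2}{2}}\,\IE\!\left[\Big(\int_{\R^d}f(x)\big(\overline H_{Tt}(\sqrt{T}x)-\overline M_{Tt}(\sqrt{T}x)\big)\,\dd x\Big)^{2}\right]\longrightarrow 0 .
\]
Using $\overline H_{Tt}(x)=\overline H_{T^{1-\e}}(x)+\int_{T^{1-\e}}^{Tt}\dd\overline H_s(x)$ and $\overline M_{Tt}(x)=\int_{T^{1-\e}}^{Tt}\dd\overline M_s(x)$ (which makes sense once $\e$ is small enough that $1-\e>a$), the difference splits into an \emph{initial part} $\overline H_{T^{1-\e}}(\sqrt{T}x)$ and a \emph{bulk part} $\int_{T^{1-\e}}^{Tt}(\dd\overline H_s-\dd\overline M_s)(\sqrt{T}x)$, handled separately. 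Throughout one uses $|u_0^{-1}|_\infty^{-1}\sZ_\tau(x)\le\kW_\tau(x)\le|u_0|_\infty\sZ_\tau(x)$, the shift invariance of the white noise, the local limit theorem (Theorem~\ref{th:errorTermLLT}) together with the fact that $\sup_s\IE\DE^{s,0}_{0,0}[\Phi_s]^2<\infty$ for $\b<\b_{L^2}$, and the concentration inequality for $\sZ_T$ from \cite{CCM20,CH02}, which in particular gives $\sup_\tau\IE[\sZ_\tau(0)^{-p}]<\infty$ for every $p\ge1$.

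\textbf{The initial part.} Since $\tau\mapsto\overline H_\tau(x)$ is a martingale started at $0$, $\overline H_{T^{1-\e}}(\cdot)$ is centered and $\IE[\overline H_{T^{1-\e}}(\sqrt{T}x)\overline H_{T^{1-\e}}(\sqrt{T}y)]=\IE\langle\overline H(\sqrt{T}x),\overline H(\sqrt{T}y)\rangle_{T^{1-\e}}$. Because $\phi$, hence $V$, is compactly supported and $|u_0^{\pm1}|_\infty<\infty$, this cross-bracket is dominated by $C\,\IE\langle\log\sZ(\sqrt{T}x),\log\sZ(\sqrt{T}y)\rangle_{T^{1-\e}}$, which by Cauchy--Schwarz on brackets and $\IE\langle\log\sZ(0)\rangle_\infty<\infty$ is $\le C$ uniformly, is super-polynomially small for $|x-y|\gtrsim T^{-\e/2}$ (the polymer paths started at $\sqrt{T}x$ and $\sqrt{T}y$ cannot interact otherwise), and inside that window decays like $(1+\sqrt{T}|x-y|)^{2-d}$ — here the concentration inequality bounds the negative moments of $\sZ_s$ and thus the averaged polymer endpoint density by $C\rho_s$. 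Inserting $|\IE\langle\overline H(\sqrt{T}x),\overline H(\sqrt{T}y)\rangle_{T^{1-\e}}|\le C\min\!\big(1,(1+\sqrt{T}|x-y|)^{2-d}\big)$ into the double integral and using that $T^{\frac{d-2}{2}}\!\int_{|w|\lesssim T^{-\e/2}}\min(1,(\sqrt{T}|w|)^{2-d})\,\dd w=O(T^{-\e})$, the initial part contributes $O(T^{-\e})\to0$ for any $\e>0$.

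\textbf{The bulk part.} A stochastic Fubini turns $\int f(x)\int_{T^{1-\e}}^{Tt}(\dd\overline H_s-\dd\overline M_s)(\sqrt{T}x)\,\dd x$ into a single white-noise stochastic integral, whose squared $L^2$-norm equals its expected quadratic variation; using $\int\phi(z-b)\phi(z'-b)\dd b=V(z-z')$ this is $\b^2\iint f(x)f(y)\,\Xi_T(x,y)\,\dd x\,\dd y$ with
\[
\Xi_T(x,y)=\int_{T^{1-\e}}^{Tt}\!\!\iint_{(\R^d)^2}\! \rho_s(z-\sqrt{T}x)\,\rho_s(z'-\sqrt{T}y)\,V(z-z')\,b_s(z)\,b_s(z')\,\IE\big[R_s(\sqrt{T}x,z)R_s(\sqrt{T}y,z')\big]\,\dd z\,\dd z'\,\dd s,
\]
where $b_s(z):=\DE_z[u_0(B_{tT-s}/\sqrt{T})]$ is bounded by $|u_0|_\infty$ and $R_s(a,z):=\DE^{s,z}_{0,a}[\Phi_s]/\kW_s(a)-\overset{\leftarrow}{\sZ}_{s,\ell(T)}(z)/\bar u(t,T^{-1/2}a)$. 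The heart of the matter is the uniform estimate
\[
\eta_T:=\sup_{s\in[T^{1-\e},Tt]}\ \sup_{a\in\R^d}\ \sup_{|z-a|\le s}\ \IE\big[R_s(a,z)^2\big]\longrightarrow 0 .
\]
To obtain it, decompose $R_s(a,z)=\kW_s(a)^{-1}\Gamma_T(s,z,a)+\overset{\leftarrow}{\sZ}_{s,\ell(T)}(z)\big(\tfrac{\sZ_{\ell(T)}(a)}{\kW_s(a)}-\tfrac{1}{\bar u(t,T^{-1/2}a)}\big)$ with $\Gamma_T(s,z,a):=\DE^{s,z}_{0,a}[\Phi_s]-\sZ_{\ell(T)}(a)\overset{\leftarrow}{\sZ}_{s,\ell(T)}(z)$ the local limit theorem error (cf.\ \eqref{eq:DefGamma}). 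The first piece is handled by Theorem~\ref{th:errorTermLLT}, which gives $\|\Gamma_T(s,z,a)\|_{L^2}\to0$ with a $\b$-free rate, uniformly over $s\le Tt$ and over $|z-a|$ up to the linear scale; one then splits $\{\sZ_s(a)\ge\delta\}$ (on which $\kW_s(a)^{-1}$ is bounded, so $\IE[\kW_s(a)^{-2}\Gamma_T^2;\sZ_s(a)\ge\delta]\le C\delta^{-2}\IE[\Gamma_T^2]\to0$) from its complement, made negligible by the concentration inequality applied on an initial window of length $\ell(T)$. For the second piece (homogenization of $\kW$), one writes, via the local limit theorem, $\kW_s(a)=\sZ_{\ell(T)}(a)\int\rho_s(z-a)\overset{\leftarrow}{\sZ}_{s,\ell(T)}(z)\,b_s(z)\,\dd z+o_{L^2}(1)$ and observes, using $\IE[\overset{\leftarrow}{\sZ}_{s,\ell}(z)]=1$, the near-independence of $\overset{\leftarrow}{\sZ}_{s,\ell}(z),\overset{\leftarrow}{\sZ}_{s,\ell}(z')$ for $|z-z'|\gtrsim\sqrt{\ell}$, and the heat-semigroup identity $\DE_a[\bar u(t-s/T,T^{-1/2}B_s)]=\bar u(t,T^{-1/2}a)$, that this average equals $\bar u(t,T^{-1/2}a)+O_{L^2}((\ell(T)/s)^{d/4})$; combined with the concentration control of $\kW_s(a)^{-1}$ this bounds the second piece. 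Granting $\eta_T\to0$, Cauchy--Schwarz gives $|\IE[R_s(\sqrt{T}x,z)R_s(\sqrt{T}y,z')]|\le\eta_T$; integrating out $z,z'$ (the factor $V(z-z')$ forces $z\approx z'$, so $\int\rho_s(z-\sqrt{T}x)\rho_s(z-\sqrt{T}y)\dd z\le C\rho_{2s}(\sqrt{T}(x-y))$), then $s$ via $\int_{T^{1-\e}}^{Tt}\rho_{2s}(\sqrt{T}(x-y))\dd s\le\int_0^\infty\rho_{2\sigma}(\sqrt{T}(x-y))\dd\sigma=c_d(\sqrt{T}|x-y|)^{2-d}$, the prefactor $T^{\frac{d-2}{2}}$ cancels the $T^{(2-d)/2}$, and $\iint|f(x)f(y)||x-y|^{2-d}\dd x\dd y<\infty$ because $d-2<d$ and $f\in\mathcal C_c^\infty$. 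Hence the bulk part is $\le C\eta_T\to0$, which with the initial-part estimate proves the proposition.

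\textbf{Main obstacle.} The crux is the uniform bound $\eta_T\to0$: since $\b$ may approach $\b_{L^2}$, the local limit theorem error $\Gamma_T$ is controlled only in $L^2$ (no higher moments are available), so a naive Hölder estimate against the possibly degenerate denominator $\kW_s(a)$ is not allowed; the resolution — and the reason the concentration inequality for $\sZ_T$ is needed — is the truncation on $\{\sZ_s(a)\ge\delta\}$, using the (stretched-)exponential lower tail of $\log\sZ$ to render the complementary event harmless even after a few powers of $\sZ_s(a)^{-1}$. A secondary, but structurally essential, point (as already in the SHE proof) is that quantities such as $\IE\langle\overline H(\sqrt{T}x)\rangle$ are only $O(1)$ on the diagonal, and indeed blow up there under the natural change of variables; what saves the scaling is the kernel $V(z-z')$ together with the two heat kernels, which concentrate the mass near the diagonal and produce exactly the locally integrable off-diagonal decay $|x-y|^{2-d}$ matched by the prefactor $T^{(d-2)/2}$.
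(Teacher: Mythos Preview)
Your initial reduction to an $L^2$ bound is where the approach goes wrong, specifically for the bulk part. After the $L^2$ expansion you need $\eta_T=\sup_{s,a,z}\IE[R_s(a,z)^2]\to 0$, and in your decomposition $R_s=\kW_s^{-1}\Gamma_T+\overset{\leftarrow}{\sZ}_{s,\ell}\big(\sZ_\ell/\kW_s-1/\bar u\big)$ both pieces require controlling a product in $L^2$ that couples the inverse partition function with a quantity available only in $L^2$ near $\b_{L^2}$. For the first piece your truncation gives $\IE[\kW_s^{-2}\Gamma_T^2;\sZ_s\ge\delta]\le C\delta^{-2}\IE[\Gamma_T^2]$, but on the complement you still have to bound $\IE[\kW_s^{-2}\Gamma_T^2;\sZ_s<\delta]$, and any H\"older splitting sends either $\Gamma_T$ or the bridge partition function into $L^p$ with $p>2$, which blows up for $\b$ close to $\b_{L^2}$; the concentration inequality only makes $\IP(\sZ_s<\delta)$ small, it does not tame $\kW_s^{-2}\Gamma_T^2$ on that event. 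The second piece has the same obstruction via the factor $\overset{\leftarrow}{\sZ}_{s,\ell}^2$, which likewise has no moments beyond $2$ in the full $L^2$-region.

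The paper circumvents exactly this by \emph{not} passing to $L^2$ on the bulk. It keeps the $L^1$ norm as in the statement, pulls the $x$-integral outside, applies Burkholder--Davis--Gundy to the inner stochastic integral $\int_{T^{1-\e}}^{Tt}\kW_s^{-1}(\dd\kW_s-\dd L_s)$, and then uses Cauchy--Schwarz once to separate $\sup_s\kW_s^{-1}$ (all moments available) from $\big(\int\dd\langle\kW-L\rangle_s\big)^{1/2}$, whose $L^2$-norm is $\IE\!\int\dd\langle\kW-L\rangle_s$ and involves only $\IE[\Gamma_T^2]$. The passage from $\dd L_s/\kW_s$ to $\dd\overline M_s$ is handled the same way, with the extra input $\IE\big[(\sZ_\ell\bar u/\kW_s-1)^2\big]\to 0$ obtained from the local limit theorem plus $\IE[(\kW_{Tt}-\kW_s)^2]\to 0$. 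In short, the BDG step is what decouples $\kW_s^{-1}$ from the $L^2$ quantities; your $L^2$ expansion does the opposite and forces higher moments that are unavailable. Your treatment of the initial part is essentially fine (modulo replacing the claimed pointwise decay by the cruder truncation bound with an extra $T^{\e'}$ factor, which still gives $o(1)$), but the bulk argument has a genuine gap.
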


The following proposition will conclude the proof of Proposition \ref{prop:mainpropKPZ}:

  %Since the martingale $\widetilde M$ is the same as the martingale integrated against $f$ in \eqref{eq:defMtau0tauf} (up to rescaling and the $\sZ_\ell(\sqrt T x)$ factor), we can use again the functionnal central limit theorem to get following result analogue to Proposition \ref{prop:CVbracketSHEalterGeneral}:
  
\begin{proposition} \label{prop:MwidetildeIsGaussianFlat}
 For $\e$ small enough, for any test function $f$, as $T\to\infty$,
\begin{equation} \label{eq:cvMtoU3}
 T^{\frac{d-2}{4}}\int_{\mathbb R^d} f(x) \, \overline{M}_{Tt}(\sqrt T x) \dd x \cvlaw \mathscr U_3(t,f).
\end{equation}
%with $N_{\tau}(f)$ defined in \eqref{eq:defNtau}. \CC{there should be $\mathscr U_3$ right?} {\SN I need to check it.} 
\end{proposition}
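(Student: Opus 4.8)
The plan is to apply the functional martingale central limit theorem \cite[Theorem 3.11 in Chap.~8]{JS87} to the rescaled process $\tau \mapsto \mathcal M^{\ssup T}_\tau(f) := T^{\frac{d-2}{4}}\int_{\R^d} f(x)\,\overline M_{T\tau}(\sqrt T x)\,\dd x$ (which is $0$ for $\tau < T^{-\varepsilon}$). By the definition \eqref{eq:defdMbartau} of the increments and stochastic Fubini, $\tau\mapsto\mathcal M^{\ssup T}_\tau(f)$ is a continuous $(\mathcal F_{T\tau})$-martingale on $[0,t]$ with $\mathcal M^{\ssup T}_0(f) = 0$. It thus suffices to prove that $\langle \mathcal M^{\ssup T}(f)\rangle_\tau$ converges in probability (indeed in $L^1$) to a deterministic continuous function of $\tau$ whose value at $\tau = t$ is the variance of the centered Gaussian variable $\mathscr U_3(t,f)$: then $\mathcal M^{\ssup T}(f) \cvlaw (G_\tau)_{\tau\le t}$ for the associated centered Gaussian process, and \eqref{eq:cvMtoU3} is the case $\tau = t$; joint convergence over finitely many $f$ follows by linearity in $f$ and the Cram\'er--Wold device.

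By the It\^o isometry applied to the white-noise integrals in \eqref{eq:defdMbartau}, the identity $\int \phi(z_1 - b)\phi(z_2 - b)\,\dd b = V(z_1 - z_2)$, the changes of variables $s = T\sigma$, $z_1 = \sqrt T z$, $z_2 = \sqrt T z + Z$ (which produce the factor $T^{-\frac{d-2}{2}}$ via $\rho_{T\sigma}(\sqrt T z) = T^{-d/2}\rho_\sigma(z)$), and the Brownian scaling $\DE_{\sqrt T z}[u_0(B_{(t-\sigma)T}/\sqrt T)] = \bar u(t-\sigma, z)$ together with $\bar u(t, T^{-1/2}\sqrt T x) = \bar u(t,x)$, one checks that $\langle \mathcal M^{\ssup T}(f)\rangle_\tau$ coincides with the bracket $\langle M^{\ssup T}_{\tau_0,\cdot}(f)\rangle_\tau$ of \eqref{eq:bracketOfMtau0} up to three modifications: the lower time cutoff is $\tau_0 = T^{-\varepsilon}$ instead of a fixed $\tau_0$; the $\sZ_\ell(\sqrt T x)\sZ_\ell(\sqrt T y)$ prefactor is absent, since we have already passed from $\overline H$ to $\overline M$ via Proposition \ref{prop:replaceByM}; and the integrand carries the extra deterministic weight $1/(\bar u(t,x)\bar u(t,y))$ coming from the $\bar u(t,T^{-1/2}x)^{-1}$ factor in \eqref{eq:defdMbartau}. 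Since $u_0$ is bounded away from $0$ and $\infty$, this weight is bounded and continuous, hence does not affect any of the estimates below.

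The convergence of $\langle \mathcal M^{\ssup T}(f)\rangle_\tau$ is then obtained by repeating the proof of Proposition \ref{prop:CVbracketSHEalterGeneral}. First, the contribution of $\sigma \in [0, T^{-\varepsilon}]$ is controlled as in Lemma \ref{lem:unifNeglTau0}: bounding $\IE[\overset{\leftarrow}{\sZ}_{T\sigma,\ell}(\sqrt T z)\overset{\leftarrow}{\sZ}_{T\sigma,\ell}(\sqrt T z + Z)] \le \IE[\sZ_\infty^2] < \infty$ by \eqref{eq:expZZ}, this contribution is $\le C\, T^{-\varepsilon} \to 0$. On $\sigma \in [T^{-\varepsilon}, \tau]$ one runs the three approximation steps of Lemma \ref{lem:asymptPsi}: (i) replace the $T^{-1/2}Z$ shifts inside $\rho_\sigma$ and $\bar u(t-\sigma,\cdot)$ by $0$, by dominated convergence using the boundedness of $V$, of $\bar u$, and of $\rho_\sigma$ for $\sigma$ bounded below, and \eqref{eq:expZZ}; (ii) replace $\overset{\leftarrow}{\sZ}_{T\sigma,\ell}(\sqrt T z)\overset{\leftarrow}{\sZ}_{T\sigma,\ell}(\sqrt T z + Z)$ by the truncated constrained product $\{\overline{\sZ}_\ell(\sqrt T z)\,\overline{\sZ}_\ell(\sqrt T z + Z)\} \wedge T^{1/2 - a}$, as in \eqref{trunc2}, using the H\"older estimate \eqref{Difference of sZ and bar-sZ}, Lemma \ref{lem:UIZZtxy} and dominated convergence; (iii) replace the truncated product by its expectation (independent of $z$ by spatial stationarity): since it decorrelates at spatial scale $\sim T^{a-1/2}$, the variance of its $z$-average is $\le C\, T^{a-1/2}\,\IE[\widetilde{\mathcal{V}}_T(0)^2]$, which tends to $0$ by Lemma \ref{CN Lemma 3.3} and uniform integrability. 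Since $\int V(Z)\,\IE[\overset{\leftarrow}{\sZ}_\ell(0)\overset{\leftarrow}{\sZ}_\ell(Z)]\,\dd Z = \int V(Z)\,\IE[\sZ_\ell(0)\sZ_\ell(Z)]\,\dd Z \to \b^{-2}\gamma^2(\b)$ as $T\to\infty$, we obtain
\[
\langle \mathcal M^{\ssup T}(f)\rangle_\tau \cvLone \gamma^2(\b) \int_0^\tau \int_{(\R^d)^3} \frac{f(x)f(y)}{\bar u(t,x)\,\bar u(t,y)}\,\rho_\sigma(z-x)\,\rho_\sigma(z-y)\,\bar u(t-\sigma,z)^2 \,\dd\sigma\,\dd z\,\dd x\,\dd y,
\]
the cutoff $T^{-\varepsilon}$ vanishing in the limit thanks to the first step.

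Applying \cite[Theorem 3.11 in Chap.~8]{JS87} then gives that $\mathcal M^{\ssup T}(f)$ converges in law, as a continuous process in $\tau$, to the centered Gaussian process of variance the right-hand side above. Evaluating at $\tau = t$ and writing $g := f/\bar u(t,\cdot)$, this variance equals $\gamma^2(\b)\int_0^t\int_{(\R^d)^3} g(x)g(y)\rho_\sigma(z-x)\rho_\sigma(z-y)\bar u(t-\sigma,z)^2$, which by \eqref{eq:CVbracketNtauf} is the variance of $\int g(x)\mathscr U_1(t,x)\,\dd x$, hence --- using the identity $\mathscr U_3(t,x) = \bar u(t,x)^{-1}\mathscr U_1(t,x)$ from the remark following Theorem \ref{th:EWlimitKPZ} --- the variance of $\mathscr U_3(t,f)$; since $\mathscr U_3(t,f)$ is itself centered Gaussian, this proves \eqref{eq:cvMtoU3}. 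The main obstacle is step (iii): one must bound the variance of a spatial average of the truncated product of time-reversed partition functions using only the decorrelation length of the constrained partition functions $\overline{\sZ}_\ell$ of horizon $\ell = T^a$ ($a < 1/4$) together with Lemma \ref{CN Lemma 3.3}/Lemma \ref{lem:UIZZtxy}, so that the truncation error disappears without invoking any moment of the partition function above $2$ (not available near $\b_{L^2}$); everything else is deterministic bookkeeping parallel to the SHE case, the only genuinely new ingredient being the harmless bounded continuous weight $1/\bar u(t,\cdot)$.
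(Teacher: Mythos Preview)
Your approach is essentially the paper's own: apply the functional martingale CLT by showing the bracket converges, reuse Lemma~\ref{lem:asymptPsi} (with the extra bounded weight $1/\bar u(t,\cdot)$) to identify the limit, and recognise the limiting variance as that of $\int f\bar u(t,\cdot)^{-1}\mathscr U_1(t,\cdot)$.

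There is one slip. You propose to run the three steps of Lemma~\ref{lem:asymptPsi} directly on $\sigma\in[T^{-\varepsilon},\tau]$ and justify step~(i) by ``boundedness of $\rho_\sigma$ for $\sigma$ bounded below''. But $\sigma$ is \emph{not} bounded below here: the lower cutoff $T^{-\varepsilon}\to 0$, so $\sup_\sigma\rho_\sigma$ blows up and the dominated-convergence argument in Lemma~\ref{lem:asymptPsi} (which is written for a fixed $\tau_0>0$) does not apply verbatim. The same issue resurfaces in step~(iii), where the variance bound uses $\rho_\sigma(z-x)\rho_\sigma(z-y)\le C_{\tau_0}$. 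The paper circumvents this by inserting an auxiliary fixed $\tau_0>0$: on $[\tau_0,\tau]$ the bracket is handled exactly as in Lemma~\ref{lem:asymptPsi} (equation~\eqref{eq:CVbracketMt}), while on $[T^{-\varepsilon},\tau_0]$ one uses the Lemma~\ref{lem:unifNeglTau0} computation to get $\varlimsup_{T}\IE[(\cdot)^2]\le C\tau_0$ (equation~\eqref{eq:MtildeTau0negl}), and then lets $\tau_0\downarrow 0$. Your one-shot version can be made rigorous with quantitative estimates on $|\rho_\sigma(\cdot+T^{-1/2}Z)-\rho_\sigma(\cdot)|$ integrated in $\sigma$, but as written the appeal to ``$\sigma$ bounded below'' is not correct; the quickest fix is simply to reinstate the intermediate $\tau_0$ as the paper does.
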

\begin{proof}[Proof of Proposition \ref{prop:MwidetildeIsGaussianFlat}] 

%which combined to the proof of Proposition \ref{prop:CVbracketSHEalterGeneral} (more precisely, the one of Lemma \ref{lem:asymptPsi}) gives that $\langle\overline{M}^{\ssup T}(f)\rangle_\tau$ converges in $L^1$ to $\langle N(f)\rangle_\tau$ as $T\to\infty$ for all $\tau > \tau_0$. The functional CLT for martingales then implies the statement of the proposition.
%\CC{This proof has been quite modified.}
Let $0<\tau_0\leq \tau\leq t$. Thanks to  Lemma~\ref{lem:asymptPsi}, we know that as $T\to\infty$,
\begin{align} \label{eq:CVbracketMt}
&T^{\frac{d-2}{2}} \int_{T\tau_0}^{T\tau} \dd \left\langle \overline{M}(\sqrt T x), \overline{M}(\sqrt T y) \right\rangle_{s}\notag\\
& =T^{\frac{d-2}{2}}	\frac{\beta^2}{\bar{u}(t,x)\bar{u}(t,y)} \int_{T\tau_0}^{T\tau} \int_{(\R^d)^2}\rho_s(z_1-\sqrt{T}x)\rho_s(z_2-\sqrt{T}y)V(z_1-z_2)\notag\\
&{\hspace{7em} \times \overleftarrow{\sZ}_{s,\ell(T)}(z_1)\overleftarrow{\sZ}_{s,\ell(T)}(z_2)  \DE_{z_1}\left[u_0\left(\frac{B_{tT}}{\sqrt{T}}\right)\right]\DE_{z_2}\left[u_0\left(\frac{B_{tT}}{\sqrt{T}}\right)\right]\dd z_1\dd z_2\dd s}\notag\\
& \cvLone \frac{\gamma^2(\b)}{\bar{u}(t,x)\bar{u}(t,y)}\int_{\tau_0}^\tau\int_{\R^d}\rho_\sigma(x-z)\rho_\sigma(y-z)\bar{u}(t-\sigma,z)^2\dd z.
\end{align} 
Therefore, again by the functional martingale central limit theorem, we find that the centered martingale $\big(T^{\frac{d-2}{4}} \big(\overline{M}_{T \tau} (\sqrt T x)- \overline{M}_{T \tau_0} (\sqrt T x)\big)\big)_{\tau_0\leq \tau\leq t}$ converges in law (after integration with respect to a test function) to a Gaussian process with covariance given by the RHS of \eqref{eq:CVbracketMt}. Moreover, by proof of Lemma \ref{lem:unifNeglTau0}, we have as $\tau_0 \to 0$,
\begin{equation} \label{eq:MtildeTau0negl}
 \lim_{\tau_0\to 0}\varlimsup_{T\to\infty}\IE\left[ T^{\frac{d-2}{2}}\left(\int f(x) \overline{M}_{T\tau_0}(\sqrt T x)\dd x\right)^2\right]=0,
\end{equation}
Considering that when $\tau_0=0$ and $\tau=t$, the RHS of \eqref{eq:CVbracketMt} is exactly the covariance function of the Gaussian process $\bar u(t,x)^{-1} \mathscr U_1(t,x)$, we thus obtain that \eqref{eq:cvMtoU3} holds with $\mathscr U_3(t,x) = \bar u(t,x)^{-1} \mathscr U_1(t,x)$. Let us now justify why defined this way, $\mathscr U_3(t,x)$  is a solution of the SPDE \eqref{eq:defU3}.

% In other words, if we define $\overline{U}_{t}(\sqrt T x)=\bar{u}(x,t)\overline{M}_{t}(\sqrt T x)$, then
%        \begin{equation} \label{eq:CVbracketUt}
%          T^{-\frac{d-2}{2}} \left\langle \overline{U}(\sqrt T x), \overline{U}(\sqrt T y) \right \rangle_{T\sigma } \cvLone \gamma^2(\b)  \int_{\R^d}\rho_\sigma(x-z)\rho_\sigma(y-z)\bar{u}(t-\sigma,z)^2\dd z ,
%        \end{equation}
% where the limit on the RHS is exactly the covariance of $\int \dd x\, f(x) \kU_1(t,x)$. 
% 
% 
% 
% This means that the limit of
%        $$T^{\frac{d-2}{4}}\int_{\R^d} f(x)\, \bar{u}(t,x) (\log{\kW_{t T}}-\IE \log{\kW_{t T}})\dd x$$
%       has the same distribution as EW for the SHE for the initial condition $u_0$, i.e. $\int \dd x\, f(x) \kU_1(t,x)$.
%
%       Moreover, this convergence is jointly in time as we saw in the previous section \CC{I don't understand why this is necessary since we only considering pointwise convergence for now}. {\SN You're right; I will erase it.} 

     Since $\kU_1(t,f)=\int_{\R^d} \kU_1(t,x) f(x)\dd x $ satisfies in a distributional sense,
        $$\partial_t \kU_1(t,x)=\frac{1}{2}\Delta \kU_1(t,x) + \bar{u}(t,x) \xi(t,x),$$
        then $\kU_3(t,x)=\bar{u}(t,x)^{-1}\kU_1(t,x)$,
        \aln{
          &\Leftrightarrow \partial_t [\bar{u}(t,x) \kU_3(t,x)]=\frac{1}{2}\Delta [\bar{u}(t,x) \kU_3(t,x)] + \bar{u}(t,x) \xi(t,x)\notag\\
          &\Leftrightarrow \bar{u}(t,x) \partial_t  \kU_3(t,x)=\frac{1}{2} \bar{u}(t,x)\Delta \kU_3(t,x) + \nabla \bar{u}(t,x)\cdot\nabla \kU_3(t,x) +\bar{u}(t,x) \xi(t,x).\nonumber\\
            &\Leftrightarrow  \partial_t  \kU_3(t,x)= \frac{1}{2} \Delta \kU_3(t,x) + \nabla \log{ \bar{u}(t,x)} \cdot\nabla \kU_3(t,x) +\xi(t,x).\label{final}
        }
%        Moreover, 
%        $$T^{\frac{d-2}{4}}\int_{\R^d} f(x)\, (\log{\kW_{t T}}-\IE \log{\kW_{t T}})\dd x \longrightarrow \int \dd x \, f(x) \kU_3(t,x),$$
%        and hence, the proof of Proposition \ref{prop:MwidetildeIsGaussianFlat} is completed .
        \end{proof}
      %  \end{rem}
      
The next section is divided in two parts: one is dedicated to the proof of Proposition \ref{prop:replaceByM} (which in turn will conclude the proof of Proposition \ref{prop:mainpropKPZ}),
 and one to the proof of Proposition \ref{prop:VanishBracket}.

\subsection{Elements of proof}
{We denote by $\langle M\rangle_s':=\frac{\dd \langle M\rangle_s}{\dd s}$ and $\langle M,N\rangle_s':=\frac{\dd \langle M,N\rangle_s}{\dd s}$ the derivatives of quadratic variation and the cross-bracket of continuous  martingale $M$ and $N$ in this section.}

\subsubsection{Surgery of martingales (proof of Proposition \ref{prop:replaceByM})}
We start with the following lemma.
\begin{lemma} For all $\e\in(0,1)$, we have
\begin{equation} \label{eq:vanishingSmalltSurgery}
 \lim_{T\to \infty} T^{\frac{d-2}{2}}\IE\left[\left(\int^{T^{1-\ve}}_0\int f(x) \frac{\dd \kW_s(\sqrt{T}x)}{\kW_s(\sqrt{T}x)}\dd x \right)^2\right]=0.
 \end{equation}
\end{lemma}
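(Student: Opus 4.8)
The plan is to recognise the left-hand side of \eqref{eq:vanishingSmalltSurgery} as $T^{\frac{d-2}{2}}$ times the expected quadratic variation of a genuine $L^2$-martingale, and then to estimate that quadratic variation by the same scaling mechanism as in the proofs of Lemma~\ref{lem:unifNeglTau0} and Lemma~\ref{lem:asymptPsi}, the only truly new ingredient being the control of ratios of partition functions. First observe that $\frac{\dd\kW_s(\sqrt Tx)}{\kW_s(\sqrt Tx)}=\dd\overline H_s(\sqrt Tx)$ and $\dd\overline H_s(z)=\b\int_{\R^d}g^{z}_s(b)\,\xi(\dd s,\dd b)$ with $g^{z}_s(b)=\kW_s(z)^{-1}\DE_z[\phi(B_s-b)\Phi_s u_0(B_{tT}/\sqrt T)]=\int_{\R^d}\phi(z'-b)\,q^{z}_s(z')\dd z'$, where
\[
q^{z}_s(z')=\frac{\rho_s(z'-z)\,\DE_{0,z}^{s,z'}[\Phi_s]\,\DE_{z'}\!\big[u_0(B_{tT-s}/\sqrt T)\big]}{\kW_s(z)}
\]
is, for a.e.\ $\xi$, a probability density in $z'$. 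Since $\int\phi(z_1-b)\phi(z_2-b)\dd b=V(z_1-z_2)$ one has $\int_{\R^d}g^z_s(b)^2\dd b=\iint V(z_1-z_2)q^z_s(z_1)q^z_s(z_2)\dd z_1\dd z_2\le\|V\|_\infty$, so $s\mapsto\int f(x)\overline H_s(\sqrt Tx)\dd x$ is a continuous $L^2$-martingale vanishing at $s=0$, and by the identity $\IE[M_\tau^2]=\IE[\langle M\rangle_\tau]$ together with a stochastic Fubini argument the quantity in \eqref{eq:vanishingSmalltSurgery} equals $T^{\frac{d-2}{2}}\iint f(x)f(y)\,\IE[\langle\overline H(\sqrt Tx),\overline H(\sqrt Ty)\rangle_{T^{1-\e}}]\dd x\dd y$, where $\frac{\dd}{\dd s}\langle\overline H(\sqrt Tx),\overline H(\sqrt Ty)\rangle_s=\b^2\iint V(z_1-z_2)q^{\sqrt Tx}_s(z_1)q^{\sqrt Ty}_s(z_2)\dd z_1\dd z_2$.

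Using $\kW_s(z)\ge|u_0^{-1}|_\infty^{-1}\sZ_s(z)$ and $u_0\le|u_0|_\infty$, the modulus of this cross-bracket derivative is at most $C\,\mathtt E^{\b,s}_{\sqrt Tx}\!\otimes\mathtt E^{\b,s}_{\sqrt Ty}\big[|V(B^1_s-B^2_s)|\big]$ (two independent copies under the length-$s$ polymer measure), so it is enough to show that $T^{\frac{d-2}{2}}\iint|f(x)||f(y)|\int_0^{T^{1-\e}}\IE[\mathtt E^{\b,s}_{\sqrt Tx}\!\otimes\mathtt E^{\b,s}_{\sqrt Ty}[|V(B^1_s-B^2_s)|]]\dd s\,\dd x\dd y\to 0$ as $T\to\infty$. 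The crux is the bound, uniform in $s>0$ and $x,y$,
\[
\IE\Big[\mathtt E^{\b,s}_{\sqrt Tx}\!\otimes\mathtt E^{\b,s}_{\sqrt Ty}\big[|V(B^1_s-B^2_s)|\big]\Big]\ \le\ C\,\delta^{-2}\,\big(\rho_{cs}(\sqrt T(x-y))\wedge s^{-d/2}\big)\ +\ C_p\,\delta^{p},
\]
valid for every $\delta\in(0,1)$ and $p\ge1$. To obtain it one writes the left side as $\IE[\DE^{\otimes2}_{\sqrt Tx,\sqrt Ty}[|V(B^1_s-B^2_s)|\Phi_s(B^1)\Phi_s(B^2)]/(\sZ_s(\sqrt Tx)\sZ_s(\sqrt Ty))]$ and splits according to $\{\sZ_s(\sqrt Tx)\wedge\sZ_s(\sqrt Ty)\ge\delta\}$: on this event the two denominators are removed at the cost $\delta^{-2}$, and one applies the local limit theorem (Theorem~\ref{th:errorTermLLT}) exactly as in Lemma~\ref{lem:changeOfMartingaleSHE} to replace each bridge partition function $\DE_{0,\cdot}^{s,\cdot}[\Phi_s]$ by a product $\sZ_{\ell}(\cdot)\,\overset{\leftarrow}{\sZ}_{s,\ell}(\cdot)$ with $\ell=\ell(s)\to\infty$, $\ell=o(s)$ (for $s$ below a fixed constant one bounds directly, using $\int_0^sV\le\|V\|_\infty s$); the point is that $\sZ_{\ell}$ (built from the environment on $[0,\ell]$) is independent of $\overset{\leftarrow}{\sZ}_{s,\ell}$ (built from $[s-\ell,s]$), so after $\IE$ one is left with $\IE[\sZ_\ell(\sqrt Tx)\sZ_\ell(\sqrt Ty)]$ and $\sup_{|Z|\le2\delta_\phi}\IE[\overset{\leftarrow}{\sZ}_{s,\ell}(0)\overset{\leftarrow}{\sZ}_{s,\ell}(Z)]$, both finite and uniformly bounded for $\b<\b_{L^2}$, times $\int|V(Z)|\rho_{2s}(\sqrt T(x-y)-Z)\dd Z\le C(\rho_{cs}(\sqrt T(x-y))\wedge s^{-d/2})$. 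On the complementary event one only uses $\mathtt E^{\b,s}\!\otimes\mathtt E^{\b,s}[|V|]\le\|V\|_\infty$ together with the concentration inequality for $\sZ$ of \cite{CCM20} (see also \cite{CH02}), which gives $\sup_{s,z}\IE[\sZ_s(z)^{-p}]<\infty$ and hence $\IP(\sZ_s(\sqrt Tx)\wedge\sZ_s(\sqrt Ty)<\delta)\le C_p\delta^p$.

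It remains to assemble the pieces. After the substitution $s=T\sigma$ as in Lemma~\ref{lem:unifNeglTau0}, the powers of $T$ cancel ($\frac{d-2}{2}+1-\frac d2=0$). The contribution of $\{|\sqrt T(x-y)|>R\}$ (a fixed radius) is $\le C\delta^{-2}\iint|f(x)||f(y)|\int_0^{T^{-\e}}\rho_{c\sigma}(x-y)\dd\sigma\,\dd x\dd y$, which tends to $0$ because $\int_0^{\8}\rho_{c\sigma}(w)\dd\sigma=c'|w|^{2-d}$ is locally integrable on $\R^d$ for $d\ge3$ and $f$ has compact support, while $\int_0^{T^{-\e}}\rho_{c\sigma}(w)\dd\sigma\to0$ for $w\ne0$ (quantitatively it is $O(\delta^{-2}T^{-\e/2})$). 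The contribution of $\{|\sqrt T(x-y)|\le R\}$ is $O(\delta^{-2}T^{\frac{d-2}{2}}T^{-d/2})=O(\delta^{-2}T^{-1})$, since that set has Lebesgue measure $O(T^{-d/2})$ and $\int_0^{\8}\min(1,s^{-d/2})\dd s<\8$ for $d\ge3$; and the complementary-event contribution is $O(\delta^{p}T^{-\e})$. Choosing $\delta=T^{-\kappa}$ with $\kappa$ a small multiple of $\e$ and then $p=p(d,\e)$ large makes all three pieces vanish, which proves \eqref{eq:vanishingSmalltSurgery}. The delicate point throughout is the displayed bound on $\IE[\mathtt E^{\b,s}\!\otimes\mathtt E^{\b,s}[|V|]]$: it is the one place where the local limit theorem and the concentration inequality are genuinely needed — the former to recover the sharp diffusive factor $s^{-d/2}$ by disentangling the free heat kernels from the polymer weights (a H\"older argument here would require moments of order $>2$, which fail near $\b_{L^2}$), the latter to discard the atypical event on which $\sZ_s$ is abnormally small — everything else paralleling the SHE computations of Section~\ref{sec:EWproof}.
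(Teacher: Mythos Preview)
Your proof is correct, and it shares the paper's basic architecture: write the quantity as $T^{\frac{d-2}{2}}\iint f(x)f(y)\,\IE\!\int_0^{T^{1-\e}}\frac{\dd\langle\kW(\sqrt Tx),\kW(\sqrt Ty)\rangle_s}{\kW_s(\sqrt Tx)\kW_s(\sqrt Ty)}\,\dd x\,\dd y$, truncate the denominators on an event of the form $\{\sZ_s\ge\delta\}$ whose complement has tiny probability by the negative-moment bound of \cite{CCM20}, and on the good event bound the numerator. But your execution of the numerator bound is considerably more elaborate than the paper's, and your stated reason for the detour is a misconception.

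The paper uses two elementary facts. First, the almost-sure inequality $\langle\kW(\sqrt Tx),\kW(\sqrt Ty)\rangle_s'/(\kW_s(\sqrt Tx)\kW_s(\sqrt Ty))\le\|V\|_\infty$ (an average of $V$ under a product probability measure). Second, by Cauchy--Schwarz and the uniform bound \eqref{eq:unifboundForP2P} on the \emph{bridge} partition functions, one has directly
\[
\IE\,\langle\kW(\sqrt Tx),\kW(\sqrt Ty)\rangle_s'\;\le\;C\,T^{-d/2}\!\int V(Z)\,\rho_{2s/T}\big(x-y+T^{-1/2}Z\big)\,\dd Z,
\]
which after integrating out $y$ (the heat kernel has total mass $1$) gives \eqref{eq:boundOnBracketAt0}, namely $\IE\int_0^{T^{1-\e}}\!\iint f\,f\,\dd\langle\kW,\kW\rangle\le C\,T^{-\frac{d-2}{2}-\e}$. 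With the fixed threshold $\delta=T^{-\e/8}$ the lemma follows in one line: $T^{\e/4}\cdot CT^{-\frac{d-2}{2}-\e}+\|V\|_\infty\cdot T^{1-\e}\|f\|_1^2\cdot T^{-2d}=o(T^{-\frac{d-2}{2}})$.

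You reach the same heat-kernel estimate, but via the local limit theorem: you factorise $\DE_{0,\cdot}^{s,\cdot}[\Phi_s]\approx\sZ_\ell(\cdot)\overset{\leftarrow}{\sZ}_{s,\ell}(\cdot)$, appeal to independence of the two time windows, and then bound second moments of each factor. This is valid (the LLT error is controllable by Cauchy--Schwarz with \eqref{eq:unifboundForP2P}, though you do not spell this out), but it is unnecessary. Your remark that ``a H\"older argument here would require moments of order $>2$, which fail near $\b_{L^2}$'' is misguided: Cauchy--Schwarz applied to the two \emph{bridge} variables $\DE_{0,\sqrt Tx}^{s,z_1}[\Phi_s]$ and $\DE_{0,\sqrt Ty}^{s,z_2}[\Phi_s]$ needs only their second moments, available throughout the $L^2$-region by \eqref{eq:unifboundForP2P}. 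In other words, the ``sharp diffusive factor $s^{-d/2}$'' comes for free from the explicit heat kernel $\rho_s$ in the bridge decomposition, not from the LLT. Your final assembly, splitting on $|\sqrt T(x-y)|\lessgtr R$, also works but is more laborious than the paper's device of simply integrating out one spatial variable against the heat kernel.
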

\begin{proof}
  
We begin by making a few observations that we will use repeatedly in the following. For $\e>0$, define the event
  $$A_s(x)=\{ \sZ_s(\sqrt{T}x)^{-1} \leq T^{\frac{\ve}{8}} \}.$$
By Theorem 1.3 in \cite{CCM20}{, we know that $\sZ_\infty$ admits all negative moments. Hence by Doob's inequality applied to the submartingale $(\sZ^{-1}_s)_{s\geq 0}$, we find using} shift invariance that for all $\e>0$, for sufficiently large $T$,
\begin{equation} \label{eq:negligibleEvent}
\sup_{x\in \mathbb R^d} \sup_{s\geq 0} \IP\left(A_s(x)^c \right)\leq T^{-2d}.
\end{equation}
Moreover, observe that for all $x,y$, almost-surely
 \begin{equation}\label{uniform-estFLAT}
   \begin{split}
    &\quad   \left(\kW_s(\sqrt{T}x)\kW_s(\sqrt{T}y)\right)^{-1}%\frac{\dd} {\dd s} 
 {\left\lan \kW(\sqrt{T}x),\kW(\sqrt{T}y) \right\ran_s'}\\
     &= \frac{\DE_{\sqrt{T}x,\,\sqrt{T} y}^{\otimes 2} \left[\Phi_s^{(1)} \Phi_s^{(2)} V(B^{(1)}_s-B^{(2)}_s) u_0\left(T^{-\frac{1}{2}}\,B_{t T}^{(1)}\right)u_0\left(T^{-\frac{1}{2}}\,B_{t T}^{(2)}\right) \right]}{\DE_{\sqrt{T}x,\,\sqrt{T} y}^{\otimes 2} \left[\Phi_s^{(1)} \Phi_s^{(2)} u_0\left(T^{-\frac{1}{2}}\,B_{t T}^{(1)}\right)u_0\left(T^{-\frac{1}{2}}\,B_{t T}^{(2)}\right) \right]}\leq \Vert V \Vert_\infty,
     \end{split}
      \end{equation}
while expanding as in \eqref{eq:bracketExpansion_intro} and thanks to \eqref{eq:unifboundForP2P}, we have for all $x,y\in \mathbb R^d$ and $s\geq 0$,
\begin{align}
 \IE %\frac{\dd}{\dd s} 
 {\langle \kW(\sqrt T x), \kW(\sqrt T y) \rangle_{s}'} & \leq  C T^{-d/2}  \int_{(\mathbb{R}^d)^2} V(Z) \rho_\frac{s}{T}(z -x) \rho_{\frac{s}{T}}(z + T^{-\frac{1}{2}} Z - y)  \dd z \dd Z\nonumber\\
 &{=C T^{-d/2}  \int_{\mathbb{R}^d} V(Z)  \rho_{\frac{2s}{T}}(x + T^{-\frac{1}{2}} Z - y)   \dd Z,}
% & \leq C T^{-d/2} \sigma^{-d/2},
 \label{eq:bracketExpansion_W}
\end{align}
for some finite $C=C(\b,\Vert u_0 \Vert_\infty)$. 
%{\MN I think  estimate in the previous version is better because  estimate in the latest version is not used in the right below and make readers confused by the fact $\int_0^{\cdot}\sigma^{-d/2}d \sigma=\infty$.}

Now, from the last expression, we see that there is  $C=C(\b, \|f\|_\infty,\|f\|_1,\|u_0\|_\infty,\|V\|_1)$ which is finite and such that 
\begin{equation} \label{eq:boundOnBracketAt0}
\IE\left[\int_0^{T^{1-\ve}}\int f(x)f(y) \dd \lan \kW(\sqrt{T}x),\kW(\sqrt{T}y) \ran_s\dd x \dd y\right]\leq C T^{-\frac{d-2}{2}-\ve},
\end{equation}
(in fact this is \eqref{eq:L2upperBoundZTf} with $ \tau_0=T^{-\ve}$). Therefore, the expectation in \eqref{eq:vanishingSmalltSurgery} equals and satisfies
  \al{
 &\IE\left[\int_0^{T^{1-\ve}}\int f(x)f(y) \frac{\dd \lan \kW(\sqrt{T}x),\kW(\sqrt{T}y) \ran_s}{\kW_s(\sqrt{T}x)\kW_s(\sqrt{T}y)}\dd x \dd y\right]\\
     &\leq T^{\frac{\ve}{4}}\,  \int^{T^{1-\ve}}_0\int f(x)f(y) \IE\left[ %\frac{\dd}{\dd s} 
   {\lan \kW(\sqrt{T}x),\kW(\sqrt{T}y) \ran_s'} \right] \dd x \dd y %\dd s
    \\
    &\hspace{8mm}+ \|V\|_\infty  \int^{T^{1-\ve}}_0\int f(x)f(y){ \IP\left(A_s(x)^c\cup A_s(y)^c\right)} \dd x \dd y \dd s\\
    &\leq C T^{\frac{\ve}{4}}\, T^{-\frac{d-2}{2}-\ve}+ C T^{-2d+1}=o(T^{-\frac{d-2}{2}}).
  }
%where the first term in the last line is obtained via \eqref{eq:L2upperBoundZTf} with $s_0 = T^{-\ve}$.
\end{proof}
\begin{comment}
\begin{lemma}
  For sufficietly large $r>0$,
\al{  \lim_{T\to\infty}T^{\frac{d-2}{2}}\IE\left(\int^T_{T^{1-\ve}} \int f(x) \frac{\dd \kW_\sigma(\sqrt{T}x)}{\kW_\sigma(\sqrt{T}x)}\mathbf{1}(A_\sigma(x))\dd x \right)^2 =0.}
\end{lemma}
\begin{proof}
  By \eqref{uniform-estFlat},
  
  \al{
    &\quad \IE\left(\int^T_{T^{1-\ve}} \int f(x) \frac{\dd \kW_\sigma(\sqrt{T}x)}{\kW_\sigma(\sqrt{T}x)}\mathbf{1}(A_\sigma(x))\dd x \right)^2\\
    &\leq \int^{T}_{T^{1-\ve}} \int f(x) f(y) \IE\left[\left(  \frac{ \lan \kW_\sigma(\sqrt{T}x),\kW_\sigma(\sqrt{T}y) \ran^\prime}{\kW_\sigma(\sqrt{T}x)\kW_\sigma(\sqrt{T}y)}\right)\mathbf{1}(A_\sigma(x))\right]\dd x \dd y \dd \sigma\\
    &\leq |V|_{\infty}\int^{T}_{T^{1-\ve}} \int f(x) f(y) \IP\left(A_\sigma(x) \right)\dd x \dd y \dd \sigma\\
    &\leq C T\, T^{-2d}=o(T^{\frac{d-2}{2}})
    }
\end{proof}

\end{comment}
 Recall $\dd \overline{ M}_s$ from \eqref{eq:defdMbartau} and define
\begin{equation} \label{eq:defdL}
\dd L_s(z)=\bar{u}(t,z/\sqrt{T}) \sZ_{\ell(T)}\dd \overline{M}_s(z).
\end{equation}
% We first replace $\dd W$ by $\dd L$. \CC{in what ?}
\begin{lemma}\label{Diff W and L}
  If $\e$ is sufficiently small depending on $a$ and $\b<\b_{L^2}$, for all $t>0$,
  \al{
    \lim_{T\to\infty}T^{\frac{d-2}{4}}\IE\left|\int^{T t}_{T^{1-\ve}} \int f(x) \frac{\dd \kW_s(\sqrt{T}x)-\dd L_s(\sqrt{T}x)}{\kW_s(\sqrt{T}x)}\dd x \right| =0.
  }

\end{lemma}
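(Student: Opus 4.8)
The plan is to view the quantity in question as a continuous $L^2$-martingale in its upper time limit and to estimate its bracket by means of the local limit theorem. Fix $t$, write $\ell=\ell(T)=T^{a}$, and set $\Gamma_T(s,z,w):=\DE_{0,w}^{s,z}[\Phi_s]-\sZ_{\ell}(w)\,\overset{\leftarrow}{\sZ}_{s,\ell}(z)$, the analogue of the discrepancy $\Gamma$ of \eqref{eq:DefGamma} with the $s$-independent horizon $\ell$. Conditioning $B$ at time $s$ and using $\int\phi(z_1-b)\phi(z_2-b)\,\dd b=V(z_1-z_2)$, one reads off from \eqref{eq:diffOfH} and \eqref{eq:defdMbartau}--\eqref{eq:defdL} that
\[
\dd\kW_s(w)-\dd L_s(w)=\beta\int_{\R^d}\xi(\dd s,\dd b)\int_{\R^d}\rho_s(z-w)\phi(z-b)\,\Gamma_T(s,z,w)\,\DE_z\!\Big[u_0\big(\tfrac{B_{tT-s}}{\sqrt T}\big)\Big]\dd z .
\]
Since $\dd\kW_s$ and $\dd L_s$ are white-noise stochastic integrals and $\kW_s(\sqrt Tx)>0$ is $\mathcal F_{s-}$-measurable (by continuity of $\kW$), the process $N_\tau:=\int_{T^{1-\ve}}^{\tau}\int f(x)\,\kW_s(\sqrt Tx)^{-1}\big(\dd\kW_s(\sqrt Tx)-\dd L_s(\sqrt Tx)\big)\dd x$ is a continuous martingale started from $0$ at $\tau=T^{1-\ve}$; by Jensen and the It\^o isometry, $T^{\frac{d-2}{4}}\IE|N_{Tt}|\le\big(T^{\frac{d-2}{2}}\IE\langle N\rangle_{Tt}\big)^{1/2}$, so it suffices to prove $T^{\frac{d-2}{2}}\IE\langle N\rangle_{Tt}\to 0$.

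To handle the denominators $\kW_s^{-1}$ we repeat the argument in the proof of \eqref{eq:vanishingSmalltSurgery}: with $A_s(x)=\{\sZ_s(\sqrt Tx)^{-1}\le T^{\ve/8}\}$, which satisfies $\sup_{x,s}\IP(A_s(x)^c)\le T^{-2d}$ by \eqref{eq:negligibleEvent} (a consequence of the finiteness of the negative moments of $\sZ_\infty$ proven in \cite{CCM20} and of Doob's inequality), we write $N=M^{(1)}+M^{(2)}$, where $M^{(1)}$, $M^{(2)}$ are the continuous martingales obtained by inserting $\mathbf 1_{A_s(x)}$, resp.\ $\mathbf 1_{A_s(x)^c}$, in the integrand of $N$. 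On the good event $\kW_s(\sqrt Tx)^{-1}\mathbf 1_{A_s(x)}\le|u_0^{-1}|_\infty T^{\ve/8}$; hence, applying Cauchy--Schwarz first to the cross-brackets and then to the expectations, splitting the $z_1$- and $z_2$-integrals into the diffusive window (where Theorem~\ref{th:errorTermLLT} applies, the scale staying far below the linear scale $s$) and the complementary range (negligible by the uniform $L^2$-bound \eqref{eq:unifboundForP2P}, the $L^2$-boundedness of $\sZ_\ell$, and the Gaussian tail of $\rho_s$), and using that $V=\phi\star\phi$ is bounded and compactly supported, one gets
\[
\IE\langle M^{(1)}\rangle_{Tt}\;\le\; C\,T^{\ve/4}\,\ell^{-\kappa}\int_{T^{1-\ve}}^{Tt}s^{-d/2}\,\dd s ,
\]
where the exponent $\kappa=\kappa(d)>0$ supplied by Theorem~\ref{th:errorTermLLT} \emph{does not depend on $\b<\b_{L^2}$} (its hypotheses $\ell=T^{a}\to\infty$ and $\ell/s\le T^{a-1+\ve}\to0$ hold because $a<\tfrac14$ and $s\ge T^{1-\ve}$). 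Since $\int_{T^{1-\ve}}^{Tt}s^{-d/2}\dd s\le C\,T^{-(1-\ve)(d-2)/2}$ and $\ell^{-\kappa}=T^{-a\kappa}$, this yields $T^{\frac{d-2}{4}}\big(\IE\langle M^{(1)}\rangle_{Tt}\big)^{1/2}\le C\,T^{\ve(2d-3)/8-a\kappa/2}$, which tends to $0$ provided $\ve<4a\kappa/(2d-3)$ --- precisely the assumption that $\ve$ be small depending on $a$.

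For $M^{(2)}$ we argue once more as in the proof of \eqref{eq:vanishingSmalltSurgery}: decomposing $\kW-L$ into $\kW$ and $L$, the $\kW$-contribution to $\IE\langle M^{(2)}\rangle_{Tt}$ is controlled by the almost-sure bound \eqref{uniform-estFLAT} together with $\IP(A_s(x)^c)\le T^{-2d}$, and the $L$-contribution by a H\"older split combining $\IP(A_s(x)^c)\le T^{-2d}$ with the uniform positive moments of $\sZ_\ell$ and $\overset{\leftarrow}{\sZ}_{s,\ell}$ (finite since $\b<\b_{L^2}$) and the uniform negative moments of $\sZ_s$ (from \cite{CCM20} and Doob); both are $O(T^{c_0-2d})$ for a fixed $c_0$, hence $o(T^{-(d-2)/2})$ because $d\ge3$, so they are negligible after the $T^{(d-2)/4}$ scaling. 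Adding the two contributions proves the lemma.

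I expect the main difficulty to be the good-event estimate and, specifically, the matching of exponents: the small gain $\ell^{-\kappa}$ coming from the local limit theorem must beat both the length $\sim Tt$ of the integration interval and the loss $T^{\ve/4}$ produced by the $\kW_s^{-1}$ denominators. This is possible only because Theorem~\ref{th:errorTermLLT} provides a rate $\kappa$ that stays bounded away from $0$ as $\b\uparrow\b_{L^2}$ --- the pre-existing local limit theorems \cite{V06,Si95} would not be enough here --- and this is exactly the mechanism that forces the constraint $\ve\ll a$ in the statement.
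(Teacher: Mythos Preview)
Your overall strategy --- view the integral as a continuous martingale, reduce to its bracket, and control the bracket via the quantitative local limit theorem (Theorem~\ref{th:errorTermLLT}) --- is the same as the paper's, and your treatment of the ``good--event'' part $M^{(1)}$ can be made to work essentially as you describe.

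There is, however, a genuine gap in your treatment of $M^{(2)}$, more precisely of its $L$-contribution. After writing $\dd L_s(\sqrt T x)=\bar u\,\sZ_{\ell}(\sqrt T x)\,\dd\overline M_s(\sqrt T x)$, the bracket of this part contains the factor $\sZ_\ell(\sqrt T x)\sZ_\ell(\sqrt T y)\,\langle\overline M(\sqrt T x),\overline M(\sqrt T y)\rangle_s'$, and the latter involves products $\overset{\leftarrow}{\sZ}_{s,\ell}(z_1)\overset{\leftarrow}{\sZ}_{s,\ell}(z_2)$. Your ``H\"older split'' would need, for some exponent $q>1$, finiteness of $\IE[\sZ_\ell^{2q}]$ and of $\IE[(\overset{\leftarrow}{\sZ}_{s,\ell})^{2q}]$, because the indicator $\mathbf 1_{A_s(x)^c}$ only carries probability $T^{-2d}$ and must be separated from the random positive factors by H\"older. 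But in the full region $\b<\b_{L^2}$ only the second moment of $\sZ_\ell$ is guaranteed; higher moments may blow up as $\b\uparrow\b_{L^2}$. So the step ``uniform positive moments of $\sZ_\ell$ and $\overset{\leftarrow}{\sZ}_{s,\ell}$'' is not available, and the bound on $\IE\langle M^{(2)}\rangle$ does not go through as written.

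The paper avoids this difficulty by a slightly different (and simpler) organisation of the denominator. Instead of inserting indicators, it first brings the absolute value inside the $x$-integral, applies the Burkholder--Davis--Gundy inequality for each fixed $x$, and then pulls out the random factor $\sup_{s}\kW_s(\sqrt T x)^{-1}$ from under the square root via Cauchy--Schwarz:
\[
\IE\Big[\Big(\int_{T^{1-\ve}}^{Tt}\frac{\dd\langle \kW-L\rangle_s}{\kW_s^2}\Big)^{1/2}\Big]\le \IE\big[\sup_s\kW_s^{-2}\big]^{1/2}\,\IE\Big[\int_{T^{1-\ve}}^{Tt}\dd\langle \kW-L\rangle_s\Big]^{1/2}.
\]
Here $\IE[\sup_s\kW_s^{-2}]\le C\,\IE[\sup_s\sZ_s^{-2}]<\infty$ by the finite negative moments of $\sZ_\infty$ (from \cite{CCM20}) together with Doob's inequality, and this holds throughout weak disorder with no extra power of $T$. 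The remaining factor $\IE\int\dd\langle \kW-L\rangle_s$ is then a pure second-moment computation on $\Gamma$, which Theorem~\ref{th:errorTermLLT} controls as in your $M^{(1)}$ estimate (this is exactly \eqref{eq:simComputation}). Replacing your good/bad-event split by this argument closes the gap and in fact removes the spurious loss $T^{\ve/4}$ in the constants.
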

\begin{proof} We have:
  \al{
    &\quad \IE\left|\int^{T t}_{T^{1-\ve}} \int f(x) \frac{\dd \kW_s(\sqrt{T}x)-\dd L_s(\sqrt{T}x)}{\kW_s(\sqrt{T}x)}\dd x \right|  \\
    &\leq  \int f(x) \IE\left|\int^{T t}_{T^{1-\ve}}\frac{\dd (\kW(\sqrt{T}\,x)- L(\sqrt{T}\,x))_s}{\kW_s(\sqrt{T}\,x)} \right|\dd x \\
    &\leq \int f(x) \IE\left[\left( \int^{Tt}_{T^{1-\ve}}\frac{\dd \lan \kW(\sqrt{T}\,x)- L(\sqrt{T}\,x)\ran_s }{\kW_s(\sqrt{T}\,x)^2} \right)^{\frac{1}{2}}\right] \dd x,}
  where we have used the B\"urkholder-Davis-Gundy inequality in the second inequality. Again by Theorem 1.3 in \cite{CCM20} and Doob's inequality, we know that the variable $\sup_{\sigma\geq 0} \sZ_\sigma(\sqrt{T}\,x)^{-1}$ admits any moment. Therefore, by Cauchy-Schwarz inequality and $\sZ_{\sigma}\leq |u_0^{-1}|_{\infty}\, \kW_{\sigma}$, the last expectation is further bounded from above by
  \al{
    &\quad   \IE\left[\left(\sup_{s\geq 0}\kW_s(\sqrt{T}\,x)^{-1}\right) \left( \int^{Tt}_{T^{1-\ve}}\dd \lan \kW(\sqrt{T}\,x)- L(\sqrt{T}\,x)\ran_s  \right)^{\frac{1}{2}}\right]\\
    &\leq  C \IE\left[ \int^{T t}_{T^{1-\ve}}\dd \lan \kW(\sqrt{T}\,x)- L(\sqrt{T}\,x)\ran_s  \right]^{\frac{1}{2}}.
}
Now, recalling the definition in \eqref{eq:DefGamma} of $\Gamma(s,z,x)$, the last expectation equals
\begin{align}
  & \int_{T^{1-\ve}}^{Tt} \int_{(\mathbb R^d)^2} V(z_1-z_2) \rho_s(\sqrt{T} x,z_1) \rho_s(\sqrt{T} y,z_2) \IE[\Gamma(s,z_1,\sqrt{T}\,x) \Gamma(s,z_2,\sqrt{T}\,x)]\nonumber\\
  & \qquad \times\DE_{z_1}\left[ u_0\left(\frac{B_{t T-s}}{\sqrt{T}}\right)\right]\DE_{z_2}\left[ u_0\left(\frac{B_{t T-s}}{\sqrt{T}}\right)\right]  \dd s \dd z_1 \dd z_2 \nonumber\\
& = T^{-\frac{d-2}{2}} \int_{T^{-\ve}}^t \int_{(\mathbb R^d)^2} \dd \sigma \dd Z \dd z\, V(Z) \rho_\sigma(z) \rho_\sigma(z + T^{-\frac{1}{2}}Z)\times \nonumber \\
& \IE[\Gamma(T\sigma,\sqrt T z,\sqrt{T}\,x) \Gamma(T\sigma,\sqrt T z + Z,\sqrt{T}\,x)] \DE_{\sqrt{T} z}\left[ u_0\left(\frac{B_{t T-\sigma}}{\sqrt{T}}\right)\right]\DE_{\sqrt{T} z+Z}\left[ u_0\left(\frac{B_{t T-\sigma}}{\sqrt{T}}\right)\right]  \nonumber \\
& \leq C T^{-\frac{d-2}{2}} \int_{T^{-\ve}}^t \int \dd \sigma \dd Z \dd z\, V(Z) \rho_\sigma(z)\rho_\sigma(z+T^{-\frac{1}{2}}Z) \nonumber\\
&\hspace{5cm}\times \left(\ell^{-\frac{d-2}{2}} + (1+|z|^2) e^{c\frac{\ell}{T}|z|^2} (\ell T^{-1})^{\frac{1}{2}}\right) \nonumber\\
& \leq C T^{-\alpha} T^{-\frac{d-2}{2}} \left(\int_{T^{-\ve}}^t \sigma^{-\frac{d}{2}}\dd \sigma\right) \label{eq:simComputation}
\end{align}
with $\alpha>0$ where we used in the first inequality the local limit theorem for polymers (Theorem \ref{th:errorTermLLT}) and in the last inequality we used that $sup_{x,y\in\R^d}\rho_{\sigma}(x,y)\leq (2\pi \sigma)^{-\frac{d}{2}}$ and we have observed that
\[
\int \rho_\sigma(z) (1+|z|^2) e^{c\frac{\ell}{T}|z|^2} \dd z = \int e^{-\frac{|y|^2}{2}} (1+\sigma |y|^2) e^{\sigma c\frac{\ell}{T}|y|^2} \dd y \leq C
\]
for $T$ large enough and where $C=C({t})<\infty$ since $\sigma\leq t$.

 Then for $\e$ small enough, the RHS of \eqref{eq:simComputation} is negligible with respect to $T^{-\frac{d-2}{2}}$, which by the previous displays implies the statement of the lemma.
\end{proof}
The next lemma ends the proof of Proposition \ref{prop:replaceByM}.

\begin{comment}
\begin{lemma}
  For sufficietly large $r>0$,
\al{  \lim_{T\to\infty}T^{\frac{d-2}{2}}\IE\left(\int^T_{T^{1-\ve}} \int f(x) \frac{\dd \kW_\sigma(\sqrt{T}x)}{\kW_\sigma(\sqrt{T}x)}\mathbf{1}(A_\sigma(x))\dd x \right)^2 =0.}
\end{lemma}
\begin{proof}
  By \eqref{uniform-est},
  
  \al{
    &\quad \IE\left(\int^T_{T^{1-\ve}} \int f(x) \frac{\dd \kW_\sigma(\sqrt{T}x)}{\kW_\sigma(\sqrt{T}x)}\mathbf{1}(A_\sigma(x))\dd x \right)^2\\
    &\leq \int^{T}_{T^{1-\ve}} \int f(x) f(y) \IE\left[\left(  \frac{ \lan \kW_\sigma(\sqrt{T}x),\kW_\sigma(\sqrt{T}y) \ran^\prime}{\kW_\sigma(\sqrt{T}x)\kW_\sigma(\sqrt{T}y)}\right)\mathbf{1}(A_\sigma(x))\right]\dd x \dd y \dd \sigma\\
    &\leq |V|_{\infty}\int^{T}_{T^{1-\ve}} \int f(x) f(y) \IP\left(A_\sigma(x) \right)\dd x \dd y \dd \sigma\\
    &\leq C T T^{-2d}=o(T^{\frac{d-2}{2}})
    }
\end{proof}
\end{comment}
\begin{lemma}\label{LAST LEMMA}
  If $\e$ is sufficiently small depending on $a$, for $t>0$,
\al{  \lim_{T\to\infty}T^{\frac{d-2}{4}}\IE\left|\int^{ Tt}_{T^{1-\ve}} \int f(x) \left(\frac{\dd L_\sigma(\sqrt{T}x)}{\kW_\sigma(\sqrt{T}x)}-\dd \overline{M}_\sigma(\sqrt{T}\,x)\right) \dd x \right| =0.}
\end{lemma}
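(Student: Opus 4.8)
The plan is to use that, since $\dd L_\sigma(z)=\bar{u}(t,z/\sqrt{T})\,\sZ_{\ell(T)}(z)\,\dd \overline{M}_\sigma(z)$ by \eqref{eq:defdL}, the increment to be estimated factorizes as
\[
\frac{\dd L_\sigma(\sqrt{T}x)}{\kW_\sigma(\sqrt{T}x)}-\dd \overline{M}_\sigma(\sqrt{T}x)=\frac{\Delta_\sigma(x)}{\kW_\sigma(\sqrt{T}x)}\,\dd \overline{M}_\sigma(\sqrt{T}x),\qquad \Delta_\sigma(x):=\bar{u}(t,x)\,\sZ_{\ell(T)}(\sqrt{T}x)-\kW_\sigma(\sqrt{T}x),
\]
i.e.\ $\dd\overline{M}_\sigma$ multiplied by a factor that is small in view of the heuristic $\kW_\sigma(\sqrt{T}x)\approx\sZ_{\ell(T)}(\sqrt{T}x)\,\bar{u}(t,x)$ from Section~\ref{sec:explainKPZ}. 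First I would bound the $L^1$-norm by $\int|f(x)|\,\IE\big|\int_{T^{1-\ve}}^{Tt}\tfrac{\Delta_\sigma(x)}{\kW_\sigma(\sqrt{T}x)}\dd\overline{M}_\sigma(\sqrt{T}x)\big|\,\dd x$, observe that for fixed $x$ the inner integral is a continuous martingale in its upper limit (all integrands are adapted since $\sigma\ge T^{1-\ve}\gg\ell(T)=T^a$), and apply Burkholder--Davis--Gundy followed by Cauchy--Schwarz to pull out $\sup_{\sigma\ge0}\kW_\sigma(\sqrt{T}x)^{-1}$; as in Lemma~\ref{Diff W and L}, this variable has moments of all orders, uniformly in $x$, by Theorem~1.3 of \cite{CCM20}, Doob's inequality and shift-invariance. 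The statement thus reduces to proving, for every compact $K$,
\[
\sup_{x\in K}\IE\Big[\int_{T^{1-\ve}}^{Tt}\Delta_\sigma(x)^2\,\big\langle\overline{M}(\sqrt{T}x)\big\rangle_\sigma'\,\dd\sigma\Big]=o\big(T^{-\frac{d-2}{2}}\big),
\]
so that $T^{\frac{d-2}{4}}$ times the square root tends to $0$.

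Then I would decompose $\Delta_\sigma$. Writing $\kW_\sigma(\sqrt{T}x)=\int\rho_\sigma(z-\sqrt{T}x)\,\DE_{0,\sqrt{T}x}^{\sigma,z}[\Phi_\sigma]\,\DE_z[u_0(B_{tT-\sigma}/\sqrt{T})]\,\dd z$ by Chapman--Kolmogorov and inserting the local limit decomposition $\DE_{0,\sqrt{T}x}^{\sigma,z}[\Phi_\sigma]=\sZ_{\ell(T)}(\sqrt{T}x)\,\overset{\leftarrow}{\sZ}_{\sigma,\ell(T)}(z)+\Gamma(\sigma,z,\sqrt{T}x)$ of \eqref{eq:DefGamma}, one gets
\[
\Delta_\sigma(x)=\sZ_{\ell(T)}(\sqrt{T}x)\big(\bar{u}(t,x)-\Lambda_\sigma(x)\big)-E_\sigma(x),
\]
with $\Lambda_\sigma(x):=\int\rho_\sigma(z-\sqrt{T}x)\,\overset{\leftarrow}{\sZ}_{\sigma,\ell(T)}(z)\,\DE_z[u_0(B_{tT-\sigma}/\sqrt{T})]\,\dd z$ (which satisfies $\IE\,\Lambda_\sigma(x)=\bar{u}(t,x)$, using $\IE\,\overset{\leftarrow}{\sZ}_{\sigma,\ell}=1$ and Chapman--Kolmogorov) and $E_\sigma(x):=\int\rho_\sigma(z-\sqrt{T}x)\,\Gamma(\sigma,z,\sqrt{T}x)\,\DE_z[u_0(B_{tT-\sigma}/\sqrt{T})]\,\dd z$. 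The first term is small by homogenization — $\Lambda_\sigma$ averages $\overset{\leftarrow}{\sZ}_{\sigma,\ell}$ against $\rho_\sigma$ over the scale $\sqrt\sigma\ge T^{(1-\ve)/2}\gg\ell(T)$, so its variance is of order $\ell(T)^d\sigma^{-d/2}$ — and the second is small by the local limit theorem, Theorem~\ref{th:errorTermLLT} bounding $\IE\,\Gamma(\sigma,z,\sqrt{T}x)^2$ by $C\ell(T)^{-\frac{d-2}{2}}$ plus a term of order $(\ell(T)/\sigma)^{1/2}$ against a Gaussian weight, exactly as in the computation preceding \eqref{eq:simComputation}. Expanding $\langle\overline{M}(\sqrt{T}x)\rangle_\sigma'$ as in \eqref{eq:CVbracketMt} and bounding $\IE\,\langle\overline{M}(\sqrt{T}x)\rangle_\sigma'\le C\sigma^{-d/2}$ as in \eqref{eq:bracketExpansion_W} gives $\int_{T^{1-\ve}}^{Tt}\IE\,\langle\overline{M}(\sqrt{T}x)\rangle_\sigma'\,\dd\sigma=O\big(T^{-\frac{(1-\ve)(d-2)}{2}}\big)$; the additional decay carried by $\Delta_\sigma(x)^2$ — of order $T^{-\frac{a(d-2)}{2}}$ from $E_\sigma$ and $T^{ad-\frac{(1-\ve)d}{2}}$ from $\bar{u}-\Lambda_\sigma$ — then beats the $T^{\frac{\ve(d-2)}{2}}$ loss provided $\ve$ is small relative to $a\in(0,\tfrac14)$, which is the meaning of the hypothesis.

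The hard part is that the products $(\bar{u}(t,x)-\Lambda_\sigma(x))^2\langle\overline{M}(\sqrt{T}x)\rangle_\sigma'$ and $E_\sigma(x)^2\langle\overline{M}(\sqrt{T}x)\rangle_\sigma'$ are quartic in the partition functions, and fourth moments are not available as $\b\uparrow\b_{L^2}$. Two points circumvent this. First, because $\sigma\ge T^{1-\ve}>2\ell(T)$, the factor $\sZ_{\ell(T)}(\sqrt{T}x)$ depends only on the noise on $[0,\ell(T)]$, hence is independent of $\Lambda_\sigma(x)$ and of $\langle\overline{M}(\sqrt{T}x)\rangle_\sigma'$ (which are measurable with respect to the noise on $[\sigma-\ell(T),\sigma]$); the corresponding expectations factorize and only $\IE\,\sZ_{\ell(T)}(\sqrt{T}x)^2\le\IE\,\sZ_\infty^2<\infty$ enters. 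Second, for the remaining quartic terms I would pass, as in Step~2 of the proof of Lemma~\ref{lem:asymptPsi}, to the constrained and truncated partition functions $\{\overline{\sZ}_{\ell(T)}(\cdot)\,\overline{\sZ}_{\ell(T)}(\cdot)\}\wedge T^{\frac12-a}$: their decorrelation at scale $\ell(T)$ drives the homogenization of $\Lambda_\sigma$, and Lemma~\ref{CN Lemma 3.3} together with the $L^2$-boundedness of $\{\sZ_{\ell(T)}\}$ turns the would-be fourth moments into second moments at the cost of the benign factor $T^{\frac12-a}$. Choosing $a\in(0,\tfrac14)$ and then $\ve$ small enough makes every error term $o(T^{-\frac{d-2}{2}})$, which completes the proof of the lemma, and hence of Proposition~\ref{prop:replaceByM}.
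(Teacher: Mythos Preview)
Your opening reduction matches the paper's: you factor $\dd L_\sigma/\kW_\sigma-\dd\overline{M}_\sigma=(\sZ_{\ell(T)}\bar u/\kW_\sigma-1)\,\dd\overline{M}_\sigma$, apply Burkholder--Davis--Gundy, and invoke negative moments of $\sZ_\infty$. The divergence comes at the next step. You pull out only $\sup_\sigma\kW_\sigma^{-1}$ and are left with $\IE\int\Delta_\sigma^2\langle\overline{M}\rangle'_\sigma\,\dd\sigma$, which forces you to treat the product $\Delta_\sigma^2\langle\overline{M}\rangle'_\sigma$ jointly; since both factors live on the same noise slab $[\sigma-\ell,\sigma]$, this is genuinely quartic and drives you to the truncation machinery of Lemma~\ref{lem:asymptPsi}. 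The paper instead pulls out the entire factor $\sup_s|\sZ_{\ell(T)}\bar u/\kW_s-1|$, so the bracket integral decouples completely and only the scalar $\IE\big[\sup_s(\sZ_{\ell(T)}\bar u/\kW_s-1)^2\big]$ remains.

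The paper then handles this supremum by a different decomposition of $\Delta_s$: rather than applying the local limit theorem at each running time $\sigma$ (your $\Lambda_\sigma$ and $E_\sigma$), it writes $\Delta_s=(\sZ_{\ell(T)}\bar u-\kW_{Tt})+(\kW_{Tt}-\kW_s)$. The first piece is $s$-independent, so the supremum is trivial and one only needs the LLT and a single homogenization at the terminal time $Tt$; the second piece is a martingale increment whose supremum is controlled by Doob's inequality, giving $\IE\sup_s(\kW_{Tt}-\kW_s)^2\le C\IE\int_{\ell(T)}^{Tt}\dd\langle\kW\rangle_s=O(T^{-a(d-2)/2})$. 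This avoids all fourth-moment issues and produces the polynomial gain directly.

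Your truncation sketch for the quartic terms is plausible but not routine: in particular, for $E_\sigma^2\langle\overline{M}\rangle'_\sigma$ the LLT error $\Gamma(\sigma,\cdot,\sqrt T x)$ depends on the full noise on $[0,\sigma]$ and is not disentangled from $\overset{\leftarrow}{\sZ}_{\sigma,\ell}$ by the constrained-and-capped replacement alone. If you want to push your route through, the cleanest fix is to adopt the paper's step of pulling out $\sup_s|\Delta_s/\kW_s|$ first; your $\sigma$-by-$\sigma$ LLT then becomes unnecessary.
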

\begin{proof}
  We compute as
  \aln{
    &\quad \IE\left|\int^{Tt}_{T^{1-\ve}} \int f(x) \frac{\dd L_s(\sqrt{T}x)}{\kW_s(\sqrt{T}x)}-\dd \overline{M}_s(\sqrt{T}x) \dd x \right| \nonumber \\
    &\leq   C \int f(x) \IE\left|\int^{T t}_{T^{1-\ve}} \frac{\dd L_s(\sqrt{T}x)}{\kW_s(\sqrt{T}x)}-\dd \overline{M}_s(\sqrt{T}x)\right|\dd x\\
    &= C\int f(x)\IE\left|\int^{T t}_{T^{1-\ve}} \left(\frac{\sZ_{\ell(T)}(\sqrt{T}x)\, \bar{u}(t,x)}{\kW_s(\sqrt{T}x)}-1\right)\dd \overline{M}_s(\sqrt{T}x) \dd x \right| \dd x\nonumber\\
    &\leq C \int f(x) \IE\left[\left( \int^{T t}_{T^{1-\ve}} \left(\frac{\sZ_{\ell(T)}(\sqrt{T}x)\, \bar{u}(t,x)}{\kW_s(\sqrt{T}x)}-1\right)^2 \dd \lan \overline{M}(\sqrt{T}x)\ran_s \right)^{\frac{1}{2}}\right]\dd x\label{PPPP},
  }
  where we have used Burkh\"older-Davis-Gundy in the last line. For ease of notation, we simply write $\kW_s=\kW_s(\sqrt{T}x)$ and $\sZ_s=\sZ_{s}(\sqrt{T}x)$ in the proof. By Cauchy-Schwarz inequality, the expectation inside the integral of \eqref{PPPP}  is further bounded from above by
  \aln{
    &\quad \IE\left[\left(\sup_{\ell(T) \leq s\leq T t}\left|\frac{\sZ_{\ell(T)}\, \bar{u}(t,x)}{\kW_s}-1\right| \right) \left( \int^{T t}_{T^{1-\ve}}\dd \lan \overline{M}(\sqrt{T} x)\ran_s \right)^{\frac{1}{2}}\right]\nonumber\\
    &\leq  \IE\left[ \sup_{\ell(T) \leq s\leq T t}\left(\frac{\sZ_{\ell(T)}\, \bar{u}(t,x)}{\kW_s}-1\right)^2 \right]^{\frac{1}{2}} \IE\left[ \int^{T t}_{T^{1-\ve}}\dd \lan \overline{M}(\sqrt{T} x)\ran_s  \right]^{\frac{1}{2}}.\label{conclusion}
  }
  Since
  $$\frac{\sZ_{\ell(T)}\, \bar{u}(t,x)}{\kW_s}-1=\kW_s^{-1}(\sZ_{\ell(T)}\, \bar{u}(t,x)-\kW_{T t}+(\kW_{tT}-\kW_s)),$$
  the first term of \eqref{conclusion} can be bounded from above as
  \aln{
    &\quad \IE\left[ \sup_{\ell(T) \leq s\leq T t}\left(\frac{\sZ_{\ell(T)}\, \bar{u}(t,x) }{\kW_s}-1\right)^2 \right]\notag\\
    &\leq  \IE\left[ \sup_{\ell(T) \leq s\leq T t} \kW_s^{-2} \right]^{\frac{1}{2}} \IE\left[ \sup_{\ell(T) \leq s\leq T t}\left( \sZ_{\ell(T)}\, \bar{u}(t,x)-\kW_{T t}+(\kW_{tT}-\kW_s)  \right)^2 \right]^{\frac{1}{2}}\notag  \\
    &\leq C \left(\IE\left[\left(\sZ_{\ell(T)}(\sqrt{T}x)\, \bar{u}(t,x) -\kW_{T t}(\sqrt{T}x)\right)^2\right]+\IE\left[ \sup_{\ell(T) \leq s\leq T t}\left(\kW_{T t} -\kW_s\right)^2 \right]\right)^{\frac{1}{2}}, \label{star777}%\IE\left[ \sup_{\ell(T) \leq t} \sZ_s^{-4} \right]^{\frac{1}{4}} \IE\left[ \sup_{\ell(T) \leq t}\left(\frac{\sZ_{\ell(T)}}{\sZ_s}\right)^4 \right],\\
  }
  where we have used $\IE\left[ \sup_{\ell(T) \leq s\leq T t} \kW_s^{-2} \right]\leq C \IE[\sZ_{\infty}^{-2}]<\infty$ by Doob's inequality and finite negative moments of $\sZ_{\infty}$. With the $\overline{\sZ}$ and $\Gamma(s,x,y)$ notations from \eqref{eq:barZ} and \eqref{eq:DefGamma}, the first term of \eqref{star777} can be bounded from above as
  \al{
 &\quad  \IE\left[\left(\sZ_{\ell(T)}(\sqrt{T}x)\, \bar{u}(t,x) -\kW_{T t}(\sqrt{T}x)\right)^2\right]\\
  &\leq  2\IE\left[\left(\sZ_{\ell(T)}(\sqrt{T}x)\, \bar{u}(t,x) -\sZ_{\ell(T)}(\sqrt{T}x) \DE_{\sqrt{T}x}\left[ \overset{\leftarrow}{\sZ}_{T t,\ell(T)}(B_{T t})u_0\left(\frac{B_{T t}}{\sqrt{T}}\right)\right]\right)^2\right]\\
  &\hspace{8mm} +  2\IE\left[\left(\kW_{T t}(\sqrt{T}x)-\sZ_{\ell(T)}(\sqrt{T}x) \DE_{\sqrt{T}x}\left[ \overset{\leftarrow}{\sZ}_{T t,\ell(T)}(B_{T t})u_0\left(\frac{B_{T t}}{\sqrt{T}}\right)\right]\right)^2\right]\\
    &= 2\IE[\sZ_{\ell(T)}^2]\IE\left[\left(\DE_{\sqrt{T}x}\left[ \left(\sZ_{\ell(T)}(B_{T t}) -1 \right) u_0\left(\frac{B_{T t}}{\sqrt{T}}\right) \right]\right)^2 \right]\\
    &\hspace{8mm} +  2\IE\left[ \DE_{\sqrt{T}x}\left[ \Gamma(T t,\sqrt{T}x,B_{T t}) u_0\left(\frac{B_{T t}}{\sqrt{T}}\right)\right]^2\right]\\
    &= 2\IE[\sZ_{\ell(T)}]^2 \IE\left[\left(\DE_{\sqrt{T}x}\left[ \left(\overline{\sZ}_{\ell(T)}(B_{T t}) -\IE\left[\overline{\sZ}_{\ell(T)}(B_{T t})\right] \right) u_0\left(\frac{B_{T t}}{\sqrt{T}}\right) \right]\right)^2 \right] +O(T^{-\alpha})\\
    &\leq C \DP^{\otimes 2}_{0}\left(|B_{T t}^{(1)}-B_{T t}^{(2)}|\leq \ell(t)\right)+ O(T^{-\alpha})=O(T^{-\alpha}),\\
  }
 with some $\alpha>0$ where we have used the local limit theorem for polymers \eqref{Application LLT KPZ} and \eqref{Difference of sZ and bar-sZ}.
  For the second term of \eqref{star777}, {we use that by \eqref{eq:bracketExpansion_W} there is a constant $C=C(\b)$ such that for all $x\in\R^d$, $\sigma > 0$,
\begin{equation} \label{eq:bracketExpansion_Wbis}
\IE\left[\langle \kW(\sqrt T x) \rangle_{T\sigma }' \right]  \leq C T^{-d/2} \sigma^{-d/2},
\end{equation} 
and hence}
  \al{
    \IE\left[ \sup_{\ell(T) \leq s\leq T t}\left(\kW_{T t}-\kW_s\right)^2 \right] &\leq C \IE \left[ \int_{\ell(T)}^{T t} \dd {\langle \kW\rangle_s} \right]\\
    &{\leq  C T^{-\frac{d-2}{2}} \int_{\frac{\ell (T)}{T}}^{t}\int_{\R^d}V(Z)\sigma^{-d/2}\dd Z\dd \sigma}\\   
&= O\left(T^{-\frac{(d-2)a}{2}}\right).
}
Thus, the first term of \eqref{conclusion} is bounded by $C\,T^{-\alpha'}$ with some $\alpha'>0$ that depends on $a$. If $\e$ is sufficiently small depending on $a$,  \eqref{conclusion} can be further bounded from above by
  \al{
    C T^{-\alpha'}  \IE\left[ \int^{T t}_{T^{1-\ve}} \dd \lan \overline{M}(\sqrt{T} x)\ran_s  \right]^{\frac{1}{2}}=o(T^{\frac{d-2}{4}}),
    }
which we obtain thanks to {\eqref{eq:bracketExpansion_Wbis} and the fact that}
 there is a constant $C=C(\b)$ such that for $s\in [\ell(T),Tt]$,	
\[
\IE\left[\langle \overline{M}(\sqrt T x) \rangle_{s}' \right]\leq C'\IE\left[\langle \kW(\sqrt T x) \rangle_{s}' \right].
\]
\end{proof}

\subsubsection{Reduction to a martingale (proof of Proposition \ref{prop:VanishBracket})}
Our goal is to prove that: %(we forget the $t$ dependence since it plays no role here):
\begin{equation} \label{eq:goalOfReduction}
T^{\frac{d-2}{2}}  \int^{tT}_0 \int_{\R^d}f(x) \left(\frac{\dd \lan \kW(\sqrt T  x) \ran_s}{\kW_s^2(\sqrt T x)}-\IE \left[\frac{\dd \lan \kW(\sqrt T x) \ran_s}{\kW_s^2 (\sqrt T x)}\right]\right) \dd x \cvLone 0.
\end{equation}
The proof is composed of three steps. We first deal with the big $s$ regime.

\begin{lemma}[Step 1]\label{lem:firstStep}
  For any $\e>0$,
 \aln{
 \lim_{T\to\infty} T^{\frac{d-2}{4}}  \IE\left[\int f(x) \int^{tT}_{T^{\frac{1}{2}+\ve}} {\frac{\dd \lan \kW (\sqrt{T}x)\ran_s}{\kW_s(\sqrt T  x)^2 }\dd x }\right]=0.
  }
  \end{lemma}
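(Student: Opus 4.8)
\noindent\emph{Proof plan for Lemma~\ref{lem:firstStep}.} The plan is to split the time integral according to the event $A_s(x)=\{\sZ_s(\sqrt T x)^{-1}\le T^{\ve/8}\}$ from \eqref{eq:negligibleEvent}, estimating the contribution of the "good" event $A_s(x)$ by brute force (using that there $\kW_s^{-2}$ is bounded by a small power of $T$) and the contribution of $A_s(x)^c$ using the a.s.\ bound \eqref{uniform-estFLAT} together with the smallness of $\IP(A_s(x)^c)$. The only genuine input is that for $s\ge T^{1/2+\ve}$ the averaged bracket increment $\IE[\langle\kW(\sqrt T x)\rangle_s']$ has already entered its decay regime. (The case $\ve\ge 1/2$ is trivial since then the interval $[T^{1/2+\ve},tT]$ is eventually empty; so I assume $\ve\in(0,1/2)$.)

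For the contribution of $A_s(x)$: since $\sZ_s(\sqrt T x)\le |u_0^{-1}|_\infty\,\kW_s(\sqrt T x)$, on $A_s(x)$ one has the deterministic bound $\kW_s(\sqrt T x)^{-2}\le |u_0^{-1}|_\infty^2\,T^{\ve/4}$, so that
\[
\IE\!\left[\int_{T^{1/2+\ve}}^{tT}\frac{\dd\langle\kW(\sqrt T x)\rangle_s}{\kW_s(\sqrt T x)^2}\,\mathbf{1}_{A_s(x)}\right]\ \le\ |u_0^{-1}|_\infty^2\,T^{\ve/4}\int_{T^{1/2+\ve}}^{tT}\IE\!\left[\langle\kW(\sqrt T x)\rangle_s'\right]\dd s .
\]
By \eqref{eq:bracketExpansion_Wbis} (with $\sigma=s/T$) one has $\IE[\langle\kW(\sqrt T x)\rangle_s']\le C\,s^{-d/2}$, uniformly in $x$, and since $d\ge 3$ gives $d/2>1$ the $s$-integral is at most $C\,(T^{1/2+\ve})^{1-d/2}=C\,T^{1/2-d/4-\ve(d/2-1)}$. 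Multiplying by the factor $T^{\ve/4}$ above and by the prefactor $T^{(d-2)/4}$ from the statement, the $T$-powers $(d-2)/4$, $1/2$, $-d/4$ cancel exactly and what remains is of order $T^{-\ve(d/2-5/4)}\to 0$, because $d\ge 3$ forces $d/2-5/4\ge 1/4>0$; the integration against $f$ only produces the extra factor $\|f\|_{1}$.

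For the contribution of $A_s(x)^c$: the deterministic estimate \eqref{uniform-estFLAT} taken at $x=y$ gives $\kW_s(\sqrt T x)^{-2}\,\langle\kW(\sqrt T x)\rangle_s'\le\|V\|_\infty$, so by \eqref{eq:negligibleEvent}
\[
\IE\!\left[\int_{T^{1/2+\ve}}^{tT}\frac{\dd\langle\kW(\sqrt T x)\rangle_s}{\kW_s(\sqrt T x)^2}\,\mathbf{1}_{A_s(x)^c}\right]\ \le\ \|V\|_\infty\int_{T^{1/2+\ve}}^{tT}\IP\!\big(A_s(x)^c\big)\,\dd s\ \le\ \|V\|_\infty\,t\,T^{\,1-2d}.
\]
After multiplying by $T^{(d-2)/4}$ and integrating against $f$ this is $O\!\big(T^{(d-2)/4+1-2d}\big)=o(1)$. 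Adding the two contributions proves the lemma.

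The part I expect to require the most care is simply the bookkeeping of the exponents of $T$: the one structural ingredient is the $s^{-d/2}$ decay of $\IE[\langle\kW(\sqrt T x)\rangle_s']$, which (available precisely because $d\ge3$) makes $\int_{T^{1/2+\ve}}^{tT}s^{-d/2}\,\dd s$ of smaller order than $T^{-(d-2)/4}$ even after paying the price $T^{\ve/4}$ for the possibly large factor $\kW_s^{-2}$ on $A_s(x)$. This interplay is what dictates how the threshold $T^{1/2+\ve}$ in the integral and the truncation level $T^{\ve/8}$ defining $A_s(x)$ must be chosen relative to one another, and checking that every resulting exponent is negative is the only thing one needs to be attentive about.
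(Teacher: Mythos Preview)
Your proof is correct and follows essentially the same approach as the paper: split according to the event $A_s(x)$, on $A_s(x)$ bound $\kW_s^{-2}\le C\,T^{\ve/4}$ and use $\IE[\langle\kW(\sqrt T x)\rangle_s']\le C\,s^{-d/2}$ from \eqref{eq:bracketExpansion_Wbis}, on $A_s(x)^c$ use the almost-sure bound \eqref{uniform-estFLAT} together with \eqref{eq:negligibleEvent}. The exponent bookkeeping matches the paper's, yielding a net power $T^{-\ve(d/2-5/4)}=o(1)$ for the main term.
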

\begin{proof}
  Recall that
\[
A_s(x)=\{ \sZ_s(x)^{-1} \leq T^{\frac{\ve}{8}} \}.
\]
Then, by {\eqref{eq:bracketExpansion_Wbis}} and \eqref{eq:negligibleEvent}
    \al{
      \IE\left[\int^{tT}_{T^{\frac{1}{2}+\ve}} \frac{\dd \lan \kW(\sqrt{T} x) \ran_s}{\kW_s(\sqrt T  x)^2} \right]&\leq C T^{\frac{\ve}{4}}\int^{T t}_{T^{\frac{1}{2}+\ve}} s ^{-\frac{d}{2}}\,\dd s +\|V\|_{\infty} \int  ^{tT}_{T^{\frac{1}{2}+\ve}}  \IP({ A_s(\sqrt{T} x)}^c)\dd s \\
      &\leq C T^{\frac{\ve}{4}} T^{-\left(\frac{1}{2}+\e\right)\frac{d-2}{2}}+C T^{-2d+1}\\
      &\leq C T^{\frac{\ve}{4}} T^{-\frac{d-2}{4}} T^{-\frac{d-2}{2}\ve}=o(T^{-\frac{d-2}{4}}).
}
  \end{proof}

We now consider smaller $s$. The idea behind \eqref{eq:goalOfReduction} for small $s$ is that $\sZ_s(x)$ and $\sZ_s(x')$ decorrelate when $x$ and $x'$ are far away. To use this property, it is convenient to work with partition functions where the trajectories do not exit a ball of super diffusive scale. With the $\mathtt{R}_{\ell,T}(z)$ notation from \eqref{Def:CUBE}, we thus define the event $F_s(x, B)= \{B[0,s]  \subset  \mathtt{R}_{\sqrt{s}\log T,s}(x) \}$ and denote $\widetilde{\kW}_s(x) : = \DE_x\left[\Phi_s \,u_0(T^{-\frac{1}{2}}\,B_{T t}); F_s(x ,B)\right]$. Recalling that:
\[ \lan \kW (\sqrt{T}x) \ran_s^\prime =\DE_{\sqrt{T}x}\left[\Phi_s^{(1)} \Phi_s^{(2)} V(B^{(1)}_s-B^{(2)}_s) u_0\left(T^{-\frac{1}{2}}\,B_{t T}^{(1)}\right)u_0\left(T^{-\frac{1}{2}}\,B_{t T}^{(2)}\right) \right],\]
we also use the notation\footnote{with a slight abuse of notation since $s \to \widetilde{\kW}_s$ is not exactly a martingale}:
 \begin{align*}
\lan \widetilde{\kW}(x)\ran_s' ={\DE}_x^{\otimes 2}\left[\Phi_s^{(1)}\Phi_s^{(2)} V(\kW_s^{(1)}-\kW_s^{(2)})\,u_0\left(T^{-\frac{1}{2}}\,B_{t T}^{(1)}\right)u_0\left(T^{-\frac{1}{2}}\,B_{t T}^{(2)}\right) ; F_s^{(1)}(x),F_s^{(2)}(x)\right]
 \end{align*}
 Hence, define $\dd \lan \widetilde{\kW}(x)\ran_s= \lan \widetilde{\kW}(x)\ran_s^\prime \dd s$. Then, one can replace $W$ by $\widetilde{\kW}$ to prove \eqref{eq:goalOfReduction}:
\begin{lemma}\label{lem:ChangeZtoWidetildeZ} For all $\e\in(0,\frac{1}{2})$,
\[
\lim_{T\to\infty} T^{\frac{d-2}{2}}  \IE\left[\int^{T^{\frac{1}{2}+\ve}}_0 \int_{\R^d}f(x) \left|\frac{ \lan \kW(\sqrt T x) \ran'_s }{\kW_s(\sqrt T x)^{ 2}}- \frac{ \lan \widetilde \kW(\sqrt T x) \ran_s'}{\widetilde \kW_s (\sqrt T x)^{ 2}}\right| \dd x \dd s \right]=0.
\]
\end{lemma}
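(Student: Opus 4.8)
The plan is to write the integrand as a sum of two terms, a \emph{bracket difference} and a \emph{denominator difference}: suppressing the spatial argument $\sqrt T x$ everywhere,
\[
\frac{\lan \kW\ran'_s}{\kW_s^2}-\frac{\lan \widetilde{\kW}\ran'_s}{\widetilde{\kW}_s^2}
= \underbrace{\frac{\lan \kW\ran'_s-\lan \widetilde{\kW}\ran'_s}{\kW_s^2}}_{=:\,A_s}
+\underbrace{\lan \widetilde{\kW}\ran'_s\Big(\frac{1}{\kW_s^2}-\frac{1}{\widetilde{\kW}_s^2}\Big)}_{=:\,B_s},
\]
and to show that $\IE|A_s|$ and $\IE|B_s|$ are super-polynomially small in $T$, uniformly over $x\in\R^d$ and $s\in[0,T^{\frac12+\e}]$. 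Since the prefactor $T^{\frac{d-2}{2}}\int_0^{T^{1/2+\e}}\!\int_{\R^d}|f(x)|\,\dd x\,\dd s$ grows only polynomially in $T$, this gives the claim.

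By construction, the two differences appearing in $A_s$ and $B_s$ are exactly $\kW_s(\sqrt T x)$, resp.\ $\lan\kW(\sqrt T x)\ran'_s$, restricted to trajectories that exit the tube $\mathtt{R}_{\sqrt s\log T,\,s}(\sqrt T x)$, i.e.\ to the complement of $F_s(\sqrt T x,B)$ (resp.\ of $F_s^{(1)}\cap F_s^{(2)}$ for the two-copy bracket). The first step is to record, via the reflection principle and a Gaussian tail bound, that $\DP_x(F_s(x,B)^c)\le C e^{-(\log T)^2/2}$ uniformly in $s>0$. Then, taking $\IE$ and using $\IE[\Phi_s\,|\,B]=1$ for the first difference, and $\IE[\Phi_s^{(1)}\Phi_s^{(2)}\,|\,B^{(1)},B^{(2)}]=\exp\{\b^2\!\int_0^s V(B_r^{(1)}-B_r^{(2)})\,\dd r\}$ together with a H\"older split with exponents $p,q>1$ such that $\gamma(\sqrt p\b)<\infty$ (exactly as in \eqref{Difference of sZ and bar-sZ}) for the second, one gets that both $\IE[\kW_s(\sqrt T x)-\widetilde{\kW}_s(\sqrt T x)]$ and $\IE[\lan\kW(\sqrt T x)\ran'_s-\lan\widetilde{\kW}(\sqrt T x)\ran'_s]$ are $\le C e^{-c(\log T)^2}$, uniformly in $x\in\R^d$ and $s\le T^{1/2+\e}$.

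The substantive point is to control the inverse partition functions $\kW_s^{-2}$ and $\widetilde{\kW}_s^{-2}$, which are random and can be small. I would introduce the good event $G_s(x):=A_s(x)\cap\{\kW_s(\sqrt T x)-\widetilde{\kW}_s(\sqrt T x)\le\tfrac12\kW_s(\sqrt T x)\}$, with $A_s(x)=\{\sZ_s(\sqrt T x)^{-1}\le T^{\e/8}\}$ as in \eqref{eq:negligibleEvent}. On $A_s(x)$ one has $\kW_s(\sqrt T x)\ge |u_0^{-1}|_\infty^{-1}T^{-\e/8}$, so Markov's inequality applied to the bound above for $\IE[\kW_s-\widetilde{\kW}_s]$, together with \eqref{eq:negligibleEvent}, yields $\IP(G_s(x)^c)\le C T^{-2d}$ uniformly in $x$ and $s\le T^{1/2+\e}$; moreover on $G_s(x)$ both $\kW_s(\sqrt T x)^{-1}$ and $\widetilde{\kW}_s(\sqrt T x)^{-1}$ are $\le C T^{\e/8}$. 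Now split $\IE|A_s|+\IE|B_s|$ according to $G_s(x)$. On $G_s(x)^c$, the deterministic bound $\kW_s^{-2}\lan\kW\ran'_s\le\|V\|_\infty$ from \eqref{uniform-estFLAT} and the identical bound $\widetilde{\kW}_s^{-2}\lan\widetilde{\kW}\ran'_s\le\|V\|_\infty$ (same computation, with both copies constrained to the tube) give $|A_s|+|B_s|\le C\|V\|_\infty\mathbf{1}_{G_s(x)^c}$, whose expectation is $\le C T^{-2d}$. On $G_s(x)$, using $\widetilde{\kW}_s\le\kW_s$ to write $\tfrac{1}{\kW_s^2}-\tfrac{1}{\widetilde{\kW}_s^2}=\tfrac{(\kW_s-\widetilde{\kW}_s)(\kW_s+\widetilde{\kW}_s)}{\kW_s^2\widetilde{\kW}_s^2}$, and using $\lan\widetilde{\kW}\ran'_s\le\lan\kW\ran'_s$ together with the $L^\infty$ bounds on the reciprocals just mentioned, one bounds $|A_s|\mathbf{1}_{G_s(x)}$ and $|B_s|\mathbf{1}_{G_s(x)}$ by $C T^{C\e}$ times $\lan\kW\ran'_s-\lan\widetilde{\kW}\ran'_s$ or $\kW_s-\widetilde{\kW}_s$, whose expectations are super-polynomially small by the previous paragraph. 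Summing the two contributions and integrating against $T^{\frac{d-2}{2}}|f(x)|\,\dd x\,\dd s$ over $[0,T^{1/2+\e}]$ concludes.

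The main obstacle is exactly this last ingredient --- showing that the event on which $\kW_s$ or $\widetilde{\kW}_s$ is atypically small is negligible. This is where the negative-moment estimate of \cite{CCM20} (via Doob's inequality, i.e.\ \eqref{eq:negligibleEvent}) is essential, together with the fact that off the good event one still has the crude but uniform bound \eqref{uniform-estFLAT}. Everything else is driven by the tube-exit probability $\DP_x(F_s^c)$, which is super-polynomially small and comfortably beats the polynomial prefactor, so that no sharp estimate on the bracket or denominator differences is required.
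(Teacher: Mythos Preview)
Your proof is correct and follows essentially the same strategy as the paper: split according to a good event on which the denominators $\kW_s,\widetilde{\kW}_s$ are bounded below (via the negative-moment bound \eqref{eq:negligibleEvent} and Markov), use the uniform deterministic bound \eqref{uniform-estFLAT} on the bad event, and on the good event reduce to the super-polynomially small tube-exit probability via the H\"older split as in \eqref{Difference of sZ and bar-sZ}. The only differences are cosmetic --- the paper writes the difference as a single fraction and uses the pointwise inequality $\lan\kW\ran'_s-\lan\widetilde{\kW}\ran'_s\le\|V\|_\infty(\kW_s^2-\widetilde{\kW}_s^2)$ to reduce both terms to $\kW_s^2-\widetilde{\kW}_s^2$, and its good event is simply $\{\widetilde{\kW}_s\ge T^{-\e/2}\}$ --- but the substance is identical.
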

\begin{proof} The lemma is proven if we can show that the expectation of the integrand decays faster than for example $T^{-2d}$ uniformly in $t,x$.  For simplicity of notation, we write $\kW_s=\kW_s(\sqrt{T} x)$, $\widetilde{\kW}_s=\widetilde{\kW}_s(\sqrt{T} x)$ etc. On the event $\widetilde{\kW}_s\geq T^{-\frac{\ve}{2}}$, we have \begin{align*}
&{\bigg|}\frac{\lan W\ran_s'}{\kW_s^2}-\frac{\lan \widetilde W\ran_s'}{\widetilde \kW_s^2}{\bigg|}={\bigg|}\frac{\lan  \kW\ran_s' (\widetilde{\kW}_s^2- \kW_s^2)+(\lan  \kW\ran'_s -\lan \widetilde{\kW}\ran_s') \kW_s^2}{ \kW_s^2\widetilde \kW_s^2}{\bigg|}\\
&\leq |V|_\infty T^\ve  \left|\widetilde{\kW}_s^2- \kW_s^2\right|+T^\ve \left|\lan  \kW_s\ran' -\lan \widetilde{\kW}\ran_s'\right|\\
&\leq 2|V|_\infty T^\ve\left|\kW_s^2-\widetilde \kW_s^2\right|, 
\end{align*}
hence,
\begin{equation} \label{eq:upperBoundRemoveOutsideBall}
\IE\left[\left|\frac{\lan  \kW\ran_s'}{ \kW_s^2}-\frac{\lan \widetilde W\ran_s'}{\widetilde \kW_s^2}\right|\right]\leq 2\|V\|_\infty T^\ve \IE \left[\left|\widetilde{\kW}_s^2- \kW_s^2\right|\right]+\|V\|_\infty \IP(\widetilde{\kW}_s\leq T^{-\frac{\ve}{2}}).
\end{equation}
By Cauchy-Schwarz inequality, the first expectation on the RHS is bounded by
\[
\IE \left[\left|\kW_s-\widetilde \kW_s\right|^2\right]^{\frac{1}{2}}\IE \left[\left|\kW_s+\widetilde \kW_s\right|^2\right]^{\frac{1}{2}}\leq \sqrt{2}\IE \left[\kW_s^2\right]^{\frac{1}{2}}\IE \left[\left|\kW_s-\widetilde \kW_s\right|^2\right]^{\frac{1}{2}},
\]
while by H\"older's inequality for $p>1$ such that $p\beta^2<\beta_{L^2}^2$, there is a finite constant $C=C(p,\b)$ verifying:
\begin{align}
\IE \left|\kW_s-\widetilde \kW_s\right|^2 & \leq 2 \|u_0\|_{\infty}\, \DE_0^{\otimes 2}\left[	e^{\b^2\int_0^s V(B_s^{(1)}-B_{s}^{(2)})\dd s}; \,  F_s(0,B^{(1)})^c	\right] \nonumber\\
&\leq C\DP_0\left(  F_s(0,B^{(1)})^c	\right)^{1-1/p},\label{eq:boundExitProba}
\end{align}
where the last quantity goes to $0$ faster than any power of $T$ uniformly in  $s > 0$.

Furthermore, the second term of the RHS of \eqref{eq:upperBoundRemoveOutsideBall} satisfies
\begin{align}
\IP(\widetilde{\kW}_s\leq T^{-\frac{\ve}{2}}) &\leq \IP(\kW_s\leq 2T^{-\frac{\ve}{2}})+\IP(\overline \kW_s-\widetilde{\kW}_s\geq T^{-\frac{\ve}{2}})\nonumber\\
&\leq \IP(\sZ_s\leq 2|u_0^{-1}|_{\infty}T^{-\frac{\ve}{2}})+T^{\frac{\ve}{2}} \IE\left[|\kW_s-\widetilde \kW_s|\right],\nonumber\\
& = O(T^{-2d}), \label{eq:LowerboundonZwidetilde}
\end{align}
for all $k\geq 0$, where we have used Markov's inequality and where the first term of the second line controlled by \eqref{eq:negligibleEvent}, while the second term is dealt with by \eqref{eq:boundExitProba}.
\end{proof}
  
\begin{lemma}[Step 2] \label{eq:lem:removingTheBegining}
  For any $\e\in(0,\frac{1}{2})$,
\begin{equation*} \label{eq:beginingIsNegl}
T^{\frac{d-2}{2}}  \IE\left(\int^{T^{\frac{1}{2}-\ve}}_0 \int_{\R^d}f(x) \left(\frac{ \lan \widetilde{\kW}(\sqrt T x) \ran'_s}{\widetilde{\kW}_s(\sqrt T x)^{ 2}}-\IE \frac{\lan \widetilde{\kW}(\sqrt T x) \ran'_s}{\widetilde{\kW}_s (\sqrt T x)^{ 2}}\right) \dd x \dd s \right)^2\cvLone 0.
\end{equation*}
\end{lemma}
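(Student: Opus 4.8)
The plan is to expand the square, exhibit a covariance kernel that is bounded, supported on a shrinking set, and of decaying size, and then integrate. Writing $X_s(x):=\lan\widetilde{\kW}(\sqrt T x)\ran'_s\big/\widetilde{\kW}_s(\sqrt T x)^2$ and $S:=T^{\frac12-\e}$, Fubini turns the quantity to bound into
\[
T^{\frac{d-2}{2}}\int_0^{S}\!\!\int_0^{S}\iint_{\R^d\times\R^d} f(x)f(y)\,\mathrm{Cov}\big(X_{s_1}(x),X_{s_2}(y)\big)\,\dd x\,\dd y\,\dd s_1\,\dd s_2 .
\]
The first ingredient is an almost sure bound: exactly as in \eqref{uniform-estFLAT}, since $V\ge 0$, $u_0\ge 0$ and the Brownian constraint $F_s$ appears identically in the numerator and the denominator, one has $0\le X_s(x)\le C_V$ a.s.\ for a finite constant $C_V=C_V(\beta,V)$; in particular $\|X_{s_1}(x)-\IE X_{s_1}(x)\|_\infty\le 2C_V$.

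The second ingredient is spatial decorrelation. Both $\widetilde{\kW}_s(\sqrt T x)$ and $\lan\widetilde{\kW}(\sqrt T x)\ran'_s$ involve only Brownian paths that stay, on $[0,s]$, inside $\mathtt{R}_{\sqrt{s}\log T, s}(\sqrt T x)$; since $\mathrm{Supp}(\phi)\subset\{|z|<\delta_\phi\}$, they are measurable with respect to the white noise restricted to $[0,s]\times Q_s(\sqrt T x)$, where $Q_s(z)$ denotes the spatial cube of half-width $\sqrt{s}\log T+\delta_\phi$ centred at $z$. Hence $X_{s_1}(x)$ and $X_{s_2}(y)$ are independent, so their covariance vanishes, as soon as $|x-y|>r_T(s_1,s_2):=CT^{-1/2}\big((\sqrt{s_1}+\sqrt{s_2})\log T+1\big)$, and crucially $r_T(s_1,s_2)\le CT^{-1/2}(\log T)\,(1\vee s_1\vee s_2)^{1/2}\to 0$ uniformly over $s_1,s_2\le S$, so that the $(x,y)$-window closes up as $T\to\infty$.

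The third ingredient is a decaying bound on $\IE[X_s(x)]$. On the event $\{\widetilde{\kW}_s(\sqrt T x)\ge T^{-\e/2}\}$, which by \eqref{eq:negligibleEvent} and \eqref{eq:boundExitProba} (cf.\ \eqref{eq:LowerboundonZwidetilde}) has probability at least $1-CT^{-2d}$, one has $X_s(x)\le T^{\e}\lan\kW(\sqrt T x)\ran'_s$ because $\lan\widetilde{\kW}\ran'_s\le\lan\kW\ran'_s$; together with the a.s.\ bound and the estimate $\IE\lan\kW(\sqrt T x)\ran'_s\le C\|V\|_1(4\pi s)^{-d/2}$ read off from \eqref{eq:bracketExpansion_W}, this gives
\[
\IE[X_s(x)]\le g_T(s)+CT^{-2d},\qquad g_T(s):=C_V\wedge\big(CT^{\e}s^{-d/2}\big).
\]
Since $\big|\mathrm{Cov}(X_{s_1}(x),X_{s_2}(y))\big|\le\|X_{s_1}(x)-\IE X_{s_1}(x)\|_\infty\,\IE\big|X_{s_2}(y)-\IE X_{s_2}(y)\big|\le 2C_V\cdot 2\IE[X_{s_2}(y)]$ and, symmetrically, also $\le 4C_V\,\IE[X_{s_1}(x)]$, combining the three ingredients yields
\[
\big|\mathrm{Cov}(X_{s_1}(x),X_{s_2}(y))\big|\le C\big(g_T(s_1\vee s_2)+T^{-2d}\big)\,\mathbf{1}_{\{|x-y|\le r_T(s_1,s_2)\}}.
\]

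To conclude, bound $\iint_{|x-y|\le r}|f(x)f(y)|\,\dd x\,\dd y\le C(f,d)\,r^d$ and $r_T(s_1,s_2)^d\le CT^{-d/2}(\log T)^d(1\vee s_1\vee s_2)^{d/2}$; using the elementary inequality $g_T(s)\,(1\vee s)^{d/2}\le CT^{\e}$, the $(s_1,s_2)$-integral is then at most $CT^{\e}S^2=CT^{1-\e}$ (the $T^{-2d}$ piece being of much lower order), so the whole expression is $\le C\,T^{\frac{d-2}{2}}\cdot T^{-d/2}(\log T)^d\cdot T^{1-\e}=C\,T^{-\e}(\log T)^d\to 0$ as $T\to\infty$. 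I expect the third ingredient to be the main obstacle: it is where the random denominator $\widetilde{\kW}_s(\sqrt T x)^2$ must be tamed in order to produce the $s^{-d/2}$ decay of $\IE[X_s(x)]$, and the whole argument hinges on this decay exactly offsetting the $s$-growth of the decorrelation length $r_T$ — keeping the double time-integral of size $T^{1-2\e}=S^2$ rather than $T^{2-o(1)}$, which the crude bound $|\mathrm{Cov}|\le 4C_V^2$ would otherwise allow and which is not good enough when $d\ge 3$.
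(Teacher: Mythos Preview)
Your proof is correct and follows essentially the same approach as the paper: the three ingredients you isolate (the a.s.\ bound $X_s\le C_V$, spatial decorrelation via the box constraint in $\widetilde{\kW}$, and the bound $\IE X_s\lesssim T^{\e}s^{-d/2}+T^{-2d}$ obtained by controlling the denominator on a high-probability event) are exactly those the paper uses. The only minor difference is that the paper first applies Cauchy--Schwarz in the time variable to reduce to the diagonal $s_1=s_2$ before invoking decorrelation, whereas you keep the full double integral in $(s_1,s_2)$ and bound the covariance by $4C_V\min(\IE X_{s_1},\IE X_{s_2})$; both routes give the same $T^{-c\e}(\log T)^d$ decay.
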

\begin{proof}
Denote,
\begin{equation}
\Delta_s(x):=\frac{\lan \widetilde{\kW} (x) \ran_s^\prime}{\widetilde{\kW}_s(x)^{ 2}}-\IE\left[ \frac{ \lan \widetilde{\kW} (x) \ran_s^\prime}{\widetilde{\kW}_s(x)^{ 2}}\right].
\end{equation}
We have
 \begin{align*}
    &  \IE\left[\left(\int^{T^{\frac{1}{2}-\ve}}_0 \int_{\R^d}f(x) \Delta_s(\sqrt{T}x) \dd x \dd s\right)^2\right]\\
    &\leq  T^{\frac{1}{2}-\ve} \int^{T^{\frac{1}{2}-\ve}}_0 \int_{\R^d\times \R^d} \left|f(x)f(y) \IE\left[\Delta_s(\sqrt{T}x)\Delta_s(\sqrt{T}y)\right] \right|  \dd x\dd y \dd s\\
    & =  T^{\frac{1}{2}-\ve} \int^{T^{\frac{1}{2}-\ve}}_0 \int \left|f(x) f(y) \IE\left[\Delta_s(\sqrt{T}x)\Delta_s(\sqrt{T}y) \right]\right| \mathbf{1}_{\{|x-y|\leq 2d\,T^{-\frac{1}{2}}s^{\frac{1}{2}} \log T\}} \dd x \dd y \dd s\\
    &\leq C T^{\frac{1}{2}-\ve} \int^{T^{\frac{1}{2}-\ve}}_0 T^{\frac{\ve}{2}}\, s^{-\frac{d}{2}}\, {T^{-\frac{d}{2}}}\, s^{\frac{d}{2}}(\log{T})^{d}\dd s+O(T^{-2d+1}) =o(T^{-\frac{d-2}{2}}),
\end{align*}
where we have used in the third line that $\Delta_s(\sqrt T x)$ is independent of $\Delta_s(\sqrt T y)$ whenever $|x-y| > 2d\,T^{-\frac{1}{2}}s^{\frac{1}{2}  } \log T$
and where in the last line we referred to \eqref{eq:negligibleEvent} and \eqref{eq:boundOnBracketAt0}  with $|\Delta_s(x)|\leq  \|V\|_{\infty}$ a.s. to find that
\begin{equation} \label{eq:boundOnDeltas}
 \left|\IE\left[\Delta_s(\sqrt{T}x)\Delta_s(\sqrt{T}y)\right] \right| \leq C T^{\frac{\ve}{2}} s^{-\frac{d}{2}} + CT^{-2d},
\end{equation}
for $T$ large enough and let $\e$ be small.
\end{proof}
  
   \begin{lemma}[Step 3]\label{lem:step3}
  For sufficiently small $\e>0$,
  \aln{
 \lim_{T\to\infty} T^{\frac{d-2}{2}}  \IE\left[{\left(\int^{T^{\frac{1}{2}+\ve}}_{T^{\frac{1}{2}-\ve}} \int_{\R^d}f(x) \left(\frac{ \lan \kW \ran_s'}{\kW_s^2}-\IE \frac{\lan \kW \ran_s'}{\kW_s^2}\right) \dd x \dd s\right)^2}\right]=0.
  }  
   \end{lemma}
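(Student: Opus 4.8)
The plan is to treat the mesoscopic window $s\in[T^{\frac12-\ve},T^{\frac12+\ve}]$ by the same truncate--expand--decorrelate scheme used in Lemmas~\ref{lem:ChangeZtoWidetildeZ} and~\ref{eq:lem:removingTheBegining}, but pushing the decorrelation estimate to second order, since a naive Cauchy--Schwarz in the time variable is now too lossy. As the whole window lies in $[0,T^{\frac12+\ve}]$, I would first replace $\frac{\lan \kW(\sqrt Tx)\ran_s'}{\kW_s(\sqrt Tx)^2}$ by $\frac{\lan \widetilde\kW(\sqrt Tx)\ran_s'}{\widetilde\kW_s(\sqrt Tx)^2}$, at the cost of a term that is $o(T^{-\frac{d-2}{2}})$: indeed the proof of Lemma~\ref{lem:ChangeZtoWidetildeZ} shows $\sup_{s\leq T^{1/2+\ve}}\sup_x\IE\big|\frac{\lan \kW(\sqrt Tx)\ran_s'}{\kW_s(\sqrt Tx)^2}-\frac{\lan \widetilde\kW(\sqrt Tx)\ran_s'}{\widetilde\kW_s(\sqrt Tx)^2}\big|=O(T^{-K})$ for every $K$, and combined with the a.s.\ bounds $\frac{\lan \widetilde\kW\ran_s'}{\widetilde\kW_s^2}\leq\|V\|_\infty$ and $|\Delta_\cdot|\leq 2\|V\|_\infty$ this controls the difference in $L^2$. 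Thus it suffices to prove the statement with $\kW$ replaced everywhere by $\widetilde\kW$ and $\Delta_s$ as in Lemma~\ref{eq:lem:removingTheBegining}.

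Next I would expand the $L^2$-norm as the double space--time integral
\[
\int_{T^{\frac12-\ve}}^{T^{\frac12+\ve}}\!\!\int_{T^{\frac12-\ve}}^{T^{\frac12+\ve}}\iint_{\R^d\times\R^d} f(x)f(y)\,\IE\big[\Delta_s(\sqrt Tx)\,\Delta_{s'}(\sqrt Ty)\big]\,\dd x\,\dd y\,\dd s\,\dd s'.
\]
Since $\widetilde\kW_s(\sqrt Tx)$ is measurable with respect to the white noise restricted to the cuboid $\mathtt R_{\sqrt s\log T,\,s}(\sqrt Tx)$ thickened by $\delta_\phi$, the variables $\Delta_s(\sqrt Tx)$ and $\Delta_{s'}(\sqrt Ty)$ are independent --- hence their covariance vanishes --- whenever $|x-y|>3\,T^{-\frac14+\frac\ve2}\log T$, so the spatial integral is confined to a diagonal strip of volume $O\big(T^{-\frac d4+\frac{d\ve}2}(\log T)^d\big)$.

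The heart of the argument is a sharp bound on $\big|\IE[\Delta_s(\sqrt Tx)\Delta_{s'}(\sqrt Ty)]\big|$ on this strip. Splitting on the event $\big\{\widetilde\kW_s(\sqrt Tx)\wedge\widetilde\kW_{s'}(\sqrt Ty)\geq T^{-\ve/8}\big\}$, using $|\Delta_\cdot|\leq 2\|V\|_\infty$ together with \eqref{eq:negligibleEvent}--\eqref{eq:LowerboundonZwidetilde} to discard its complement at cost $O(T^{-2d})$, and using $\IE[\lan\widetilde\kW(\sqrt Tx)\ran_s']\leq C s^{-d/2}$ as in \eqref{eq:bracketExpansion_W}, one reduces matters to establishing
\[
\IE\big[\lan\widetilde\kW(\sqrt Tx)\ran_s'\,\lan\widetilde\kW(\sqrt Ty)\ran_{s'}'\big]\leq C\big(1+o(1)\big)\,s^{-d/2}\,(s')^{-d/2}
\]
uniformly on the strip and over $s,s'\in[T^{\frac12-\ve},T^{\frac12+\ve}]$. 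The left-hand side is a four-replica Brownian expectation with two paths issued from $\sqrt Tx$ and two from $\sqrt Ty$, each pair confined to its cuboid; writing $e^{\b^2(\mathrm{inter})}=1+(e^{\b^2(\mathrm{inter})}-1)$ for the cross-cluster interaction, the ``$1$'' term factorizes \emph{exactly} into $\IE[\lan\widetilde\kW(\sqrt Tx)\ran_s']\,\IE[\lan\widetilde\kW(\sqrt Ty)\ran_{s'}']\leq C\,s^{-d/2}(s')^{-d/2}$, while the correction is estimated by $|e^a-1|\leq |a|\,e^{|a|}$ and successive Hölder inequalities --- the extra room $p\b^2<\b_{L^2}^2$ (for some $p>1$) being spent on the two intra-cluster pairs so as to keep them square-integrable --- the key input being the collision-time estimate $\DE\big[\int_0^{s\wedge s'}|V(B^i_u-B^j_u)|\,\dd u\big]\leq C\,(\sqrt T|x-y|)^{2-d}$ for an $x$-path $B^i$ and a $y$-path $B^j$, which is locally integrable in $(x,y)$ and renders the correction a term of strictly smaller order on the strip. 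Crucially, only second moments of the partition function are used, so the bound is uniform for every $\b<\b_{L^2}$.

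Collecting the estimates, the quantity in the lemma is bounded by
\[
C\,T^{\frac{d-2}{2}}\cdot T^{-\frac d4+C\ve}(\log T)^d\cdot\Big(\int_{T^{1/2-\ve}}^{\infty} s^{-d/2}\,\dd s\Big)^2\leq C\,T^{\frac{d-2}{2}}\cdot T^{-\frac d4+C\ve}(\log T)^d\cdot T^{-\frac{d-2}{2}+C\ve}=C\,T^{-\frac d4+C'\ve}(\log T)^d,
\]
using $\int_{T^{1/2-\ve}}^{\infty}s^{-d/2}\,\dd s\leq C\,T^{-\frac{d-2}{4}+C\ve}$ for $d\geq 3$; this vanishes once $\ve$ is small, which proves the lemma. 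I expect the delicate point to be the four-replica estimate in the third paragraph: controlling the cross-cluster interactions at the mesoscopic scale $s\sim T^{1/2}$ (uniformly in the starting separation $\sqrt T|x-y|$, including near the diagonal) requires a careful multi-Hölder and collision-time computation, and this is the one place where the full range $\b<\b_{L^2}$ --- rather than a strictly smaller threshold --- is genuinely used.
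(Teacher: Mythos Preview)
Your first two steps---replacing $\kW$ by $\widetilde\kW$ and expanding the square into a double space--time integral with spatial decorrelation on the strip $|x-y|\leq C\,T^{-1/4+\ve}\log T$---are correct and match the paper. The departure, and the gap, is in your treatment of the covariance $\IE[\Delta_s(\sqrt Tx)\Delta_{s'}(\sqrt Ty)]$ for $s,s'$ in the mesoscopic window.

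You aim at the uniform bound $|\IE[\Delta_s\Delta_{s'}]|\leq C\,T^{C\ve}s^{-d/2}(s')^{-d/2}$ via a direct four--replica estimate on $\IE\big[\langle\widetilde\kW(\sqrt Tx)\rangle'_s\,\langle\widetilde\kW(\sqrt Ty)\rangle'_{s'}\big]$. After expanding $e^{\b^2\,\mathrm{inter}}=1+(e^a-1)$, the ``$1$'' term indeed factorizes, but the correction term still carries the full six--pair interaction $e^{\b^2\sum_{i<j}I_{ij}}$ once you invoke $e^a-1\leq a\,e^a$. Any H\"older separation of the $a$ factor leaves, on the conjugate exponent $q=p/(p-1)$, a term $\DE^{\otimes 4}\big[e^{q\b^2(\text{cross pairs})}\big]$ with $q$ large (since you need $p$ close to~$1$ to keep $p\b^2<\b_{L^2}^2$ on the intra pairs). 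Near the spatial diagonal $x=y$ the cross paths start from the same point, and this multi--body partition function at effective temperature $q\b^2$ is \emph{not} controlled for all $\b<\b_{L^2}$---it requires something like an $L^4$ bound on $\sZ_s$, which is a strictly stronger hypothesis. Your collision--time input $(\sqrt T|x-y|)^{2-d}$ governs only $\DE[a]$, not $\DE[e^{qa}]$. In short, the assertion ``only second moments of the partition function are used'' is not substantiated by the sketch, and as written the argument covers only $\b$ in a strictly smaller region.

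The paper avoids the four--replica obstacle by introducing \emph{temporal} decorrelation on top of the spatial one. Its Lemma~\ref{homgeniation in t} splits according to whether $|s_2-s_1|>T^{1/4}$ or not. In the far case it replaces $\langle\kW\rangle'_{s_2}$ by $\langle L\rangle'_{s_2}$ (hence by $\langle\overline M\rangle'_{s_2}$, cf.\ \eqref{eq:defdL}) via the local limit theorem (Theorem~\ref{th:errorTermLLT}); since $\langle\overline M\rangle'_{s_2}$ depends only on the noise in $[s_2-\ell(T),s_2]$ with $\ell(T)=T^a$, $a<\tfrac14$, it is independent of the time-$s_1$ quantity, yielding the bound $T^{-d/4-\alpha}$ using only second moments. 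In the near case $|s_2-s_1|\leq T^{1/4}$, the crude one--sided estimate $T^{-d/4+1/8}$ from \eqref{eq:boundOnDeltas} suffices because the time window is short. This time--splitting via the local limit theorem is the key idea missing from your proposal, and it is precisely what allows the result to hold throughout the full $L^2$-region.
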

   \begin{proof}
   Note that
 \al{
    &\quad \IE\left[{ \left(\int^{T^{\frac{1}{2}+\ve}}_{T^{\frac{1}{2}-\ve}} \int_{\R^d}f(x) \left(\frac{ \lan \kW \ran_s'}{\kW_s^2}-\IE \frac{\lan \kW \ran_s'}{\kW_s^2}\right) \dd x \dd s\right)^2}\right]\\
    &=2\int_{T^{\frac{1}{2}-\ve} \leq s_1\leq s_2\leq T^{\frac{1}{2}+\ve}}\int_{\R^d\times \R^d}f(x)f(y)\IE[\Delta_{s_1}(\sqrt{T}x)\Delta_{s_2}(\sqrt{T}y)]\dd x \dd y \dd s_1 \dd s_2.
  }
    To estimate $\IE[\Delta_{s_1}(\sqrt{T}x)\Delta_{s_2}(\sqrt{T}y)]$ above, we use the following lemma.
  \begin{lemma}\label{homgeniation in t} 
For all $\b<{ \b_{L^2}}$, there exists $\alpha>0$ such that for all sufficiently smalle $\e>0$ and $s_1,s_2\in[T^{\frac{1}{2} - \varepsilon}, T^{\frac{1}{2} + \varepsilon}]$,
    
\begin{equation} \label{eq:boundCovDelta}
     \IE[\Delta_{s_1}(\sqrt{T}x)\Delta_{s_2}(\sqrt{T}y)]\leq
     \begin{cases}
       0 & \text{ if $|x-y|>T^{-\frac{1}{4}+\ve}$}\\
       T^{-\frac{d}{4}-\alpha}& \text{ if $|s_2-s_1|>T^{\frac{1}{4}}$ and $|x-y|\leq T^{-\frac{1}{4}+\ve}$}\\
       T^{-\frac{d}{4}+\frac{1}{8}}& \text{ otherwise}.
    \end{cases}
\end{equation}
  \end{lemma}
  We first prove Lemma~\ref{lem:step3} using the lemma above. Indeed, for $\varepsilon$ small enough,
  \al{
    &\quad \int_{T^{\frac{1}{2}-\ve} \leq s_1\leq s_2\leq T^{\frac{1}{2}+\ve}}\int_{\R^d\times \R^d}f(x)f(y)\IE[\Delta_{s_2}(\sqrt{T}x)\Delta_{s_1}(\sqrt{T}y)]\dd x \dd y { \mathbf{1}_{\{|s_1-s_2|>T^{\frac{1}{4}}\}}}\dd s_1 \dd s_2\\
    &\leq T^{-\frac{d}{4}-\frac{1}{32}} \int_{T^{\frac{1}{2}-\ve} \leq s_1 \leq s_2 \leq T^{\frac{1}{2}+\ve}}\int_{\R^d\times \R^d}f(x)f(y){ \mathbf{1}_{\{|x-y|\leq T^{-\frac{1}{4}+\ve}\}}}\dd x \dd y \dd s_1 \dd s_2\\
    &\leq T^{-\left(\frac{1}{4}-\e\right)d}\,T^{-\frac{d}{4}-\alpha}\,T^{1+2\ve}=o(T^{-\frac{d-2}{2}}),
    }
 On the other hand,
    \al{
      &\quad \int_{T^{\frac{1}{2}-\ve} \leq s_1\leq s_2\leq T^{\frac{1}{2}+\ve}}\int_{\R^d\times \R^d}f(x)f(y)\IE[\Delta_{s_2}(\sqrt{T}x)\Delta_{s_2}(\sqrt{T}y)]\dd x \dd y \mathbf{1}_{{ \{|s_1-s_2|\leq T^{\frac{1}{4}}\}}}\dd s_1 \dd s_2\\
    &\leq T^{\frac{1}{2}+\ve} T^{\frac{1}{4}}T^{-\frac{d}{4}+\frac{1}{8}} \int_{\R^d\times \R^d}\,f(x)f(y)\mathbf{1}_{{ \{|x-y|\leq T^{-\frac{1}{4}+\ve}\}}}\dd x \dd y =o(T^{-\frac{d-2}{2}}).
    }
    Thus, we complete the proof of Lemma~\ref{lem:step3}.
    \end{proof}
    \begin{proof}[Proof of Lemma~\ref{homgeniation in t}]
      The first line in the RHS of \eqref{eq:boundCovDelta} is immediate from the definition of $\widetilde{\kW}$.  The third line comes from \eqref{eq:boundOnDeltas}.  We  now prove the second line.  Recall that $\ell(T)=T^a$ with a small constant $a>0$ and 
      note that 
\begin{align}
&\IE[\Delta_{s_2}(\sqrt{T}x)\Delta_{s_2}(\sqrt{T}y)]\nonumber\\
&=\IE\left[\frac{ \lan \widetilde{\kW}(\sqrt{T}x) \ran_{s_1}^\prime}{\widetilde{\kW}_{s_1}(\sqrt{T}x)^2}\cdot \frac{ \lan \widetilde{\kW}(\sqrt{T}y) \ran_{s_2}^\prime}{\widetilde{\kW}_{s_2}(\sqrt{T}y)^2}\right]-\IE\left[\frac{ \lan \widetilde{\kW}(\sqrt{T}x) \ran_{s_1}^\prime}{\widetilde{\kW}_{s_1}(\sqrt{T}x)^2}\right]\IE\left[\frac{ \lan \widetilde{\kW}(\sqrt{T}y) \ran_{s_2}^\prime}{\kW_{s_2}(\sqrt{T}y)^2}\right],\label{eq:exprCovDelta}
\end{align}
where the second term in the RHS of \eqref{eq:exprCovDelta} is $O(T^{-\frac{d}{2}+4\e d})\leq CT^{-\frac{d}{4}-\alpha}$ with some $\alpha>0$ for sufficiently small $\e$. Next, we focus on the first term. Since $\lan \widetilde{\kW} \ran_{s_1}^\prime \widetilde{\kW}_{s_1}^{-2} \leq |V|_\infty$, we can safely exchange back $ \lan \widetilde{\kW} \ran_{s_i}^\prime \widetilde{\kW}_{s_i}^{-2}$ with $ \lan {W} \ran_{s_i}^\prime {W}_{s_i}^{-2}$ inside this expectation for $i=1,2$ (this has a cost decaying faster than any power of $T$, see the proof of Lemma \ref{lem:ChangeZtoWidetildeZ}).  Now, recall $\dd L_s(z)$ from \eqref{eq:defdL} and $\lan {L(z)} \ran_{s}^\prime:=\frac{\dd}{\dd s}\lan {L(z)} \ran_{s}$.% and let 

%\[\lan {L(z)} \ran_{s}^\prime:=\frac{\dd \lan {L(z)} \ran_{s}}{\dd s} \quad \text{and}\quad \lan \overline{M}(z) \ran_{s}^\prime:=\frac{\dd \lan \overline{M}(z) \ran_{s}}{\dd s}.\]
  Then, by the error in LLT (Theorem \ref{th:errorTermLLT}) and the proof of Lemma~\ref{Diff W and L}, for $\e$ small, there is some $\alpha>0$ such that for all $z\in \R^d$ and $s_2\in[T^{\frac{1}{2}-\ve},T^{\frac{1}{2}+\ve}]$ with $T$ large enough,

%& \leq  \iint \dd (z) \dd w \, \rho_{s_2}(z) \rho_{s_2}(z+w) V(w)\\
%& \qquad \qquad \times \IE \left|\DE_{0,0}^{t,z}[\Phi_{s_2}]\DE_{0,0}^{t,z+w}[\Phi_{s_2}] - \sZ_{\ell} \overset{\leftarrow}{\sZ}_{t,\ell}(z)\sZ_{\ell} \overset{\leftarrow}{\sZ}_{t,\ell}(z+w)\right|\\
%& \leq t^{-\frac{d}{2}}  \iint \dd z \dd w\, \rho_1(z) \rho_1(z+t^{-\frac{1}{2}}w) V(w) \\
%&\qquad \qquad \times \IE \left|\DE_{0,0}^{t,\sqrt t z}[\Phi_{s_2}]\DE_{0,0}^{t,\sqrt t z+w}[\Phi_{s_2}] - \sZ_{\ell} \overset{\leftarrow}{\sZ}_{t,\ell}(\sqrt t z)\sZ_{\ell} \overset{\leftarrow}{\sZ}_{t,\ell}(\sqrt t z+w)\right|\\
%& \leq  t^{-\frac{d}{2}} C \Vert V \Vert_1 \int \rho(1,z) \left( \ell(T)^{-\frac{d-2}{2}} + (1+|z|^\gamma) \e^{c\frac{\ell(T)}{2t}|z|^2} \left(\frac{\ell(T)}{t}\right)^{\gamma} \right)^{\frac{1}{2}}\dd z\
\begin{align*}
\IE|\lan \kW(z) \ran_{s_2}^\prime-\lan {L(z)} \ran_{s_2}^\prime|\leq C T^{-\frac{d}{4}-\alpha},
\end{align*}
for some $\alpha > 0$.
%where we have used Cauchy-Schwarz inequality and \eqref{eq:unifboundForP2P} in the third inequality.
      Thus, for $s_1<s_2$ with $|s_2-s_1| \geq T^{\frac{1}{4}}$ and $\e$ small enough,
     \al{
     & \quad \IE\left[\frac{ \lan \kW(\sqrt{T}x) \ran_{s_1}^\prime}{\kW_{s_1}(\sqrt{T}x)^2}\cdot \frac{ \lan \kW(\sqrt{T}y) \ran_{s_2}^\prime}{\kW_{s_2}(\sqrt{T}y)^2}\right]\\
       & \leq \IE\left[\frac{ \lan \kW(\sqrt{T}x) \ran_{s_1}^\prime}{\kW_{s_1}(\sqrt{T}x)^2}\cdot \frac{ \lan \kW(\sqrt{T}y) \ran_{s_2}^\prime}{\kW_{s_2}(\sqrt{T}y)^2};~\kW_{s_2}(\sqrt{T}y)\geq T^{-\ve},~\frac{\kW_{\ell(T)}(\sqrt{T}y)}{\kW_{s_2}(\sqrt{T}y)}\leq T^{\ve}\right]+T^{-2d}\\
      &\leq  \IE\left[\frac{ \lan \kW(\sqrt{T}x) \ran_{s_1}^\prime}{\kW_{s_1}(\sqrt{T}x)^2}\cdot \frac{ |\lan \kW(\sqrt{T}y) \ran_{s_2}^\prime- \lan L((\sqrt{T}y)) \ran_{s_2}^\prime|}{\kW_{s_2}(\sqrt{T}y)^2};~\kW_{s_2}(\sqrt{T}y)\geq T^{-\ve}\right]\\
      &\qquad+ \IE\left[\frac{ \lan \kW(\sqrt{T}x) \ran_{s_1}^\prime}{\kW_{s_1}({ \sqrt{T}x)^2}}\cdot \frac{ \lan L((\sqrt{T}y)) \ran_{s_2}^\prime}{\kW_{s_2}(\sqrt{T}y)^2};~\frac{\kW_{\ell(T)}(\sqrt{T}y)} {\kW_{s_2}({ \sqrt{T}y})}\leq T^{\ve}\right]+T^{-2d}\\
        &\leq \|V\|_{\infty} T^{2\ve} \IE\left[| \lan \kW \ran_{s_2}^\prime- \lan L \ran_{s_2}^\prime|\right]\\
        &\qquad+C \IE\left[\frac{ \lan \kW(\sqrt{T}x) \ran_{s_1}^\prime}{\kW_{s_1}(\sqrt{T}x)^2}\cdot \left(\frac{\kW_{\ell(T)}(\sqrt{T}y)} {\kW_{s_2}(\sqrt{T}y)}\right)^2\lan \overline{M}(\sqrt{T}y) \ran_{s_2}^\prime;~\frac{\kW_{\ell(T)}(\sqrt{T}y)}{\kW_{s_2}(\sqrt{T}y)}\leq T^{\ve}\right]+T^{-2d}\\
        &\leq C T^{2\ve}\, T^{-\frac{d}{4}-\alpha}+ C T^{2\ve}\, \IE\left[\frac{ \lan \kW(\sqrt{T}x) \ran_{s_1}^\prime}{\kW_{s_1}(\sqrt{T}x)^2}\cdot \lan \overline{M}(\sqrt{T}y) \ran_{s_2}^\prime \right]+T^{-2d}\\
        & \leq  C T^{2\ve}\, T^{-\frac{d}{4}-\alpha}+ C T^{4\ve}\, \IE\left[ \lan \kW(\sqrt{T}x) \ran_{s_1}^\prime\right] \IE\left[ \lan \overline{M}(\sqrt{T}y) \ran_{s_2}^\prime \right]+2\,T^{-2d}\leq CT^{-\frac{d}{4}-\alpha'},
      }
 with $\alpha'>0$ and
where in the first inequality we used Markov inequality and Lemma~\ref{lower-tail} below to get
$$\IP\left(\kW_{s_1}< T^{-\ve}\right)+\IP\left(\frac{\kW_{\ell(T)}}{\kW_{s_2}}> T^{\ve}\right)\leq O(T^{-2d}),$$
and in the last line, that $\sZ_{s_1}^{-2} \lan \sZ \ran_{s_1}^\prime$ and $\lan \overline{M} \ran_{s_2}^\prime $ are independent for $s_2-s_1>T^{\frac{1}{4}}$, with expectations of the later quantities being respectively bounded above by $CT^{\frac{\ve}{2}} s_1^{-\frac{d}{2}}+T^{-2d}$ (via \eqref{eq:boundOnBracketAt0} and \eqref{eq:LowerboundonZwidetilde})  and $C s_2^{-\frac{d}{2}}$.      
      \end{proof}

\begin{lemma}\label{lower-tail}
  For any $k\in\N$, $s>0$,
  \al{
    \IE\left[ \sup_{t\geq s}\left(\frac{\sZ_{s}}{\sZ_t}\right)^k\right]\leq \IE\left[\sup_{t\geq 0}\left(\sZ_{t}\right)^{-k}\right]<\infty.
    }
\end{lemma}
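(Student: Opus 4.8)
The plan is to reduce the ratio $\sZ_s/\sZ_t$ to an average of negative moments of a fresh (time-shifted) partition function via the Markov property at time $s$, and then close with Jensen's inequality. First I would split $\Phi_t(B)=\Phi_s(B)\,\Phi_{s,t}(B)$ with $\Phi_{s,t}(B)=\exp\{\beta\int_s^t\int_{\R^d}\phi(B_u-y)\,\xi(\dd u,\dd y)-\frac{\beta^2(t-s)V(0)}{2}\}$, and condition on the Brownian path up to time $s$; since $(B_u-B_s)_{u\geq s}$ is then an independent Brownian motion, this gives, with $\sZ_{s,t}$ as in \eqref{PhiAB},
\[
\sZ_t=\DE_0\big[\Phi_s(B)\,\sZ_{s,t}(B_s)\big]=\sZ_s\int_{\R^d}\mu_s(\dd x)\,\sZ_{s,t}(x),
\]
where $\mu_s$ denotes the law of $B_s$ under the polymer measure $\dd\mathtt{P}_0^{\b,s}=\Phi_s\,\dd\DP_0/\sZ_s$. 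The two structural facts I would use are that $\mu_s$ is a (random) probability measure which is $\mathcal F_s$-measurable, while the family $(\sZ_{s,t}(x))_{t\geq s}$ depends only on the restriction of $\xi$ to $[s,\infty)$ and is thus independent of $\mathcal F_s$; moreover, by time-shift invariance of the white noise together with spatial shift invariance, $\sup_{t\geq s}\sZ_{s,t}(x)^{-k}$ has the same law as $\sup_{r\geq 0}\sZ_r^{-k}$ for every $x\in\R^d$.

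Next, since $\sZ_s/\sZ_t=\big(\int\mu_s(\dd x)\,\sZ_{s,t}(x)\big)^{-1}$ and $x\mapsto x^{-k}$ is convex on $(0,\infty)$ with $\mu_s$ a probability measure, Jensen's inequality gives $(\sZ_s/\sZ_t)^k\leq\int\mu_s(\dd x)\,\sZ_{s,t}(x)^{-k}$ for every $t\geq s$, hence
\[
\sup_{t\geq s}\Big(\frac{\sZ_s}{\sZ_t}\Big)^k\leq\int_{\R^d}\mu_s(\dd x)\,\sup_{t\geq s}\sZ_{s,t}(x)^{-k}.
\]
Taking $\IE[\,\cdot\mid\mathcal F_s]$ and commuting it with the $\mu_s$-integral (legitimate because $\mu_s$ is $\mathcal F_s$-measurable and $\sup_{t\geq s}\sZ_{s,t}(x)^{-k}$ is independent of $\mathcal F_s$), I would obtain
\[
\IE\Big[\sup_{t\geq s}\big(\tfrac{\sZ_s}{\sZ_t}\big)^k\,\Big|\,\mathcal F_s\Big]\leq\int_{\R^d}\mu_s(\dd x)\,\IE\big[\sup_{r\geq 0}\sZ_r^{-k}\big]=\IE\big[\sup_{r\geq 0}\sZ_r^{-k}\big],
\]
because $\mu_s(\R^d)=1$; integrating over $\IP$ then yields the asserted inequality. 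For the finiteness of the right-hand side I would invoke the fact already used above (see the proof of Lemma~\ref{Diff W and L}), namely that by Theorem~1.3 in \cite{CCM20} and Doob's inequality the variable $\sup_{r\geq 0}\sZ_r^{-1}$ has finite moments of every order, so that $\IE[\sup_{r\geq 0}\sZ_r^{-k}]=\IE[(\sup_{r\geq 0}\sZ_r^{-1})^k]<\infty$.

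The argument is short, and the only points that need care — which I would regard as the (minor) main obstacle — are the joint measurability of $(x,\omega)\mapsto\sup_{t\geq s}\sZ_{s,t}(x)^{-k}(\omega)$, required to commute the $\mu_s$-integral with the conditional expectation, and the passage to a supremum over the uncountable index set $t\in[s,\infty)$; both are dispatched using the $\DP$-a.s.\ continuity of $(t,x)\mapsto\sZ_{s,t}(x)$ coming from the Feynman--Kac representation. One should also keep in mind that the right-hand side is finite only in the weak-disorder regime (where $\sZ_\infty>0$ a.s.), which is ensured in the application since $\b<\b_{L^2}\leq\b_c$.
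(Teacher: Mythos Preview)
Your proof is correct and follows essentially the same approach as the paper: both use the Markov decomposition $\sZ_t=\sZ_s\int\mu_s(\dd x)\,\sZ_{s,t}(x)$ with the polymer measure $\mu_s$, apply Jensen's inequality to the convex map $u\mapsto u^{-k}$, and then exploit independence together with shift invariance to reduce to $\IE[\sup_{r\geq 0}\sZ_r^{-k}]$, whose finiteness follows from Doob's inequality and the negative moment bounds of \cite{CCM20}. Your version is slightly more explicit about the conditioning step and the measurability issues, but the argument is the same.
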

\begin{proof}
  Recall:
  \al{
 \Phi_{s,t}=\exp{\left(\beta\int_s^t \int_{{\mathbb R}^d} \phi(z- B_r) \ \xi({\rm d} r,{\rm d} z)-\frac{\b^2 (t-s) V(0)}{2}\right)}.
  }
  We have defined the polymer measure as
  $$\dd \mathtt{P}_{x}^{\b,s}=\frac{1}{\sZ_{s}}\rho_{s}(0,x) \DE_{s,x}[\Phi_{s,t}] \dd x.$$
  Since $\dd \mathtt{P}_{x}^{\b,s}$ is a probability measure, by Jensen's inequality,
  \al{
    \left(\frac{\sZ_{s}}{\sZ_t}\right)^k
    &= \left(\int_{\R^d}\dd \mathtt{P}_{x}^{\b,s} \DE_{s,x}[\Phi_{s,t}]\right)^{-k}\\
    &\leq \int_{\R^d}\dd \mathtt{P}_{x}^{\b,s} \left(\DE_{s,x}[\Phi_{s,t}]\right)^{-k}.
  }
  Thus,
  \al{
    \IE\left[ \left(\sup_{t\geq s}\frac{\sZ_{s}}{\sZ_t}\right)^k\right]&\leq \IE\left[\int_{\R^d}\dd \mathtt{P}_{x}^{\b,s}\sup_{t\geq s} \left(\DE_{s,x}[\Phi_{s,t}]\right)^{-k}\right]\\
    &=\IE\left[\int_{\R^d}\dd \mathtt{P}_{x}^{\b,s}\right] \IE\left[\sup_{t\geq s}\left(\sZ_{t-s}\right)^{-k}\right]\\
    & = \IE\left[\sup_{t\geq 0}\left(\sZ_{t}\right)^{-k}\right].
    }
By Doob's submartingale inequality, we further see that $\IE\sup_{t\geq 0}\left(\sZ_{t}\right)^{-k}<\infty$ since $t\to \left(\sZ_{t}\right)^{-k}$ is a submartingale and by finite negative moments of the partition function.
\end{proof}

\subsection{Proof of Theorem \ref{th:CV_stationaryKPZ} in the 1-dimensional case}
\label{subsec:conclusionGFFKPZ}
Note that the proof in the 1-dimensional case already entails Corollary \ref{th:LogPolymerCV}.

Define, for $\tau\in[0,\infty]$,
\begin{equation}  
Y_\tau^{\ssup{T}} :=  
T^{\frac{d-2}{4}}\int f(x) \left(\log \sZ_{T \tau }\left({\sqrt{T}x}\right)-\IE \log{\sZ_{T \tau }\left({\sqrt{T}x}\right)}\right)dx,
\end{equation}
and observe that when $h_0\equiv 0$, then $W_\tau = \sZ_\tau$ and $\mathscr U_3(\tau,f) \eqlaw Z_\tau(f)$, where the latter quantity is a Gaussian variable with variance given by \eqref{eq:TheVarianceformula}. Hence, we know from Section \ref{sec:proofEWKPZ} that for all $\tau\geq 0$, $Y_\tau^{\ssup{T}}(f)\cvlaw Z_\tau(f)$.

Recall that Gaussian free field $\mathscr H(f)$ has variance given by formula $\eqref{eq:TheVarianceformula}$ when $\tau=\infty$ and that $h^{stat}_\e(t,\cdot) \eqlaw \log \sZ_\infty(\e^{-1}\cdot)$ for all $t\geq 0$. Therefore, one-dimensional time convergences in Theorem \ref{th:CV_stationaryKPZ} and Theorem \ref{th:LogPolymerCV} follow once we prove that $Y_\infty^{\ssup{T}}(f) \cvlaw \mathscr H(f)$ as $T\to\infty$. Again, it is enough to prove that the following (commutative) diagram holds:
\[\begin{CD} 
Y_\tau^{\ssup{T}}(f) @>{(d)}>{T \to \infty}> Z_\tau(f)\\
@V{\IP, \text{ unif.\ in } T\geq 1}V{\tau\to \infty}V @V{(d)}V{\tau \to \infty}V\\
Y_\infty^{\ssup{T}}(f) @. \mathscr H(f)\rangle
\end{CD}\]

The convergence is the first line is settled. By convergence of the variance, the Gaussian variable ${Z}_\tau(f)$ converges as $\tau\to\infty$ to $\langle \mathscr H,f\rangle$.  To prove the uniform convergence, we write that
\begin{align*}
\IE \left| Y_\tau^{\ssup{T}}(f) - Y_\infty^{\ssup{T}}(f) \right| & \leq T^{\frac{d-2}{4}} \int 2 \IE\left| \log \sZ_{\tau T}\big(\sqrt T x\big) - \log \sZ_{\infty}\big(\sqrt T x\big)  \right| |f(x)| \dd x\\
&\leq 2 \Vert f \Vert_\infty T^{\frac{d-2}{4}} \IE\left| \log \sZ_{\tau T}(0) - \log \sZ_{\infty}(0)  \right|.
\end{align*}
We will show the following:
\begin{equation} \label{eq:conveLogZtauT}
\sup_{T\geq 1} T^{\frac{d-2}{4}} \IE\left|\log \sZ_\infty - \log \sZ_{T\tau}\right| \to 0 \text{ as } \tau\to\infty.
\end{equation}
By It\^o's formula, we have
\[T^{\frac{d-2}{4}}\left(\log \sZ_\infty - \log \sZ_{T\tau}\right) = T^{\frac{d-2}{4}} \left(\int_{T\tau}^\infty \frac{\dd \sZ_s}{\sZ_s}
- \frac{1}{2} \int_{T\tau}^\infty \frac{\dd \langle \sZ \rangle_s}{\sZ_s^2}\right).
\]
Let $\e>0$ and define
\[A_{\tau,T} = \left\{\sup_{s\geq 0} \sZ_s^{-1} \leq \tau^{\e} T^{\e}\right\}.\]
By the same argument as for \eqref{eq:negligibleEvent}, we know that there is a constant $C>0$ such that $\IP\left(A_{\tau,T}^c\right) \leq C \tau^{-2d} T^{-2d}$ and moreover that $\sup_{s\geq 0} \IE |\log \sZ_s|^2 <\infty$.
Therefore by Cauchy-Schwarz inequality,
\[\sup_{T\geq 1} T^{\frac{d-2}{4}}\IE\left[\left|\log \sZ_\infty - \log \sZ_{\tau T}\right|\mathbf{1}_{A_{\tau,T}^c} \right]\leq C\tau^{-d}. \]
On the other hand, thanks to {\eqref{eq:bracketExpansion_Wbis}},
 \[T^{\frac{d-2}{4}} \IE\left[  \int_{T\tau}^\infty \frac{\dd \langle \sZ \rangle_s}{\sZ_s^2} \mathbf{1}_{A_{\tau,T}} \right] \leq  T^{\frac{d-2}{4}} \tau^{\e} T^\e \int_{T\tau}^\infty \IE {\dd \langle \sZ \rangle_s} \leq  C\tau^{-\frac{d-2}{2}+\e} T^{-\frac{d-2}{4} + \e},
\]
while by Burkholder-Davis-Gundy and Cauchy-Schwarz inequalities,
\begin{align*}
T^{\frac{d-2}{4}} \IE\left|\int_{T\tau}^\infty \frac{\dd \sZ_s}{\sZ_s}\right|&\leq T^{\frac{d-2}{4}} \IE\left(\int_{T\tau}^\infty \frac{\dd \langle \sZ \rangle_s}{\sZ_s^2}\right)^{1/2} \\
&\leq \IE\left[\sup_{s\geq 0} \sZ_s^{-2}  \right]^{1/2} T^{\frac{d-2}{4}} \left(\IE \int_{T\tau}^\infty  \dd \langle \sZ\rangle_s\right)^{1/2}\\
& \leq C \tau^{-\frac{d-2}{4}}.
\end{align*}
For $\e$ small enough we finally obtain \eqref{eq:conveLogZtauT}.

%\CC{Problem: how to deal with the second term? this time the approach in the proof of Lemma \ref{lem:firstStep} to deal with the second term is NOT enough because we have to integrate $s$ up until infinity so we cannot simply use $\sZ_s^{-2}\dd \langle \sZ \rangle_s \leq \Vert V \Vert_\infty$.}
%
%\CC{We can try this:} Similarly to \eqref{eq:bracketExpansion_intro},
%\begin{align*}
%\dd \langle \sZ \rangle_s &= \b^2 T^{-\frac{d-2}{2}} T^{d/2}  \int_{(\mathbb{R}^d)^2} V(Z) \rho_\sigma(z) \rho_{\sigma T}(\sqrt T z +  Z)  \DE_{0,0}^{T\sigma,\sqrt T z} \left[\Phi_{T\sigma}\right] \DE_{0,0}^{T\sigma,\sqrt T z + Z}\left[\Phi_{T\sigma} \right] \dd z \dd Z\\
%& \leq C \Vert V \Vert_{\infty} \b^2 T\, \sigma^{-d/2} \int_{\mathbb R^d} \dd z \, \DE_{0,0}^{T\sigma,\sqrt T z} \left[\Phi_{T\sigma} \right] \int_{\mathbb{R}^d}  \rho_{\sigma T}(\sqrt T z +  Z)    \DE_{0,0}^{T\sigma,\sqrt T z + Z}\left[\Phi_{T\sigma} \right]  \dd Z\\
%& = \CC{\dots}
%\end{align*}

\subsection{Multidimensional convergence in the EW limits for KPZ}
\label{sec:KPZmultidim}
 To ease the presentation, we restrict ourselves to the case where $h_0\equiv 0$, although a repetition of the argument would lead to the result for the general initial conditions that we have been considering. Note that in this case $\mathscr U_3 = \mathscr U_1$.

By identity \eqref{eq:couplingProporty}, we see that what we need to show is that jointly for finitely many $u\in[0,t]$, $f\in\mathcal C^\infty_c$, as $T\to\infty$, (see \eqref{PhiAB} for definitions)
\begin{align*}
 T^{\frac{d-2}{4}} & \int f(x) \left( \log \sZ_{T u,T t}(\sqrt T x)- \IE\left[ \log \sZ_{T u,T t}(\sqrt T x)\right]\right)\dd x \\
& \cvlaw \quad \int_{\mathbb{R}^d} f(x)\mathscr U_3(t-u,x)\dd x.
\end{align*}
Following the same strategy as in Section \ref{sec:explainKPZ}, we are reduced to showing that
\begin{equation}\label{eq:finalGoalKPZmuli}
 \overline{M}^{\ssup T}_\tau(u,f)\cvlaw\int_{\mathbb{R}^d} f(x)\mathscr U_3(\tau-u,x)\dd x, \quad \text{jointly in } u\in[0,\tau], f\in\mathcal C^\infty_c,
\end{equation}
where (see \eqref{eq:defdMbartau} for definition)
\[
\overline{M}^{\ssup T}_\tau(u,f) := T^{\frac{d-2}{4}}\int_{\R^d} f(x) \int_{Tu  \lor T^{1-\varepsilon}}^{T\tau} \dd \overline{M}_{\tau_1}(\sqrt{T}x)\dd x.
\]
As in Section \ref{sec:conclusionOfProofOfSHE},  convergence in \eqref{eq:finalGoalKPZmuli} follows from convergence of the { cross-brackets} $\langle\overline{M}^{\ssup T}(u_1,f),\overline{M}^{\ssup T}(u_2,g)\rangle_\tau$ towards the RHS of \eqref{eq:LimcovStruc} and  the multidimensional functional central limit for martingales (\cite[Theorem 3.11]{JS87}). See the proof of Proposition \ref{lm:CVLone}. Note that convergence of the quadratic variation comes from the argument of Lemma \ref{lem:asymptPsi}, c.f.\ the proof of Proposition \ref{prop:MwidetildeIsGaussianFlat}.
%\end{comment}
{Multidimensional convergence for the stationary case (Theorem \ref{th:CV_stationaryKPZ}) comes again via exchanging limits as in Section \ref{subsec:conclusionGFFKPZ}.}

\appendix
\section{Law of large numbers for Dirac}\label{app:LLNDirac}
Similar arguments as in [CY] can be applied for the proof of Theorem 2.2 (Case of Dirac initial condition), that is 
\begin{align*}
\sZ_{\e^{-2}t}\mathtt{E}_0^{\b,\e^{-2}t}\left[\vphi\left(\e B_{T t}+x_0\right)\right]\to \sZ_{\infty}\DE_{x_0}[\vphi(B_t)].
\end{align*}
%where \begin{align*}
%\Phi_T(W,\xi):=\Phi_T(W)=\exp\left(\beta\int_0^T\int_{\rd}\phi(y-\kW_s)\xi(\dd s,\dd y)-\frac{\b^2}{2}V(0)T\right).
%\end{align*}

This is an analogue of Theorem 5.1 in \cite{CY06} and our proof is a modification.

Let $(\mathbb{W},\mathcal{G}^\mathbb{W},\DP_0)$ be the $d$-dimensional Wiener space: \begin{align*}
\mathbb{W}:=\{w\in C([0,\infty)\to \R^d); w(0)=0\}
\end{align*}
with  the $\sigma $-field $\mathcal{G}^{\mathbb{W}}$ defined as the smallest $\sigma$-field on $\mathbb{W}$ for which the coordinate maps $t\mapsto w(t)$ are measurable for each $t\geq 0$.

The proof should be divided into several steps.

\begin{lemma}{\cite[Lemma 5.3, Proposition 4.1]{CY06}}
Let $\cG_t=\sigma[B_s:0\leq s\leq t]$ and $\cG_\infty=\bigvee_{t\geq 0}\cG_t$. For any $F\in \cG_\infty$ and $G\in \cG_\infty^{\otimes 2}$, \begin{align*}
\mathtt{P}_{0}^{\b,\infty}(F)&:=\lim_{t\to \infty}\mathtt{P}_0^{\b,t}(F)\\
\mathtt{P}_{0,(2)}^{\b,\infty}(G)&:=\lim_{t\to\infty}\left (\mathtt{P}_0^{\b,t}\right)^{\otimes 2}(G)%=\left(P_{0,\b} \right)^{\otimes 2}(B)
\end{align*}
exists a.s. Moreover, $\mathtt{P}_0^{\beta,\infty}$ is a probability measure on $\cG_{t}$ for any $t\geq 0$ and \begin{align}
\IE\left[ \mathtt{P}_{0,(2)}^{\b,\infty}(G)\right]=\IE \left[ \left(\mathtt{P}_0^{\b,\infty}\right)^{\otimes 2}(G)\right] \quad \text{for any  }G\in \bigcup_{t\geq 0}\cF_t^{\otimes 2} \label{prodmeas}
\end{align} Moreover, $\IE \mathtt{P}_{0,(2)}^{\b,\infty}$ 
can be extended to the probability measure on $\cG_\infty^{\otimes 2}$ we also denote by $\IE \mathtt{P}_{0,\b}^{\otimes 2}$ and we have \begin{align}
\IE \mathtt{P}_{0,\b}^{\otimes 2}\ll \DP_0^{\otimes 2} \text{ on }\cF_\infty^{\otimes 2}.\label{abscont}
\end{align}
\end{lemma}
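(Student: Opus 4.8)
We adapt the argument of \cite[Section 5, Proposition 4.1, Lemma 5.3]{CY06} to the white-noise setting; throughout we are in the weak disorder regime, so that $(\sZ_t)_{t\ge0}$ is a non-negative $(\mathcal F_t)$-martingale which is uniformly integrable, converges $\IP$-a.s.\ and in $L^1$ to $\sZ_\infty$, and $\sZ_\infty>0$ $\IP$-a.s. For a fixed $F\in\cG_\infty$ put $\sZ_t(F):=\DE_0[\mathbf 1_F\,\Phi_t]$. Since for each fixed Brownian path $B$ the process $t\mapsto\Phi_t(B)$ is an $(\mathcal F_t)$-exponential martingale, one has $\IE[\sZ_t(F)\mid\mathcal F_r]=\DE_0[\mathbf 1_F\,\IE[\Phi_t\mid\mathcal F_r]]=\sZ_r(F)$, so $\sZ_t(F)$ is a non-negative $(\mathcal F_t)$-martingale with $\IE\sZ_t(F)=\DP_0(F)$ and $0\le\sZ_t(F)\le\sZ_t$. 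Uniform integrability of $(\sZ_t)$ therefore forces $(\sZ_t(F))_{t\ge0}$ to be uniformly integrable, hence it converges $\IP$-a.s.\ and in $L^1$ to some $\sZ_\infty(F)\in[0,\sZ_\infty]$ with $\IE\sZ_\infty(F)=\DP_0(F)$. As $\sZ_\infty>0$ a.s., we set $\mathtt P_0^{\b,\infty}(F):=\sZ_\infty(F)/\sZ_\infty$, and then $\mathtt P_0^{\b,t}(F)=\sZ_t(F)/\sZ_t\to\mathtt P_0^{\b,\infty}(F)$ $\IP$-a.s.

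To see that $\mathtt P_0^{\b,\infty}$ is a.s.\ a probability measure, the cleanest route is to identify its $\DP_0$-density on each $\cG_s$. Writing $\Phi_t=\Phi_s\,\Phi_{s,t}$ and using the Markov property of $B$ gives, for $F\in\cG_s$, $\sZ_t(F)=\DE_0[\mathbf 1_F\,\Phi_s\,\sZ_{s,t}(B_s)]$, where $\sZ_{s,t}(x)$ is the partition function over $[s,t]$ of $\xi$ started from $x$ and $\sZ_{s,\infty}(x):=\lim_{t\to\infty}\sZ_{s,t}(x)$ its $\IP$-a.s.\ limit; conditionally on the full path, $\Phi_s(B)$ and $\sZ_{s,t}(B_s)-\sZ_{s,\infty}(B_s)$ are independent, and $\IE_\xi|\sZ_{s,t}(x)-\sZ_{s,\infty}(x)|=\IE|\sZ_{t-s}-\sZ_\infty|\to0$ independently of $x$, so one may pass to the limit $t\to\infty$ in $L^1(\IP)$ to obtain $\sZ_\infty(F)=\DE_0[\mathbf 1_F\,\Phi_s\,\sZ_{s,\infty}(B_s)]$. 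Thus the restriction of $\mathtt P_0^{\b,\infty}$ to $\cG_s$ has the non-negative $\DP_0$-density $\Phi_s\,\sZ_{s,\infty}(B_s)/\sZ_\infty$, of total mass $\sZ_\infty/\sZ_\infty=1$; these densities are consistent in $s$ (they form a $(\cG_s,\DP_0)$-martingale), so Kolmogorov's extension theorem makes $\mathtt P_0^{\b,\infty}$ a probability measure on $\cG_\infty$. (Equivalently, one checks continuity at $\emptyset$ directly along a countable generating subalgebra of $\cG_s$: if $F_n\downarrow\emptyset$ then $\sZ_\infty(F_n)\downarrow L\ge0$ with $\IE L\le\lim_n\DP_0(F_n)=0$, so $L=0$ a.s.)

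Finally, on a rectangle $F_1\times F_2$ with $F_i\in\cG_s$ we have $(\mathtt P_0^{\b,t})^{\otimes2}(F_1\times F_2)=\mathtt P_0^{\b,t}(F_1)\,\mathtt P_0^{\b,t}(F_2)\to\mathtt P_0^{\b,\infty}(F_1)\,\mathtt P_0^{\b,\infty}(F_2)=(\mathtt P_0^{\b,\infty})^{\otimes2}(F_1\times F_2)$ $\IP$-a.s.; by additivity the limit $\mathtt P_{0,(2)}^{\b,\infty}(G):=\lim_t(\mathtt P_0^{\b,t})^{\otimes2}(G)$ exists a.s.\ for every $G$ in the algebra of finite disjoint unions of such rectangles and there coincides with $(\mathtt P_0^{\b,\infty})^{\otimes2}(G)$; since $(\mathtt P_0^{\b,\infty})^{\otimes2}$ is, by the previous paragraph, a genuine measure on $\cG_\infty^{\otimes2}$ agreeing with this pre-measure on the algebra, it is its Carath\'eodory extension, which both defines $\mathtt P_{0,(2)}^{\b,\infty}$ on $\cG_\infty^{\otimes2}$ and yields \eqref{prodmeas}. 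Absolute continuity \eqref{abscont} then follows from the density formula: if $\DP_0^{\otimes2}(G)=0$, then $\IE\mathtt P_{0,(2)}^{\b,\infty}(G)=\IE_\xi\DE_0^{\otimes2}\big[\mathbf 1_G\,\Phi_s^{(1)}\Phi_s^{(2)}\,\sZ_{s,\infty}(B^{(1)}_s)\,\sZ_{s,\infty}(B^{(2)}_s)/\sZ_\infty^{2}\big]=0$ since $\mathbf 1_G=0$ $\DP_0^{\otimes2}$-a.e.\ and the density is $\DP_0^{\otimes2}$-a.s.\ finite; this gives $\IE\mathtt P_{0,(2)}^{\b,\infty}\ll\DP_0^{\otimes2}$ on a generating $\pi$-system, hence on $\cG_\infty^{\otimes2}$.

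The only genuinely delicate points are the measure-theoretic ones: turning the a.s.\ pointwise limits into $\sigma$-additive random measures (which is exactly why one must take limits over the algebra of rectangles rather than over arbitrary $G\in\cG_\infty^{\otimes2}$), and the justification of the $t\to\infty$ interchange inside $\DE_0$ in the second paragraph via the conditional-independence observation rather than a naive dominated convergence, which fails because $\sup_t\sZ_{s,t}(x)$ carries no uniform-in-$x$ bound. The martingale and uniform-integrability inputs themselves are immediate once weak disorder is assumed.
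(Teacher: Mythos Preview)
Your approach via explicit $\DP_0$-densities on each $\cG_s$ is cleaner in spirit than the paper's, but it does not deliver the full statement, and the last two steps contain genuine gaps.

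\textbf{Convergence for general $G$.} The lemma asserts that $(\mathtt P_0^{\b,t})^{\otimes 2}(G)$ converges $\IP$-a.s.\ for \emph{every} $G\in\cG_\infty^{\otimes 2}$, not only for $G$ in the algebra generated by rectangles. Your Carath\'eodory step manufactures a measure agreeing with the limit on that algebra, but it does not show that $\lim_t(\mathtt P_0^{\b,t})^{\otimes 2}(G)$ exists a.s.\ for arbitrary $G$; you explicitly disclaim this in your last paragraph. The paper handles general $G$ directly: the process $X_t(G)=\DE_0^{\otimes 2}[\Phi_t(B,\xi)\Phi_t(B',\xi)\mathbf 1_G]$ is a non-negative $(\mathcal F_t)$-submartingale whose Doob--Meyer decomposition $X_t(G)=\DP_0^{\otimes 2}(G)+M_t+A_t$ satisfies $\langle M\rangle_\infty\le C\sup_{u}\sZ_u^{4}\int_0^\infty I_s\,\dd s$ and $A_\infty\le \b^2\sup_{u}\sZ_u^{2}\int_0^\infty I_s\,\dd s$, where $I_s=(\mathtt E_0^{\b,s})^{\otimes 2}[V(B_s-B_s')]$. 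In weak disorder $\{\sZ_\infty>0\}=\{\int_0^\infty I_s\,\dd s<\infty\}$ a.s., so both $M_t$ and $A_t$ converge a.s.; dividing by $\sZ_t^2\to\sZ_\infty^2>0$ gives the claim for every $G$. Your density route could be pushed through for $G\in\cG_s^{\otimes 2}$ (same $L^1$ interchange as for the one-point measure, now for the product $\sZ_{s,t}(B_s)\sZ_{s,t}(B_s')$), but passing from $\bigcup_s\cG_s^{\otimes 2}$ to $\cG_\infty^{\otimes 2}$ still requires an argument of this type.

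\textbf{Absolute continuity.} The implication ``absolutely continuous on a generating $\pi$-system $\Rightarrow$ absolutely continuous on the generated $\sigma$-field'' is false in general (two distinct i.i.d.\ coin-flip measures on $\{0,1\}^{\mathbb N}$ are mutually singular yet agree in the sense of absolute continuity on every cylinder set). What is needed is uniform integrability of the $\DP_0^{\otimes 2}$-density martingale, equivalently: $\DP_0^{\otimes 2}(G_m)\to 0\Rightarrow\IE[\mathtt P_{0,(2)}^{\b,\infty}(G_m)]\to 0$ for any sequence $(G_m)\subset\cG_\infty^{\otimes 2}$. The paper proves exactly this, again using the Doob--Meyer control above. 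Your density formula on each $\cG_s^{\otimes 2}$ is correct but does not by itself yield \eqref{abscont} on $\cG_\infty^{\otimes 2}$.
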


\begin{proof}[Proof]
The existence of the limit $\mathtt{P}_{0}^{\b,\infty}(F)$ follows from the martingale convergence theorem. 
We consider a non-negative  submartingale \[X_t=X_t(G):=\DP_0^{\otimes 2}\left[\Phi_{t}(B,\xi)\Phi_t(B',\xi)1_G(B,B')\right],\] and its Doob-Meyer decomposition \begin{align*}
X_t(G)&=\DP_0^{\otimes 2}(G)+M_t+A_t\\
M_t&=\DE_0^{\otimes 2}\left[\left(\int_0^t \Phi_s(B, \xi) {\rm d}\Phi_s(B',\xi)+\int_0^t \Phi_s(B',\xi){\rm d}\Phi_s(B,\xi)\right)1_G(B,B')\right] \\
A_t&=\beta^2\int_0^t \DP_0^{\otimes 2}\left[V(B_s-B_s')\Phi_s(B,\xi)\Phi_s(B',\xi)1_G(B,B')\right]{\rm d} s
\end{align*}
Then, it is easy to see that there exists a constant $C>0$ such that \begin{align*}
\langle M\rangle_t &\leq 4\left\langle 	\DE_0^{\otimes 2}\left[\int_0^\cdot \Phi_s(B, \xi) {\rm d}\Phi_s(B',\xi)\right]				\right\rangle_t \\
&\leq 4\beta^2\DE^{\otimes 4}\left[\int_0^t	V(B_s^{(3)}-B_s^{(4)})	\Phi_s(B^{(1)},\xi)\Phi_s(B^{(2)},\xi)\Phi_s(B^{(3)},\xi)\Phi_s(B^{(4)},\xi){\rm d}s			\right]\\
&		\leq C\sup_{u\geq 0}\sZ_u^4 \int_0^tI_s{\rm d}s, \\
A_t&\leq \beta^2\sup_{u\geq 0}\sZ_u^2 \int_0^tI_s{\rm d}s,
\end{align*}
where $I_t=\mathtt{ E}_0^{\b,t}\left[V(B_t-B_t')\right]$. In the weak disorder phase, we have that \begin{align*}
\left\{\sZ_\infty<\infty\right\}\stackrel{\text{a.s.}}{=}\left\{ \int_0^\infty I_tdt<\infty \right\}.
\end{align*}
Thus,  $X_t(G)$, as well as $\left(\mathtt{P}_{0}^{\b,t}\right)^{\otimes 2}(G)$, converges $\IP$-a.s. 

Also, it follows from Markov property that for $G\in \cG_t^{\otimes 2}$ \begin{align*}
&\left(\mathtt{P}_0^{\b,s+t}\right)^{\otimes 2}(G)\\
&=\frac{1}{\sZ_{s+t}^2}\iint_{\R^d\times \R^d}\DP_0^{\otimes 2}\left[\Phi_t(W,\xi)\Phi_t(B',\xi):G, B_t\in dx,B_t'\in dx'\right]\left(\theta_{t,x}\circ \sZ_{s}\right)\left(\theta_{t,x'}\circ \sZ_s\right)
\end{align*}
where $\theta_{t,x}$ is the time-space shift of $\xi$. The dominated convergence theorem  implies \eqref{prodmeas}. 

As in the proof of \cite[Lemma 4.2]{CY06}, we can show \eqref{prodmeas} and \eqref{abscont} by proving that for any $\{G_m\}_{m=1}^\infty\subset \cG_\infty^{\otimes 2}$ with $\lim_{m\to\infty }\DP_0^{\otimes 2}(G_m)=0$,  we can prove \begin{align*}
\lim_{m\to\infty}\IE\left[\mathtt{P}_{0,(2)}^{\b, \infty}(G_m)\right]=0.
\end{align*}
Hence $\IE \mathtt{P}_{0,(2)}^{\beta,\infty}$ can be extended to the probability measure on $\cG_\infty^{\otimes 2}$ and \eqref{abscont} follows.
\end{proof}

Now, we give the proof of Theorem \ref{LNDirac} in sveral steps by following the proof of \cite[Theorem 5.1]{CY06}.

\begin{proposition}\label{prop:CLTinfty}
Suppose  $\beta <\beta_c $. Then, we have for $G\in C_b(\mathbb{W}_1)$\begin{align*}
\lim_{t\to \infty}\IE \mathtt{P}_{0,(2)}^{\otimes 2}\left[G(w^{(t)},w'^{(t)})		\right]=\DP_0^{\otimes 2}[G(w,w')],
\end{align*} 
where $\mathbb{W}_1=C([0,1]\to \mathbb{R}^d)$ and $w^{(t)}=\displaystyle \left(\frac{w_{st}}{\sqrt{t}}\right)_{0\leq s\leq 1}\in \mathbb{W}_1$ is Brownian rescaling of $w\in \mathbb{W}$. In particular, we have \begin{align*}
\lim_{t\to\infty}\IE \left[\left|\mathtt{E}_{0}^{\beta,\infty}\left[F\left(w^{(t)}\right)\right]-\DE_0\left[F\left(w\right)\right]\right|\right]=0.
\end{align*}

\end{proposition}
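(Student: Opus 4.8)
The plan is to adapt the argument of \cite[Theorem~5.1 and Sections~4--5]{CY06} to the present Brownian setting, exploiting the absolute continuity \eqref{abscont} of the averaged two-replica measure $\IE\mathtt{P}_{0,(2)}^{\beta,\infty}$ with respect to $\DP_0^{\otimes 2}$. First I would reduce the ``in particular'' assertion to the main one: expanding the square and using \eqref{prodmeas} together with the fact that the one-coordinate marginal of $\IE\mathtt{P}_{0,(2)}^{\beta,\infty}$ is $\IE\mathtt{P}_0^{\beta,\infty}$ (both being limits of marginals of $(\mathtt{P}_0^{\beta,t})^{\otimes 2}$ on cylinder sets), one obtains
\begin{align*}
\IE\big[(\mathtt{E}_0^{\beta,\infty}[F(w^{(t)})]-\DE_0[F(w)])^2\big]
&= \IE\mathtt{P}_{0,(2)}^{\otimes 2}[F(w^{(t)})F(w'^{(t)})]\\
&\quad - 2\,\DE_0[F(w)]\,\IE\mathtt{P}_{0,(2)}^{\otimes 2}[F(w^{(t)})] + \DE_0[F(w)]^2 .
\end{align*}
Applying the main assertion to $G(w,w')=F(w)F(w')$ and to $G(w,w')=F(w)$ shows that each summand converges to $\DE_0[F(w)]^2$, so the left-hand side tends to $0$, whence also the claimed $L^1$-convergence.

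For the main assertion I would establish tightness and convergence of finite-dimensional distributions of $(w^{(t)},w'^{(t)})$ under $\IE\mathtt{P}_{0,(2)}^{\otimes 2}$ separately. By \eqref{abscont}, the measure $\IE\mathtt{P}_{0,(2)}^{\beta,\infty}$ restricted to $\cG_s^{\otimes 2}$ has a density $g^{(s)}$ with respect to $\DP_0^{\otimes 2}$; $(g^{(s)})_{s\geq0}$ is a non-negative $\DP_0^{\otimes 2}$-martingale of mean one, and since $\IE\mathtt{P}_{0,(2)}^{\beta,\infty}\ll\DP_0^{\otimes 2}$ holds on all of $\cG_\infty^{\otimes 2}$ it converges in $L^1(\DP_0^{\otimes 2})$ to the density $g_\infty$; in particular $\{g^{(s)}\}_{s\geq0}$ is uniformly integrable. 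For bounded measurable $G$ of the two paths on $[0,1]$, the variable $G(w^{(t)},w'^{(t)})$ is $\cG_t^{\otimes 2}$-measurable, so
\[
\IE\mathtt{P}_{0,(2)}^{\otimes 2}\big[G(w^{(t)},w'^{(t)})\big]=\DP_0^{\otimes 2}\big[g^{(t)}\,G(w^{(t)},w'^{(t)})\big].
\]
Tightness then follows from Brownian scaling: under $\DP_0^{\otimes 2}$ the pair $(w^{(t)},w'^{(t)})$ has the $t$-independent law of two independent Brownian paths on $[0,1]$, so $\DP_0^{\otimes 2}$ assigns to the event that $(w^{(t)},w'^{(t)})$ has modulus of continuity at scale $\delta$ exceeding $\varepsilon$ a probability that is independent of $t$ and tends to $0$ as $\delta\to0$; by uniform integrability of $\{g^{(t)}\}$ the $\IE\mathtt{P}_{0,(2)}^{\otimes 2}$-probability of the same event is then small uniformly in $t$, which gives tightness.

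For the finite-dimensional distributions, fix $0<s_1<\dots<s_k\leq1$ (coordinates equal to $0$ being deterministic) and a bounded continuous $H$ on $(\rd)^{2k}$, and set $H_t:=H(B_{s_1t}/\sqrt t,B'_{s_1t}/\sqrt t,\dots,B_{s_kt}/\sqrt t,B'_{s_kt}/\sqrt t)$. Then $\DP_0^{\otimes 2}[g^{(t)}H_t]=\DP_0^{\otimes 2}[g_\infty H_t]+O(\|g^{(t)}-g_\infty\|_{1})$, and for the leading term I would approximate $g_\infty$ by $g_\infty^{(r)}:=\DP_0^{\otimes 2}[\,g_\infty\mid\cG_r^{\otimes 2}\,]$, with $\|g_\infty-g_\infty^{(r)}\|_1\to0$ as $r\to\infty$ uniformly in $t$, and condition on $\cG_r^{\otimes 2}$: under $\DP_0^{\otimes 2}$ the two paths are independent, and conditionally on $\cG_r^{\otimes 2}$ their increments after time $r$ are fresh independent Brownian increments, so the conditional law of $(B_{s_it}/\sqrt t,B'_{s_it}/\sqrt t)_{i\leq k}$ converges as $t\to\infty$ to that of two independent Brownian paths sampled at $s_1,\dots,s_k$, with the limit not depending on the conditioning. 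Boundedness and continuity of $H$ together with dominated convergence (using $g_\infty^{(r)}\in L^1$) then give $\DP_0^{\otimes 2}[g_\infty^{(r)}H_t]\to\DP_0^{\otimes 2}[g_\infty^{(r)}]\cdot\DP_0^{\otimes 2}[H(w_{s_1},w'_{s_1},\dots,w_{s_k},w'_{s_k})]=\DP_0^{\otimes 2}[H(w_{s_1},w'_{s_1},\dots,w_{s_k},w'_{s_k})]$, and letting $r\to\infty$ concludes; combined with tightness this yields the main assertion, hence the proposition.

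The delicate point is this last step, namely the interchange of the limits $t\to\infty$ and $r\to\infty$ that makes the decorrelation of the two rescaled replicas precise; this is exactly where weak disorder enters, through the uniform integrability of the density martingale $g^{(t)}$ --- equivalently, through the $\DP_0^{\otimes 2}$-almost sure finiteness of the total replica overlap $\int_0^\infty I_u\,\mathrm du$ established above --- which guarantees the $L^1$-convergence $g^{(s)}\to g_\infty$ used throughout.
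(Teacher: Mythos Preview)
Your proposal is correct and follows essentially the same route as the paper, which simply defers to the argument of \cite[Theorem~5.1 and Proposition~5.2, Step~1]{CY06}: exploit the absolute continuity $\IE\mathtt{P}_{0,(2)}^{\beta,\infty}\ll\DP_0^{\otimes 2}$ from \eqref{abscont}, use uniform integrability of the density martingale $g^{(s)}$ to reduce to the $\cG_r^{\otimes 2}$-measurable approximation $g^{(r)}$, and then use that the conditional law of the rescaled increments after time $r$ converges (independently of the conditioning) to that of two independent Brownian motions. In fact your write-up is slightly cleaner than the discrete case in \cite{CY06}, since here exact Brownian scaling makes the law of $(w^{(t)},w'^{(t)})$ under $\DP_0^{\otimes 2}$ \emph{independent} of $t$, so tightness is immediate and no Donsker-type invariance principle is needed. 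One minor remark: the ``interchange of limits'' you flag at the end is really just a standard $\varepsilon/3$ argument (fix $r$ large enough that $\|g_\infty-g^{(r)}\|_1<\varepsilon$, then send $t\to\infty$ for that fixed $r$), so there is no genuine subtlety there beyond the uniform integrability you have already invoked.
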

\begin{proof} This proposition follows from  the same argument as the step 1 in the proof of  \cite[Proposition 5.2]{CY06} and the latter part of the proof of \cite[Theorem 5.1]{CY06}.

\end{proof}

\begin{proof}[Proof of Theorem \ref{LNDirac}] 
% Let $\textrm{BL}(\mathbb{W})$ be the bounded Lipschitz continuous function on $C_b(\mathbb{W})$: \begin{align*}
%\textrm{BL}(\mathbb{W})=\{F:\mathbb{W}\to \R: \|F\|_{\textrm{BL}}:=\|F\|+\|F\|_L\},
%\end{align*}
%where $\|F\|=\sup_{w\in \mathbb{W}}$ and \begin{align*}
%\|F\|_L=\sup\left\{\frac{|F(w)-F(w')|}{\|w-w'\|}:w,w'\in \mathbb{W}, w\not=w'		\right\}.
%\end{align*}

The proof is  almost  the same as the one in \cite[Proposition 5.2]{CY06}. Let $\bar{F}\in C_b(\mathbb{W})$ with $\DE\left[\bar{F}(w)\right]=0$. Then, it is enough to show that $\IE\left[\left|\mathtt{E}_0^{\b,t}\left[\bar{F}(w^{(t)})\right]\right|\right]\to 0$.

For each $0\leq s\leq t$,
\begin{align}
\IE\left[\left|\mathtt{P}_0^{\b,t}(\bar{F}(w^{(t)}))\right|\right]& \leq \IE\left[\left|\mathtt{E}_t^{\beta,t}\left[\bar{F}(w^{(t)})-\bar{F}(w^{(t-s)})\right]\right|\right]+\|\bar{F}\|_\infty \sup_{0\leq u\leq t}\IE\left[\|\mathtt{P}_0^{\b,t}-\mathtt{P}_0^{\b,\infty}\|_{\mathcal{G}_{u}}\right]\notag\\
&+\IE\left[\left|\mathtt{E}_0^{\b,\infty}\left[\bar{F}(w^{(t-s)})\right]\right|\right],\label{eq:PtPi}
\end{align}
where $\|F\|_\infty=\sup_{w\in \mathbb{W}_1}|F(w)|$ is the uniform norm of $F\in C_b(\mathbb{W}_1)$  and $\|\mu-\nu\|_{\mathcal{G}_t}=2\sup\{\mu(A)-\nu(A): A\in \mathcal{G}_t\}$ is total variation for $\mu$ and $\nu$ with probability measures on $(\mathbb{W},\mathcal{G}_t)$.
Then, the last term converges to $0$ as $t\to \infty$ by Proposition \ref{prop:CLTinfty}. 

Thus, we need to estimate the first term and the second term. The second term converges to $0$ by a similar argument to the proof of Proposition 4.3 in \cite{CY06} so we omit the proof with one remark: By Markov property of $B_t$, \begin{align*}
\mathtt{P}_0^{\b,\infty}(A)=\frac{1}{\sZ_\infty}\int_{\R^d}\DE_{0,0}^{t,x}\left[\Phi_t(B): A\right]\sZ^{(t,x)}_\infty \dd x,
\end{align*}
where $\sZ_\infty^{(t,x)}=\sZ_\infty(\xi_{(t,x)})$ is the time-space shift of $\sZ_\infty$ with $\xi_{(t,x)}(s,y)=\xi(s+t,x+y)$.

 %other than the estimate of $\displaystyle\IE \left[\left| \mathtt{P}_{0}^{\beta,t}\left[F(w^{(t)})-F(w^{(t-s)})\right]\right|\right]$ for fixed $0\leq s\leq t$ and $\DE_0\left[F(w)\right]=0$. 
The first term in \eqref{eq:PtPi} is estimated as follows: For any $\ve>0$, there exists $c>0$ such that \begin{align*}
2\varlimsup_{t\to \infty}\IP(\sZ_t< c)\|\bar{F}\|_\infty<\ve.
\end{align*} so that  we have that \begin{align*}
&\varlimsup_{t\to\infty} \IE\left[\left| \mathtt{P}_{0}^{\beta,t}\left[\bar{F}(w^{(t)})-\bar{F}(w^{(t-s)})\right]\right|\right]\\
&	\leq 2\varlimsup_{t\to\infty}\IP\left(\sZ_t< c\right)\|\bar{F}\|_\infty+c\IE\left[\DE\left[\Phi_t\left|\bar{F}(w^{(t)})-\bar{F}(w^{(t-s)})\right|\right]			\right]	\\
&\leq \ve+\varlimsup_{t\to\infty}\DP_0\left[\left|\bar{F}(w^{(t)})-\bar{F}(w^{(t-s)})\right|\right] =\, \ve,
 \end{align*}
 where we have used the fact that the pair $(w^{(t)},w^{(t-s)})$ converges in law to the pair $(B,B)$  where $B$ is a standard Brownian motion on $\mathbb{R}^d$.
Thus, Theorem \ref{LNDirac} follows.

\end{proof}
\section{Error term in the local limit theorem for polymers}
Recall the definition of $\overset{\leftarrow}{\sZ}_{T,\ell}(z)$ in \eqref{eq:timeRevPartitionf}.

\begin{theorem} \label{th:errorTermLLT}
There exists a positive constant $c$  such that for all $\b< \b_{L^2}$, there exist $C=C(\b)$ and some $\delta=\delta(\b)>0$, such that for all positive $\ell,T$ verifying $\ell/T\leq \delta$ and for all $x,y\in\mathbb R^d$,
\[
\IE\left(\DE_{0,\sqrt T x}^{T,\sqrt T y} [\Phi_T] - \sZ_\ell(\sqrt T x) \overset{\leftarrow}{\sZ}_{T,\ell}(\sqrt T y)\right)^2 \leq C \ell^{-\frac{d-2}{2}} + C\left(1+|x-y|^2\right) e^{c\frac{\ell}{T}(x-y)^2} \left(\frac{\ell}{T}\right)^{\frac{1}{2}}.
\]

%{\MN The statement in this  proposition is stronger than the one in Vargas (LLT holds if $l(T)=o(T)$.) Most parts in his proof for continuous case is omitted, so more rigorous estimate may be recovered in this note. Probably the difference from discrete case is the error term in LLT for transition probability.} 
\begin{rem}
Note that in addition to giving a control on the error term, the theorem states that the error term still vanishes for starting and terminal points that can be distant up to sub-linear scale (i.e.\ $\sqrt T |x-y| = o(T)$), provided that $\ell$ is chosen accordingly. This is a significant improvement compared to the result of \cite{V06,Si95}. Another slight improvement is that we can let $\ell$ go up to $o(T)$ instead of $o(T^{1/2})$. %\CC{?? See if Khanin has put his paper}.
\end{rem}
\end{theorem}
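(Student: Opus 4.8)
The plan is to run a second moment computation. Write $x_0=\sqrt T x$, $y_0=\sqrt T y$ and expand
\[
\IE\Big(\DE_{0,x_0}^{T,y_0}[\Phi_T]-\sZ_\ell(x_0)\,\overset{\leftarrow}{\sZ}_{T,\ell}(y_0)\Big)^2=\mathrm I-2\,\mathrm{II}+\mathrm{III},
\]
with $\mathrm I=\IE\,\DE_{0,x_0}^{T,y_0}[\Phi_T]^2$, $\mathrm{II}=\IE\big[\DE_{0,x_0}^{T,y_0}[\Phi_T]\,\sZ_\ell(x_0)\overset{\leftarrow}{\sZ}_{T,\ell}(y_0)\big]$, $\mathrm{III}=\IE\big[\sZ_\ell(x_0)^2\overset{\leftarrow}{\sZ}_{T,\ell}(y_0)^2\big]$, and then use the white-noise structure to turn each term into a Brownian functional: for two independent paths sharing a time window $I$, the expectation over the noise of the product of their Feynman--Kac weights on $I$, conditionally on the paths, is $\exp\{\b^2\int_I V(B^{(1)}_s-B^{(2)}_s)\dd s\}$. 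Since $\sZ_\ell(x_0)$, resp.\ $\overset{\leftarrow}{\sZ}_{T,\ell}(y_0)$, uses the noise only on the disjoint slabs $[0,\ell]$, resp.\ $[T-\ell,T]$, while the restriction of $\Phi_T$ to the middle slab $[\ell,T-\ell]$ has conditional mean $1$ and is independent of the rest in $\mathrm{II}$, and $\IE[\sZ_\ell(0)^2]=\IE[\overset{\leftarrow}{\sZ}_{T,\ell}(0)^2]=:A_\ell$ by space-translation and time-reflection invariance, one gets $\mathrm{III}=A_\ell^2$ exactly. It then remains to show $\mathrm I=A_\ell^2+\mathrm{err}_1$ and $\mathrm{II}=A_\ell^2+\mathrm{err}_2$ with $\mathrm{err}_1,\mathrm{err}_2$ bounded by the right-hand side of the theorem; the $A_\ell^2$ contributions cancel.

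All the Brownian estimates would be carried out after reducing, via shear-invariance of the white noise and Brownian scaling, to bridges pinned at $0$, so that the uniform $L^2$-bounds for single Brownian-bridge partition functions in \eqref{eq:unifboundForP2P} apply; together with finiteness of $\gamma^2(\b)$ from \eqref{def-sigma-beta} (equivalently, $\sup_z\DE_z[\exp\{\b^2\int_0^\infty V(B_{2s})\dd s\}]<\infty$, valid for $\b<\b_{L^2}$), this makes every moment below finite uniformly in $T,x_0,y_0$. The first error mechanism is the interaction of the paths in the bulk. In $\mathrm I$, with $\Delta_s=B^{(1)}_s-B^{(2)}_s$ the difference of two independent bridges from $x_0$ to $y_0$, one splits $e^{\b^2\int_0^T V(\Delta_s)\dd s}=e^{\b^2\int_0^\ell V}\,e^{\b^2\int_{T-\ell}^T V}\,e^{\b^2\int_\ell^{T-\ell}V}$ and, conditioning on $(\Delta_\ell,\Delta_{T-\ell})$, estimates $\IE[e^{\b^2\int_\ell^{T-\ell}V}-1\mid\Delta_\ell,\Delta_{T-\ell}]$, which equals $\IE[e^{\b^2\int_0^{T-2\ell}V}-1]$ for a Brownian bridge started at $\Delta_\ell$: by transience in $d\geq3$ and finiteness of the post-hitting exponential functional this is at most $C\min(1,|\Delta_\ell|^{-(d-2)})$, uniformly in the endpoints. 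Integrating against the (essentially Gaussian, variance of order $\ell$) law of $\Delta_\ell$ and bounding the two bounded-window exponential factors by further conditioning gives a contribution $C\ell^{-(d-2)/2}$. Crucially, this step is done by conditioning, not by a H\"older inequality (which would cost a factor $1/q$ in the exponent), and this is exactly what produces the $\b$-free exponent $\tfrac{d-2}2$ improving on \cite{V06,Si95}.

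The second mechanism is the replacement of the bridges by free motions over the two end windows. The key fact is that the restriction of a bridge from $x_0$ to $y_0$ over $[0,T]$ to $[0,\ell]\cup[T-\ell,T]$ has, relative to $(\text{free motion from }x_0\text{ on }[0,\ell])\otimes(\text{free motion from }y_0\text{ run backwards on }[T-\ell,T])$, the Radon--Nikodym density $H=\rho_{T-2\ell}(B_\ell-B_{T-\ell})/\rho_T(x_0-y_0)$, with $\IE[H]=1$ by Chapman--Kolmogorov. Writing each term as $1+(H-1)$ (in $\mathrm I$ one obtains a product $H_1H_2$ of two independent such densities) and expanding $\log H$ around $B_\ell\approx x_0$, $B_{T-\ell}\approx y_0$, the quadratic and higher contributions are $O(\ell/T)+O(\tfrac\ell T|x-y|^2)$, while the linear term, which does not cancel once paired with the non-symmetric Feynman--Kac weight $\mathcal F$ over the windows, is bounded by $\tfrac{|x_0-y_0|}{T}\,\|\mathcal F\|_p\,\|B_\ell-x_0\|_{p'}\leq C(1+|x-y|)(\ell/T)^{1/2}$, where $p<(\b_{L^2}/\b)^2$ so that $\|\mathcal F\|_p$ is bounded uniformly in $\ell$; the factor $e^{c\frac\ell T(x-y)^2}$ appears when controlling the higher moments of $H$, hence of $H-1$, in the regime of distant endpoints. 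Here H\"older does not spoil the $(\ell/T)^{1/2}$ rate, since $\|H-1\|_{p'}$ is of order $(\ell/T)^{1/2}$ for every fixed $p'$. Collecting these estimates in $\mathrm I$ and $\mathrm{II}$, with $\ell/T\leq\delta(\b)$ small enough to validate the Taylor expansions and $L^2$-bounds, yields the theorem.

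The main obstacle is exactly this bookkeeping: keeping the exponent $\tfrac{d-2}2$ independent of $\b$ (which forces the pathwise/conditioning treatment of the bulk term rather than a H\"older split) and tracking the $|x-y|$-dependence precisely enough that the bound remains informative up to $|x-y|=o(\sqrt T)$, i.e.\ sub-linear Euclidean separation of the endpoints after unscaling. A secondary technical point is the transience estimate $\DP(\exists\,s\in[\ell,T-\ell]:|\Delta_s|\leq R)\leq C_R\,\ell^{-(d-2)/2}$, uniform over the bridge endpoints, which follows from Green's function bounds for the difference process together with the fact that pinning at the far time $T$ distorts the law of $\Delta$ on $[0,T-\ell]$ only by a bounded density.
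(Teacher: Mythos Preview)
Your approach is correct and follows the same two-step logic as the paper: (i) remove the bulk interaction on $[\ell,T-\ell]$, producing the $\ell^{-(d-2)/2}$ term, and (ii) replace the bridge on the two end windows by free motions, producing the $(1+|x-y|^2)e^{c\frac{\ell}{T}|x-y|^2}(\ell/T)^{1/2}$ term. The paper organizes this via a triangle inequality through the intermediate quantity $\DE_{0,x_0}^{T,y_0}[\Phi_\ell\,\Phi_{T-\ell,T}]$ rather than your $\mathrm I-2\,\mathrm{II}+\mathrm{III}$ expansion, but the content is the same; your Radon--Nikodym factor $H$ is exactly the density $F_\delta=\rho_{1/2-\delta}(y-B_\delta)/\rho_{1/2}(y)$ that the paper writes out explicitly after integrating over the midpoint.

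One technical point worth flagging: your conditional bound $\IE[e^{\b^2\int_\ell^{T-\ell}V(\Delta_s)\dd s}-1\mid \Delta_\ell,\Delta_{T-\ell}]\leq C\min(1,|\Delta_\ell|^{-(d-2)})$ is not literally uniform in $\Delta_{T-\ell}$ as you state (if $\Delta_{T-\ell}$ is near the origin the bridge is forced back there). The paper sidesteps this by using $e^M-1\leq Me^M$ with $M=\b^2\int_\ell^{T-\ell}V(\Delta_s)\dd s$, then for each $t\in[\ell,T-\ell]$ conditioning on $\Delta_t=z$ (with $|z|$ bounded since $V$ has compact support) and invoking the uniform bound \eqref{eq:unifboundForP2P} on each of the two resulting bridge partition functions. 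This converts the hitting-time heuristic into a clean occupation-time computation
\[
\int_\ell^{T-\ell}\sup_{|z|\leq R}\frac{\rho_t(z)\rho_{T-t}(z)}{\rho_T(0)}\,\dd t\leq C\,\ell^{-(d-2)/2},
\]
with no H\"older step and hence no $\b$-dependent loss in the exponent, which is exactly the point you emphasize. Your hitting-time route also leads there (the integrated statement you record at the end is correct), but the occupation-time version is shorter and avoids the uniformity issue.
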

Although we follow in our proof the two main steps of \cite{V06}, some new arguments are needed in order to obtain good and uniform estimates on the error. The main difference in our approach is the use of uniform bound on the second moments of point-to-point partition functions.
\subsection{First Step}
\begin{lemma} There exist a constant $C=C(\b)>0$ such that for all $T>0$, uniformly for $\ell\leq \frac{T}{2}$,
\[
\sup_{x\in\mathbb R^d} \IE\left[\left(\DE_{0,0}^{T,\sqrt T x} [\Phi_T] - \DE_{0,0}^{T,\sqrt T x}[\Phi_{\ell}\Phi_{T-\ell,T}]\right)^2\right] \leq C\,\ell^{-\frac{d-2}{2}}. 
\]
\end{lemma}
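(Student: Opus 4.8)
The goal is to show that replacing the full Brownian-bridge partition function $\DE_{0,0}^{T,\sqrt T x}[\Phi_T]$ by the truncated version $\DE_{0,0}^{T,\sqrt T x}[\Phi_\ell \Phi_{T-\ell,T}]$ — which discards the noise contribution in the ``bulk'' time window $[\ell,T-\ell]$ — costs only $O(\ell^{-(d-2)/2})$ in $L^2$, uniformly in the endpoint $x$ and in $\ell \le T/2$. The natural strategy is an Itô / martingale computation: fix the endpoint and view $s \mapsto \DE_{0,0}^{T,\sqrt T x}[\Phi_s \Phi_{T-\ell,T}]$ as a martingale increment process and expand the difference. Concretely, writing the difference as a stochastic integral over the time interval $[\ell,T-\ell]$ of the Brownian bridge measure, the $L^2$-norm becomes, via Itô isometry and $\int \phi(b-z_1)\phi(b-z_2)\,\dd b = V(z_1-z_2)$, an integral of the form
\[
\b^2\int_\ell^{T-\ell} \iint V(z_1-z_2)\, q^{T,\sqrt Tx}_s(z_1)\, q^{T,\sqrt Tx}_s(z_2)\, \IE\!\left[\DE_{0,0}^{s,z_1}\!\big[\Phi_s\big]\,\DE_{0,0}^{s,z_2}\!\big[\Phi_s\big]\,(\cdots)\right]\dd z_1\dd z_2\,\dd s,
\]
where $q^{T,\sqrt Tx}_s(\cdot)$ denotes the time-$s$ marginal density of the Brownian bridge from $(0,0)$ to $(T,\sqrt T x)$ (a Gaussian density of variance $\asymp s(T-s)/T$), and the remaining factor involves the second half of the bridge and the tail noise $\Phi_{T-\ell,T}$. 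The key point is that the two bridge-partition-function factors starting from $z_1$ and $z_2$, together with the independent tail factor, can be controlled in $L^2$ uniformly using \eqref{eq:unifboundForP2P}: by the shear-invariance property (i) and the uniform second-moment bound (ii) valid for $\b<\b_{L^2}$, each $\IE\,\DE_{0,0}^{s,z}[\Phi_s]^2$ is bounded by a constant independent of $s$ and $z$, and Cauchy–Schwarz decouples the $z_1,z_2$ factors at the cost of a universal constant.

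Once the random factors are bounded by a constant, what remains is the purely Gaussian estimate
\[
\int_\ell^{T-\ell}\iint V(z_1-z_2)\, q^{T,\sqrt Tx}_s(z_1)\, q^{T,\sqrt Tx}_s(z_2)\,\dd z_1\,\dd z_2\,\dd s \;\le\; C\,\ell^{-\frac{d-2}{2}},
\]
uniformly in $x$ and in $T$ with $\ell\le T/2$. Since $V$ is bounded with compact support, $\iint V(z_1-z_2)q_s(z_1)q_s(z_2)\dd z_1\dd z_2 \le \|V\|_1 \sup_z q_s(z) \le C\,\|V\|_1\,\big(s(T-s)/T\big)^{-d/2}$. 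Integrating over $s\in[\ell,T-\ell]$: by symmetry it suffices to treat $s\in[\ell,T/2]$, where $s(T-s)/T \asymp s$, so the integral is $\le C\int_\ell^{T/2} s^{-d/2}\,\dd s \le C\,\ell^{1-d/2} = C\,\ell^{-(d-2)/2}$ precisely because $d\ge 3$ makes the exponent $d/2>1$. This is where the restriction $d\ge 3$ and the exact form of the error term come from, and the uniformity in $x$ is automatic because the bound on $q_s$ is over all $z$.

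The main obstacle I expect is the bookkeeping around the tail window $[T-\ell,T]$: the stochastic-integral expansion of the difference must be set up so that the tail factor $\Phi_{T-\ell,T}$ (which is present in both terms being compared) is correctly carried through and its $L^2$-contribution absorbed into a constant — this requires either conditioning on the noise in $[T-\ell,T]$ first, or observing that the difference $\Phi_T - \Phi_\ell\Phi_{T-\ell,T} = (\Phi_\ell\Phi_{\ell,T-\ell}\Phi_{T-\ell,T} - \Phi_\ell\Phi_{T-\ell,T}) = \Phi_\ell(\Phi_{\ell,T-\ell}-1)\Phi_{T-\ell,T}$ factorizes the relevant noise into three independent time-blocks, so that taking $\IE$ and using independence reduces to the middle block $(\Phi_{\ell,T-\ell}-1)$, whose $L^2$-norm under the bridge is exactly the Gaussian integral above (the outer blocks contributing bounded $L^2$-factors by \eqref{eq:unifboundForP2P}). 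Once this factorization is made clean, the rest is the routine Itô isometry plus the elementary heat-kernel integral estimate sketched above.
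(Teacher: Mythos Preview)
Your proposal is correct and follows essentially the same route as the paper. The paper computes the second moment directly as a two-replica bridge expectation (using $B^{(1)}-B^{(2)}\eqlaw\sqrt 2\,B$ under the bridge, which removes the $x$-dependence at the outset rather than via your $\sup_z q_s$ bound), applies $e^a-e^b\le (a-b)e^a$ to extract the factor $\int_\ell^{T-\ell}V(\sqrt 2 B_t)\dd t$, conditions on the bridge position at the running time, and then invokes the same uniform point-to-point second-moment bound you cite before the identical heat-kernel estimate $\int_\ell^{T/2}s^{-d/2}\dd s\le C\,\ell^{-(d-2)/2}$.
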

\begin{proof}
Since $B_s^{(1)}-B_s^{(2)} \eqlaw \sqrt 2 B_s$ for two independent brownian motions, we have
\begin{align}
&\IE\bigg[\bigg(\DE_{0,0}^{T,\sqrt T x} [\Phi_T - \Phi_{\ell}\Phi_{T-\ell,T}]\bigg)^2\bigg] \nonumber\\
& = \DE_{0,0}^{T,0} \left[\mathrm e^{\b^2 \int_0^T V(\sqrt 2 B_s)ds} -\mathrm e^{\b^2 \int_0^{\ell} V(\sqrt 2 B_s)ds}\mathrm e^{\b^2 \int_{T-\ell}^T V(\sqrt 2 B_s)ds}\right]\nonumber\\
& \leq  \DE_{0,0}^{T,0}\left[ e^{\b^2\int_0^{tT} V(\sqrt 2  B_s)}\,\int_\ell^{T-\ell} V(\sqrt 2 B_t)\dd t\right]. 
\end{align}
The last expectation equals
\begin{equation}\label{Application LLT KPZ}
  \begin{split}
 &  \int_{[\ell,T-\ell]\times \mathbb R ^d} V(\sqrt 2 z)  \frac{\rho_t(z)\rho_{T-t}(z)}{\rho_T(0)} \DE_{0,0}^{t,z} \left[e^{\b^2\int_0^{t} V(\sqrt 2  B_s)\dd s} \right] \DE_{0,z}^{T-t,0} \left[ e^{\b^2\int_0^{T-t} V(\sqrt 2  B_s)\dd s}\right]  \dd z \dd t\\
 & \leq \left( \sup_{t} \sup_{|z|\leq R} \DE_{0,z}^{t,0}\left[ e^{\b^2\int_0^{t} V(\sqrt 2  B_s) \dd s} \right]\right)^2 \int_\ell^{T-\ell} \int_{\mathbb R^d} V(\sqrt 2 z)  \frac{\rho_t(z)\rho_{T-t}(z)}{\rho_T(0)} \dd t \dd z,
  \end{split}
  \end{equation}
with $\mathrm{supp}(V) \subset B(0,\sqrt 2 R)$ where the supremum on the last line is finite by the continuous analogue of \cite[Lemma 3.1]{CN19}  (see also \cite[Corollary 3.8]{V06}). Finally,
\begin{align*}
 & \int_\ell^{T-\ell} \int_{\mathbb R^d} \frac{\rho_t(z)\rho_{T-t}(z)}{\rho_T(0)} V\left(\sqrt 2 z\right)\dd t \dd z\\
& \leq \Vert V \Vert_\infty \left(\int_\ell^{\frac{T}{2}}  \frac{T^{\frac{d}{2}}}{t^{\frac{d}{2}}(T-t)^{\frac{d}{2}}}\dd t + \int_{\frac{T}{2}}^{T-\ell}  \frac{T^{\frac{d}{2}}}{t^{\frac{d}{2}}(T-t)^{\frac{d}{2}}}\dd t\right)\\
& \leq 2\Vert V \Vert_\infty\,2^{\frac{d}{2}}\int^{\frac{T}{2}}_{\ell} t^{-\frac{d}{2}}\dd t \leq C \frac{T^{\frac{d}{2}}}{(T-\ell)^{\frac{d}{2}}} \ell^{-\frac{d-2}{2}},
\end{align*}
and the statement of the lemma follows.
\end{proof}
Define 
\begin{equation}
A_{T,\ell,x} := \DE_{0,0}^{T,\sqrt T x}\left[\Phi_{\ell}\Phi_{T-\ell,T}\right] - \DE_0\left[\Phi_{\ell}\right] \overset{\leftarrow}{\sZ}_{T,\ell}(\sqrt T x).
\end{equation}

\subsection{Second step}
\begin{lemma} There exists a positive constant $c$ such that for all $\b< \b_{L^2}$ there exist positive constants $C=C(\b)$ and some $\delta=\delta(\b)>0$ such that for all positive $\ell,T$ verifying $\ell/T\leq \delta$ and all $x\in\mathbb R^d$, 
\[
 \IE\left[A_{T,\ell,x}^2\right]\leq  C(|x|^2+1) e^{c\frac{\ell}{2T}|x|^2} \left(\frac{\ell}{T}\right)^{\frac{1}{2}}.
\]
\end{lemma}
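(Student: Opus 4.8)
The quantity $A_{T,\ell,x}$ measures how far the product structure $\DE_0[\Phi_\ell]\cdot \overset{\leftarrow}{\sZ}_{T,\ell}(\sqrt T x)$ is from the bridge expectation $\DE_{0,0}^{T,\sqrt T x}[\Phi_\ell \Phi_{T-\ell,T}]$. The point is that under the bridge measure the first leg (on $[0,\ell]$) and the last leg (on $[T-\ell,T]$) are, for $\ell\ll T$, almost independent: conditioning the bridge on its position at times $\ell$ and $T-\ell$ decouples the two functionals, and the joint law of $(B_\ell,B_{T-\ell})$ under the bridge is close to that of two independent Gaussians of the right variance. The plan is to expand $\IE[A_{T,\ell,x}^2]$ into a four-fold integral over the positions $z_1,z_2$ (first leg endpoints, two independent copies) and $w_1,w_2$ (last leg startpoints), where the $\xi$-expectation factorizes because the time intervals $[0,\ell]$ and $[T-\ell,T]$ are disjoint, producing a product of two second-moment terms $\IE[\DE_{0,0}^{\ell,z_i}[\Phi_\ell]\DE_{0,0}^{\ell,z_j}[\Phi_\ell]]$ (for the first leg) and the analogous time-reversed ones. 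These second moments are uniformly bounded for $\b<\b_{L^2}$ by property (ii) of \eqref{eq:unifboundForP2P} / \cite[Corollary 3.8]{V06}, combined with the shear-invariance property (i). What remains after this factorization is a purely \emph{Gaussian} computation: the discrepancy between the bridge transition densities and the product of free transition densities, integrated against $V$ (which forces $|z_1-z_2|$ and $|w_1-w_2|$ bounded) and the $\rho$ kernels.

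\textbf{Key steps, in order.} First I would write $\DE_{0,0}^{T,\sqrt T x}[\Phi_\ell \Phi_{T-\ell,T}]$ by disintegrating over the bridge positions at times $\ell$ and $T-\ell$: this gives an integral of $\frac{\rho_\ell(z)\,\rho_{T-2\ell}(z,w)\,\rho_\ell(w,\sqrt T x)}{\rho_T(\sqrt T x)}$ times $\DE_{0,0}^{\ell,z}[\Phi_\ell]$ times $\overset{\leftarrow}{\DE}$ of the last leg. Then $\DE_0[\Phi_\ell]\,\overset{\leftarrow}{\sZ}_{T,\ell}(\sqrt T x)$ is the same integral but with $\rho_\ell(z)$ replaced by the free weight $\rho_\ell(z)$ without the bridge constraint on the $[0,\ell]$-piece and with $\rho_\ell(w,\sqrt T x)\,\rho_{T-2\ell}(z,w)/\rho_T(\sqrt T x)$ replaced by $\rho_\ell(\sqrt T x, w)$-type free weights — i.e. the difference $A_{T,\ell,x}$ is an integral against the \emph{kernel difference}
\[
K(z,w) := \frac{\rho_\ell(z)\,\rho_{T-2\ell}(z,w)\,\rho_\ell(w,\sqrt T x)}{\rho_T(\sqrt T x)} - \rho_\ell(z)\,\rho_\ell(w-\sqrt T x).
\]
Second, I would compute $\IE[A_{T,\ell,x}^2]$ as $\iiiint K(z_1,w_1)K(z_2,w_2)\, \mathbf 1_{\mathrm{supp} V}(z_1-z_2)\,\mathbf 1_{\mathrm{supp}V}(w_1-w_2)\, G(z_1,z_2)\,\overset{\leftarrow}{G}(w_1,w_2)$, where $G$ is the bounded second-moment factor; here the $V$'s come from the fact that $\Phi$-second-moments are governed by $e^{\b^2\int V(B^{(1)}-B^{(2)})}$. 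Third — the genuinely computational core — I would bound $\int |K(z,w)|\,\dd z\,\dd w$ (and the paired version) by Taylor-expanding the Gaussian densities: after the change of variables $z=\sqrt T\, \zeta$, $w = \sqrt T x + \sqrt\ell\, \eta$ or similar, the relative error between $\rho_{T-2\ell}(z,w)/\rho_T(\sqrt T x)$ and $\rho_{T}(z, \sqrt T x)/\rho_T(\sqrt T x) \approx \rho_\ell$-type terms is of size $O(\ell/T)$ times polynomial-in-position corrections, and the Gaussian tails absorb these at the cost of the factor $(|x|^2+1)e^{c\frac{\ell}{2T}|x|^2}$; the leftover small parameter is $(\ell/T)^{1/2}$. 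The role of the hypothesis $\ell/T\le\delta$ with $\delta=\delta(\b)$ is precisely to keep the exponential $e^{c\frac\ell T|x|^2}$ finite and to justify the Taylor expansion.

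\textbf{Main obstacle.} The delicate point is not the probabilistic decoupling — that is handled cleanly by the disjointness of the time intervals and the uniform $L^2$ bound on point-to-point partition functions — but the uniform-in-$x$ Gaussian estimate on $\int|K(z,w)|$. One must track how the ratio of heat kernels $\rho_{T-2\ell}/\rho_T$ behaves when the endpoint $\sqrt T x$ is allowed to be as far as sub-linear scale, and show that the error is genuinely $O((\ell/T)^{1/2})$ with only a Gaussian (not worse) blow-up in $|x|$. This requires care in choosing the right scaling of the integration variables and in separating the "bulk" contribution (where Taylor expansion applies) from the "tail" contribution (controlled directly by Gaussian decay), and is exactly the place where our argument improves on \cite{V06,Si95}. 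A secondary technical nuisance is that the middle kernel $\rho_{T-2\ell}(z,w)$ couples $z$ and $w$, so the double integral does not factorize; one handles this by first integrating out $w$ against $\rho_\ell(w,\sqrt T x)$, which reconstitutes (up to the controlled error) a single Gaussian in $z$ centered near $\sqrt T x$, and only then integrates in $z$ against $\rho_\ell(z)$ and the support constraint from $V$.
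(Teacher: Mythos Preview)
Your approach differs from the paper's in a structural way. You disintegrate the bridge at the times $\ell$ and $T-\ell$ and write $A_{T,\ell,x}$ as a double integral against the signed kernel $K(z,w)$, handling both legs simultaneously. The paper instead disintegrates at the \emph{midpoint} $T/2$: it inserts the intermediate quantity $\int_{\mathbb R^d}\rho(1/2,y;1,x)\,\DE_0[\Phi_\ell]\,\DE_{T/2,\sqrt T y}^{T,\sqrt T x}[\Phi_{T-\ell,T}]\,\dd y$, bounds separately $\IE(\text{bridge}-\text{intermediate})^2$ and $\IE(\text{intermediate}-\text{product})^2$ via Jensen over $y$, and uses the independence of the two time halves to factor the $\IE$. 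Each half then reduces to the single estimate $\DE_0[|F_\delta(B)-1|]$ for a heat-kernel ratio $F_\delta$, $\delta=\ell/T$, integrated back against the bridge density in $y$. Your route avoids the intermediate quantity at the cost of a four-fold integral; the paper's is more modular but runs the estimate twice. Both rely on the same two inputs: the uniform second-moment bound \eqref{eq:unifboundForP2P} on point-to-point partition functions, and a Gaussian heat-kernel ratio estimate.

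One correction: your displayed formula for $\IE[A^2]$ is wrong --- there are no factors $\mathbf 1_{\mathrm{supp}\,V}(z_1-z_2)$ or $\mathbf 1_{\mathrm{supp}\,V}(w_1-w_2)$. The quantity $G(z_1,z_2)=\IE\big[\DE_{0,0}^{\ell,z_1}[\Phi_\ell]\,\DE_{0,0}^{\ell,z_2}[\Phi_\ell]\big]$ is a double-bridge expectation of $e^{\b^2\int_0^\ell V(B_s^{(1)}-B_s^{(2)})\,\dd s}$ with both bridges starting at $0$, so $G\ge 1$ for \emph{all} $z_1,z_2$; there is no support constraint. (You may be conflating this with the $V(z_1-z_2)$ that arises in quadratic variations of $\xi$-stochastic integrals, which is a different computation.) This does not break your argument, since what you actually need is only that $G$ and $\overset{\leftarrow}{G}$ are uniformly bounded --- which follows from Cauchy--Schwarz and \eqref{eq:unifboundForP2P} --- yielding $\IE[A^2]\le C\big(\iint|K(z,w)|\,\dd z\,\dd w\big)^2$. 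After the change of variables $z=\sqrt\ell\,u$, $w=\sqrt T x+\sqrt\ell\,v$, the inner integral becomes $\DE\big[\big|\rho_{T-2\ell}(\sqrt T x+\sqrt\ell(v-u))/\rho_T(\sqrt T x)-1\big|\big]$, which is exactly the same type of Gaussian-ratio estimate the paper carries out for $F_\delta$, and your claimed bound follows.
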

\begin{proof}
Let $\rho(s,y;t,x) =\frac{\rho_s(y) \rho_{t-s}(x-y)}{\rho_t(x)}$ denote the Brownian bridge density. 
Integrating over the middle point and using that $\rho_{T/2}(\sqrt{T} x)=T^{-\frac{d}{2}}\rho_{1/2}(x)$, we find:
\begin{align}
&\DE_{0,0}^{T,\sqrt T x}\left[\Phi_{\ell}\Phi_{T-\ell,T}\right] \nonumber \\
& =T^{\frac{d}{2}} \int_{\mathbb{R}^d} \rho(T/2,\sqrt T y;T,\sqrt Tx)  \DE_{0,0}^{T/2,\sqrt T y} \left[\Phi_{\ell}\right] \DE_{T/2,\sqrt T y}^{T,\sqrt T x}  \left[\Phi_{T-\ell,T}\right] \dd y,\nonumber\\
& = \int_{\mathbb{R}^d} \rho(1/2, y;1,x)  \DE_{0,0}^{T/2,\sqrt T y} \left[\Phi_{\ell}\right] \DE_{T/2,\sqrt T y}^{T,\sqrt T x}  \left[\Phi_{T-\ell,T}\right] \dd y.
\end{align}
Hence by Jensen's inequality and the indendence structure,
\begin{align}
&\IE\left(\DE_{0,0}^{T,\sqrt T x}\left[\Phi_{\ell}\Phi_{T-\ell,T}\right] - \int_{\mathbb{R}^d} \rho(1/2, y;1,x) \DE_0 \left[\Phi_{\ell}\right] \DE_{T/2,\sqrt T y}^{T,\sqrt T x}  \left[\Phi_{T-\ell,T}\right] \dd   y\right)^2\nonumber\\
& \leq \int_{\mathbb{R}^d} \rho(1/2, y;1,x) \IE\left(\DE_{0,0}^{T/2,\sqrt T y}\left[\Phi_{\ell} \right] - \DE_0\left[\Phi_{\ell} \right] \right)^2 \IE\, \DE_{T/2,\sqrt T y}^{T,\sqrt T x}  \left[\Phi_{T-\ell,T}\right]^2 \dd  y. \label{eq:jensenLLT}
\end{align}
We first see that by time-reversal invariance  and \eqref{eq:unifboundForP2P},
\begin{equation} \label{eq:unifL2again}
\sup_{T,\ell\leq \frac{T}{2},x,y} \IE\,\DE_{T/2,\sqrt T y}^{T,\sqrt T x}  \left[\Phi_{T-\ell,T}\right]^2 <\infty.
\end{equation} 
Then, we use \eqref{eq:unifboundForP2P} again with Cauchy-Schwarz inequality to find that
\begin{align}
&\IE\left(\DE_{0,0}^{T/2,\sqrt T y}\left[\Phi_{\ell}\right] - \DE_0\left[\Phi_{\ell} \right]\right)^2\nonumber\\
&= \IE \left(\int \rho_\ell(Z) \left( \frac{\rho_{\frac{T}{2}-\ell}(\sqrt T y-Z)}{\rho_{\frac{T}{2}}(\sqrt T y)} - 1\right) \DE_{0,0}^{\ell,Z}\left[\Phi_{\ell} \right]\dd Z\right)^2\nonumber\\
& \leq C\int \rho_\ell(Z_1) \rho_\ell(Z_2) \left| \frac{\rho_{\frac{T}{2}-\ell}(\sqrt T y-Z_1)}{\rho_{\frac{T}{2}}(\sqrt T y)} - 1\right| \left| \frac{\rho_{\frac{T}{2}-\ell}(\sqrt T y-Z_2)}{\rho_{\frac{T}{2}}(\sqrt T y)} - 1\right| \dd Z_1 \dd Z_2\nonumber\\
& = C  \left(\int \rho_\ell(Z) \left| \frac{\rho_{\frac{T}{2}-\ell}(\sqrt T y-Z)}{\rho_{\frac{T}{2}}(\sqrt T y)} - 1\right|\dd Z\right)^2. \label{eq:upperBoundCondvsFree}
\end{align}
for some $C=C(\b)$.

 Now, if we let $\delta = \ell/T$ and
\[
F_\delta(B)=\frac{\rho_{\frac{1}{2}-\delta}\left(y -  B_{\delta}\right)}{\rho_{\frac{1}{2}}(y)}=(1-2\delta)^{-\frac{d}{2}}e^{-\frac{|y-B_{\delta}|^2}{1-2\delta}+|y|^2},
\] 
we see via the heat kernel scaling property and identity $T^{-\frac{1}{2}}B_\ell \eqlaw B_{\delta}$ that
\begin{equation} \label{eq:backToProba}
\int \rho_\ell(Z) \left| \frac{\rho_{\frac{T}{2}-\ell}(\sqrt T y-Z)}{\rho_{\frac{T}{2}}(\sqrt T y)} - 1\right|\dd Z =\DE_{0}\left[ \left|F_\delta\left(B\right)-1\right|\right].
\end{equation}
By a straightforward computation, we further find that for $\delta$ small enough,
\begin{align*}
&\DE_{0}\left[ \left|F_\delta\left(B\right)-1\right|\right]\\
& \leq (1-2\delta)^{-\frac{d}{2}} \DE_0 \left|e^{(1-2\delta)^{-1}(-2\delta|y|^2 + 2\langle y,B_{\delta} \rangle - B_{\delta}^2)}-1 \right| + |(1-2\delta)^{-\frac{d}{2}} - 1|\\
& \leq 2^{\frac{d}{2} +1} \DE_0\left[ e^{4|\langle y,B_{\delta} \rangle|}\,\left|-2\delta|y|^2 + 2\langle y,B_{\delta} \rangle - B_{\delta}^2\right| \right] + C\delta,
\end{align*}
where we have used that $|e^x - 1| \leq e^{x\vee 0}|x|$ for all $x\in \mathbb R$. By Cauchy-Schwarz inequality, the last expectation is further bounded from above by 
\begin{align}
&(\DE_0[e^{8|\langle y,B_{\delta} \rangle|}])^{\frac{1}{2}} \DE_0\left[\left(-2\delta|y|^2 + 2\langle y,B_{\delta} \rangle - B_{\delta}^2\right)^2\right]^{\frac{1}{2}} \nonumber\\
&\leq C e^{c \delta | y|^2} \delta ^{\frac{1}{2}}  \left( | y |^2+1 \right), \label{eq:upperBoundCondvsFreeEnd}
\end{align}
for some positive constants $C$ and $c$ and $\delta$ small enough. %\CC{(I'm not sure that $\delta^{\frac{1}{2}}$ is an optimal upper-bound. In particular because I used Cauchy-Schwarz inequality. Maybe the LHS of \eqref{eq:backToProba} is of order at least $\delta$)}

Coming back to \eqref{eq:jensenLLT}, we obtain combining \eqref{eq:unifL2again}, \eqref{eq:upperBoundCondvsFree} and \eqref{eq:upperBoundCondvsFreeEnd} (recall $\delta = \ell/T$):
\begin{align}
&\IE\left(\DE_{0,0}^{T,\sqrt T x}\left[\Phi_{\ell}\Phi_{T-\ell,T}\right] - \int_{\mathbb{R}^d} \rho(1/2, y;1,x) \DE_0 \left[\Phi_{\ell}\right] \DE_{T/2,\sqrt T y}^{T,\sqrt T x}  \left[\Phi_{T-\ell,T}\right] \dd   y\right)^2\nonumber\\
& \leq  C \left(\frac{\ell}{T}\right)^{\frac{1}{2}} \int_{\mathbb{R}^d} \rho(1/2, y;1,x)  e^{c\frac{\ell}{T}|y|^2}(|y|^2+1)  \dd  y.\label{eq:BBintegral}
\end{align}
Then, if $X_s$ denotes a Brownian bridge from $(0,0)$ to $(1,0)$, the integral appearing on the RHS of \eqref{eq:BBintegral} is bounded from above by
\begin{align*} & \DE_{0,0}^{1,x} \left[(|B_{1/2}|^2+1) e^{c\frac{\ell}{T} |B_{1/2}|^2}\right]\\
& = \DE_{0,0}^{1,0} \left[\left(|X_{1/2}+x/2|^2 +1\right) e^{c\frac{\ell}{T} |X_{1/2}+x/2|^2}\right]\\
& \leq 2 e^{c\frac{\ell}{4T}x^2}\left(\DE_{0,0}^{1,0} \left[|X_{1/2}|^{2} e^{\frac{\ell}{T} |X_{1/2}|^2}\right] +\left(\frac{|x|^2}{4}+1\right)\,\DE_{0,0}^{1,0}  \left[ e^{c\frac{\ell}{T} |X_{1/2}|^2}\right] \right)\\
& \leq C e^{c\frac{\ell}{4T}|x|^2} (|x|^2+1),
\end{align*}
for $T$ large enough.

By the same strategy and translation invariance, we also get that
\begin{align}
&\IE\left( \int_{\mathbb{R}^d} \rho(1/2, y;1,x) \DE_0 \left[\Phi_{\ell}\right] \DE_{T/2,\sqrt T y}^{T,\sqrt T x}  \left[\Phi_{T-\ell,T}\right] \dd  y - \DE_0\left[\Phi_{\ell}\right] \overset{\leftarrow}{\sZ}_{T,\ell}(\sqrt T x)\right)^2 \nonumber\\
& \leq  C \left(\frac{\ell}{T}\right)^{\frac{1}{2}} \int_{\mathbb{R}^d} \rho(1/2, y;1,x)  e^{c\frac{\ell}{T}|x-y|^2}\left(|x-y|^2+1\right)  \dd  y,\nonumber\\
& \leq C e^{c\frac{\ell}{4T}|x|^2} (|x|^2+1),\label{eq:BBintegral2} 
\end{align}
where we obtain the last line via the change of variable $z=x-y$ and the previous computation.
Combining \eqref{eq:BBintegral} and \eqref{eq:BBintegral2} ends the proof of the lemma.
\end{proof}

\section*{Acknowledgments}
We thank Nikos Zigouras, Dimitris Lygkonis,  Ofer Zeitouni and David Belius for interesting discussions and helpful suggestions. The work of S. Nakajima is partially supported by  JSPS KAKENHI 19J00660 and SNSF grant 176918. C. Cosco acknowledges that this project has received funding from the European Research Council (ERC) under the European Union's Horizon 2020 research and innovation program (grant agreement No. 692452). We also thank Chiranjib Mukherjee for bringing to our attention the work \cite{LZ20}.

\end{document}